\documentclass[12pt]{amsart}
\usepackage{indentfirst, latexsym, bm, amsmath, eufrak, amsthm, amssymb}
\usepackage[colorlinks,
linkcolor=black,citecolor=black]{hyperref}
\usepackage{upgreek}
\usepackage{bm}
\usepackage{url}
\usepackage{lineno}

\theoremstyle{definition}
\theoremstyle{remark}
\numberwithin{equation}{section}
\newtheorem{theorem}{Theorem}[section]
\newtheorem*{theorem*}{Theorem}

\newtheorem{lemma}{Lemma}[section]
\newtheorem*{lemma*}{Lemma}
\newtheorem{corollary}{Corollary}[section]
\newtheorem{remark}{Remark}[section]
\newtheorem{proposition}{Proposition}[section]
\newtheorem{definition}{Definition}[section]
\newtheorem{example}{Example}[section]
\begin{document}
\title[Hamilton-Tian Conjecture]{On the Hamilton-Tian Conjecture in a compact transverse Fano Sasakian $5$-manifold}
\author{$^{\ast}$Shu-Cheng Chang}
\address{Shanghai Institute of Mathematics and Interdisciplinary Sciences (SIMIS), Shanghai, 200433, P.R. China}
\email{scchang@math.ntu.edu.tw }
\author{$^{\dag}$Yingbo Han}
\address{School of Mathematics and Statistics, Xinyang Normal University, Xinyang,464000, Henan, P. R. China}
\email{yingbohan@163.com}
\author{$^{\ast \ast}$Chien Lin}
\address{Department of Mathematics, National Taiwan Normal University, Taipei, Taiwan}
\email{chienlin@ntnu.edu.tw}
\author{$^{^{\ddag}}$Chin-Tung Wu}
\address{Department of Applied Mathematics, National Pingtung University, Pingtung 90003, Taiwan}
\email{ctwu@mail.nptu.edu.tw }
\thanks{$^{\ast}$Supported in part by Startup Foundation
for Advanced Talents of the Shanghai Institute for Mathematics and
Interdisciplinary Sciences (No.2302-SRFP-2024-0049).$^{\dag}$Supported in part by an NSFC grant No. 12571056 and NSF of Henan Province
No. 252300421497. $^{\ast \ast}$Supported in part by NSTC 114-2115-M-003-002-MY2, Taiwan. $^{\ddag}$Supported in part by NSTC 114-2115-M-153-001, Taiwan. }
\subjclass{Primary 53E50, 53C25; Secondary 53C12, 14E30.}
\keywords{Hamilton-Tian conjecture, Sasaki-Ricci flow, Transverse Fano, KLT foliation
singularities, Transverse $K$-stable.}
\maketitle

\begin{abstract}
In this paper, we first confirm the Hamilton-Tian conjecture for the
Sasaki-Ricci flow in a compact transverse Fano quasi-regular Sasakian
$5$-manifold with klt\textbf{\ }foliation singularities. Secondly, we derive
the compactness theorem of Sasaki-Ricci solitons on transverse Fano quasi-regular Sasakian
$5$-manifolds. Then, by the second Sasakian structure theorem, we confirm the Hamilton-Tian
conjecture for a compact transverse Fano Sasakian $5$-manifold. With its
applications, we show that the gradient Sasaki-Ricci soliton orbifold metric
on a compact Sasakian $5$-manifold is Sasaki-Einstein if $M$ is transverse $K$-stable.
\end{abstract}
\tableofcontents

\section{Introduction}
Sasakian geometry is the odd-dimensional analogue of K\"{a}hler geometry. In
particular, $5$-dimensional Sasaki--Einstein manifolds are closely related to
Calabi--Yau $3$-folds via the K\"{a}hler cone construction. This
correspondence makes Sasakian geometry an important tool in the study of
Calabi--Yau geometry and String theory. Thus, the geometrization problem on
Sasakian $5$-manifolds not only contributes to the classification of these
manifolds but also enhances our understanding of Calabi--Yau geometry with
deep connections to the AdS/CFT correspondence. On the other hand, the class
of simply connected, closed, oriented, smooth $5$-manifolds is classifiable
under diffeomorphism due to Smale-Barden (\cite{s}, \cite{b}).

Let $(M,\eta,\xi,\Phi,g)$ be a compact Sasakian manifold of dimension $2n+1.$
If the orbits of the Reeb vector field $\xi$ are all closed and hence circles,
then integrates to an isometric $U(1)$ action on $(M,g)$. Since it is nowhere
zero this action is locally free, that is, the isotropy group of every point
in $S$ is finite. If the $U(1$) action is in fact free then the Sasakian
structure is said to be regular. Otherwise, it is said to be quasi-regular. If
the orbits of $\xi$ are not all closed,\textbf{\ }the Sasakian structure is
said to be irregular. However, by the second structure theorem (\cite{ru},
\cite{bg}), any Sasakian structure $(\xi,\eta,\Phi)$ on $(M,g)$ is either
quasi-regular or there is a sequence of quasi-regular Sasakian structures
$(M,\xi_{i},\eta_{i},\Phi_{i},g_{i})$ converging in the compact-open
$C^{\infty}$-topology to $(\xi,\eta,\Phi,g).$ It means that there always
exists a quasi-regular Sasakian structure $(\xi,\eta,\Phi)$ on $(M,g)$.

Significant progress has been made in Sasaki--Einstein geometry by
Boyer--Galicki, Koll\'{a}r, Collins--Sz\'{e}kelyhidi, and Ono--Futaki--Wang,
etc. Furthermore, Kobayashi and Boyer--Galicki established a close
correspondence between quasi-regular Sasakian $5$-manifolds and log del Pezzo
surfaces, relating Sasaki--Einstein and K\"{a}hler--Einstein structures
respectively, thereby illustrating the strong interplay between Sasakian and
complex geometry.

By applying the Seifert $C^{\ast}$-bundle construction, there are questions
asked by Koll\'{a}r (\cite{k3}):
\begin{enumerate}
\item Describe all Sasakian manifolds $M$ which have a given Seifert
$\mathbb{S}^{1}$\textbf{-}bundle structure$\pi:M\rightarrow(Z,\Delta)$ over
an orbifold with positive orbifold Chern class?

\item Given a Sasakian manifold $M$\textbf{, }describe all Seifert $S^{1}%
$-bundle structures $\pi:M\rightarrow(Z,\Delta)$ over an orbifold with
positive orbifold Chern class?
\end{enumerate}
We refer to the book by Boyer--Galicki (\cite{bg}) and references therein in
some details for the related subjects. It is our goal to apply
\textbf{foliation analytic MMP} as in section $2$ to attach the above
geometrization problems of compact quasi-regular Sasakian $5$-manifolds.

In a series of our previous articles (\cite{chlw1}, \cite{chlw2},
\cite{ccllw}, \cite{cclw}), we address the related issues on the
geometrization and classification problems of a compact quasi-regular Sasakian
$5$-manifold. In this paper, we will deal with problems in a general Sasakian
$5$-manifold without the property of quasi-regular. In fact, we will answer
the Hamilton-Tian conjecture for the Sasaki-Ricci flow in a compact transverse
Fano Sasakian $5$-manifold. We refer to the topic $(1)$ of the conjecture
picture as in subsection $2.2$.

It is served as a generalization of the well-known Hamilton-Tian conjecture
for the K\"{a}hler-Ricci flow ( \cite{tz1}, \cite{csw}, \cite{bbegz},
\cite{bam}, \cite{cw}, \cite{tzzzz}). For more details, we refer to references therein.

More precisely, we confirm the Hamilton-Tian conjecture which states that the
solution of Sasaki-Ricci flow (\ref{2025}) converges (along a subsequence) to
a shrinking Sasaki-Ricci soliton with mild singularities in a compact
transverse Fano Sasakian manifold of dimension five.

As a consequence of the Hamilton-Tian conjecture, we have the existence
theorem of Sasaki-Ricci solitons and Sasaki-Einstein in a compact transverse
Fano Sasakian $5$-manifold with klt foliation singularities as in Theorem
\ref{T99} and Corollary \ref{C12}.

We first assume that $M$ is a compact quasi-regular transverse Fano
$5$-Sasakian manifold with Type I foliation singularities (Definition
\ref{D21}) in the sense that the space $Z$ of leaves is a del Pezzo orbifold
surface of cyclic quotient klt singularities (well-formed) which means its
orbifold singular locus and algebro-geometric singular locus coincide,
equivalently $Z$ has no branch divisors. Then we have

\begin{proposition}
\label{T66} Let $(M,\xi,\eta_{0},g_{0}^{T})$ be a compact quasi-regular
transverse Fano Sasakian manifold of dimension five and $(Z_{0},h_{0}%
,\omega_{h_{0}})$ denote the space of leaves of the characteristic foliation
which is a del Pezzo orbifold surface of cyclic quotient klt singularities
with codimension two orbifold singularities $\Sigma_{0}$. Then, by a
$D$-homothetic deformation, under the normalized Sasaki-Ricci flow
\begin{equation}
\left \{
\begin{array}
[c]{lcl}%
\frac{\partial}{\partial t}\omega(t) & = & -Ric^{T}(\omega(t))+\omega(t),\\
\omega(0) & = & \omega_{0}.
\end{array}
\right.  \label{2025}%
\end{equation}

\begin{enumerate}
\item $(M(t),\xi,\eta(t),g^{T}(t))$ converges to a compact quasi-regular
transverse Fano Sasaki-Ricci soliton $(M_{\infty},\xi,\eta_{\infty},g_{\infty
}^{T}).$

\item The leave space of a del Pezzo orbifold surface $(Z_{\infty},h_{\infty
})$ can have at worst finite isolated klt singularities $\Sigma_{\infty
}=\{x_{1},\cdots,x_{N}\}$ with $g_{\infty}^{T}=\pi^{\ast}(h_{\infty})$ such
that $h_{\infty}$ is the smooth gradient K\"{a}hler-Ricci soliton metric in
the Cheeger-Gromov topology on $Z_{\infty}\backslash \Sigma_{\infty}$.

\item $M_{\infty}$ is at worst foliation log terminal for the singular set of
$\mathbb{S}^{1}$-fibres of $\{ \xi_{x_{1}},\cdots,\xi_{x_{N}}\}$.
\end{enumerate}
\end{proposition}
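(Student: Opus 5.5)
The plan is to reduce the flow to the leaf space and apply the orbifold Hamilton--Tian theory there. Since $M$ is quasi-regular, the Reeb foliation is given by a locally free $\mathbb{S}^{1}$-action, and $\pi:M\rightarrow Z_{0}$ is a Seifert $\mathbb{S}^{1}$-orbibundle over the well-formed del Pezzo orbifold surface $(Z_{0},h_{0})$. First I will perform a $D$-homothetic deformation $\eta\mapsto a\eta$, $g^{T}\mapsto a g^{T}$ so that $[d\eta_{0}]_{B}=2\pi c_{1}^{B}(M)$. This normalizes the basic class so that (\ref{2025}) preserves the basic Kähler class, and the transverse Fano condition then matches $c_{1}^{orb}(Z_{0})>0$. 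Every basic form is the pullback of an orbifold form on $Z_{0}$, with $Ric^{T}=\pi^{\ast}Ric(h)$. Hence the solution of (\ref{2025}) is $\omega(t)=\pi^{\ast}\omega_{h(t)}$, where $h(t)$ solves the normalized orbifold Kähler--Ricci flow $\partial_{t}\omega_{h}=-Ric(\omega_{h})+\omega_{h}$ on $Z_{0}$. The contact forms are recovered as $\eta(t)=\eta_{0}+d^{c}_{B}\varphi(t)$, with $\varphi$ the basic Kähler potential. Long-time existence follows from the orbifold version of Cao's argument.

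Second, I will establish the uniform estimates needed for Hamilton--Tian on the two-dimensional orbifold flow. These are Perelman's estimates on scalar curvature, diameter and the Ricci potential, which carry over to orbifolds because they only use the maximum principle and the monotonicity of the $\mu$-functional on the orbifold cover charts. I will also need a uniform noncollapsing bound and an $L^{p}$ Ricci bound, say $L^{4}$ in real dimension four, following Tian--Zhang. With these in hand, the partial $C^{0}$ estimate and the Cheeger--Colding--Tian structure theory in the form of \cite{tz1}, \cite{bam}, \cite{cw} apply. Along a subsequence $t_{i}\rightarrow\infty$, $(Z_{0},h(t_{i}))$ then converges in the Gromov--Hausdorff sense to a limit $(Z_{\infty},h_{\infty})$. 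The singular set $\Sigma_{\infty}$ has real codimension $\geq4$, so it is a finite set of points $\{x_{1},\dots,x_{N}\}$. Convergence is smooth in the Cheeger--Gromov sense on $Z_{\infty}\setminus\Sigma_{\infty}$, and $h_{\infty}$ is a gradient Kähler--Ricci soliton there. By the partial $C^{0}$ estimate, $Z_{\infty}$ is a normal projective $\mathbb{Q}$-Fano surface, i.e.\ a log del Pezzo surface. Its singularities are tangent cones of the form $\mathbb{C}^{2}/\Gamma$, hence quotient and therefore klt. This gives (2).

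Third, I will lift back to $M$. The Seifert bundle is determined by an orbifold line bundle $L$, a root of $K_{Z}^{-1}$ up to the $D$-homothety. Via the Hilbert-space embeddings from the partial $C^{0}$ estimate, $L$ converges to an orbifold line bundle $L_{\infty}$ on $Z_{\infty}$, and the unit circle bundle gives $M_{\infty}$. This $M_{\infty}$ carries the same Reeb field $\xi$, the contact form $\eta_{\infty}$ with $d\eta_{\infty}=\pi^{\ast}\omega_{h_{\infty}}$, and the metric $g_{\infty}^{T}=\pi^{\ast}h_{\infty}$. The pair $(\eta(t_{i}),g^{T}(t_{i}))$ converges smoothly away from the fibres $\xi_{x_{j}}$, because the connection forms converge once the curvature forms do (after gauge fixing). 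This gives (1), and $M_{\infty}$ is a quasi-regular transverse Fano Sasaki--Ricci soliton. For (3), the foliation log terminal condition is defined through the klt condition on the leaf space, so it follows directly from (2) applied to the fibres over $x_{j}$.

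The main obstacle is the second step: justifying the $L^{4}$ Ricci bound and the partial $C^{0}$ estimate on an orbifold surface with initial singularities $\Sigma_{0}$. This step also requires controlling how the orbifold points of $Z_{0}$ can merge or disappear in the limit. I would first try to use the existing singular Kähler--Ricci flow results for log Fano varieties \cite{bbegz}, \cite{tzzzz}. In complex dimension two, the $L^{4}$ Ricci bound should follow from Chern--Weil, since $\int|Rm|^{2}$ is bounded by the orbifold Euler characteristic and signature plus the bounded scalar curvature.
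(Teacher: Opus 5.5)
\textbf{Verdict: gap in the key analytic step.} Your architecture is essentially the paper's, but the one input you try to supply yourself, the $L^{4}$ Ricci bound, does not follow from the argument you give.

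The shared skeleton is this. The first structure theorem reduces everything to the leaf space. Perelman's estimates (Lemma~\ref{L61}, Lemma~\ref{L31}) plus an $L^{p}$ Ricci bound with $p>n$ feed the Tian--Zhang structure theory (Theorems~\ref{T61}, \ref{T63}, \ref{T64}). The partial $C^{0}$-estimate (Theorem~\ref{T65}) then makes $Z_{\infty}$ a normal projective surface with finitely many quotient (hence klt) points, and foliation log terminality is read off from the leaf space. You run an orbifold K\"ahler--Ricci flow on $Z_{0}$ and lift back through the Seifert line bundle, whereas the paper stays on the smooth total space $M$ with basic quantities. That difference is only bookkeeping.

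The problem is the step you flag as the main obstacle. For K\"ahler surfaces, $\int(R^{2}-|Ric|^{2})\,\omega^{2}$ is, up to a constant, the cohomological quantity $\int\rho\wedge\rho$. Chern--Weil (or Gauss--Bonnet plus signature) together with Perelman's scalar bound therefore controls $\int|Rm|^{2}$, hence $\int|Ric|^{2}$, and nothing better. But $p=2=n$ is exactly the borderline exponent. The Petersen--Wei volume comparison and the $C^{\alpha}\cap L^{2,p}$ harmonic-coordinate control behind Theorem~\ref{T61} need $p>n$, i.e.\ $p>2$ in real dimension four. At $p=2$ the embedding $L^{2,p}\subset C^{\alpha}$ fails, so an $L^{2}$ bound cannot replace $L^{4}$.

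The paper does not get the bound from topology at all. It imports Theorem~\ref{T51}, the Sasakian version of Tian--Zhang's $L^{4}$ estimate:
$\int_{M}|\nabla^{T}\overline{\nabla}^{T}u|^{4}\,\omega(t)^{n}\wedge\eta_{0}\le C$.
This is equivalent to the $L^{4}$ bound on $Ric^{T}$, since $Ric^{T}-g^{T}=\nabla^{T}\overline{\nabla}^{T}u$. Its input is Perelman's control of the Ricci potential $u$ (Lemma~\ref{L31}), and it holds in every dimension.

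Working upstairs on $M$ has two advantages. All maximum-principle and integration-by-parts arguments for basic functions take place on a smooth compact manifold, so no orbifold justification is needed. Also, the constants depend only on the initial data on $M$ and not on quasi-regularity, which is what Section~4 exploits. A downstairs argument on $Z_{0}$ would have to check that its constants do not depend on the orbifold structure.

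To close your proof, here are the options:
\begin{itemize}
\item Actually carry out the Tian--Zhang $L^{4}$ argument in your orbifold setting, or on $M$. This is the direct fix.
\item Drop $L^{p}$ Ricci bounds and use Bamler's or Chen--Wang's theory \cite{bam}, \cite{cw}, which needs only the scalar curvature bound. Those results are for smooth Ricci flows, so they would have to be transplanted to orbifold flows.
\item Citing \cite{bbegz} does not help. It treats the case where the Mabuchi functional is proper, i.e.\ a K\"ahler--Einstein metric exists, not the degenerate soliton limit you need.
\end{itemize}
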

Now by applying the methods as in Guo-Phong-Song-Sturm \cite{gpss}, we have
the compactness theorem of Sasaki-Ricci solitons on transverse Fano quasi-regular
Sasakian manifolds of dimension five as in Theorem \ref{T55}. And then
by the second structure theorem (Proposition \ref{P22}), we confirm the
Hamilton-Tian conjecture for the Sasaki-Ricci flow on a compact transverse
Fano Sasakian manifold of dimension five by removing the condition of
quasi-regular:
\begin{theorem}
\label{T99} Let $(M,\xi,\eta_{0},g_{0}^{T})$ be a compact transverse Fano
Sasakian manifold of dimension five with Type I foliation singularities. Then,
under the Sasaki-Ricci flow (\ref{2025})

\begin{enumerate}
\item $(M(t),\xi,\eta(t),g^{T}(t))$ converges to a compact gradient
Sasaki-Ricci orbifold soliton $(M_{\infty},\xi,\eta_{\infty},g_{\infty}^{T}).$

\item $g^{T}(t)$ converges to a smooth Sasaki-Ricci soliton metric in the
Cheeger-Gromov topology on $M_{\infty}\backslash \Sigma_{\infty}^{T}$. Here
$\Sigma_{\infty}^{T}$ is the singular set of $\mathbb{S}^{1}$-fibres of $\{
\xi_{p_{1}},\cdots,\xi_{p_{N}}\}$.

\item $M_{\infty}$ is at worst foliation log terminal at the singular set of
$\  \mathbb{S}^{1}$-fibres of $\{ \xi_{p_{1}},\cdots,\xi_{p_{N}}\}$.

\item If $M$ is transverse $K$-stable, then $(M_{\infty},g_{\infty}^{T})$ is
isomorphic to $M$ endowed with a smooth Sasaki-Einstein metric.
\end{enumerate}
\end{theorem}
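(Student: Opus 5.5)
The plan is to reduce to the quasi-regular case, Proposition \ref{T66}, using the second structure theorem (Proposition \ref{P22}), with uniform control coming from the compactness theorem for Sasaki--Ricci solitons (Theorem \ref{T55}). If $\xi$ is quasi-regular, items (1)--(3) are exactly Proposition \ref{T66}: lift the leaf-space conclusions through the Seifert $\mathbb{S}^{1}$-bundle $\pi:M\to Z$. The limit soliton $h_{\infty}$ on $Z_{\infty}$ pulls back to $g^{T}_{\infty}=\pi^{\ast}h_{\infty}$, and the isolated klt points $x_{i}$ give the singular fibres $\xi_{p_i}$ with $\pi(p_i)=x_i$. So the real work is the irregular case.

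In the irregular case, Proposition \ref{P22} gives quasi-regular structures $(\xi_i,\eta_i,\Phi_i,g_i)$ converging to $(\xi,\eta,\Phi,g)$ in $C^{\infty}$. Transverse Fano is an open condition, since $c_1^B>0$ is preserved under small deformation of the Reeb field in the Reeb cone. Hence for $i$ large each structure is transverse Fano quasi-regular with Type I singularities. For each $i$ I will run the flow (\ref{2025}) with initial data $g_i^{T}$ and invoke Proposition \ref{T66}, which produces solitons $(M_{\infty,i},\xi_i,\eta_{\infty,i},g^{T}_{\infty,i})$.

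The next step is to obtain estimates uniform in $i$, following Perelman-type arguments for the transverse flow. These are: a bound on the transverse scalar curvature, on the diameter, and on $\|\nabla u\|$ for the transverse Ricci potential $u$, together with a uniform $\kappa$-noncollapsing constant and a uniform volume. All of these are controlled by the $C^{\infty}$-convergent initial data and the fixed basic class $c_1^B$. With these bounds, Theorem \ref{T55} extracts a subsequence of the solitons $g^{T}_{\infty,i}$ converging to a gradient Sasaki--Ricci orbifold soliton $g^{T}_{\infty}$ with Reeb field $\xi$. The convergence is smooth away from finitely many singular fibres, and the limit has klt (foliation log terminal) singularities in the sense of Guo--Phong--Song--Sturm. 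A diagonal argument in $(i,t)$, combined with continuous dependence of the flow on initial data on finite time intervals, shows that the flow for $\xi$ itself subconverges to this limit. This yields (1) and (2); (3) follows because log terminality is inherited from the uniform klt bounds of Theorem \ref{T55}.

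For (4), transverse $K$-stability forces the transverse Futaki invariant to vanish on the limit. A gradient shrinking soliton with vanishing Futaki invariant has Killing soliton vector field zero, so the limit is Sasaki--Einstein. I then use the stability of $M$, via a transverse analogue of the Chen--Sun--Wang / Tian--Zhang uniqueness of degenerations, to show the limit is not a nontrivial special degeneration. It is therefore isomorphic to $M$, and the Sasaki--Einstein metric is smooth.

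The main obstacle is the uniformity step: ensuring that the constants in Proposition \ref{T66} and Theorem \ref{T55} do not degenerate as $i\to\infty$. This is delicate because the orbifold orders of the quasi-regular approximations $\xi_i$ blow up. I would first try to formulate all estimates purely transversally, in terms of basic cohomology and the transverse metric on $M$ rather than on the leaf spaces $Z_i$, so that no orbifold order enters.
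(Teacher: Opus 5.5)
Your treatment of (1)--(3) follows the paper. You use the quasi-regular approximants from Proposition \ref{P22}, apply Proposition \ref{T66} to each, apply the compactness Theorem \ref{T55} to the resulting solitons, and close the commutative diagram at the end of Section 4 with He's continuity result (Proposition \ref{P2026}). The one difference is where uniformity in $i$ comes from. Theorem \ref{T55} does not use flow-side Perelman bounds. It rests on the uniform lower bound $\mu(g^{T})\geq -A$ for all quasi-regular transverse Fano solitons, obtained in four steps: Birkar's boundedness gives a uniform $\alpha$-invariant bound, Theorem \ref{T2025} turns this into a metric with $Ric^T\geq\alpha_1\omega$, Croke's theorem gives a Sobolev bound and hence a $\mu$ bound, and Proposition \ref{P77} with monotonicity of $\mu$ transfers it to every soliton. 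Your plan to get uniformity transversally on $M$ from the $C^{\infty}$-convergent initial data is a different way to supply the same input. Your worry about it is well placed: Birkar's result needs a uniform $\varepsilon$-lc bound, and growing orbifold orders of the $\xi_i$ threaten exactly that.

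The genuine gap is in (4). Your first step, that transverse $K$-stability forces the Futaki invariant of the flow limit to vanish, does not follow from Definition \ref{d61}. That definition only gives the inequality $\operatorname{Re} f_{M_\infty}(V)\geq 0$. It applies only when $M_\infty$ is the central fibre $\lim_{t\to 0}\sigma^{T}(t)(M)$ of a one-parameter subgroup of $SL^{T}(N+1,\mathbb{C})$, and $V$ is the generator of that subgroup. To conclude that the soliton field is zero you need two things, and neither is available at that stage.
\begin{enumerate}
\item You must realize the soliton limit as such a central fibre. The reductivity argument the paper uses to produce a degeneration only works once the limit is already known to be Sasaki--Einstein. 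For a genuine soliton limit you would need something like the two-step degeneration of Chen--Sun--Wang, which you cite only by name and only for the final identification.
\item The generator must be the soliton field with the sign that makes $f_{M_\infty}$ negative. On a soliton the Ricci potential $h_\omega$ is the potential of the soliton field, so by \eqref{59} the value of $f_{M_\infty}$ on the soliton field is, up to sign convention, $\int_{M_\infty}|\partial_B h_\omega|^2\omega^n\wedge\eta\geq 0$. With the other sign you get no information.
\end{enumerate}

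The paper avoids this by proving analytically that the limit is Einstein before any Futaki-invariant argument. It converts transverse $K$-stability into the lower bound \eqref{54-b} for the transverse Mabuchi $K$-energy along the flow. That bound combines four ingredients:
\begin{enumerate}
\item a Paul-type lower bound on the Bergman metrics $\overline{\omega}_i=\frac{1}{m}\Psi_i^{\ast}\omega_{FS}$;
\item the cocycle identity for the $K$-energy;
\item the partial $C^0$-estimate \eqref{d11} together with \eqref{d1}, which give $\overline{\omega}_i\leq C\omega_i$;
\item the bound $|u|\leq C$, used in \eqref{57a}.
\end{enumerate}
Corollary \ref{C52} and \eqref{54} then give $\mu^{T}\to\sup$ and $\nabla^{T}f_{t_i}\to 0$, so the limit is Sasaki--Einstein. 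Only after that does the paper use the strict inequality in Definition \ref{d61}, together with reductivity, to rule out $M_\infty\neq M$. Without this $K$-energy bound, or a real substitute for items 1 and 2 above, your argument for (4) does not close.
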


As a consequence, we have

\begin{corollary}
\label{C12} Let $M$ be a compact transverse Fano Sasakian $5$-manifold of Type
I foliation singularities. Then

\begin{enumerate}
\item There always exists a Sasaki-Ricci soliton metric of klt foliation singularities.

\item In addition, if $M$ is transverse $K$-stable, then $M$ endowed with a
smooth Sasaki-Einstein metric.
\end{enumerate}
\end{corollary}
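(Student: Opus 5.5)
The corollary follows from Theorem \ref{T99} by running the Sasaki-Ricci flow from any admissible initial datum. Let $M$ be a compact transverse Fano Sasakian $5$-manifold with Type I foliation singularities. First fix a transverse K\"{a}hler form $\omega_{0}$ in the basic class $2\pi c_{1}^{B}(M)$; after a $D$-homothetic deformation we may assume $[d\eta_{0}]_{B}=c_{1}^{B}$. With this normalization the flow (\ref{2025}) preserves the basic class, and it has a long-time solution by the transverse analogue of Cao's theorem. Theorem \ref{T99}(1)--(3) then provide a limit $(M_{\infty},\xi,\eta_{\infty},g_{\infty}^{T})$ with the following properties:
\begin{enumerate}
\item it is a gradient Sasaki-Ricci orbifold soliton;
\item it is smooth away from the finitely many singular $\mathbb{S}^{1}$-fibres $\Sigma_{\infty}^{T}$;
\item it is at worst foliation log terminal along $\Sigma_{\infty}^{T}$.
\end{enumerate}
Foliation log terminal is exactly the klt condition for foliation singularities, so $g_{\infty}^{T}$ is the Sasaki-Ricci soliton metric of klt foliation singularities claimed in part (1).

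For part (2) we invoke Theorem \ref{T99}(4). If $M$ is transverse $K$-stable, the limit is isomorphic to $M$ with a smooth Sasaki-Einstein metric, which gives the conclusion. For the reader's benefit, the mechanism is the following. Transverse $K$-stability forces the transverse Futaki invariant to vanish. The soliton vector field $\nabla^{T}f_{\infty}$ is holomorphic and basic, and the Futaki invariant evaluated on it equals $\int|\nabla^{T}f_{\infty}|^{2}$. Hence $f_{\infty}$ is constant and the soliton is Sasaki-Einstein. Stability also rules out a degeneration of the complex structure, so $M_{\infty}\cong M$ and the metric is smooth. Since this is already part of Theorem \ref{T99}, we only need to cite it.

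The only point requiring care is the first step: the initial class must be normalized so that Theorem \ref{T99} applies. We must check that a transverse K\"{a}hler metric in $c_{1}^{B}$ exists, which holds by the transverse Fano assumption together with the basic $\partial\bar{\partial}$-lemma. We must also check that the $D$-homothetic deformation preserves the Reeb foliation, and hence the Type I foliation singularities. Both facts are standard, so we expect no serious obstacle.
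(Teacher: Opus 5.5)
Your proof is correct and is the paper's own: Corollary \ref{C12} is recorded there as an immediate consequence of Theorem \ref{T99}. Parts (1)--(3) give the gradient Sasaki-Ricci orbifold soliton with at worst foliation log terminal (i.e.\ foliation klt) singularities, and part (4) gives the smooth Sasaki-Einstein metric when $M$ is transverse $K$-stable.

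The only caveat concerns your optional heuristic for (4), which is not how the paper argues. The paper bounds the transverse Mabuchi $K$-energy from below along the flow, concludes that $M_{\infty}$ is Sasaki-Einstein so that $f_{M_{\infty}}\equiv 0$, and only then invokes the equality case of Definition \ref{d61} to get $M_{\infty}\cong M$. Your heuristic, as written, applies the vanishing of the Futaki invariant of $M$ to the soliton field on $M_{\infty}$ before $M_{\infty}\cong M$ is known. Since you rely only on citing Theorem \ref{T99}, this does not affect your proof.
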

\begin{remark}
\begin{enumerate}
\item Collins and Sz\'{e}kelyhidi (\cite{cz2}) called the $K$-stability of
Sasakian manifolds in the sense of its associated K\"{a}hler cone is
$K$-stable by following the definition of Donaldson.

\item Our Definition \ref{d61} for transverse $K$-stable works well through
the course for\ the studying of the Sasaki-Ricci flow. In fact, one can
introduce the generalized Sasaki-Futaki invariant defined by (\ref{59}) or
(\ref{59-1}) as in \cite{fow}, \cite{dt}, \cite{t3}, \cite{t5}, \cite{d} and
\cite{li}.

\item It was proved by Wang-Zhu (\cite{wz}) that all tori Fano K\"{a}hler
manifolds admit a K\"{a}hler-Ricci soliton metric and Futaki-Ono-Wang
(\cite{fow}) extended to tori Fano Sasakian manifolds. Also Shi-Zhu
(\cite{sz}) extended to tori Fano orbifolds as well. We also refer to
\cite{he} \ for the other existence theorem of Sasaki-Ricci solitons with
positive transversal bisectional curvature.

\item At the final remark, similar to Theorem \ref{T99}, we can confirm that
the Hamilton-Tian conjecture holds for the conic Sasaki-Ricci flow
(\ref{2026}) on a compact transverse log Fano Sasakian manifold of dimension
five with Type II foliation singularities as well (\cite{ccllw}) which will
appear elsewhere.
\end{enumerate}
\end{remark}
For the completeness, we refer to Appendix for the preliminary notions in this
paper which included the Sasakian structure, the Sasaki-Ricci flow, the log
pair foliation singularities and CR Bochner-Kodaira-Nakano identity over
Sasakian manifolds, etc.

In section $2$, we propose a Sasaki analogue of the conjecture picture of
analytic minimal model program (\cite{st}, \cite{sw}, \cite{jst}) for the
Sasaki-Ricci flow.

In section $3,$ we first conform the Hamilton-Tian conjecture for the
Sasaki-Ricci flow (\ref{2025}) in a compact transverse Fano quasi-regular
Sasakian $5$-manifold of Type I foliation singularities. The first central
issue is to show the $L^{4}$-bound of the transverse Ricci curvature under the
Sasaki-Ricci flow on transverse Fano Sasakian manifolds. Secondly, by applying
the argument as in \cite[Theorem 1.2]{tz1} to the limit leave orbifold space
$Z_{\infty}$ which is mainly depended on the Perelman's pseudolocality theorem
\cite{p1}, we have the smooth convergence of the Sasaki-Ricci flow on the
regular set $(M_{\infty})_{reg}$ which is a $\mathbb{S}^{1}$-principle bundle
over $\mathcal{R}$ and $\mathcal{R}$ is the regular set of $Z_{\infty}$.
Third, by the partial $C^{0}$-estimate, it is to show that $M_{\infty}$ is
isomorphic to a compact quasi-regular transverse Fano Sasaki-Ricci soliton
with at worst foliation log terminal singularities.

In section $4$, by applying the methods as in Guo-Phong-Song-Sturm
(\cite{gpss}), we have the compactness theorem of such Sasaki-Ricci solitons in transverse Fano
quasi-regular Sasakian $5$-manifolds. Then by the second structure theorem (Proposition \ref{P22}), we confirm the
Hamilton-Tian conjecture in a compact transverse Fano Sasakian $5$-manifold as
in Theorem \ref{T99}. The key issues is to obtain a bounded of a family of
Fano-type varieties due to Birkar. In fact, we will apply the algebraic and
analytic invariants : $\alpha$-invariant due to Tian-Zhang (\cite{tz1}) and
log canonical threshold due to Cheltsov-Shramov (\cite{cs}) and Demailly
(\cite{d1}), and a uniform positive lower bound of the log canonical threshold
due to Birkar (\cite{bir}).

In the last section, with its applications, we will show that the gradient
Sasaki-Ricci soliton orbifold metric on a compact Sasakian $5$-manifold is a
Sasaki-Einstein or conic Sasaki-Einstein metric if $M$ is transverse
$K$-stable, respectively. Note that the other $K$-stability condition proposed
by Collins and Sz\'{e}kelyhidi (\cite{cz2}) which is defined on its K\"{a}hler
cone. They showed that a polarized affine variety admits a Ricci-flat
K\"{a}hler cone metric if and only if it is $K$-stable. In particular, the
Sasakian manifold admits a Sasaki-Einstein metric if and only if its
associated K\"{a}hler cone is $K$-stable.

\section{Analytic Foliation Minimal Model Program}

In this section, we will propose a Sasaki analogue of analytic minimal model
program (\cite{st}, \cite{sw}, \cite{jst}) for the Sasaki-Ricci flow. It is
our goal to answer the topic $(1)$ of the conjecture picture as in subsection
$2.2.$ in a compact transverse Fano Sasakian $5$-manifold.

\subsection{Mori's Foliation Minimal Model Program}

One can \ view the Mori's minimal model program in birational geometry can be
viewed as the complex analogue of Thurston's geometrization conjecture which
was proved via Hamilton's Ricci flow with surgeries on $3$-dimensional
Riemannian manifolds by Perelman (\cite{p1}, \cite{p2}, \cite{p3}). Likewise,
there is a conjecture picture by Song-Tian (\cite{st}) that the
K\"{a}hler-Ricci flow should carry out an analytic minimal model program with
scaling on projective varieties. Recently, Song and Weinkove (\cite{sw})
established the above conjecture on a projective algebraic surface. In fact

(MMP) Finding an "optimal" representative of a projecyive variety $X$ within
its birational class :
\begin{enumerate}
\item Minimal model : The canonical bundle $K_{X}$\  \ is nef (nonpositively
curved,\ $K_{X}\cdot C\geq0,$ minimal, i.e. no $(-1)$-curve ); or
\item Mori fiber space : There exists a holomorphic map $\pi:X\rightarrow Y$
from $X$ to a lower dimensional variety $Y$ such that the generic fiber
$X_{y}=\pi^{-1}(y)$ is a manifold with $K_{X_{y}}<0$ (positively curved).
\end{enumerate}

More precisely, one expects to find a finite sequence of birational maps
$f_{1},...,f_{k\text{ }}$ and varieties $X_{1},...,X_{k}$ with
\[
X=X_{0}\overset{f_{1}}{\rightarrow}X_{1}\overset{f_{2}}{\rightarrow}%
X_{2}\overset{f_{3}}{\rightarrow}\cdot \cdot \cdot \overset{f_{k}}{\rightarrow
}X_{k}%
\]
so that either $X_{k}$ is minimal model or Mori fiber space. That is, to find
$f_{i}$ which "remove" curves $C$ with
\[
K_{X_{i}}\cdot C<0.
\]

In view of the previous discusses, it is natural to have a Sasaki analogue of
the Song-Tian conjecture picture (\cite{st}) that the Sasaki-Ricci flow will
carry out so-called an analytic foliation minimal model program on
quasi-regular Sasakian $5$-manifolds as well.

Let $(M,\xi_{0},\eta_{0},\Phi_{0},g_{0},\omega_{0})$ be a compact
quasi-regular Sasakian $5$-manifold and its leave space $Z$ of the
characteristic foliation be \textit{well-formed}.\textbf{\ }Then the orbifold
canonical divisor $K_{Z}^{orb}$ and canonical divisor $K_{Z}$ are the same and
thus
\[%
\begin{array}
[c]{c}%
K_{M}^{T}=\pi^{\ast}(\varphi^{\ast}(K_{Z})).
\end{array}
\]
In a such case, there is the Sasaki analogue of Mori's minimal model program
with respect to $K_{Z}$ in a such compact quasi-regular Sasakian $5$-manifold.
More precisely, one can ask the following so-called foliation minimal model
program :

Is there a foliation $(-1)$-curve ? One of Mori's program is to replace this
criterion with the one dictated by the transverse canonical divisor $K_{M}%
^{T}$ of $M$: Does the transverse canonical divisor have nonnegative
intersection
\[%
\begin{array}
[c]{c}%
K_{M}^{T}\cdot V\geq0
\end{array}
\]
with any invariant submanifold\ $V$ on $M$ which is a Sasakian $3$%
-submanifold. In other words,
\[
\mathrm{is}\  \ K_{M}^{T}\  \text{\ }\mathrm{nef\ }?
\]
\ If $K_{M}^{T}$ is not nef, there is an external transverse contraction map
which turns out not only to generalize the Sasaki analogue of Castelnuovo's
contractibility criterion but also to provide decisive information on the
global structures of the end results of the foliation minimal model program.
Then an end result of foliation MMP starting from $M$ is a transverse Mori
fiber space if and only if there exists a nonempty open set $U\subset M$ such
that for any $\mathbb{S}^{1}$-fibre in $U$, there is an irreducible invariant
$3$-submanifold $V$ passing through such a fibre $\mathbb{S}^{1}$
with$\ K_{M}^{T}\cdot V<0.$

\subsection{Conjecture Picture for Analytic FMMP}
The Sasaki analogue of the conjecture picture for the K\"{a}hler-Ricci flow,
we expect the following analytic foliation minimal model program on Sasakian
manifolds of dimension five for the Sasaki-Ricci flow.

Let $(M,\xi_{0},\eta_{0},\Phi_{0},g_{0},\omega_{0})$ be a compact
quasi-regular Sasakian $(2n+1)$-manifold of foliation quotient singularities
of Type I\textbf{. }We consider a solution $\omega=\omega(t)$ of the
Sasaki-Ricci flow

\[%
\begin{array}
[c]{c}%
\frac{\partial}{\partial t}\omega(t)=-\mathrm{Ric}^{T}(\omega(t)),\text{
}\omega(0)=\omega_{0}.
\end{array}
\]
As long as the solution exists, the cohomology class $[\omega(t)]_{B}$ evolves
by%
\[%
\begin{array}
[c]{c}%
\frac{\partial}{\partial t}\left[  \omega(t)\right]  _{B}=-c_{1}^{B}(M),\text{
}\left[  \omega(0)\right]  _{B}=\left[  \omega_{0}\right]  _{B},
\end{array}
\]
and solving this ordinary differential equation gives%
\[%
\begin{array}
[c]{c}%
\left[  \omega(t)\right]  _{B}=\left[  \omega_{0}\right]  _{B}-tc_{1}^{B}(M).
\end{array}
\]
We see that a necessary condition for the Sasaki-Ricci flow to exist for $t>0$
such that%
\[%
\begin{array}
[c]{c}%
\left[  \omega_{0}\right]  _{B}-tc_{1}^{B}(M)>0.
\end{array}
\]
This necessary condition is in fact sufficient. In fact, we define
\[%
\begin{array}
[c]{c}%
T_{0}:=\sup \{t>0\text{ }|\text{ }\left[  \omega_{0}\right]  _{B}-tc_{1}%
^{B}(M)>0\}.
\end{array}
\]
That is to say that
\begin{equation}%
\begin{array}
[c]{c}%
\left[  \omega_{0}\right]  _{B}-T_{0}c_{1}^{B}(M)\in \overline{C_{M}^{B}}%
\end{array}
\label{E}%
\end{equation}
which is a nef class.

\textbf{When }$K_{M}^{T}$\textbf{\ is not nef :} The flow must develop finite
singularities at $T_{0}<\infty:$ \ Then we start the initial Sasaki class
$[\omega_{0}]_{B}\in H_{B}^{2}(M,\mathbb{Q})$ with a pair $(M,H^{T})$ and then
the limit nef class
\[
L_{0}=H^{T}+T_{0}K_{M}^{T}%
\]
is a\textbf{\ }semi-ample $Q$-line bundle.

More precisely, it follows from the Kawamata base-point free theorem that
there exists a basic surjective transverse holomorphic map from $M$ into the
normal projective variety $Y$
\[
\Phi:M\rightarrow Y\subset(\mathbb{C}\mathrm{P}^{N},\omega_{FS})
\]
defined by the basic transverse holomorphic section $\{s_{0},s_{1}%
,\cdots,s_{N}\}$ of $H_{B}^{0}(M,(K_{M}^{T})^{m})$ which is $\mathbb{S}^{1}%
$-equivariant with respect to the weighted $\mathbb{C}^{\ast}$-action in
$\mathbb{C}^{N+1}$ with $N=\dim H^{0}(M,(K_{M}^{T})^{m})-1$ for a large
positive integer $m.$ In the quasi-regular case, we have the following diagram
: $\Phi=\pi_{N}\circ \overline{\Phi}$
\[%
\begin{array}
[c]{ccc}%
M & \overset{\overline{\Phi}}{\longrightarrow} & N\\
& \searrow^{\Phi} & \downarrow \pi_{N}\\
&  & Y.
\end{array}
\]
Now we have the following conjecture picture for the analytic foliation
minimal model program:

Let $(M,\xi_{0},\eta_{0},\Phi_{0},g_{0},\omega_{0})$ be a compact
quasi-regular Sasakian $5$-manifold of foliation quotient
\textbf{singularities of Type I. }
\begin{enumerate}
\item When $Y$ is an $0$-dimensional ($\dim N=1)$ : In this case, $M$ is
transverse Fano with $c_{1}^{B}(M)>0.$ Then by Perelman, the type $I$ blow-up
of solutions $g(t)$ of the Sasaki-Ricci flow has uniformly bounded diameter
and scalar curvature. That is, there exists $C>0$ such that for all $(x,t)\in
M\times \lbrack0,T_{0})$, we have%
\[
|R^{T}(x,t)|\leq \frac{C}{(T-t)}%
\]
and \
\[
\mathrm{Diam}(X,g(t))\leq C\sqrt{(T-t)}.
\]
This leads to the convergence of the Sasaki-Ricci flow to a unique singular
Sasaki-Ricci soliton on a transverse $Q$-Fano Sasakian $5$-manifold.
\item When $0<\dim Y<2$ ($\dim M>\dim N$; \textbf{Collapsed or fiber
contraction) :} $M$ is a transverse Mori fibration, i.e. the general fiber of
$\Phi$ is transverse Fano such that $\Phi=\pi_{N}\circ \overline{\Phi}:$
\[%
\begin{array}
[c]{ccc}%
M & \overset{\overline{\Phi}}{\longrightarrow} & N\\
& \searrow^{\Phi} & \downarrow \pi_{N}\\
&  & Y.
\end{array}
\]
The Sasaki-Ricci flow is conjectured to collapse transversely onto $Y$ and to
extend through $t=T$ on $N$ in the sense of the Gromov Hausdorff topology. For
any $q\in Y$ , the type $I$ blow-up based at $q$ should converge to a complete
non-flat Sasaki-Ricci soliton.
\item When $\dim Y=2$ ($\dim M=\dim N$; \textbf{Non-collapsed or divisorial
contraction) :} $K_{M}^{T}$ is big. It is conjectured that the flow will
extend through $t=T$ geometrically associated to a transverse birational
transform such as a divisorial contraction. Furthermore,
\begin{enumerate}
\item Divisorial contraction : $N$ differs from $M$ by a invariant submanifold
of codimension $2$, then we return to the previous step.
\item It follows up from the previous steps untill $T_{k}=\infty$ and then
$K_{M}^{T}$ is nef. $M_{k}$ has at worst foliation cyclic quotient
singularities (orbifold singularities) and has no foliation $K_{M}^{T}%
$-negative curves.
\end{enumerate}
\end{enumerate}

\begin{remark}
\begin{enumerate}
\item (\cite{cclw}) For the non-collapsing solutions in case $(3a)$, we
established the Gromov Hausdorff continuation through finite time
singularities on a compact quasi-regular Sasakian $5$-manifold of\ foliation
cyclic quotient singularities. More precisely, we have the results on the
analytic foliation minimal model program with scaling in a compact quasi-regular
Sasakian 5-manifold.

\item (\cite{chlw2}) For a smooth transverse \textbf{minimal model of general
type} as in $(3b)$, we proved that $(M,\omega(t))$ converges in the
Cheeger-Gromov sense to the unique Sasaki $\eta$-Einstein orbifold metric
$\omega_{\infty}$
\[%
\begin{array}
[c]{c}%
\mathrm{Ric}_{\omega_{\infty}}^{T}=-\omega_{\infty}%
\end{array}
\]
on the transverse canonical model $M_{\mathrm{can}\text{ }}$ with finite
orbifold foliation singularities at a singular fibre $S^{1}$ on $M$. In
particular, the floating foliation $(-2)$-curves in $M$ will be contracted to
orbifold points by the Sasaki-Ricci flow as $t\rightarrow \infty.$
\end{enumerate}
\end{remark}

Let $(M,\xi_{0},\eta_{0},\Phi_{0},g_{0},\omega_{0})$ be a compact
quasi-regular Sasakian $5$-manifold of foliation quotient singularities of
Type II and\textbf{\ }$(M,\Delta^{T})$ be a log pair with klt foliation
singularities. The corresponding foliation singularities in $(M,\xi_{0}%
,\eta_{0},\Phi_{0},g_{0},\omega_{0})$\ is the Hopf $\mathbb{S}^{1}$-orbibundle
over a Riemann surface $\Sigma_{h}$. The orbifold canonical divisor
$K_{Z}^{orb}$ and canonical divisor $K_{Z}$ are related by
\[%
\begin{array}
[c]{c}%
K_{Z}^{orb}=\varphi^{\ast}(K_{Z}+[\Delta])
\end{array}
\]
and then
\[%
\begin{array}
[c]{c}%
K_{M}^{T}=\pi^{\ast}(K_{Z}^{orb}).
\end{array}
\]
There is a Sasaki analogue of analytic log minimal model program via the conic
Sasaki-Ricci flow with the cone angle $2\pi \beta$ $(0<\beta<1)$ along the
basic divisor $\Delta^{T}\thicksim-K_{M}^{T}$ \ on $(M,\Delta^{T}%
)\times \lbrack0,T):$%
\begin{equation}
\left \{
\begin{array}
[c]{lcl}%
\frac{\partial}{\partial t}\omega(t) & = & -Ric^{T}(\omega(t))+(1-\beta
)\Delta^{T},\\
\omega(0) & = & \omega_{\ast}.
\end{array}
\right.  \label{2026}%
\end{equation}
Where $S^{T}$ is the basic defined section of $\Delta^{T}$ in transverse
regular log pair $(M,\Delta^{T})$ $\ $and $\omega_{\ast}=\omega_{0}%
+\delta \sqrt{-1}\partial_{B}\overline{\partial_{B}}||S^{T}||^{2\beta}$. We
refer to \cite{ccllw} for some details.

There is a Sasaki analogue of analytic log foliation minimal model program
with respect to $K_{M}^{T}+[D^{T}]$ via the conic Sasaki-Ricci flow which is
the odd dimensional counterpart of the conic K\"{a}hler-Ricci flow. More
precisely, one expects to find a finite sequence of transverse birational maps
$\Phi_{1},...,\Phi_{k\text{ }}$ and $M_{1},...,M_{k}$ with
\[
(M,D^{T})=(M_{0},D_{0}^{T})\overset{\Phi_{1}}{\rightarrow}(M_{1},D_{1}%
^{T})\overset{\Phi_{2}}{\rightarrow}(M_{2},D_{2}^{T})\overset{\Phi_{3}%
}{\rightarrow}\cdot \cdot \cdot \overset{\Phi_{k}}{\rightarrow}(M_{k},D_{k}^{T})
\]
so that either it is a Mori fiber space, or the canonical bundle $K_{M}%
^{T}+[D^{T}]$\  \ is nef. That is, to find $\Phi_{i}$ which "remove" foliation
curves $V$ which is an invariant $3$-dimensional submanifold of $M_{i}$ with
\[
(K_{M_{i}}^{T}+[D_{i}^{T}])\cdot V<0.
\]
Thus we expect the following conjecture picture for the analytic log foliation
minimal model program:

Let $(M,\xi_{0},\eta_{0},\Phi_{0},g_{0},\omega_{0})$ be a compact
quasi-regular Sasakian $5$-manifold of foliation quotient
\textbf{singularities of Type II }and\textbf{\ }$(M,D^{T})$ be a log pair with
klt foliation singularities. Then there exists a unique maximal conic
Sasaki-Ricci flow $\omega(t)$ on $(M_{0},D_{0}^{T}),(M_{1},D_{1}%
^{T}),..,(M_{k},D_{k}^{T}),$ starting at $(M_{0},D_{0}^{T},\omega_{0})$ with
floating foliation canonical surgical contractions of a finite number of
disjoint floating foliation $(-1)$-curves or foliation external ray
contractions of foliation $(K_{M}^{T}+[D^{T}])$-negative curves $\Phi
_{i}:(M_{i-1},D_{i-1}^{T})\rightarrow(M_{i},D_{i}^{T})$. In addition,
\begin{enumerate}
\item Either $T_{k}<\infty$ and the flow $\omega(t)$ collapses in the sense
that
\[%
\begin{array}
[c]{c}%
\lbrack(K_{M_{k}}^{T}+[D_{k}^{T}]]^{2}\rightarrow0
\end{array}
\]
as$\ t\rightarrow T_{k}^{-}.$
\begin{enumerate}
\item there exists a contraction%
\[%
\begin{array}
[c]{c}%
\Psi:M_{k}\rightarrow pt
\end{array}
\]
such that $-(K_{M_{k}}^{T}+[D_{k}^{T}])>0$ and thus $(M_{k},D_{k}^{T})$ is
transverse minimal Fano and the foliation space is a minimal log pair del
Pezzo surface with at worst klt foliation singularities, or
\item there exists a fibration
\[%
\begin{array}
[c]{c}%
\Psi:M_{k}\rightarrow \Sigma_{l},
\end{array}
\]
then $(M_{k},D_{k}^{T})$ is an $\mathbb{S}^{1}$-orbibundle of a log rule
surface over a\ Riemann surface $\Sigma_{l}$\ with finite marked points
$\{(p_{1},...,p_{l})\}$. i.e. The foliation space is a log rule surface over
a\ Riemann surface $\Sigma_{l}$\ with finite marked points. Or
\end{enumerate}
\item When $\dim M=\dim M_{i}$ (\textbf{Non-collapsed or divisorial
contraction) :} $K_{M_{k}}^{T}+[D_{k}^{T}]$ is big. It is conjectured that the
flow will extend through $t=T$ geometrically associated to a transverse
birational transform such as a divisorial contraction. Furthermore,
\begin{enumerate}
\item Divisorial contraction : $M_{i}$ differs from $M$ by a invariant
submanifold of codimension $2$, then we return to the previous step.
\item It follows up from the previous steps untill $T_{k}=\infty:$  $K_{M_{k}%
}^{T}+[D_{k}^{T}]$ is nef and $\Psi_{i}=\Psi_{k}$, and $(M_{k},D_{k}^{T})$ has
at worst klt foliation singularities and has no foliation $(K_{M}^{T}%
+[D^{T}])$-negative curves.
\end{enumerate}
\end{enumerate}
\section{Existence of Sasaki-Ricci solitons}
In the section, we consider the Sasakian manifold $(M,\eta,\xi,\Phi,g,\omega)$
with $2\pi c_{1}^{B}(M,\mathcal{F}_{\xi})=[\omega]$ and $\omega=\frac{1}%
{2}d\eta$ up to $D$-homothetic with respect to the Reeb vector field $\xi$. We
will focus on the regularity of the limit space and the partial $C^{0}%
$-estimate for the Sasaki-Ricci flow (\ref{2025}).

\subsection{Regularity of Sasaki-Ricci Solitons}
In our previous paper (\cite{chlw1}), we obtain the crucial estimate on the
$L^{4}$-bound of the transverse Ricci curvature under the Sasaki-Ricci flow
(\ref{2025}) in a compact transverse Fano Sasakian manifold $(M,\xi,\eta
_{0},\Phi_{0},g_{0},\omega_{0})$ of dimension five with foliation
singularities of type I.
\begin{theorem}
\label{T51} (\cite{chlw1}) Let $(M,\xi,\eta_{0},\Phi_{0},g_{0},\omega_{0})$ be
a compact transverse Fano Sasakian $(2n+1)$-manifold\textbf{.} Then, under the
Sasaki-Ricci flow (\ref{2025}), there exists a positive constant $C(g_{0})$
such that
\begin{equation}%
\begin{array}
[c]{c}%
\int_{M}|Ric_{\omega(t)}^{T}|^{4}\omega(t)^{n}\wedge \eta_{0}\leq C,
\end{array}
\label{40}%
\end{equation}
for all $t\in \lbrack0,\infty).$ That is it suffices to show that%
\[%
\begin{array}
[c]{c}%
\int_{M}|\nabla^{T}\overline{\nabla}^{T}u(t)|^{4}\omega(t)^{n}\wedge \eta
_{0}\leq C,
\end{array}
\]
for all $t\geq0$ and for some constant $C$ independent of $t$. Here $u(t)$ is
the evolving transverse Ricci potential.
\end{theorem}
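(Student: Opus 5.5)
The bound will be obtained by transplanting the Kähler--Ricci flow argument of Tian--Zhang (in the form used for the $L^{4}$-bound of Ricci curvature on Fano manifolds of complex dimension at most three) to the transverse Kähler structure of $(M,\xi)$. Everything is done with basic forms and basic functions, and integrals are taken against $\omega(t)^{n}\wedge\eta_{0}$. Along (\ref{2025}) the Reeb field $\xi$ and the transverse holomorphic structure stay fixed, so $\omega(t)=\omega_{0}+\sqrt{-1}\partial_{B}\overline{\partial}_{B}\varphi(t)$ with $\varphi$ basic. Define the transverse Ricci potential $u(t)$ by $Ric^{T}(\omega(t))-\omega(t)=-\sqrt{-1}\partial_{B}\overline{\partial}_{B}u$, normalized by $\int_{M}e^{-u}\omega^{n}\wedge\eta_{0}=(2\pi)^{n}\mathrm{Vol}$. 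Then $|Ric^{T}|\leq|\nabla^{T}\overline{\nabla}^{T}u|+C$, and the claim reduces to the stated bound on $\int_{M}|\nabla^{T}\overline{\nabla}^{T}u|^{4}$.

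\textbf{Step 1: Perelman-type estimates.} Use the transverse $\mathcal{W}$-functional on basic functions. Its monotonicity along (\ref{2025}) holds because the Sasakian volume form is $\omega^{n}\wedge\eta_{0}$ and all transverse quantities are basic. Run Perelman's argument (as in Sesum--Tian, or the Sasakian version of Collins) to get, uniformly in $t$:
\[
|u|+|\nabla^{T}u|+|\Delta_{B}u|+|R^{T}|\leq C,\qquad \mathrm{diam}^{T}\leq C,
\]
together with a uniform transverse $\kappa$-noncollapsing and a uniform Sobolev inequality.

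\textbf{Step 2: integral Bochner identities.} Use the evolution equations
\[
\partial_{t}u=\Delta_{B}u+u-a(t),
\]
\[
(\partial_{t}-\Delta_{B})|\nabla^{T}u|^{2}=|\nabla^{T}u|^{2}-|\nabla^{T}\nabla^{T}u|^{2}-|\nabla^{T}\overline{\nabla}^{T}u|^{2}.
\]
Integrating the second over $[t,t+1]$ gives a uniform bound on $\int_{t}^{t+1}\!\!\int_{M}\big(|\nabla^{T}\overline{\nabla}^{T}u|^{2}+|\nabla^{T}\nabla^{T}u|^{2}\big)$. Next, compute $(\partial_{t}-\Delta_{B})$ of $|\nabla^{T}\overline{\nabla}^{T}u|^{2}$, which is essentially $|Ric^{T}-\omega|^{2}$, and of $|\nabla^{T}\nabla^{T}u|^{2}$. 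The CR Bochner--Kodaira--Nakano identity from the Appendix lets one commute basic covariant derivatives; since $u$ is basic, the torsion terms involving $\xi$ vanish. Multiply by $|\nabla^{T}\overline{\nabla}^{T}u|^{2}$ and integrate by parts. The cubic curvature term $Rm^{T}*\nabla\overline{\nabla}u*\nabla\overline{\nabla}u$ is controlled by substituting $Rm^{T}$ through Ricci in the spirit of Tian--Zhang's $L^{4}$ argument. This yields a differential inequality of the form
\[
\frac{d}{dt}\int|\nabla^{T}\overline{\nabla}^{T}u|^{4}\leq -c\int|\nabla^{T}|\nabla^{T}\overline{\nabla}^{T}u|^{2}|^{2}+C\int|\nabla^{T}\overline{\nabla}^{T}u|^{4}+C.
\]

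\textbf{Step 3: closing the estimate.} Combine the uniform transverse Sobolev inequality with the time-integrated $L^{2}$ bounds of Step 2 via a Moser/ODE argument on unit time intervals, and conclude $\sup_{t}\int_{M}|\nabla^{T}\overline{\nabla}^{T}u|^{4}\leq C$.

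The main obstacle is Step 2: controlling the full transverse curvature term when passing from $L^{2}$ to $L^{4}$ control. In the Kähler case this uses the dimension restriction (complex dimension at most three) together with the Chern--Weil bound on $\int|Rm|^{2}$. I would first try the same route in the transverse setting. For $n=2$, the basic Chern--Weil identity gives $\int|Rm^{T}|^{2}\leq C+C\int|Ric^{T}|^{2}$, since the basic classes $c_{1}^{B}$ and $c_{2}^{B}$ are fixed. This reduces the cubic term to quantities already bounded after a Hölder/Sobolev interpolation, which is where the uniform Sobolev constant from Step 1 is essential.
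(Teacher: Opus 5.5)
The paper does not prove this bound here; it quotes it from \cite{chlw1}. Your reduction to $\int_M|\nabla^T\overline{\nabla}^Tu|^4$ and your Step 1 are fine. Step 2, however, has a genuine gap. Write $A=\nabla^T\overline{\nabla}^Tu$. The inequality $\frac{d}{dt}\int|A|^4\le -c\int|\nabla^T|A|^2|^2+C\int|A|^4+C$ does not follow from what you list. Since $A_{i\bar j}=g^T_{i\bar j}-R^T_{i\bar j}$, the heat operator applied to $A$ contains the Lichnerowicz term $R^T_{i\bar jk\bar l}A_{l\bar k}$, so $\frac{d}{dt}\int|A|^4$ contains $\int|A|^2Rm^T(A,A)$. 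Suppose you have only $\|Rm^T\|_{L^2}\le C$ from Chern--Weil, and you even grant a transverse embedding $W^{1,2}\subset L^4$ (the uniform Sobolev inequality along the flow has a weaker exponent). H\"older then gives at best $\int|Rm^T||A|^4\le \|Rm^T\|_{L^2}\||A|^2\|_{L^4}^2\le C_S\|Rm^T\|_{L^2}(\int|\nabla^T|A|^2|^2+\int|A|^4)$. This is exactly the order of your good term, so it can be absorbed only if $\|Rm^T\|_{L^2}$ is small, and it is not. ``Substituting $Rm^T$ through Ricci'' has no pointwise meaning. The contracted Bianchi identity helps only after writing one factor as $A_{l\bar k}=\nabla_{\bar k}u_l$ and integrating by parts, and here that leaves $\int|\nabla^Tu||Rm^T||A|^2|\nabla^TA|$, which would need $\int|Rm^T|^2|A|^2$. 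So your scheme is scale-critical for $n=2$ and worse for $n\ge3$, while the theorem is asserted for every $n$.

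The missing idea is to let the curvature enter only linearly, paired with $\nabla^Tu\in L^\infty$ and third derivatives of $u$ in $L^2$.

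First, integrate by parts once, using $\nabla_{\bar j}A_{j\bar i}=\nabla_{\bar i}\Delta_Bu$ and Perelman's $|\nabla^Tu|\le C$. This gives $\int|A|^4\le\frac12\int|A|^4+C\int|\nabla^TA|^2$, so it suffices to bound $\nabla^TRic^T$ in $L^2$.

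Second, commuting derivatives, $\int|\nabla^TA|^2=\int|\nabla^T\Delta_Bu|^2+\int Rm^T*A*A+\int Ric^T*A*A$. In the $Rm^T$ term, the identity $\nabla_{\bar k}R^T_{i\bar jk\bar l}=\nabla_{\bar j}R^T_{i\bar l}=-\nabla_{\bar j}A_{i\bar l}$ leaves only $\int Rm^T*\nabla^Tu*\nabla^TA\le C\|Rm^T\|_{L^2}\|\nabla^TA\|_{L^2}$. Chern--Weil does control this in every dimension, because the basic class $[\omega(t)]_B$ is fixed. The term with $Ric^T=\omega-A$ is lower order. Hence $\int|A|^4\le C\int|\nabla^T\Delta_Bu|^2+C$.

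Third, $f=\Delta_Bu$ satisfies $(\partial_t-\Delta_B)f=f-|A|^2$. From $\frac{d}{dt}\int f^2$ together with $|f|\le C$ and $\int|A|^2=\int f^2$, you get $\int_t^{t+1}\int|\nabla^Tf|^2\le C$. Next, use the bound just proved on the $\int|A|^2\Delta_Bf$ term, and handle $\int A(\nabla^Tf,\overline{\nabla}^Tf)$ by one more integration by parts with $|\nabla^Tu|\le C$. This yields $\frac{d}{dt}\int|\nabla^Tf|^2\le C\int|\nabla^Tf|^2+C$. A uniform Gronwall argument on unit time intervals then gives $\sup_t\int|\nabla^Tf|^2\le C$, and hence the $L^4$ bound for all $n$.
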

\begin{remark}
\begin{enumerate}
\item Note that (\ref{40}) holds without the condition of
quasi-regular. It is the crucial step for our compactness theorem when the
sequence of Reeb vector fields
$\xi_{i}$ keeping in the same Type I or Type II of foliation singularities for
$n=2$.

\item Similar to Theorem \ref{T51} as in the paper of Chang-Chang-Li-Lin-Wu
(\cite{ccllw}), we have the $L^{4}$-boundedness of the transverse Ricci
curvature along the conic Sasaki-Ricci flow.
\end{enumerate}
\end{remark}
Based on Perelman's non-collapsing theorem for a transverse ball along the
unnormalized Sasaki-Ricci flow, it follows that

\begin{lemma}
\label{L61} (\cite[Proposition 7.2]{co1}, \cite[Lemma 6.2]{he}) Let
$(M^{2n+1},\xi,g_{0})$ be a compact Sasakian manifold and let $g^{T}(t)$ be
the solution of the unnormalized Sasaki-Ricci flow with the initial transverse
metric $g_{0}^{T}$. Then there exists a positive constant $C$ such that for
every $x\in M$, if $\ |R^{T}|\leq r^{-2}$ on $B_{\xi,g(t)}(x,r)$ for
$r\in(0,r_{0}]$, where $r_{0}$ is a fixed sufficiently small positive number,
then%
\[%
\begin{array}
[c]{c}%
\mathrm{Vol}(B_{\xi,g^{T}(t)}(x,r))\geq Cr^{2n}.
\end{array}
\]
Moreover, the transverse scalar curvature $R^{T}$ and transverse diameters
$d_{g^{T}(t)}^{T}$ are uniformly bounded under the Sasaki-Ricci flow. As a
consequence, there is a uniform constant $C$ such that
\[
\mathrm{diam}(M,g(t))\leq C.
\]

\end{lemma}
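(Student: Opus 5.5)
The plan is to transplant Perelman's $\kappa$-noncollapsing theorem and his scalar curvature and diameter estimates for the K\"{a}hler-Ricci flow (as presented by Sesum--Tian) to the transverse setting. The key point is that all quantities involved are basic: they are invariant along the Reeb direction. The computations of Perelman's $\mathcal{W}$-functional can then be carried out on the transverse geometry, with the volume form $\omega(t)^n\wedge\eta_0$ replacing the Riemannian one.

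\textbf{Step 1: transverse $\mathcal{W}$-functional.} Define the transverse functional
\[
\mathcal{W}^T(g^T,f,\tau)=\int_M\bigl[\tau(R^T+|\nabla^T f|^2)+f-2n\bigr](4\pi\tau)^{-n}e^{-f}\omega^n\wedge\eta_0 ,
\]
with $f$ basic, and the transverse entropy $\mu^T(g^T,\tau)$ as its infimum. I will show $\mu^T$ is monotone along the unnormalized Sasaki-Ricci flow when coupled to the backward transverse heat equation. The monotonicity formula is the same as in the Riemannian case, since the flow $\partial_t g^T=-Ric^T$ involves only basic tensors, integration by parts against $\omega^n\wedge\eta_0$ holds for basic functions, and $\eta_0$ is fixed along the flow.

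\textbf{Step 2: noncollapsing.} For the first claim, argue by contradiction in the standard way. Cut off a test function supported in the transverse ball $B_{\xi,g(t)}(x,r)$, where $|R^T|\le r^{-2}$, and insert it into $\mathcal{W}^T$ at $\tau=r^2$. If $\mathrm{Vol}(B_{\xi,g^T(t)}(x,r))/r^{2n}\to0$, then $\mu^T(g^T(t),r^2)\to-\infty$. This contradicts the lower bound $\mu^T(g^T(t),r^2)\ge\mu^T(g_0^T,r^2+t)\ge -C$, which comes from monotonicity together with compactness of $M$ for $r\le r_0$. The main technical issue is that the transverse ball is a tube around the fibres, so one must check that its volume measured with $\omega^n\wedge\eta_0$ is comparable to the transverse volume. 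This holds because the fibre lengths are controlled by $\eta_0$.

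\textbf{Step 3: scalar curvature and diameter bounds.} For these, pass to the normalized flow (\ref{2025}), which is a rescaling of the unnormalized one, and work with the transverse Ricci potential $u$, defined by $Ric^T-\omega=\sqrt{-1}\partial_B\overline{\partial}_B u$. Follow Perelman's argument:
\begin{enumerate}
\item Show $u\ge -C$ and obtain the bounds $|\nabla^T u|^2\le C(u+C)$ and $R^T\le C(u+C)$. These come from maximum principle computations on the evolution equations of $|\nabla^T u|^2/(u+2B)$ and $-\Delta_B u/(u+2B)$. They apply verbatim because $u$ is basic and the transverse Bochner formulas match the K\"{a}hler ones.
\item Deduce that $\sqrt{u+C}$ is Lipschitz in the transverse distance.
\item Bound the transverse diameter. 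If a point lay far from the minimum point of $u$, the annuli along a transverse geodesic would carry small volume. Combining this with the $\kappa$-noncollapsing of Step 2 and the normalization $\int e^{-u}\omega^n\wedge\eta_0=\text{const}$ gives a contradiction.
\item Combine the diameter bound with the estimate of item 1 to get $u\le C$, hence $|R^T|\le C$.
\end{enumerate}

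\textbf{Step 4: full diameter.} Finally, $\mathrm{diam}(M,g(t))\le d^T_{g^T(t)}+\max(\text{fibre length})$. The fibre length is fixed, since $\eta$ changes only by basic exact terms and the length of a Reeb orbit segment is given by $\eta(\xi)=1$. This gives the uniform bound on $\mathrm{diam}(M,g(t))$.

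I expect the main obstacle to be Step 3, item 3, the diameter argument. It requires a careful transverse version of Perelman's annulus counting, where the noncollapsing of Step 2 has to be applied at many scales simultaneously.
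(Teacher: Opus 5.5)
Your route is the one the paper relies on. The paper gives no argument beyond citing Collins \cite[Proposition 7.2]{co1} and He \cite[Lemma 6.2]{he}: monotonicity of the transverse entropy, Perelman's noncollapsing, and the Sesum--Tian form of Perelman's estimates, all carried over to basic functions. However, your Steps 2 and 4 tacitly assume the Reeb orbits are closed, while the lemma is stated for an arbitrary compact Sasakian manifold. \emph{Step 4 fails when $\xi$ is irregular.} A generic Reeb orbit is then not closed but dense in an orbit $T\cdot y$ of the torus $T=\overline{\{\exp(s\xi)\}}$, which has dimension $k\ge 2$. So there is no ``maximal fibre length''. The horizontal lift of a nearly minimizing transverse path from $x$ only ends at some point $y'$ on the leaf of $y$, at an uncontrolled Reeb distance from $y$. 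Approximating by quasi-regular Reeb fields $\xi_i\to\xi$ does not rescue the bound $d^T+\ell_i$, because the periods $\ell_i$ necessarily tend to infinity. What is missing is a uniform bound on $|Y|_{g(t)}$ for $Y$ in the Lie algebra of $T$; note that $T$ acts isometrically on every $g(t)$. Since $\iota_Y d\eta(t)=-d(\eta(t)(Y))$, this is a uniform $C^1$ bound on the transverse Hamiltonians $\eta(t)(Y)$, and it needs its own proof. One possibility is a $C^0$ bound from the image of the contact moment map, combined with a maximum-principle estimate for $|Y|^2_{g(t)}$. With that bound you can join $y'$ to $y$ inside $T\cdot y$ by a path of uniformly bounded length. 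In the quasi-regular case, which is where Section 3 uses the lemma, your Step 4 is fine as written.

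In Step 2, the same issue should be handled through the choice of test function, not through fibre lengths. The cutoff must be basic, so it has to be a function of the leaf-invariant transverse distance $d^T(x,\cdot)$. The transverse ball is then a saturated tube around $\overline{L_x}$. Because $\mathcal{W}^T$ is already integrated against $\omega^n\wedge\eta_0$, the quantity you bound is exactly the $\omega^n\wedge\eta_0$-volume of that set, so no comparison with a ``transverse volume'' is needed. Also, with only $|R^T|\le r^{-2}$ you have no Bishop--Gromov doubling to control the gradient term of the cutoff. So the implication ``$\mathrm{Vol}/r^{2n}\to 0$ gives $\mu^T\to-\infty$'' must go through Perelman's scale-halving trick: pass to $r/2^j$ until doubling holds. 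This terminates because $\mathrm{Vol}(B_\xi(x,\rho))\,\rho^{-2n}$ stays bounded below as $\rho\to 0$. The contradiction then goes through because the lower bound on $\mu^T$ is available at every scale $\le r_0$.
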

Now we are ready to study the structure of the limit space (\cite{chlw1}):

\begin{theorem}
\label{T61} Let $(M_{i},\eta_{i},\xi,\Phi_{i},g_{i})$ be a sequence of
quasi-regular Sasakian $(2n+1)$-manifolds with Sasaki metrics $g_{i}=g_{i}%
^{T}+\eta_{i}\otimes \eta_{i}$ such that for basic potentials $\varphi_{i}$
\[%
\begin{array}
[c]{c}%
\eta_{i}=\eta+d_{B}^{C}\varphi_{i}%
\end{array}
\]
and
\[%
\begin{array}
[c]{c}%
d\eta_{i}=d\eta+\sqrt{-1}\partial_{B}\overline{\partial}_{B}\varphi_{i}.
\end{array}
\]
We denote that $(Z_{i},h_{i},\Phi_{i},\omega_{h_{i}})$ are a sequence of
well-formed normal projective varieties which are the corresponding foliation
leave spaces with respect to $(M_{i},\eta_{i},\xi,\Phi_{i},g_{i})$ such that
\[%
\begin{array}
[c]{c}%
\omega_{g_{i}^{T}}=\frac{1}{2}d\eta_{i}=\pi^{\ast}(\omega_{h_{i}});\text{
}\Phi_{i}=\pi^{\ast}(J_{i})
\end{array}
\]
Suppose that $(M_{i},\eta_{i},\xi,\Phi_{i},g_{i})$ is a compact smooth
transverse Fano Sasakian $(2n+1)$-manifolds satisfying
\[%
\begin{array}
[c]{c}%
\int_{M}|Ric_{g_{i}^{T}}^{T}|^{p}\omega_{i}^{n}\wedge \eta \leq \Lambda,
\end{array}
\]
and
\[%
\begin{array}
[c]{c}%
\mathrm{Vol}(B_{\xi,g(t)}(x,r))\geq vr^{2n}%
\end{array}
\]
for some $p>n,$ $\Lambda>0,$ $\upsilon>0$. Then passing to a subsequence if
necessary, $(M_{i},\Phi_{i},g_{i},x_{i})$ converges in the Cheeger-Gromov
sense to limit length spaces $(M_{\infty},\Phi_{\infty},d_{\infty},x_{\infty
})$ and then $(Z_{i},J_{i},h_{i},\pi(x_{i}))$ converges to $(Z_{\infty
},J_{\infty},h_{\infty},\pi(x_{\infty}))$ such that

\begin{enumerate}
\item for any $r>0$ and $p_{i}\in M_{i}$ with $p_{i}\rightarrow p_{\infty}\in
M_{\infty},$%
\[%
\begin{array}
[c]{c}%
\mathrm{Vol}(B_{h_{i}}(\pi(p_{i}),r))\rightarrow \mathcal{H}^{2n}(B_{h_{\infty
}}(\pi(p_{\infty}),r))
\end{array}
\]
and
\[%
\begin{array}
[c]{c}%
\mathrm{Vol}(B_{\xi,g_{i}^{T}}(p_{i},r))\rightarrow \mathcal{H}^{2n}%
(B_{\xi,g_{\infty}^{T}}(p_{\infty},r)).
\end{array}
\]
Moreover,
\[%
\begin{array}
[c]{c}%
\mathrm{Vol}(B(p_{i},r))\rightarrow \mathcal{H}^{2n+1}(B(p_{\infty},r)),
\end{array}
\]
where $\mathcal{H}^{m}$ denotes the $m$-dimensional Hausdorff measure.

\item $M_{\infty}$ is a $\mathbb{S}^{1}$-orbibundle over $Z_{\infty}%
=M_{\infty}/\mathcal{F}_{\xi}.$ $Z_{\infty}=\mathcal{R}\cup \mathcal{S}$ such
that $\mathcal{S}$ is a closed singular set of complex codimension two and
$\mathcal{R}$ consists of points whose tangent cones are $\mathbb{R}^{2n}.$

\item the convergence on the regular part of $M_{\infty}$ which is a
$\mathbb{S}^{1}$-principle bundle over $\mathcal{R}$ in the $(C^{\alpha}\cap
L_{B}^{2,p})$-topology for any $0<\alpha<2-\frac{n}{p}$.
\end{enumerate}
\end{theorem}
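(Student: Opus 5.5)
The plan is to reduce Theorem \ref{T61} to the corresponding Kähler orbifold statement on the leaf spaces $Z_i$, then lift back to $M_i$ through the $\mathbb{S}^{1}$-orbibundle structure. Since each $M_i$ is quasi-regular with the same Reeb field $\xi$, we have $\pi:M_i\to Z_i$. The transverse metric satisfies $g_i^{T}=\pi^{\ast}h_i$, and $Ric^{T}_{g_i^T}=\pi^{\ast}Ric(h_i)$. Fibre integration of the $\eta$-length gives
\[
\int_{Z_i}|Ric(h_i)|^{p}\omega_{h_i}^{n}=c\int_{M}|Ric^{T}_{g_i^{T}}|^{p}\omega_i^{n}\wedge\eta ,
\]
where $c$ is the common (orbifold) fibre length, which is fixed because $\xi$ is fixed and $\eta_i-\eta=d^{C}_{B}\varphi_i$ integrates to zero along leaves. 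The transverse balls $B_{\xi,g_i^{T}}(x,r)$ are exactly the $\pi$-preimages of the balls $B_{h_i}(\pi(x),r)$. Hence the hypotheses become a uniform $L^{p}$ Ricci bound with $p>n$ and a uniform noncollapsing $\mathrm{Vol}(B_{h_i}(z,r))\ge v r^{2n}$ on the compact Kähler orbifolds $(Z_i,h_i)$. Lemma \ref{L61} supplies the uniform diameter bound.

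Next I would apply the Petersen–Wei / Tian–Zhang theory of manifolds with integral Ricci bounds. An $L^{p}$ Ricci bound with $p>n$ (real dimension $2n$) plus noncollapsing gives relative volume comparison. Gromov precompactness then yields a subsequence $(Z_i,h_i,\pi(x_i))\to(Z_\infty,d_\infty)$ in the Gromov–Hausdorff sense, with volume convergence by Colding's theorem in its integral-curvature version. Following Tian–Zhang's argument, which works verbatim for orbifolds because orbifold points are local finite quotients and the $L^p$ bound lifts to uniformizing charts, I would then establish three things. First, $Z_\infty=\mathcal R\cup\mathcal S$ with $\mathcal R$ the points having tangent cone $\mathbb R^{2n}$. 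Second, $\mathcal{R}$ is open and carries a $C^{\alpha}\cap L^{2,p}$ Kähler structure $J_\infty$ obtained as the limit of $J_i$ via harmonic coordinates, using elliptic estimates on $\Delta h=Ric$-type equations. Third, $\mathcal S$ has codimension at least four. Holomorphic charts on the regular part come from the Kähler condition being preserved by $L^{2,p}$-convergence.

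Then I would lift everything to $M_i$. The connection forms $\eta_i$ have curvature $d\eta_i=2\pi^{\ast}\omega_{h_i}$, which converges in $C^{\alpha}$ over $\mathcal R$. By Chern–Weil and the fixed fibre length, the principal (orbi)bundles together with their connections converge on compact subsets of $\mathcal R$, after gauge fixing in local trivializations. This gives $M_\infty$ as an $\mathbb S^{1}$-orbibundle over $Z_\infty$, with $\Phi_\infty$ induced by $J_\infty$ and the $\xi$-action persisting because the isometric $\mathbb S^{1}$ actions converge (Fukaya's equivariant GH convergence). Volume convergence of $B(p_i,r)$ follows by integrating the fibre length over the transverse balls, and the $\mathcal H^{2n+1}$ statement follows from the product-like local structure.

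The main obstacle is the codimension-four statement for $\mathcal S$ together with the identification of $\mathcal{R}$ as an open manifold part, under only integral Ricci bounds and in the presence of orbifold points of $Z_i$. Cheeger–Colding–Tian requires more than $L^p$ control. My first attempt would be Tian–Zhang's $\epsilon$-regularity, $L^{p}$ version, applied in local uniformizing charts, combined with the Kähler condition to rule out codimension-two singularities: a tangent cone $\mathbb R^{2n-2}\times C(S^1_\beta)$ would be excluded by the slice argument and holomorphicity. Care is needed because the orbifold points of $Z_i$ could a priori accumulate; I expect the finite-group lifting to control this.
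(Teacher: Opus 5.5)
Your proposal follows essentially the route the paper relies on. The paper gives no separate argument for this statement but imports it from \cite{chlw1}. The underlying mechanism, which the paper also uses in the proofs of Theorems \ref{T64} and \ref{T65}, is exactly yours: push the $L^{p}$ transverse Ricci bound and the transverse noncollapsing down to the orbifold leaf spaces $Z_i$ via the first structure theorem, apply the Tian--Zhang $L^{p}$-Ricci structure theory \cite{tz1} there, and lift back through the $\mathbb{S}^{1}$-orbibundle.

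The one step to state more carefully is the full-ball convergence $\mathrm{Vol}(B(p_i,r))\to\mathcal{H}^{2n+1}(B(p_\infty,r))$. For large $r$ the ball $B(p_i,r)$ is not a union of fibres, so "product-like local structure" is not enough. Instead, integrate the length of $\pi^{-1}(z)\cap B(p_i,r)$ over $z\in B_{h_i}(\pi(p_i),r)$, and pass to the limit using the equivariant convergence of the circle actions together with volume convergence on $Z_i$.
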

By the convergence theorem in Theorem \ref{T61}, we have the regularity of the
limit space: We define a family of Sasaki-Ricci flow $g_{i}^{T}(t)$ by%
\[
(M,g_{i}^{T}(t))=(M,g^{T}(t_{i}+t))
\]
for $t\geq-1$ and $t_{i}\rightarrow \infty$ and for $g_{i}^{T}(t)=\pi^{\ast
}(h_{i}(t))$
\[
(Z,h_{i}(t))=(M,h_{i}(t_{i}+t)).
\]
Note that the flow (\ref{2025}) can be expressed locally as a parabolic
Monge-Amp\`{e}re equation on a basic K\"{a}hler potential $\varphi$ as in
(\ref{C}):%
\begin{equation}%
\begin{array}
[c]{c}%
\frac{d}{dt}\varphi=\log \det(g_{\alpha \overline{\beta}}^{T}+\varphi
_{\alpha \overline{\beta}})-\log \det(g_{\alpha \overline{\beta}}^{T}%
)+\varphi-u(0).
\end{array}
\label{2022-1}%
\end{equation}
Here $u(0)$ is the transverse Ricci potential of $\eta_{0}$, defined by%
\[
R_{kl}^{T}-g_{kl}^{T}=\partial_{k}\overline{\partial}_{l}u(0)
\]
which we normalize so that $\frac{1}{V}\int_{M}e^{-u(0)}\omega_{0}^{n}%
\wedge \eta_{0}=1.$ Let $u(t)$ be the evolving transverse Ricci potential.

It follows from \cite{co1} and \cite{chlw1} that
\begin{lemma}
\label{L31} Let $(M^{2n+1},\xi,g_{0})$ be a compact Sasakian manifold and let
$g^{T}(t)$ be the solution of the Sasaki-Ricci flow (\ref{2025}) with the
initial transverse metric $g_{0}^{T}$. Then there exists $C$ depending only on
the initial metric such that%
\[
||u(t)||_{C^{0}}+||\nabla^{T}u(t)||_{C^{0}}+||\Delta_{B}u(t)||_{C^{0}}\leq C
\]
for all $t\geq0.$
\end{lemma}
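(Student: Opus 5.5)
The plan is to transplant Perelman's estimates for the K\"{a}hler-Ricci flow on Fano manifolds (in the form of Sesum-Tian) to the transverse setting. Every quantity involved is basic, so the computations reduce to computations on local transverse K\"{a}hler charts.

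\textbf{Normalizing $u(t)$ and the evolution equation.} Along (\ref{2025}) we normalize $u(t)$ by
\[
\frac{1}{V}\int_{M}e^{-u(t)}\omega(t)^{n}\wedge\eta_{0}=1 .
\]
With this normalization $u(t)$ satisfies
\[
\partial_{t}u=\Delta_{B}u+u-a(t),
\]
where $a(t)=\frac{1}{V}\int_{M}u\,e^{-u}\omega^{n}\wedge\eta_{0}$ is bounded by Jensen's inequality together with the monotonicity of the transverse $\mathcal{W}$-functional.

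\textbf{The $C^{0}$ bound.}
\begin{enumerate}
\item For the lower bound, use the transverse $\mathcal{W}$-functional to get uniform non-collapsing and a transverse Sobolev inequality along the flow; this is the content underlying Lemma \ref{L61}.
\item From the trace identity $R^{T}-n=\Delta_{B}u$, together with the lower bound $R^{T}\geq -C$ coming from the maximum principle for the evolution of $R^{T}$, we get $\Delta_{B}u\geq -C$.
\item For the upper bound, apply Green's function estimates for $\Delta_{B}$ on the leaf-transverse metric, using the uniform transverse diameter bound of Lemma \ref{L61}. We first show $u\geq -C$. We then bound $\operatorname{osc}u$ through the normalization combined with the diameter bound and the gradient bound below.
\end{enumerate}

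\textbf{The gradient and Laplacian bounds.} Following Perelman, consider the quantities
\[
\frac{|\nabla^{T}u|^{2}}{u+2C}\qquad\text{and}\qquad\frac{-\Delta_{B}u}{u+2C}
\]
(after shifting so that $u+2C\geq C>0$), and compute their evolution under the heat operator $\partial_{t}-\Delta_{B}$. Since $u$ is basic, $\xi$ annihilates all of these quantities. Hence the Bochner formulas are exactly the K\"{a}hler ones for the transverse metric: the transverse Ricci term $R^{T}_{i\bar{j}}u_{j}u_{\bar{i}}$ and the terms $|\nabla\nabla u|^{2}$ and $|\nabla\bar{\nabla}u|^{2}$ appear with the same signs. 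The maximum principle on compact $M$ then gives $|\nabla^{T}u|^{2}\leq C(u+2C)$ and $|\Delta_{B}u|\leq C(u+2C)$. Combined with the $C^{0}$ bound, these yield uniform bounds.

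\textbf{Main obstacle.} The step I expect to be hardest is the $C^{0}$ upper bound for $u$. It requires a uniform transverse diameter bound and a uniform Sobolev or Poincar\'{e} constant for basic functions, both obtained from Perelman's $\mu$-functional restricted to basic functions. This is the content cited from \cite{co1} and \cite{he} in Lemma \ref{L61}. I would invoke that lemma directly, and verify only that the normalized and unnormalized flows differ by the scaling $t\mapsto e^{t}$, so the bounds transfer.
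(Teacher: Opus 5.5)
Your overall plan is the one the paper relies on. The paper gives no argument of its own and simply cites \cite{co1} and \cite{chlw1}, where Perelman's estimates in the Sesum--Tian form are transcribed to basic functions. Your sketch, however, has a genuine gap at its foundation, the lower bound $u\geq -C$, and it is tied to a sign error.

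With your normalization $\frac{1}{V}\int_M e^{-u}\omega^n\wedge\eta_0=1$ and your evolution $\partial_t u=\Delta_B u+u-a$ with $a=\frac{1}{V}\int_M ue^{-u}\omega^n\wedge\eta_0$, the potential must satisfy $g^T-Ric^T=\nabla^T\overline{\nabla}^T u$. This matches (\ref{c1}) and the identity $\Delta_B u_i(0)=n-R^T$ in the proof of Theorem \ref{T63}. In other words, $\Delta_B u=n-R^T$. Indeed, $\partial_t(\omega^n\wedge\eta_0)=(n-R^T)\,\omega^n\wedge\eta_0$, so your sign $R^T-n=\Delta_B u$ would force an extra term $\frac{2}{V}\int_M|\nabla^T u|^2e^{-u}\omega^n\wedge\eta_0$ into $a$. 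Hence $R^T\geq -C$ gives the \emph{upper} bound $\Delta_B u\leq C$. As you wrote it, both of your Laplacian inequalities (the one from $R^T\geq -C$ and the one from $\frac{-\Delta_B u}{u+2C}$) bound $\Delta_B u$ from below, so $|\Delta_B u|\leq C(u+2C)$ does not follow.

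More seriously, $u\geq -C$ is never actually proved. Non-collapsing alone does not bound $u$. The Green's function estimates you propose need a uniform lower bound on the Green's function of $\Delta_B$, which normally comes from a Ricci lower bound that is unavailable along the flow. Yet everything downstream depends on this lower bound. The maximum principle for $\frac{|\nabla^T u|^2}{u+2C}$ requires $u+2C\geq C>0$; since the normalization of $u$ is fixed, this presupposes $u\geq -C$. Your oscillation bound in turn uses that gradient estimate.

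The missing idea is Perelman's, reproduced transversally in \cite{co1}:
\begin{itemize}
\item Because $\eta_0$ is fixed, the measure $e^{-u}\omega(t)^n\wedge\eta_0$ evolves pointwise by the factor $a-u$.
\item The bounds $\Delta_B u\leq C$ and $a\geq -C$ give $\partial_t u\leq u+C'$.
\item So if $u<-C'-\epsilon$ somewhere at time $s$, then on a neighborhood $u+C'\leq -\frac{\epsilon}{2}e^{t-s}$ for all $t\geq s$.
\item The mass of the measure on that neighborhood then blows up, contradicting the normalization.
\end{itemize}
Alternatively, apply Moser iteration with the uniform Sobolev inequality (\ref{b4}) to the subsolution $-u$, using $\Delta_B(-u)\geq -C$ and $\int_M u_-^2\leq 2\int_M(e^{-u}+1)$; this gives the same bound. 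Either way the corrected sign is essential.

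Once $u\geq -C$ is known, your gradient estimate, together with $\min u\leq 0$ and the diameter bound of Lemma \ref{L61}, gives the upper bound, and the rest goes through as you describe. Note that Lemma \ref{L61} already asserts $|R^T|\leq C$, which gives $|\Delta_B u|\leq C$ outright. However, in \cite{co1} both the scalar curvature and diameter bounds are derived from exactly these estimates, so leaning on that lemma is legitimate only as a black box.
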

\begin{lemma}
(\cite{chlw1})\label{L41} Under the Sasaki-Ricci flow
\begin{equation}%
\begin{array}
[c]{c}%
\int_{M}|\nabla^{T}\nabla^{T}u|^{2}\omega(t)^{n}\wedge \eta_{0}\rightarrow0
\end{array}
\label{a2}%
\end{equation}
and
\begin{equation}%
\begin{array}
[c]{c}%
\int_{M}(\Delta_{B}u-|\nabla^{T}u|^{2}+u-a)^{2}\omega(t)^{n}\wedge \eta
_{0}\rightarrow0
\end{array}
\label{b2}%
\end{equation}
as $t\rightarrow \infty.$ Here $a(t)=\frac{1}{V}\int_{M}ue^{-u}\omega
(t)^{n}\wedge \eta_{0}$ with $\lim_{t\rightarrow \infty}a(t)=a_{\infty}\leq0.$
\end{lemma}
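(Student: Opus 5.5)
We follow Perelman's monotonicity argument as adapted to the Kähler--Ricci flow by Tian--Zhu and Phong--Song--Sturm--Weinkove, carried out on basic quantities. All the computations involve only basic functions and forms, and integration against $\omega(t)^{n}\wedge\eta_{0}$ is invariant under the transverse structure. The Kähler arguments therefore transfer verbatim, provided each integration by parts uses the basic Laplacian $\Delta_{B}$ together with the fact that $\xi$ is Killing with $\xi(u)=0$.

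\textbf{Step 1: monotonicity.} We first record the evolution of the transverse Ricci potential under (\ref{2025}):
\[
\frac{\partial u}{\partial t}=\Delta_{B}u+u-a(t).
\]
This follows from differentiating $R^{T}_{k\bar l}-g^{T}_{k\bar l}=\partial_{k}\bar\partial_{l}u$ in $t$ and fixing the normalization $\frac1V\int_M e^{-u}\omega^{n}\wedge\eta_0=1$, which determines the constant $a(t)$.

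Next we consider the transverse Perelman entropy
\[
\mu(t)=\frac1V\int_{M}\big(R^{T}+|\nabla^{T}u|^{2}+u-2n\big)e^{-u}\omega(t)^{n}\wedge\eta_{0},
\]
which equals $-a(t)$ up to a constant. Along the flow $u$ is the minimizer of the transverse $W$-functional. The standard computation then gives
\[
\frac{d}{dt}\mu\ \ge\ \frac{c}{V}\int_{M}\big(|\nabla^{T}\nabla^{T}u|^{2}+|\nabla^{T}\overline{\nabla}^{T}u+\ldots|^{2}\big)e^{-u}\omega^{n}\wedge\eta_{0}.
\]
An equivalent route is to show directly that $a(t)$ is monotone, with
\[
\frac{d}{dt}a(t)=-\frac1V\int_M|\nabla^T\nabla^T u|^2e^{-u}\,\omega(t)^n\wedge\eta_0\le 0,
\]
via Bochner's formula and the weighted Laplacian $\Delta_{B}-\nabla^{T}u\cdot\nabla^{T}$.

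\textbf{Step 2: integrability and limit.} By Lemma \ref{L31}, $u$ is uniformly bounded, so $a(t)$ is bounded. It is decreasing and $a(0)\le0$ by Jensen's inequality, hence $a(t)\to a_\infty\le 0$. Integrating the derivative over $[0,\infty)$ yields
\[
\int_0^\infty\!\!\int_M|\nabla^T\nabla^Tu|^2e^{-u}\,\omega(t)^n\wedge\eta_0\,dt<\infty .
\]
Since $e^{-u}$ is bounded above and below by Lemma \ref{L31}, the weight can be dropped from this bound.

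\textbf{Step 3: from integrability to convergence.} Integrability in time does not by itself force the integrand to tend to zero. To upgrade it, we bound the time derivative of
\[
f(t)=\int_M|\nabla^T\nabla^Tu|^2\,\omega(t)^n\wedge\eta_0
\]
from above. Its evolution involves terms like $|\nabla^T\nabla^T\nabla^T u|^2$ with a good sign, plus $Ric^T\ast\nabla\nabla u\ast\nabla\nabla u$. The bad terms we handle with Hölder's inequality using the $L^{4}$ Ricci bound of Theorem \ref{T51}, the gradient and Laplacian bounds of Lemma \ref{L31}, and the Sobolev constant along the flow coming from Lemma \ref{L61}. The goal is the estimate $f'\le C(f+1)$. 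Combined with $\int_0^\infty f\,dt<\infty$, this gives $f(t)\to 0$, which is (\ref{a2}).

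I expect this step to be the main obstacle. If the estimate proves too costly, I would instead follow Phong--Song--Sturm--Weinkove and use smoothing: compare $\nabla\nabla u$ at time $t$ with the integral over $[t-1,t]$ through a Perelman-type parabolic estimate.

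\textbf{Step 4: the identity (\ref{b2}).} We set $F=\Delta_{B}u-|\nabla^{T}u|^{2}+u-a$. The quantity $\int F^{2}$ is controlled by the entropy defect, since
\[
\int_M F\,e^{-u}\,\omega^n\wedge\eta_0=0 .
\]
Applying the weighted Poincaré inequality for $\Delta_{B}-\nabla u\cdot\nabla$, whose first eigenvalue is at least $1$ on transverse Fano manifolds (transverse Bochner formula), we obtain
\[
\int F^{2}e^{-u}\le \int|\nabla^T F|^{2}e^{-u}.
\]
Using $\nabla^{T}F=\overline{\nabla}^{*}(\nabla\nabla u)$-type identities together with the gradient bound, this is in turn controlled by $\int|\nabla^T\nabla^Tu|^2$ and $\frac{d}{dt}a$. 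Both tend to zero by Step 3, which gives (\ref{b2}).
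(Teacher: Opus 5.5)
The paper does not prove this lemma; it quotes it from \cite{chlw1}. So your argument has to stand on its own, and it has a genuine gap in Steps 1--2, which carry the main weight.

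\textbf{The gap in Steps 1--2.} The Ricci potential $u(t)$ is \emph{not} the minimizer of $\mathcal{W}^T(g^T(t),\cdot)$. Evaluated at $u$, the Euler--Lagrange equation of $\mathcal{W}^T(g^T,\cdot)$ says exactly that $F:=\Delta_Bu-|\nabla^Tu|^2+u-a$ is constant, which happens only when $g^T$ is already a soliton. So Perelman's monotonicity formula cannot be evaluated at $u$. Moreover, $\mathcal{W}^T(g^T(t),u(t))$ equals $a(t)$ plus a constant, not $-a(t)$.

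Your ``equivalent route'' formula for $a'$ is also wrong. Use the conventions in which (\ref{b2}) is the soliton identity, namely $Ric^T-\omega=-\sqrt{-1}\partial_B\overline{\partial}_Bu$, so that $\partial_tu=\Delta_Bu+u-a$ and $\partial_t(\omega^n\wedge\eta_0)=\Delta_Bu\,\omega^n\wedge\eta_0$. Set $d\nu=e^{-u}\omega(t)^n\wedge\eta_0$. A direct computation, followed by the weighted Bochner formula, gives
\[
\frac{da}{dt}=\frac{1}{V}\int_M\bigl(|\nabla^Tu|^2-(u-a)^2\bigr)\,d\nu
=\frac{1}{V}\int_M\bigl(|\nabla^T\nabla^Tu|^2-F^2\bigr)\,d\nu\ \ge 0 .
\]
Several consequences follow.
\begin{itemize}
\item $a$ is nondecreasing, not decreasing. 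It still converges to some $a_\infty\le 0$, because Jensen gives $a\le 0$ at every time.
\item Integrating in time controls only the \emph{difference} $\int(|\nabla^T\nabla^Tu|^2-F^2)\,d\nu$, not $\int|\nabla^T\nabla^Tu|^2\,d\nu$ itself.
\item Spectrally, expand $u-a=\sum c_k\phi_k$ in eigenfunctions of the weighted Laplacian, with eigenvalues $\lambda_k\ge1$. Then $Va'$ sees $\sum(\lambda_k-1)c_k^2$, while the Hessian term is $\sum\lambda_k(\lambda_k-1)c_k^2$.
\end{itemize}
Hence the space-time bound in Step 2 is unproved, and that bound is the heart of (\ref{a2}).

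What is missing is Perelman's entropy applied to the genuine minimizer $f_t$, or to the backward conjugate heat flow started from it. This gives space-time smallness of
\[
\int_M\bigl(|\nabla^T\nabla^Tf|^2+|\nabla^T\overline{\nabla}^T(f-u)|^2\bigr)e^{-f}\,\omega(t)^n\wedge\eta_0 .
\]
Uniform $C^0$ and gradient control of $f$ then transfers this smallness to $u$, through the Bochner formula and Lemma \ref{L31}.

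\textbf{Step 3 is not the obstacle.} Write $\int|\nabla^T\nabla^Tu|^2=\int(\Delta_Bu)^2-\int Ric^T(\nabla^Tu,\overline{\nabla}^Tu)$, and use the identity $\int|\nabla^T\overline{\nabla}^Tu|^2=\int(\Delta_Bu)^2$, which holds for basic functions against $\omega^n\wedge\eta_0$. The time derivative then contains the good term $-2\int|\nabla^T\Delta_Bu|^2$, and everything else is bounded using Lemma \ref{L31} alone. So integrability in time would indeed upgrade to convergence.

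\textbf{Step 4 needs repair.} The Poincar\'e inequality sends you to $\int|\nabla^TF|^2$. But $\nabla^TF$ is the weighted divergence of $\nabla^T\nabla^Tu$, a third-order quantity, and it is not bounded by $\int|\nabla^T\nabla^Tu|^2$. Instead, the identity displayed above, together with $a'\ge0$, gives directly $\int F^2\,d\nu\le\int|\nabla^T\nabla^Tu|^2\,d\nu$. So (\ref{b2}) follows immediately from (\ref{a2}).
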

\begin{theorem}
\label{T63} Suppose (\ref{c2}) holds, then $g_{\infty}^{T}$ is smooth and
satisfies the Sasaki-Ricci soliton equation%
\begin{equation}
Ric^{T}(g_{\infty}^{T})+Hess^{T}(u_{\infty})=g_{\infty}^{T} \label{c1}%
\end{equation}
on $(M_{\infty})_{reg}$ which is a $\mathbb{S}^{1}$-bundle over $\mathcal{R}.$
Moreover, $\Phi_{\infty}$ is smooth and $g_{\infty}^{T}$ is K\"{a}hler with
respect to $\Phi_{\infty}=\pi^{\ast}(J_{\infty}).$
\end{theorem}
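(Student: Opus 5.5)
The plan is to follow Tian--Zhang's regularity argument for the Kähler--Ricci flow (\cite[Theorem 1.2]{tz1}), carried out on the leaf spaces. Here (\ref{c2}) is understood as the vanishing statements (\ref{a2})--(\ref{b2}) of Lemma \ref{L41} along the sequence $g_{i}^{T}(t)=g^{T}(t_i+t)$.

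\textbf{Step 1: a weak limit potential on the regular part.} By Theorem \ref{T51} and Lemma \ref{L61}, Theorem \ref{T61} applies with $p=4>n=2$. Hence $(M,g_i^T(0))$ converges to $M_\infty$, with $C^\alpha\cap L^{2,p}_B$ convergence on $(M_\infty)_{reg}=\pi^{-1}(\mathcal R)$. Lemma \ref{L31} gives uniform $C^0$, gradient and Laplacian bounds on the Ricci potentials $u_i=u(t_i)$. Since these are basic, elliptic theory in local foliation charts (Kähler charts on $Z_i$ lifted by $\pi$) lets us extract a limit $u_\infty\in L^{2,q}_{loc}$ on $(M_\infty)_{reg}$ for all $q$.

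\textbf{Step 2: identify the soliton equation weakly.} I would pass to the limit in
\[
R^T_{k\bar l}(g_i^T)-g^T_{i,k\bar l}=-\partial_k\bar\partial_l u_i
\]
in the distributional sense. By (\ref{a2}), $\nabla^T\nabla^T u_i\to0$ in $L^2$, so $\nabla^T\nabla^T u_\infty=0$. Thus $\nabla^T u_\infty$ is a transverse holomorphic vector field, and $Ric^T(g^T_\infty)+Hess^T(u_\infty)=g^T_\infty$ holds weakly. By (\ref{b2}), $\Delta_B u_\infty-|\nabla^T u_\infty|^2+u_\infty=a_\infty$ also holds weakly.

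\textbf{Step 3: bootstrap to smoothness.} Locally on $\mathcal R$, write $g_\infty^T=\pi^*h_\infty$ with $\omega_{h_\infty}=\sqrt{-1}\partial\bar\partial\phi$ in a holomorphic chart. The soliton equation becomes the complex Monge--Ampère equation
\[
\det(\phi_{k\bar l})=e^{-\phi-u_\infty}
\]
up to a pluriharmonic term. The metric is $C^\alpha$ and $u_\infty$ is $L^{2,q}$, and Step 2 gives an elliptic equation for $u_\infty$ itself. Evans--Krylov/Schauder bootstrapping, alternating between these two equations, then gives $C^\infty$ regularity of $h_\infty$ and $u_\infty$. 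Pulling back by $\pi$ gives smoothness of $g_\infty^T$ on $(M_\infty)_{reg}$, with $\eta_\infty$ smooth by the transverse estimate on $d\eta_\infty=2\omega_\infty$.

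\textbf{Step 4: the complex structure.} The $C^\alpha\cap L^{2,p}$ convergence gives $J_i\to J_\infty$ in $C^\alpha$, with $h_\infty$ Hermitian and $\nabla J_\infty=0$ weakly. The main obstacle is making $J_\infty$ an integrable complex structure in which harmonic coordinates are holomorphic, since this is what Step 3 uses. I would first try to construct holomorphic coordinates on $Z_i$ with uniform $L^{2,p}$ bounds via Hörmander $L^2$ estimates. On a transverse ball these come from the CR Bochner--Kodaira--Nakano identity, using the uniform non-collapsing of Lemma \ref{L61}. They then pass to the limit, so $J_\infty$ is integrable and $g_\infty^T$ is Kähler for $\Phi_\infty=\pi^*(J_\infty)$.
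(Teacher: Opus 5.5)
Your Steps 1--2 are essentially what the paper does. In the paper, (\ref{c2}) is the $C^{\alpha}\cap L^{2,p}_{B}$ convergence of $(g_i^T(0),u_i(0))$ to $(g_\infty^T,u_\infty)$ on $(M_\infty)_{reg}$, not (\ref{a2})--(\ref{b2}). Your Step 1 reproduces it, so that misreading is harmless. (\ref{a2}) and (\ref{b2}) then carry the soliton equation and its trace to the limit.

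The gap is in Steps 3--4. Your bootstrap runs on a local K\"ahler potential in holomorphic coordinates of $J_\infty$. It therefore presupposes that $J_\infty$ is integrable, with holomorphic charts of controlled regularity. That is exactly what you defer to Step 4 as ``the main obstacle,'' and there you only plan to build uniformly $L^{2,p}$-controlled holomorphic charts on $Z_i$ via H\"ormander. That construction is not routine at this regularity. $J_i$ is only uniformly controlled in $C^{\alpha}\cap L^{2,p}$, with $p=4$ equal to the real transverse dimension, so $\partial J_i$ has no uniform pointwise bound. Natural weights such as $|x|^2$ in harmonic coordinates have $dd^c$ involving $\partial J_i$, so they are not known to be uniformly strictly plurisubharmonic. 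You would also need $C^1$-smallness of the $\bar\partial$-correction to keep the charts nondegenerate. As it stands, Step 3 rests on an unproved step.

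The missing idea is that the metric regularity needs no complex structure at all. Work in harmonic coordinates of $g_\infty^T$. There (\ref{c1}) becomes the quasilinear elliptic system (\ref{c3}) for $g_\infty^T$, and it holds in $L^2$, hence in $L^p$, by (\ref{a2}) without any reference to $J_\infty$. The reason is that for each $i$, the tensor $Ric^T(g_i^T)+Hess^T(u_i)-g_i^T$ is just the $J_i$-anti-invariant part $\nabla^T\nabla^T u_i+\overline{\nabla}^T\overline{\nabla}^T u_i$ of the Hessian. Couple this with the trace equation (\ref{c4}) for $u_\infty$, where the Laplacian is $(g^T)^{\alpha\beta}\partial_\alpha\partial_\beta$ in harmonic coordinates. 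The coupled system bootstraps to $g_\infty^T,u_\infty\in C^\infty$ on $(M_\infty)_{reg}$.

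Only afterwards does the paper treat $\Phi_\infty$. The identity $\nabla^T_{g_i^T}\Phi_i=0$ passes to the limit, and with smooth Christoffel symbols this first-order system forces $\Phi_\infty\in C^\infty$. A complex structure that is parallel for the torsion-free Levi-Civita connection has vanishing Nijenhuis tensor. So $J_\infty$ is integrable and $g_\infty^T$ is K\"ahler. With this ordering your ``main obstacle'' never arises, and the Monge--Amp\`ere bootstrap becomes unnecessary.
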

\begin{proof}
It follows from the convergence theorem \ref{T61} that, passing to a
subsequence if necessary, we have at $t=0,$%
\[
(M,g_{i}^{T}(0))\rightarrow(M_{\infty},g_{\infty}^{T},d_{\infty}^{T})
\]
such that
\[
(Z,h_{i}(0))\rightarrow(Z_{\infty},h_{\infty},d_{h_{\infty}})
\]
in the Cheeger-Gromov sense. Moreover,
\begin{equation}%
\begin{array}
[c]{c}%
(g_{i}^{T}(0),u_{i}(0))\overset{C^{\alpha}\cap L_{B}^{2,p}}{\rightarrow
}(g_{\infty}^{T},u_{\infty})
\end{array}
\label{c2}%
\end{equation}
on $(M_{\infty})_{reg}$ which is a $\mathbb{S}^{1}$-principle bundle over
$\mathcal{R}.$ The convergence of $u_{i}(0)$ follows from the elliptic
regularity (\cite{co1}, \cite{co2}) of
\[
\Delta_{B}u_{i}(0)=n-R^{T}(g_{i}^{T}(0))\in L_{B}^{p}.
\]

Since all $g_{\infty}^{T}$ and $u_{\infty}$ are basic, in the local harmonic
coordinate $\{t,x^{1},x^{2},\cdots,x^{2n}\}$, the Sasaki-Ricci soliton
equation (\ref{c1}) is equivalent to
\begin{equation}%
\begin{array}
[c]{c}%
(g^{T})^{\alpha \beta}\frac{\partial^{2}g_{\gamma \delta}^{T}}{\partial
x^{\alpha}\partial x^{\beta}}=\frac{\partial^{2}u_{\infty}}{\partial
x^{\gamma}\partial x^{\delta}}+Q(g^{T},\partial g^{T})_{\gamma \delta}%
+T(g^{-1},\partial g^{T},\partial u)_{\gamma \delta}-g_{\gamma \delta}^{T}.
\end{array}
\label{c3}%
\end{equation}

By (\ref{a2}), (\ref{c3}) holds in $L_{B}^{2}((M_{\infty})_{reg})$. But
$g_{\infty}^{T}$ and $u_{\infty}$ are $L_{B}^{2,p}$, then (\ref{c3}) holds in
$L_{B}^{p}((M_{\infty})_{reg})$ too. On the other hand, by (\ref{b2}) and
(\ref{c1}), we have that
\begin{equation}%
\begin{array}
[c]{c}%
g_{\alpha \beta}^{T}\frac{\partial^{2}u_{\infty}}{\partial x^{\alpha}\partial
x^{\beta}}=(g^{T})^{\alpha \beta}\frac{\partial u_{\infty}}{\partial x^{\alpha
}}\frac{\partial u_{\infty}}{\partial x^{\beta}}-2u_{\infty}+2a_{\infty}%
\end{array}
\label{c4}%
\end{equation}
in $L_{B}^{p}((M_{\infty})_{reg}).$

Then a bootstrap argument as in \cite{pe} to the elliptic systems (\ref{c3})
and (\ref{c4}) shows that $g_{\infty}^{T}$ and $u_{\infty}$ are smooth on
$(M_{\infty})_{reg}$. The elliptic regularity shows that $\Phi_{\infty}$ is
smooth since $\nabla_{g_{\infty}^{T}}^{T}\Phi_{\infty}=0.$
\end{proof}
By applying the argument as in \cite[Theorem 1.2]{tz1} to the normal orbifold
variety $Z_{\infty}$ which is mainly depended on the Perelman's pseudolocality
theorem \cite{p1}, we have the smooth convergence of the Sasaki-Ricci flow on
the regular set $(M_{\infty})_{reg}$ which is a $\mathbb{S}^{1}$-principle
bundle over $\mathcal{R}$ and $\mathcal{R}$ is the regular set of $Z_{\infty}$:

\begin{theorem}
\label{T64}The limit $(M_{\infty},d_{\infty})$ is smooth on the regular set
$(M_{\infty})_{reg}$ which is a $\mathbb{S}^{1}$-principle bundle over the
regular set $\mathcal{R}$ of $Z_{\infty}$ and $d_{\infty}^{T}$ is induced by a
smooth Sasaki-Ricci soliton $g_{\infty}^{T}$ and $g^{T}(t_{i})$ converge to
$g_{\infty}^{T}$ in the $C^{\infty}$-topology on $(M_{\infty})_{reg}.$
Moreover, the singular set $\mathcal{S}$ of $Z_{\infty}$ is the codimension
two orbifold singularities.
\end{theorem}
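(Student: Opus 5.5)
We reduce everything to the leaf space and follow the proof of \cite[Theorem 1.2]{tz1}. Since $\xi$ is fixed and all relevant quantities ($g^{T}(t)$, $u(t)$, $\Phi$) are basic, the flow (\ref{2025}) descends to a normalized orbifold K\"{a}hler-Ricci flow $h(t)$ on the well-formed leaf space $Z$ with $g^{T}(t)=\pi^{\ast}h(t)$. Smooth convergence of $g^{T}(t_{i})$ on $(M_{\infty})_{reg}$ is then equivalent to smooth convergence of $h_{i}(0)$ on $\mathcal{R}\subset Z_{\infty}$. The circle-bundle part follows from the connection form: $d\eta_{i}=2\pi^{\ast}\omega_{h_{i}}$, so once $\omega_{h_{i}}$ converges smoothly, the $\eta_{i}$ converge locally after gauge fixing.

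The first step is to upgrade the $C^{\alpha}\cap L_{B}^{2,p}$ convergence of Theorem \ref{T63} to $C^{\infty}$ convergence near each point $z\in\mathcal{R}$. Because the tangent cone at $z$ is $\mathbb{R}^{2n}$ and Theorem \ref{T61}(1) gives volume convergence, for any $\delta>0$ there is $r>0$ with $\mathrm{Vol}(B_{h_{i}}(z_{i},r))\geq(1-\delta)\omega_{2n}r^{2n}$ for large $i$. The $L^{p}$ Ricci bound (\ref{40}) together with the $C^{\alpha}$ harmonic-coordinate convergence makes the metric almost Euclidean in a fixed ball, so the isoperimetric hypothesis of Perelman's pseudolocality theorem \cite{p1} holds. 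We apply pseudolocality to the flows $h_{i}(t)$, $t\in[-1,0]$, on the smooth part of $Z$; on the orbifold part we work on local uniformizing charts, using that pseudolocality is local. This gives $|Rm(h_{i}(t))|\leq Cr^{-2}$ on $B(z_{i},\epsilon r)$ for $t\in(-\epsilon^{2}r^{2},0]$. Shi's local derivative estimates then give all higher derivative bounds, and Hamilton compactness gives $C^{\infty}$ convergence at $t=0$ to $h_{\infty}$, which by Theorem \ref{T63} is a smooth K\"{a}hler-Ricci soliton. Lifting yields the Sasakian statement, with $d_{\infty}^{T}$ induced by $g_{\infty}^{T}$ on $(M_{\infty})_{reg}$.

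The second step is the structure of $\mathcal{S}$. From Theorem \ref{T61}(2), $\mathcal{S}$ is closed of real codimension at least $4$ in $Z_{\infty}$ (complex codimension two). For $n=2$ this means $\mathcal{S}$ consists of isolated points. At each $p\in\mathcal{S}$, the $L^{p}$ curvature bound with $p>n$ and the volume noncollapsing of Lemma \ref{L61} give tangent cones that are flat cones $\mathbb{C}^{2}/\Gamma$ for a finite group $\Gamma\subset U(2)$, as in the Anderson and Tian orbifold compactness theory with $\epsilon$-regularity from $\int|Ric|^{p}$. Removable-singularity arguments for the soliton equation (\ref{c1}) on the punctured orbifold chart then show that $h_{\infty}$ extends as an orbifold metric. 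Hence $\mathcal{S}$ consists of codimension two orbifold points.

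The main obstacle is this last extension step, specifically showing that the tangent cone is unique and flat with trivial holonomy off the vertex. Our first attempt would be to pass to the universal cover of a punctured ball $B(p,r)\setminus\{p\}$, on which $\pi_{1}$ is finite by volume noncollapsing. The $L^{2}$ curvature bound (from $L^{4}$) on this cover gives an $\epsilon$-regularity estimate, and in dimension four finite $L^{2}$ curvature plus Euclidean volume growth yields the removable-singularity result (Bando--Kasue--Nakajima, Tian). A second difficulty is justifying pseudolocality near orbifold points of the flow on $Z$. There we would use that the initial $Z$ has only finitely many isolated cyclic quotient points, which can be avoided or treated by passing to local covers.
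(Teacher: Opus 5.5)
Your framework is the same as the paper's: descend to the transverse (orbifold) Kähler--Ricci flow, get almost-Euclidean volume at regular points from volume convergence, then apply pseudolocality, Shi's estimates and compactness. However, the pseudolocality step is applied in the wrong time direction, and this is a genuine gap.

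You obtain $\mathrm{Vol}(B_{h_i}(z_i,r))\geq(1-\delta)\omega_{2n}r^{2n}$ at time $0$, which is the time at which $Z_\infty$ and $\mathcal{R}$ are defined. You then claim $|Rm(h_i(t))|\leq Cr^{-2}$ for $t\in(-\epsilon^2r^2,0]$. Pseudolocality only propagates forward in time. A hypothesis at time $t_0$ gives $|Rm|\leq \alpha/(t-t_0)+(\epsilon r)^{-2}$ for $t\in(t_0,t_0+\epsilon^2r^2]$, and nothing at $t_0$ itself.

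From your hypothesis you therefore get smooth convergence near $z$ only at positive times. The limits at those times have no established relation to the time-$0$ limit. To control $g_i^T(0)$ you need three things:
\begin{enumerate}
\item the almost-Euclidean condition at some $t_0<0$, i.e.\ for $g^T(t_i+t_0)$, whose Cheeger--Gromov limit is a priori a different space;
\item near the points corresponding to $z$;
\item at a scale independent of $i$.
\end{enumerate}
Showing that every regular point of the time-$0$ limit arises this way is the main difficulty of the theorem, and your argument does not address it.

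The paper, following \cite[Theorem 1.2]{tz1}, supplies exactly this step.
\begin{enumerate}
\item It introduces the almost-Euclidean sets $\Omega_{r,i,t}$ and uses the volume conclusion (\ref{e3}) of pseudolocality to get the forward nesting $\Omega_{r,i,t}\subset\Omega_{\delta_0\sqrt{s},i,t+s}$.
\item It chooses radii $r_j\to0$ and negative times $t_j\to0$, so that (\ref{e4}) holds near $\Omega_{r_j,i,t_j}$ for all $t\in(t_j,0]$.
\item It takes limits of the same sets at times $t_j$ and $0$.
\item It identifies the time-$t_j$ limit $\overline{\Omega}$ with the regular part of the time-$t_j$ limit space, by volume continuity \cite[Claim 3.7]{tz1}.
\item It identifies $\overline{\Omega}$ with the time-$0$ limit $\Omega$, using (\ref{e4}) as in \cite[Claim 3.8]{tz1}.
\end{enumerate}
This is what yields $\Omega\cong(M_\infty)_{reg}$, and hence smooth convergence on all of $(M_\infty)_{reg}$. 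You need this time-comparison step, or an equivalent argument that the limit flow is static and that regular sets at different times agree.

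On your second step: the paper's proof gives no separate argument for the orbifold structure of $\mathcal{S}$ beyond the codimension statement in Theorem \ref{T61}(2), so here you go further than it does. If you keep this step, note that an $L^2$ bound on the full transverse curvature does not follow from (\ref{40}) alone. For Kähler orbifold surfaces it comes from the Chern--Weil/Gauss--Bonnet identities. These express $\int|Rm^T|^2$ through $\int|Ric^T|^2$, $\int(R^T)^2$ and fixed characteristic numbers such as $c_1^2$ and $\chi_{orb}$. With that bound in hand, the standard route is the Bando--Kasue--Nakajima/Tian removable-singularity argument adapted to shrinking solitons (cf.\ \cite{z}, \cite{tz2}).
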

\begin{proof}
Note that since $g^{T}$ is a transverse K\"{a}hler metric and basic. It is
evolved by the K\"{a}hler-Ricci flow, then it follows from the standard
computations in K\"{a}hler-Ricci flow (\cite{co3}, \cite{mo}) that
\[%
\begin{array}
[c]{c}%
\frac{\partial}{\partial t}Rm^{T}=\Delta_{B}Rm^{T}+Rm^{T}\ast Rm^{T}+Rm^{T}.
\end{array}
\]
Now by Perelman's pseudolocality theorem (\cite{p1}, \cite{tz1}), there exists
$\varepsilon_{0},$ $\delta_{0},$ $r_{0}$ which depend on $p$ as in the Theorem
\ref{T61} such that for any $(x_{0},t_{0})$, if
\begin{equation}
\mathrm{Vol}(B_{\xi,g_{i}^{T}(t_{0})}(x_{0},r))\geq(1-\varepsilon
_{0})\mathrm{Vol}(B(0,r)) \label{e1}%
\end{equation}
for some $r\leq r_{0},$ where $\mathrm{Vol}(B(0,r)$ denotes the volume of
Euclidean ball of radius $r$ in $\mathbb{R}^{2n},$ then we have the following
curvature estimate%
\begin{equation}%
\begin{array}
[c]{c}%
||Rm^{T}||_{g_{i}^{T}}(x,t)\leq \frac{1}{t-t_{0}}%
\end{array}
\label{e2}%
\end{equation}
for all $x\in B_{\xi,g_{i}^{T}(t)}(x_{0},\varepsilon_{0}r)$ and $t_{0}<t\leq
t_{0}+\varepsilon_{0}^{2}r^{2}$ and the volume estimate
\begin{equation}
\mathrm{Vol}(B_{\xi,g_{i}^{T}(t)}(x_{0},\delta_{0}\sqrt{t-t_{0}}))\geq
(1-\eta)\mathrm{Vol}(B(0,\delta_{0}\sqrt{t-t_{0}})) \label{e3}%
\end{equation}
for $t_{0}<t\leq t_{0}+\varepsilon_{0}^{2}r^{2}$ and $\eta \leq \varepsilon_{0}$
is the constant such that the $C^{\alpha}$ harmonic radius at $x_{0}$ is
bounded below by $\delta_{0}\sqrt{t-t_{0}}.$

As in Theorem \ref{T61}, the metric $g_{i}^{T}(0)$ converges to $g_{\infty
}^{T}$ in the $(C^{\alpha}\cap L_{B}^{2,p})$-topology on $\mathcal{R}$. Now it
is our goal to show that the metric $g_{i}^{T}(0)$ converges smoothly to
$g_{\infty}^{T}$. For $0<r\leq r_{0}$ and $t\geq-1,$ define
\[
\Omega_{r,i,t}:=\{x\in M\text{ }|\text{ (\ref{e1}) holds on }B_{\xi,g_{i}%
^{T}(t)}(x,t)\}.
\]
Then (\ref{e3}) implies that
\[
\Omega_{r,i,t}\subset \Omega_{\delta_{0}\sqrt{s},i,t+s}%
\]
for $0<s\leq \varepsilon_{0}^{2}r^{2}.$

Let $r_{j}$ to be a decreasing sequence of radii such that $r_{j}\rightarrow0$
and $t_{j}=-\varepsilon_{0}r_{j}$. Then by applying \cite[(3.42)]{tz1}, we may
assume that
\[
\Omega_{r_{j},i,t_{j}}\subset \Omega_{r_{j+1},i,t_{j+1}}.
\]
Then by (\ref{e2})%
\begin{equation}%
\begin{array}
[c]{c}%
||Rm^{T}||_{g_{i}^{T}(t)}(x,t)\leq \frac{1}{t-t_{j}}%
\end{array}
\label{e4}%
\end{equation}
for all $(x,t)$ with
\[%
\begin{array}
[c]{c}%
d_{g_{i}^{T}(t)}^{T}(x,\Omega_{r_{j},i,t_{j}})\leq \varepsilon_{0}r_{j},\text{
}t_{j}<t\leq0.
\end{array}
\]
By Shi's derivative estimate \cite{shi} to the curvature, there exist a
sequence of constants $C_{k,j,i}$ such that%
\begin{equation}
||(\nabla^{T})^{k}Rm^{T}||_{g_{i}^{T}(0)}(x,t)\leq C_{k,j,i} \label{e5}%
\end{equation}
on $\Omega_{r_{j},i,t_{j}}.$ Then Passing to a subsequence if necessary, one
can find a subsequence $\{i_{j}\}$ of $\{j\}$ such that
\[%
\begin{array}
[c]{c}%
(\Omega_{r_{j},i_{j},t_{j}},g_{i_{j}}^{T}(t_{j}))\overset{C^{\alpha}%
}{\rightarrow}(\overline{\Omega},g_{\overline{\Omega}}^{T})
\end{array}
\]
and
\[%
\begin{array}
[c]{c}%
(\Omega_{r_{j},i_{j},t_{j}},g_{i_{j}}^{T}(0))\overset{C^{\infty}}{\rightarrow
}(\Omega,g_{\Omega}^{T}),
\end{array}
\]
where $(\overline{\Omega},g_{\overline{\Omega}}^{T})$ and $(\Omega,g_{\Omega
}^{T})$ are smooth Riemannian manifolds.
\[
(\Omega,g_{\Omega}^{T})\text{ is isometric to }((M_{\infty})_{reg},d_{\infty
}^{T}).
\]
On the other hand, as in Theorem \ref{T61}, we may also have
\[%
\begin{array}
[c]{c}%
(M,g_{i_{j}}^{T}(t_{j}))\overset{d_{G,H}^{T}.}{\rightarrow}(\overline
{M}_{\infty},\overline{d}_{\infty}^{T})
\end{array}
\]
with $\overline{Z}_{\infty}=\overline{\mathcal{R}}\cup \overline{\mathcal{S}}.$
Then%
\begin{equation}
(M_{\infty})_{reg}\text{ is the }\mathbb{S}^{1}\text{-principle bundle over
}\mathcal{R} \label{f1}%
\end{equation}
and
\begin{equation}
(\overline{M}_{\infty})_{reg}\text{ is the }\mathbb{S}^{1}\text{-principle
bundle over }\overline{\mathcal{R}}. \label{f2}%
\end{equation}
Moreover, by the continuity of volume under the Cheeger-Gromov convergence
(\cite[Claim 3.7]{tz1}), we have
\begin{equation}
(\overline{\Omega},g_{\overline{\Omega}}^{T})\text{ is isometric to
}((\overline{M}_{\infty})_{reg},\overline{d}_{\infty}^{T}) \label{f3}%
\end{equation}
and then follows from (\ref{e4}) as in \cite[Claim 3.8]{tz1} that
\begin{equation}
(\overline{\Omega},g_{\overline{\Omega}}^{T})\text{ is also isometric to
}(\Omega,g_{\Omega}^{T}). \label{f4}%
\end{equation}
Finally (\ref{f1}), (\ref{f2}), (\ref{f3}) and (\ref{f4}) imply the metric
$g_{i}^{T}(0)$ converges smoothly to $g_{\infty}^{T}$ on $\mathcal{R}$.
\end{proof}
The goal of the rest of the proof of Proposition \ref{T66} is to show that
$M_{\infty}$ is isomorphic to a compact quasi-regular transverse Fano
Sasaki-Ricci soliton with at worst foliation log terminal singularities.

\subsection{The Partial $C^{0}$-Estimate}

Let $(M,\eta,\xi,\Phi,g)$ be a sequence of quasi-regular Sasakian
$(2n+1)$-manifold. For the solution $(M,\omega(t),g^{T}(t))$ of the
Sasaki-Ricci flow and the line bundle $((K_{M}^{T})^{-1},h(t)=\omega^{n}(t))$
with the basic Hermitian metric $h(t)$ with respective to $\widetilde{g}%
^{T}(t)=e^{-\frac{1}{n}u}g^{T}(t),$ we work on the evolution of the basic
transverse holomorphic line bundle $((K_{M}^{T})^{-m},h^{m}(t))$ for a large
integer $m$ such that $(K_{M}^{T})^{-m}$ is very ample. We consider the basic
embedding (Proposition \ref{PCR}) which is $\mathbb{S}^{1}$-equivariant with
respect to the weighted $\mathbb{C}^{\ast}$-action in $\mathbb{C}^{N_{m}+1}$
\[
\Psi:M\rightarrow \Psi(M)\subset(\mathbb{CP}^{N_{m}},\omega_{FS})
\]
defined by the orthonormal basic transverse holomorphic section $\{ \sigma
_{0},\sigma_{1},\cdots,\sigma_{N}\}$ in $H_{B}^{0}(M,(K_{M}^{T})^{-m})$ with
$N_{m}=\dim H_{B}^{0}(M,(K_{M}^{T})^{-m})-1$ with
\[%
\begin{array}
[c]{c}%
\int_{M}(\sigma_{i},\sigma_{j})_{h^{m}(t)}\omega^{n}(t)\wedge \eta_{0}%
=\delta_{ij}.
\end{array}
\]
Define
\begin{equation}%
\begin{array}
[c]{c}%
\mathcal{F}_{m}(x,t):=\sum_{\alpha=0}^{N_{m}}||\sigma_{\alpha}||_{h^{m}}%
^{2}(x).
\end{array}
\label{58}%
\end{equation}
Note that under these notations (Appendix A.3), the curvature form of the
Chern connection $D$ on $(K_{M}^{T})^{-m}$ is
\[
Ric(h(t))=m\omega(t)
\]
and $\Delta_{B}^{T}=(g^{T})^{i\overline{j}}(t)\nabla_{\frac{\partial}{\partial
z^{i}}}\nabla_{\frac{\partial}{\partial \overline{z}^{j}}}$ on $(K_{M}%
^{T})^{-m}.$

The following result is a Sasaki analogue of the partial $C^{0}$-estimate
which was obtained in the K\"{a}hler-Ricci flow (\cite{ds}, \cite[Theorem
5.1]{tz1}, \cite{pss}) and the proof there does carry over to our Sasaki
setting due to the first structure theorem again on quasi-regular Sasakian
manifolds. For completeness, we give a sketch for the proof here.

\begin{theorem}
\label{T65} Suppose (\ref{c2}) holds, we have
\begin{equation}%
\begin{array}
[c]{c}%
\inf_{t_{i}}\inf_{x\in M}\mathcal{F}_{m}(x,t_{i})>0
\end{array}
\label{d11}%
\end{equation}
for a sequence of $m\rightarrow \infty$.
\end{theorem}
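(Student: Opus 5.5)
We argue by contradiction, following the Donaldson--Sun / Tian--Zhang scheme and using the first structure theorem to transfer everything to the leaf spaces $(Z,h_i(0))$. Since $M$ is quasi-regular, basic sections of $(K_M^T)^{-m}$ are pullbacks of holomorphic sections of the orbifold line bundle $(K_Z^{orb})^{-m}$. The $L^2$ inner products on $M$ differ from those on $Z$ only by the fixed length of the generic $\mathbb{S}^1$-fibre. Hence $\mathcal{F}_m(x,t)=\rho_m(\pi(x),t)$, where $\rho_m$ is the Bergman density function of $(Z,h_i(0))$. Suppose (\ref{d11}) fails for all large $m$. Then there are $x_i$ with $\rho_m(\pi(x_i),t_i)\to0$ along a subsequence.

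\textbf{Step 1 (uniform gradient bounds for sections).} Using $Ric(h)=m\omega$ and Lemma \ref{L31} (uniform $C^0$ bounds on $u$, $\nabla^T u$ and $\Delta_B u$), Moser iteration gives the following. For any basic holomorphic $\sigma\in H^0_B(M,(K_M^T)^{-m})$,
\[
\|\sigma\|_{C^0}+m^{-1/2}\|\nabla^T\sigma\|_{C^0}\le C\|\sigma\|_{L^2}.
\]
Two ingredients enter here: the Bochner formula for $|\sigma|^2$ and $|\nabla^T\sigma|^2$ on the transverse K\"ahler structure (CR Bochner--Kodaira--Nakano, Appendix), and the uniform Sobolev inequality coming from the $L^4$ Ricci bound of Theorem \ref{T51} together with Lemma \ref{L61}. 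The conformal change $\widetilde g^T=e^{-u/n}g^T$ is exactly what makes the curvature term $m\omega$ appear cleanly.

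\textbf{Step 2 (tangent cones).} Pass to the limit $(Z_\infty,h_\infty)$ from Theorem \ref{T61}, with $\pi(x_i)\to z_\infty$. By Theorems \ref{T63} and \ref{T64}, the convergence is smooth on $\mathcal{R}$ and $\mathcal{S}$ has codimension at least four (real). Take a tangent cone $C(Y)$ at $z_\infty$. After rescaling by $\sqrt{m}$ with $m$ in a suitable sequence, the pairs $(Z,m h_i(0))$ near $\pi(x_i)$ are close to $C(Y)$, and the regular part of $C(Y)$ is smooth K\"ahler away from a closed set of capacity zero. On $C(Y)$ take the model section $e^{-|z|^2/4}$ of the trivial bundle with curvature $\omega_{C}$. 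Cut it off near the singular set using the codimension-four property, with a good cutoff function of small $L^2$ gradient. Transplant it back to $(Z,m h_i(0))$ through the smooth convergence on compact subsets of the regular part, obtaining an approximately holomorphic section $\tau$ with $|\tau(\pi(x_i))|\ge c>0$ and $\|\bar\partial\tau\|_{L^2}$ small.

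\textbf{Step 3 (H\"ormander correction).} Solve $\bar\partial v=\bar\partial\tau$ on the orbifold $Z$ via the $L^2$-estimate for $(K_Z^{orb})^{-m}$, whose curvature is $m\omega_{h_i}+Ric$-type terms. Positivity holds since $Ric^T+\nabla\bar\nabla u=\omega$ up to the errors controlled by Lemma \ref{L41}; this is the reason to use the twisted metric $h=\omega^n$ with $u$. The estimate gives $\|v\|_{L^2}$ small. Step 1 then gives $|v(\pi(x_i))|$ small, so $\sigma=\tau-v$ is a holomorphic section with $\|\sigma\|_{L^2}\le C$ and $|\sigma|(\pi(x_i))\ge c/2$. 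Hence $\rho_m(\pi(x_i),t_i)\ge c'>0$, contradicting the choice of $x_i$. Pulling back by $\pi$ gives basic sections on $M$, which yields (\ref{d11}).

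\textbf{Main obstacle.} The hard part is Step 2: constructing good cutoff functions near the singular set of the tangent cone, and obtaining enough control of the limit metric in the flow setting. We only have the $L^4$ Ricci bound of Theorem \ref{T51} and the almost-soliton equations of Lemma \ref{L41}, not a genuine K\"ahler--Einstein bound. We would follow \cite[Theorem 5.1]{tz1}. This means using the $C^\alpha\cap L^{2,p}$ convergence with $p=4>n=2$ to show that tangent cones are metric cones with codimension-four singular set. It also requires a uniform choice of $m$ along a fixed sequence over all points $z_\infty$, obtained by compactness of the space of tangent cones.
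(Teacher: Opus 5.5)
Your outline follows the paper's route: reduce to the orbifold leaf space, prove the Moser-iteration bound (\ref{d1}) and the H\"ormander bound (\ref{d2}), then transplant peak sections from tangent cones as in \cite[Theorem 5.1]{tz1} and \cite{ds}. However, the reason you give for positivity in Step 3 does not work. A H\"ormander or Bochner--Kodaira--Nakano estimate needs a \emph{pointwise} lower bound on the curvature term, and errors that are only small in $L^2$, as in Lemma \ref{L41}, cannot supply one. In fact there is no error at all: the relation between $Ric^T$, $\omega$ and $\sqrt{-1}\partial_B\overline{\partial}_B u$ is the exact definition of the transverse Ricci potential. The real difficulty is different. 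For $(K_M^T)^{-m}$-valued basic $(0,1)$-forms, even with the twisted metric $h^m$ of curvature $m\omega$, the Bochner term is $m\omega+Ric^T=(m+1)\omega+\sqrt{-1}\partial_B\overline{\partial}_B u$. This is not uniformly bounded below along the flow; only its trace is. The fix is to twist once more by $e^{\pm u}$ (equivalently, use the weighted measure, or treat the forms as $(n,1)$-forms with values in $(K_M^T)^{-(m+1)}$), which makes this term exactly $(m+1)\omega$. Returning to the original norms then costs a factor depending only on $\sup|u|$, which Lemma \ref{L31} bounds. This mechanism is what underlies $\Delta_{\overline{\partial}_B}\geq\frac{m}{4}$ in the paper, obtained from Lemma \ref{L2026} together with \cite{pss}, \cite{tz1}. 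Relatedly, the paper takes the uniform Sobolev inequality from \cite{co2}. Deriving it instead from Theorem \ref{T51} and Lemma \ref{L61} would require an additional local Sobolev estimate under integral Ricci bounds.

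The second gap is uniformity in $m$. You flag it, but compactness of the space of tangent cones does not by itself deliver it. The level $m\sim r^{-2}$ at which the model section can be transplanted near $z_\infty$ is fixed by a scale at which $(Z_\infty,h_\infty)$ is close to a cone there. As in \cite{ds}, it is also fixed by the requirement that the holonomy of the bundle around the link be nearly trivial. Both depend on $z_\infty$. As a result, your contradiction, in which the $x_i$ are chosen after $m$ has been fixed, does not close. What is missing is the standard globalization, which the paper's sketch also leaves implicit:
\begin{enumerate}
\item The peak section at $z$, combined with your Step 1 gradient bound, gives $\mathcal{F}_{m_z}\geq b_z>0$ on the part of $M$ lying, under the Gromov--Hausdorff approximations, over a fixed neighbourhood of $z$, for all large $i$.
\item Cover the compact $Z_\infty$ by finitely many such neighbourhoods.
\item Pass to a common multiple $m_0$ using powers $\sigma^{m_0/m_z}$. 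Here $|\sigma^k(x)|=|\sigma(x)|^k$ and $\|\sigma^k\|_{L^2}\leq\|\sigma\|_{C^0}^{k-1}\|\sigma\|_{L^2}$, with $\|\sigma\|_{C^0}$ controlled by (\ref{d1}).
\item Choose $m_0$ divisible by the orders of the isotropy groups as well. Otherwise every basic section vanishes on any singular fibre whose isotropy acts on the fibre of $(K_M^T)^{-m}$ by a nontrivial character.
\end{enumerate}
The same argument then works for every multiple $km_0$, and this is exactly where the sequence $m\to\infty$ in (\ref{d11}) comes from.
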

\begin{proof}
The main idea is that our basic function theory on quasi-regular Sasaki
manifolds follows immediately from the standard function theory on the
orbifold quotient. In fact, since the leave space of the characteristic
foliation $(Z,h,\omega_{h})$ is a normal projective variety with codimension
two orbifold singularities. As before, for an orbifold Riemannian submersion
$\pi:(M,g,\omega)\rightarrow(Z,h,\omega_{h})$ with $\omega=\pi^{\ast}%
(\omega_{h}).$ We consider the projection
\begin{equation}
\Pi:(C(M),\overline{g},J,\overline{\omega})\rightarrow(Z,h,\omega_{h})
\label{2022-b}%
\end{equation}
such that $\Pi|_{(M,g,\omega)}=\pi,$ then we have the relation between the
volume form of the K\"{a}hler cone metric on the metric cone and the volume
form of the Sasaki metric on $M$
\begin{equation}
i_{\frac{\partial}{\partial r}}\overline{\omega}^{n+1}=(\Pi^{\ast}\omega
_{h})^{n}\wedge \eta. \label{2022-a}%
\end{equation}
Here $\overline{\omega}^{n+1}=r^{2n+1}(\Pi^{\ast}\omega_{h})^{n}\wedge
dr\wedge \eta.$ Then, by integration over $M$ via (\ref{2022-a}) and
(\ref{2022-b}) as the similar computation in \cite[(6.6)]{chlw1}, this will
give the desired estimates back to the space $(Z,h,\omega_{h})$ which is the
same as in K\"{a}hler case.
Here we described briefly two main ingredients for the reader's completeness.
These are the gradient estimate to plurianti-canonical basic sections and
H\"{o}rmander's $L^{2}$-estimate to $\overline{\partial}_{B}$-operator on
basic $(0,1)$-forms. In case of the Ricci curvature bounded, these estimates
are standard as in \cite{ds} and \cite{t1}. In our situation, due to the lack
of the Ricci curvature bounded, the arguments should be modified as follows :

\textbf{(i).} The uniform bound of the Sobolev constant $C_{S}(g_{0}^{T},n)$
for the basic function along the Sasaki-Ricci flow was obtained as in
\cite[Theorem 1.1]{co2} :
\begin{equation}%
\begin{array}
[c]{c}%
\left(  \int_{M}f^{\frac{2(2n+1)}{2n-1}}\omega(t)^{n}\wedge \eta_{0}\right)
^{\frac{2n-1}{2n+1}}\leq C_{S}(g_{0}^{T},n)\int_{M}(||\nabla^{T}f||^{2}%
+f^{2})\omega(t)^{n}\wedge \eta_{0}%
\end{array}
\label{b4}%
\end{equation}
for every $f\in W_{B}^{1,2}(M)$. Moreover, we have the the basic Bochner
formula%
\[%
\begin{array}
[c]{ccl}%
\Delta_{B}^{T}|\nabla^{T}\sigma|^{2} & = & |\nabla^{T}\nabla^{T}\sigma
|^{2}+|\overline{\nabla}^{T}\nabla^{T}\sigma|^{2}-((n+2)m)|\nabla^{T}%
\sigma|^{2}\\
&  & +<Ric^{T}\langle \nabla^{T}\sigma,\cdot),\nabla^{T}\sigma \rangle
-2|\nabla^{T}\sigma|^{2}%
\end{array}
\]
and then, for $Ric^{T}=Ric+2g^{T}$
\[%
\begin{array}
[c]{ccl}%
\Delta_{B}^{T}|\nabla^{T}\sigma|^{2} & = & |\nabla^{T}\nabla^{T}\sigma
|^{2}+|\overline{\nabla}^{T}\nabla^{T}\sigma|^{2}-((n+2)m+1)|\nabla^{T}%
\sigma|^{2}\\
&  & +\langle \partial \overline{\partial}u(\nabla^{T}\sigma,\cdot),\nabla
^{T}\sigma \rangle
\end{array}
\]
for any basic transverse holomorphic section $\sigma \in H_{B}^{0}(M,(K_{M}%
^{T})^{-m}).$ Here $u$ is the transverse Ricci potential as before. This
together with Lemma \ref{L31} and the Sobolev inequality to make it possible
by applying the standard Nash-Moser iteration arguments (\cite[(i) and (ii) of
Proposition 4.1.]{pss}, \cite[(5.8)]{tz1}) to get
\begin{equation}%
\begin{array}
[c]{c}%
||\sigma||_{C^{0}}+\sqrt{m}||\nabla^{T}\sigma||_{C^{0}}\leq C(m^{\frac{n}{2}%
})\int_{M}||\sigma||^{2}\omega(t)^{n}\wedge \eta_{0}%
\end{array}
\label{d1}%
\end{equation}
for $\sigma \in$ $H_{B}^{0}(M,(K_{M}^{T})^{-m}).$

\textbf{(ii).} The $L^{2}$-estimate to $\overline{\partial}_{B}$-operator for
basic sections on quasi-regular Sasakian manifolds follows from the standard
function theory on the orbifold quotient :

There exists a $m_{0}$ such that for any basic transverse holomorphic section
$\sigma \in H_{B}^{0}(M,K_{M}^{T-m})$ and $m\geq m_{0}$ with
\[
\overline{\partial}_{B}\sigma=0,
\]
one can find a solution $\vartheta$
\[
\overline{\partial}_{B}\vartheta=\sigma
\]
satisfying the property%
\begin{equation}%
\begin{array}
[c]{c}%
\int_{M}||\vartheta||^{2}\omega(t)^{n}\wedge \eta_{0}\leq \frac{4}{m}\int
_{M}||\sigma||^{2}\omega(t)^{n}\wedge \eta_{0}.
\end{array}
\label{d2}%
\end{equation}
In fact, it follows from the CR Bochner-Kodaira-Nakano identity (Lemma
\ref{L2026}), Phong-Song-Sturm (\cite[(iii) of Proposition 4.1.]{pss}) and
Tian-Zhang (\cite[(5.9)]{tz1}) that
\[%
\begin{array}
[c]{c}%
\Delta_{\overline{\partial}_{B}}=\overline{\partial}_{B}\overline{\partial
}_{B}^{\ast}+\overline{\partial}_{B}^{\ast}\overline{\partial}_{B}\geq \frac
{m}{4}%
\end{array}
\]
on $C_{B}^{0}(M,T^{0,1}M\otimes(K_{M}^{T})^{-m})$ for a larger $m$.

Finally, we will apply (i) and (ii) to complete the proof of our theorem. Note
that when $r$ small enough, the transverse geodesic ball $B_{\xi,g}(x,r)$ is a
trivial $\mathbb{S}^{1}$-bundle over the geodesic ball $B_{h}(\pi(x),r)$ in
$(Z,h,\omega_{h},\pi(x))$ described as in Theorem \ref{T61} outside the
singular set of codimension $4$. Then the $L^{2}$-estimate to $\overline
{\partial}$-operator for sections on the K\"{a}hler case can be applied for
basic sections on quasi-regular Sasakian manifolds. Indeed, all the integrands
are only involved with the transverse K\"{a}hler structure $\omega(t)$ and
basic sections. Hence, under the Sasaki-Ricci flow, when one applies the
Weitzenb\"{o}ch type formulae and integration by parts, the expressions
involved behave essentially the same as in the K\"{a}hler-Ricci flow.

\textbf{(iii). }We will follow the method as in \cite[Theorem 5.1]{tz1} which
is a generalization of \cite{ds} in K\"{a}hler-Ricci curvature bounds. In
fact, as before $Z_{\infty}=\mathcal{R}\cup \mathcal{S}$ and $(M_{\infty
})_{reg}$ is a $\mathbb{S}^{1}$-principle bundle over the regular part
$\mathcal{R}$ of $Z_{\infty}$. As $r_{j}\rightarrow0,$ we can have a tangent
cone $\mathcal{C}_{x}=\lim_{j\rightarrow \infty}(Z_{\infty,}r_{j}^{-2}%
\omega_{\infty},x)$ of $(M_{\infty,}\omega_{\infty})$ at $x$. Now we can work
on $\mathcal{C}_{\xi_{x}}=\lim_{j\rightarrow \infty}(M_{\infty,}r_{j}%
^{-2}\omega_{\infty},\xi_{x})$ at $\xi_{x}$ as well. For any $\varepsilon>0$,
we denote
\[
V_{\xi}(x,\varepsilon)=\{y\in \mathcal{C}_{\xi_{x}}:y\in B_{\xi}(0,\varepsilon
^{1})\backslash \overline{B_{\xi}(0,\varepsilon)}),d_{\xi}(y,((M_{\infty
})_{\text{\textrm{sing}}})_{x})>\varepsilon \}.
\]
Here $h_{x}$ is the Ricci-flat cone metric on $\mathcal{C}_{x}\backslash
\mathcal{S}_{x}$ and $B_{\xi}$ is the transverse ball with respect to $h_{x}$.
Its K\"{a}hler form $\omega_{x}=\sqrt{-1}\partial \overline{\partial}\rho
_{x},\rho_{x}$ is the distance function from the vertex of $\mathcal{C}_{x}$
and $\mathcal{S}_{x}$ is complex codimension at least $2.$ Then for a large
$j,$ there are diffeomorphisms
\[
\phi_{j}:V_{\xi}(x,\frac{\varepsilon}{4})\rightarrow(M_{\infty})_{\mathrm{reg}%
}%
\]
such that
\[
\lim||r_{j}^{-2}\phi_{j}^{\ast}\omega_{\infty}-h_{x}||_{C^{k}(V_{\xi}%
(x,\frac{\varepsilon}{2}))}=0
\]
with $d(x,\phi_{j}(V_{\xi}(x,\varepsilon))<10\varepsilon r_{j}$ and $\phi
_{j}(V_{\xi}(x,\varepsilon))\subset B_{_{\xi}}(x,(1+\varepsilon^{-1}%
)r)\subset(M_{\infty,}\omega_{\infty}).$ Furthermore, for any $\delta>0,$
there is a section $\tau$ of $(K_{M_{\infty}}^{T})^{-m}$ on $\phi_{j}(V_{\xi
}(x,\varepsilon))=\phi(V_{\xi}(x,\varepsilon)),$ for a larger $m=k_{j}%
=r_{j}^{-2}$ and isomorphism $\psi:\mathcal{C}_{\xi_{x}}\times
\mathbb{C\rightarrow}(K_{M_{\infty}}^{T})^{-m}$ over $V_{\xi}(x,\varepsilon)$
such that
\begin{equation}
||\overline{\tau}||_{\infty}^{2}=e^{-\rho_{x}^{2}}\text{ \  \  \textrm{and}
\  \ }||\overline{\partial}\tau||_{\infty}\leq C\delta \label{2026-1}%
\end{equation}
with $\tau=\psi(\mathcal{C}_{\xi_{x}}\times1)$ and $||\cdot||_{\infty}$ is the
norm induced by $e^{-\frac{u_{\infty}}{n}}h_{\infty}.$ Furthermore, one can
extend $\tau$ to

$\tau$ $=\overline{\tau}$ on $\phi(V_{\xi}(x,\delta_{0}))$ and vanishes
outside $\phi(V_{\xi}(x,\varepsilon))$ such that there exists a constant
$\nu(\varepsilon,\delta_{0})>0$
\[
\int_{M_{\infty}}||\overline{\partial}\overline{\tau}||_{\infty}\omega
_{\infty}^{n}\wedge \eta_{0}\leq \nu r^{2n-2}%
\]
for a small $\nu$ as long as $\varepsilon,\delta$ are sufficiently small.
Therefore, there are diffeomorphisms
\[
\widetilde{\phi}_{i}:(M_{\infty})_{\mathrm{reg}}\rightarrow M
\]
and smooth isomorphisms
\[
F_{i}:(K_{M_{\infty}}^{T})^{-m}\rightarrow(K_{M}^{T})^{-m}%
\]
with $F_{i}(\overline{\tau})=\overline{\tau}_{i}$ on $\widetilde{\phi}%
(\phi(V_{\xi}(x,\delta_{0}))$ such that
\[
\int_{M}||\overline{\partial}\overline{\tau_{i}}||_{i}\omega^{n}(t_{i}%
)\wedge \eta_{0}\leq2\nu r^{2n-2}.
\]
Then by the $L^{2}$-estimate (\ref{d2}), we have a section $v_{i}$ of
$(K_{M}^{T})^{-m}$ such that%
\[
\overline{\partial}_{B}v_{i}=\overline{\partial}_{B}\overline{\tau}_{i}%
\]
satisfying the property%
\[%
\begin{array}
[c]{c}%
\int_{M_{{}}}||v_{i}||_{i}^{2}\omega(t)^{n}\wedge \eta_{0}\leq \frac{1}{m}%
\int_{M}||\overline{\partial}\overline{\tau}_{i}||_{i}^{2}\omega(t)^{n}%
\wedge \eta_{0}\leq3\nu r^{2n}%
\end{array}
\]
for a large $i$. Let
\[
\sigma_{i}=\overline{\tau}_{i}-v_{i}.
\]
By the standard elliptic estimate, we have
\[
\sup_{\widetilde{\phi}(\phi(V_{\xi}(x,2\delta_{0})\cap B_{\xi}(o,1))}%
||v_{i}||^{2}\leq C(\delta_{0}r)^{-2n}\int_{M_{{}}}||v_{i}||_{i}^{2}%
\omega(t_{i})^{n}\wedge \eta_{0}\leq C\nu \delta_{0}{}^{-2n}.
\]
For any given $\delta_{0},$ if $\delta$ and $\varepsilon$ are small, then we
can choose smaller $\nu$ such that
\[
8C\nu \leq \delta_{0}{}^{2n}.
\]
It follows from (\ref{2026-1}) and the choice of $\overline{\tau}$ that
\[
|\sigma_{i}|_{i}\geq|F_{i}(\overline{\tau})|_{i}-|v_{i}|_{i}\geq \frac{1}{2}%
\]
on $\widetilde{\phi_{i}}(\phi(V_{\xi}(x,\delta_{0})\cap B_{\xi}(o,1))$ for a
large $i$. But by applying (\ref{d1}) to $\sigma_{i}$, for a larger $m=r^{-2}%
$, we have
\[
\sup_{M}|\nabla \sigma_{i}|_{i}\leq Cm^{\frac{n+1}{2}}(\int_{M_{{}}}%
||\sigma_{i}||_{i}^{2}\omega(t_{i})^{n}\wedge \eta_{0})^{\frac{1}{2}}\leq
Cr^{-1}.
\]
Since $d(x,\phi(\delta_{0}p_{0}))\leq10\delta_{0}r$ for some point $p_{0}%
\in \partial B_{\xi}(0,1),$ we have, for $i$ sufficiently large,
\[
|\sigma_{i}|_{i}(x_{i})\geq \frac{1}{4}-C\delta_{0}.
\]

Hence choose $\delta_{0}$ such that $C\delta_{0}<\frac{1}{8}$, it follows
that
\[
\mathcal{F}_{m}(x_{i},t_{i})>\frac{1}{8}%
\]
for a sequence of $m\rightarrow \infty.$ Then we are done.
\end{proof}
\textbf{The Proof of Proposition \ref{T66} :} As a consequence of the first
structure theorem for Sasakian manifolds and Theorem \ref{T65}, the
Gromov-Hausdorff limit $Z_{\infty}$ is a variety embedded in some
$\mathbb{CP}^{N}$ and the singular set $\mathcal{S}$ is a subvariety
(\cite{ds}, \cite[Theorem 1.6]{t2}). Then one can refine the regularity of
Theorem \ref{T64}, by the first structure theorem for Sasakian manifolds and
the partial $C^{0}$-estimate, that the Gromov-Hausdorff limit $Z_{\infty}$ is
a variety embedded in some $\mathbb{CP}^{N}$ and the singular set
$\mathcal{S}$ is a normal subvariety (\cite{ds}, \cite[Theorem 1.6]{t2},
\cite{tz1}, \cite{pss}). This will imply the Proposition \ref{T66}.

\section{Compactness Theorem}

In this section, we study the Sasaki analogue of compactness theorem of
K\"{a}hler-Ricci solitons. We first recall the second structure theorem on Sasakian manifolds.
\begin{proposition}
\label{P22}(\cite{ru}) Let $(M,g)$ be a compact Sasakian manifold of dimension
$2n+1$. Any Sasakian structure $(\xi,\eta,\Phi,g)$ on $M$ is either
quasi-regular or there is a sequence of quasi-regular Sasakian structures
$(\xi_{i},\eta_{i},\Phi_{i},g_{i})$ converging in the compact-open $C^{\infty
}$-topology to $(\xi,\eta,\Phi,g).$ In particular, if $M$ admits an irregular
Sasakian structure, it admits many locally free circle actions.
\end{proposition}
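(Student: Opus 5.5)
The statement is Rukimbira's second structure theorem, and I would prove it by the standard torus-approximation argument. Write $G$ for the isometry group of $(M,g)$. It is a compact Lie group because $M$ is compact. The Reeb field $\xi$ is Killing, so the closure of its flow $\{\exp(t\xi)\}_{t\in\mathbb{R}}$ in $G$ is a compact connected abelian subgroup, hence a torus $T^{k}$, and $\xi$ lies in the Lie algebra $\mathfrak{t}\cong\mathbb{R}^{k}$. There are two cases.

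If $k=1$, the flow of $\xi$ is periodic, so all orbits are closed and the structure is quasi-regular.

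If $k\geq2$, the structure is irregular, and the plan is to perturb $\xi$ inside $\mathfrak{t}$. Fix an integral lattice $\Lambda\subset\mathfrak{t}$, the kernel of $\exp$. Rational directions $\mathbb{Q}\Lambda$ are dense in $\mathfrak{t}$, so I can choose $\xi_{i}\in\mathbb{Q}\Lambda$ with $\xi_{i}\to\xi$. Each $\xi_{i}$ generates a closed one-parameter subgroup, that is, a circle action on $M$.

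The next step is to build the Sasakian structures around $\xi_{i}$. Since $T^{k}$ preserves $\eta$ and $\Phi$, every $X\in\mathfrak{t}$ satisfies $\mathcal{L}_{X}\eta=0$, so $\eta(X)$ is a basic-type function. The function $\eta(\xi)=1$ is positive, and $\eta(\xi_{i})\to1$ uniformly in $C^{\infty}$, so $\eta(\xi_{i})>0$ for large $i$; this is the Sasaki cone condition. Set
\[
\eta_{i}=\frac{\eta}{\eta(\xi_{i})}.
\]
Then $\eta_{i}(\xi_{i})=1$. Because $\xi_{i}$ commutes with $\xi$ and preserves $\eta$, we get $\xi_{i}\lrcorner d\eta_{i}=0$, so $\xi_{i}$ is the Reeb field of $\eta_{i}$. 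Next define $\Phi_{i}=\Phi-\Phi\xi_{i}\otimes\eta_{i}$. On the cone $C(M)$ this is exactly the same complex structure $J$, now with the new radial function $r_{i}=r\,\eta(\xi_{i})^{-1/2}$. Finally set $g_{i}=\tfrac12 d\eta_{i}\circ(\mathrm{Id}\otimes\Phi_{i})+\eta_{i}\otimes\eta_{i}$.

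For the Sasakian property, the cleanest check is on the cone. The form $\tfrac12 dd^{c}r_{i}^{2}$ is Kähler precisely when $\xi_{i}$ lies in the Reeb cone, meaning $\eta(\xi_{i})>0$ on $M$. This is the standard fact that the Sasaki cone is an open convex cone in $\mathfrak{t}$.

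Quasi-regularity of $(\xi_{i},\eta_{i},\Phi_{i},g_{i})$ follows because $\xi_{i}$ generates a circle action. That action is locally free because $\eta_{i}(\xi_{i})=1\neq0$ means $\xi_{i}$ never vanishes.

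Every ingredient depends smoothly and algebraically on $\xi_{i}$ through the smooth function $\eta(\xi_{i})$, and $\eta(\xi_{i})\to1$ in $C^{\infty}$. Hence $(\xi_{i},\eta_{i},\Phi_{i},g_{i})\to(\xi,\eta,\Phi,g)$ in the compact-open $C^{\infty}$ topology.

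For the last sentence of the statement: when $k\geq2$ every rational direction in the open Sasaki cone gives a locally free circle action, and these directions form an infinite family, so $M$ admits many locally free circle actions.

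The main obstacle is verifying that the deformed tensors really form a Sasakian structure, namely that $d\eta_{i}$ is transversely positive with respect to $\Phi_{i}$ and that $\Phi_{i}$ is transversely integrable. I would handle this on the Kähler cone, where it reduces to positivity of $\eta(\xi_{i})$, rather than computing on $M$ directly.
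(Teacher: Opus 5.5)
Your construction is the one the paper records after citing Rukimbira. It takes $\xi_i=\xi+\rho_i$ with $\rho_i\to 0$ in the Lie algebra $\mathfrak{t}$ of the torus closure of the Reeb flow, and $\eta_i=\eta/(1+\eta(\rho_i))=\eta/\eta(\xi_i)$. Your torus, rational-approximation, Reeb-field, quasi-regularity and convergence steps are fine.

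The step that fails as written is the one you call the main obstacle. It is not true that the new structure sits on $C(M)$ with the same $J$ and radial function $r_i=r\,\eta(\xi_i)^{-1/2}$.
\begin{itemize}
\item On $\mathbb{R}^+\times M$: $r_i/r$ depends only on the point of $M$, so $r_i\partial_{r_i}=r\partial_r$. But the cone over $(M,g_i)$ needs $J(r_i\partial_{r_i})=\xi_i\neq\xi=J(r\partial_r)$.
\item Intrinsically on $(C(M),J)$: a cone potential with Reeb field $\xi_i$ must satisfy $\mathcal{L}_{-J\xi_i}r_i=r_i$. This already fails on $\mathbb{C}^{n+1}\setminus\{0\}$ with $\xi_i=\sum_j a_j\partial_{\theta_j}$, $a_j>0$. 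Along the flow line $t\mapsto(e^{a_1t},0,\dots,0)$ of $-J\xi_i$ one has $\eta(\xi_i)\equiv a_1$. So $r\,\eta(\xi_i)^{-1/2}=a_1^{-1/2}e^{a_1t}$, which is not $e^{t}$ times its initial value unless $a_1=1$.
\end{itemize}
So even positivity of $\frac{1}{2}dd^c r_i^2$ for this $r_i$ would not show that $(\xi_i,\eta_i,\Phi_i,g_i)$ is Sasakian. For comparison, the exponent $+\frac{1}{2}$ keeps the cone symplectic form, since $r^2\eta(\xi_i)\,\eta_i=r^2\eta$, but then $J$ changes. With $J$ fixed, the correct radial function $\tilde r$ equals $1$ on $\{r=1\}$ and satisfies $\mathcal{L}_{-J\xi_i}\tilde r=\tilde r$; it induces $\eta_i$ on $M$.

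The repair is short and needs no cone.
\begin{itemize}
\item Since $\ker\eta_i=D$ and $\Phi_i|_D=\Phi|_D$, the CR structure $(D,\Phi|_D)$ is unchanged, hence integrable.
\item Since $d\eta_i|_D=\eta(\xi_i)^{-1}d\eta|_D$, you get $g_i|_D=\eta(\xi_i)^{-1}g|_D$. This is positive and $\Phi$-compatible, with $\xi_i$ a unit normal to $D$. This is exactly where $\eta(\xi_i)>0$ is used.
\item Since $\xi_i\in\mathfrak{t}$ preserves $\eta$ and $\Phi$, it preserves $\eta(\xi_i)$ and $\Phi\xi_i$, hence $\eta_i$, $\Phi_i$ and $g_i$. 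So the structure is $K$-contact.
\end{itemize}
A $K$-contact structure whose CR structure is integrable is normal, i.e.\ Sasakian. This is the standard argument behind the Sasaki-cone fact you invoke.
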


By the second structure theorem of compact Sasakian $5$-manifolds, it suffices
to focus on the compactness properties of a sequence of Sasaki-Ricci solitons
on a compact quasi-regular transverse Fano Sasakian $5$-manifold and its leave
space is normal projective Fano variety with Kawamata log terminal (klt)
singularities. More precisely, we will derive the Sasaki analogue of
compactness theorems of compact K\"{a}hler-Ricci solitons.

Let $(M,\xi_{i},\eta_{i},\Phi_{i},g_{i})$ be a sequence of quasi-regular
Sasakian manifolds. It follows from the second structure theorem
(\cite[Theorem 11.1.7]{bg}) that the irregular Reeb vector field $\xi$ lies in
the commutative Lie subalgebra $\emph{t}(M,\mathbb{F}_{\xi})\subset
\emph{a}(M,g)$ which has dimension $k>1$, there are $\rho_{i}\in
\emph{t}(M,\mathbb{F}_{\xi})$ and smooth positive function
\[%
\begin{array}
[c]{c}%
f_{i}:=\frac{1}{1+\eta(\rho_{i})}%
\end{array}
\]
with $\lim_{i\rightarrow \infty}f_{i}\nearrow1$ and $\lim_{i\rightarrow \infty
}\rho_{i}=0$ such that%
\[%
\begin{array}
[c]{c}%
\xi_{i}=\xi+\rho_{i}\text{ \  \textrm{and} \ }\eta_{i}=\frac{1}{1+\eta(\rho
_{i})}\eta.
\end{array}
\]
Moreover for $\ker \eta_{i}=\ker \eta=D$ and $g=g_{D}+\eta \otimes \eta=\frac
{1}{2}d\eta+\eta \otimes \eta$
\[
g_{i}=f_{i}[g_{D}+f_{i}\eta \otimes \eta]+df_{i}\circ \Phi \wedge \eta-f_{i}%
d\eta \circ(\Phi \rho_{i}\otimes \eta)\otimes \mathbb{I}.
\]
As $i$ larger enough, $g_{i}$ are well defined Riemannian metrics on $M$ which
converge to $g$ with respect to the compact-open $C^{\infty}$-topology and the
transverse Ricci tensor can be read as%
\[
Ric^{T}(g_{i})=\lambda_{i}Ric^{T}(g)+A(f_{i},\rho_{i},g)
\]
where $A(f_{i},\rho_{i},g)$ is a symmetric $2$-tensor depending on $f_{i}%
,\rho_{i},g$ with
\[%
\begin{array}
[c]{c}%
\lim_{i\rightarrow \infty}A(f_{i},\rho_{i},g)=0
\end{array}
\]
and $\lambda_{i}\in C^{\infty}(M)$ with
\[%
\begin{array}
[c]{c}%
\lim_{i\rightarrow \infty}\lambda_{i}=1.
\end{array}
\]

Next we recall a stability result of the Sasaki-Ricci flow as our starting point,

\begin{proposition}
\label{P2026} (\cite{he}) Let $(M,\xi,\eta,\Phi,g_{0})$ be a compact irregular
transverse Fano Sasakian manifold and $(M,\xi^{i},\eta_{i},\Phi_{i},g^{i})$ be
a sequence of quasi-regular Sasakian manifolds such that $(M,\xi^{i},\eta
_{i},\Phi_{i},g^{i})\rightarrow(M,\xi,\eta,\Phi,g_{0})$ in the compact-open
$C^{\infty}$-topology. Suppose that $g(t)$ and $g^{i}(t)$ be the solutions of
the Sasaki-Ricci flow (\ref{2025}) with the initial metric $g_{0}$ and
$g^{i},$ respectively. Then for any $\varepsilon$ and $T\in(0,\infty)$, there
is a constant $N(\varepsilon,k,T)$ such that for $i\geq N$ and $t\in
\lbrack0,T]$,%
\[%
\begin{array}
[c]{c}%
||g^{i}(t)-g(t)||_{C^{k}(M,g)}\leq \varepsilon.
\end{array}
\]

\end{proposition}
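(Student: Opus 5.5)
The statement is a finite-time stability result: on $[0,T]$ the flows $g^i(t)$ converge to $g(t)$ in $C^k$. I would write the normalized Sasaki-Ricci flow (\ref{2025}) for each structure as the parabolic transverse Monge--Amp\`ere equation (\ref{2022-1}) and reduce the claim to continuous dependence of its solutions on the data. Since the structures $(\xi^i,\eta_i,\Phi_i)$ vary with $i$, I first fix a common background. Using $\xi^i=\xi+\rho_i$ with $\rho_i\to0$ in the torus $\emph{t}(M,\mathbb{F}_\xi)$, and $\eta_i=f_i\eta$ with $f_i\to1$, all structures share the contact distribution $D=\ker\eta$. Transverse tensors are then sections of bundles over $D$, so I compare $g^i(t)|_D$ and $g(t)|_D$ directly and add back the $\eta_i\otimes\eta_i$ part, which converges because $f_i\to1$ in $C^\infty$.

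The second step is uniform estimates on $[0,T]$, with $T$ fixed. On a finite interval no long-time theory is needed. The transverse parabolic Monge--Amp\`ere equation has $C^0$, $\dot\varphi$, Laplacian (Aubin--Yau/Cao) and Calabi $C^3$ estimates, each depending only on $T$ and on bounds for the initial transverse geometry: a $C^k$ bound on $g^i_0$, a lower bound for $g^{i,T}_0$, and a bound on $\|u^i(0)\|_{C^k}$. These data converge in $C^\infty$ to those of $g_0$, so the bounds are uniform in $i$. Schauder bootstrapping then gives $\|g^i(t)\|_{C^{k+2}}\le C(k,T)$, together with $g^{i,T}(t)\ge C(T)^{-1}g^{i,T}_0$. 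The transverse Laplacian $\Delta_B$ and the basic projection depend smoothly on $\xi^i$, so the constants in the parabolic estimates are uniform as well.

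The third step is to conclude by compactness and uniqueness. Using the uniform $C^{k+2}$ bounds, Arzel\`a--Ascoli yields, along any subsequence, a further subsequence with $g^i(t)\to\tilde g(t)$ in $C^{k}([0,T]\times M)$. Passing to the limit in the equations shows that $\tilde g(t)$ is basic with respect to $\xi$: the condition $\mathcal L_{\xi^i}\varphi_i=0$ passes to the limit because $\xi^i\to\xi$. It is also transversely K\"ahler for $\Phi$, solves (\ref{2025}) for the limit structure, and has initial value $g_0$. Uniqueness of the transverse parabolic Monge--Amp\`ere flow, by the maximum principle applied to the difference of potentials, gives $\tilde g=g$. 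Since every subsequence has a further subsequence converging to the same limit, the whole sequence converges, and this produces the threshold $N(\varepsilon,k,T)$.

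The main obstacle is the second step. Even the natural choice of background potential changes with $i$: the reference form becomes $d\eta_i=f_id\eta+df_i\wedge\eta$, and the basic cohomology class and the transverse Ricci potential $u^i(0)$ change with $\xi^i$. The issue is that the estimates must not depend on the quasi-regularity of $\xi^i$, whose orbifold quotients degenerate as $i\to\infty$. I would handle this by working entirely on $M$ with foliated charts adapted to $D$, never passing to the leaf spaces. I would normalize $u^i(0)$ as in (\ref{2022-1}) and use $Ric^T(g_i)=\lambda_iRic^T(g)+A$ with $A\to0$ and $\lambda_i\to1$. This shows that the Monge--Amp\`ere data converge smoothly, so the constants in the maximum-principle estimates are continuous in $i$.
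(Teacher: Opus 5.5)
The paper does not prove this proposition. It quotes it from \cite{he}, so there is no in-paper argument to compare against. Your scheme is correct and self-contained: uniform-in-$i$ estimates for the transverse parabolic Monge--Amp\`ere equation on the fixed interval $[0,T]$, then Arzel\`a--Ascoli, then identification of every subsequential limit through uniqueness of the flow for the limit structure $(\xi,\eta,\Phi)$. What it buys is that every constant is continuous in the data $(\xi^i,\Phi_i,g^i,u^i(0))$, and nothing uses the quasi-regularity of $\xi^i$. That is exactly what is needed, since the orbifold quotients degenerate. A perturbative alternative, the inverse function theorem applied at the known solution on $[0,T]$, would avoid a priori estimates altogether. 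However, the space of $\xi^i$-basic functions changes with $i$, so that route first requires recasting the equation as strictly parabolic on $M$ (e.g.\ by adding a $\xi^i\xi^i$ term) and then recovering basicness from uniqueness. Your limiting argument avoids that step.

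Three statements in your write-up need correcting; none affects the strategy.

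First, the vertical part of $g^i(t)$ is $\eta_i(t)\otimes\eta_i(t)$ with $\eta_i(t)=\eta_i+d^c_B\varphi_i(t)$, where $d^c_B$ is taken with respect to $\Phi_i$. So $\ker\eta_i(t)$ leaves $D$ for $t>0$. This part converges because $\varphi_i\to\varphi$ in $C^{k+1}$, not merely because $f_i\to1$. You should therefore run Arzel\`a--Ascoli and the uniqueness step on the normalized potentials $\varphi_i$ (with $\varphi_i(0)=0$), not on the transverse metrics alone. The common $D$ is only bookkeeping: it stays transverse to $\xi^i$, and it is not part of the general hypothesis of compact-open convergence.

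Second, the projection onto $\xi$-basic functions (averaging over the closure of the Reeb flow) does not depend continuously on $\xi$, let alone smoothly. Do not cite it as the reason the constants are uniform. The right reason is that on $\xi^i$-basic functions the transverse Laplacian of $g^i(t)$ agrees, up to normalization, with the Riemannian Laplacian of the Sasakian metric $g^i(t)$. Hence the maximum-principle estimates, the Schauder estimates, and the elliptic problem $\Delta_{g^i}u^i(0)=n-R^T(g^i)$ can all be handled on $M$ with constants uniform in $i$.

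Third, $u^i(0)$ in (\ref{2022-1}) exists only if $[\omega^i_0]_B=2\pi c_1^B(\mathcal{F}_{\xi^i})$. This is not automatic, because the ratio between $c_1^B(\mathcal{F}_{\xi'})$ and $[d\eta']_B$ varies over the Sasaki cone. Since the CR structure on $D$ is unchanged, $c_1(D)=0$ persists and the ratio tends to the limiting one. So either take the normalization as part of the hypothesis, as the paper does throughout, or write the flow with a time-dependent reference form in $[\omega^i(t)]_B$. That class stays positive on $[0,T]$ for large $i$, and the estimates are unaffected.
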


Now we are ready to study the Sasaki analogue of compactness theorem of
K\"{a}hler-Ricci solitons due to Tian-Zhang \cite{tz1}, Guo-Phong-Song-Sturm
\cite{gpss}, Cheltsov-Shramov \cite{cs}, Demailly \cite{d1}, Birkar
\cite{bir}, and then the second Sasakian structure theorem.
\begin{theorem}
\label{T55} (Compactness Theorem) Let $\{(M_{i},\xi_{i},\eta_{i},g_{i}%
,V_{i})\}$ be a sequence of Sasaki-Ricci soliton on compact quasi-regular
transverse Fano Sasakian $5$-manifold and its leave space is normal projective
Fano variety with codimension two Kawamata log terminal (klt) singularities as
in Proposition \ref{T66}. Then after possibly passing to subsequence,
$(M_{i},g_{i})$ converges in the Cheeger-Gromov topology to a compact metric
length space $(M_{\infty},d_{\infty})$ with the transverse distance function
$d_{\infty}^{T}$ satisfying the following

\begin{enumerate}
\item $(M_{i},\xi_{i},\eta_{i},g_{i}^{T},V_{i})$ converges smoothly to a
compact gradient Sasaki-Ricci soliton $(M_{\infty},\xi,\eta_{\infty}%
,g_{\infty}^{T},V_{\infty})$. Moreover, $g_{i}^{T}$ converges to a smooth
Sasaki-Ricci soliton metric in the Cheeger-Gromov topology on $M_{\infty
}\backslash \Sigma_{\infty}^{T}$ and $(M_{\infty},d_{\infty}^{T})$ coincides
with the metric completion of $(M_{\infty}\backslash \Sigma_{\infty}%
^{T},g_{\infty}^{T})$. Here $\Sigma_{\infty}^{T}$ is the singular set of
$\mathbb{S}^{1}$-fibres of $\{ \xi_{p_{1}},\cdots,\xi_{p_{N}}\}$.

\item $(M_{\infty},\xi,\eta_{\infty},g_{\infty}^{T},V_{\infty})$ is a
transverse $\mathbb{Q}$-Fano Sasakian manifold with foliation Kawamata log
terminal singularities at $\Sigma_{\infty}^{T}$.

\item The Sasaki-Ricci soliton metric $g_{\infty}^{T}$ extends to a transverse
K\"{a}hler current on $M_{\infty}$ with bounded local potential
and\ $V_{\infty}$ extends to a global Hamiltonian holomorphic vector field on
$M_{\infty}$.
\end{enumerate}
\end{theorem}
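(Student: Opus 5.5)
The plan is to push everything down to the leaf spaces and argue there, following Guo--Phong--Song--Sturm \cite{gpss}. By the first structure theorem, each $M_i$ is an $\mathbb{S}^1$-orbibundle $\pi_i:M_i\to Z_i$. Here $Z_i$ is a normal projective del Pezzo orbifold surface with klt singularities, and
\[
g_i^T=\pi_i^{\ast}h_i,\qquad Ric(h_i)+\mathcal{L}_{W_i}h_i=h_i,
\]
so that $h_i$ is a Kähler--Ricci soliton on $Z_i$ with soliton field $W_i=(\pi_i)_{\ast}V_i$. Every transverse quantity is basic, and integrals over $M_i$ reduce to integrals over $Z_i$ through (\ref{2022-a}), exactly as in the proof of Theorem \ref{T65}. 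Consequently each of the three assertions follows from the matching statement for the solitons $(Z_i,h_i,W_i)$, together with a lifting of the limit to an $\mathbb{S}^1$-orbibundle.

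\textbf{Uniform estimates.} The key is that the family $\{Z_i\}$ is bounded. The surfaces are klt del Pezzo, and I first want a uniform positive lower bound on their $\alpha$-invariants, or equivalently on their global log canonical thresholds (Tian--Zhang \cite{tz1}, Cheltsov--Shramov \cite{cs}, Demailly \cite{d1}). With such a bound, Birkar's boundedness of $\epsilon$-lc Fano varieties \cite{bir} places the $Z_i$ in a bounded family. In particular, a fixed power $-mK_{Z_i}$ embeds every $Z_i$ into a common $\mathbb{CP}^N$, and the volumes $c_1(Z_i)^2$ are uniformly bounded.

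Next, write $h_i=\omega_{FS}|_{Z_i}+\sqrt{-1}\partial\bar\partial\varphi_i$. Using the uniform $\alpha$-invariant bound in the soliton Monge--Ampère equation, as in \cite{gpss}, I aim for the following uniform estimates:
\begin{itemize}
\item a bound on $\|\varphi_i\|_{C^0}$;
\item a bound on $\|\theta_i\|_{C^0}$, where $\theta_i$ is the Hamiltonian of $W_i$;
\item a bound on $|W_i|_{h_i}$, obtained from the Zhu/Tian--Zhu type estimates for the soliton potential.
\end{itemize}
The soliton potential $u_i=\theta_i$ satisfies $\Delta u_i-|\nabla u_i|^2+u_i=\mathrm{const}$, which gives a uniform bound on the scalar curvature. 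Combined with the diameter and noncollapsing bounds of Lemma \ref{L61} and the $L^p$ Ricci bound (here the soliton equation gives $Ric^T=g^T-\mathrm{Hess}^T u$, and $\mathrm{Hess}^T u$ is controlled in $L^4$ as in Theorem \ref{T51}), this puts us in the setting of Theorem \ref{T61}.

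\textbf{The limit.} Theorem \ref{T61} produces a Cheeger--Gromov limit $M_\infty\to Z_\infty$ that is an $\mathbb{S}^1$-orbibundle, with singular set of complex codimension two. Since the $h_i$ are exact solitons rather than flow slices, the regularity argument of Theorems \ref{T63} and \ref{T64} applies directly. It shows that $g_\infty^T$ is a smooth soliton on $(M_\infty)_{reg}$ and that the convergence is smooth there.

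The partial $C^0$-estimate of Theorem \ref{T65} holds with uniform constants, since its proof only used the Sobolev bound, the $C^0$ and gradient bounds on $u$, and the $L^2$-estimate. It identifies $Z_\infty$ with a normal subvariety of $\mathbb{CP}^N$, and in fact with a member of Birkar's bounded family, which gives a $\mathbb{Q}$-Fano klt surface. The singular set is then the finite set $\{x_1,\dots,x_N\}$, and taking $\Sigma^T_\infty$ to be the union of their $\mathbb{S}^1$-fibres gives (1) and (2). That $(M_\infty,d_\infty^T)$ is the metric completion of the regular part follows because the singular set has codimension at least four in the real sense and the limit is volume-noncollapsed.

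For (3), the uniformly bounded potentials $\varphi_i$ converge, after passing to a subsequence, in $L^1$ to a bounded potential $\varphi_\infty$. This is smooth off the singular set and defines a Kähler current on $Z_\infty$ with bounded local potential; pulling back by $\pi$ gives a transverse current on $M_\infty$. The fields $W_i$ have bounded Hamiltonians and lie in the reductive parts of automorphism groups of a bounded family, so they converge to a holomorphic field $W_\infty$ on $Z_{\infty,reg}$. Because $Z_\infty$ is normal and $\mathcal{S}$ has codimension two, Hartogs extension makes $W_\infty$ a global holomorphic Hamiltonian field, and lifting it gives $V_\infty$.

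\textbf{Main obstacle.} The hardest step is the uniform lower bound for the $\alpha$-invariant, which is what makes Birkar's theorem applicable. Birkar needs an $\epsilon$-lc bound, and it is not automatic that a sequence of soliton klt surfaces is uniformly $\epsilon$-lc. My first attempt will be to derive a uniform lower bound on the local volume, and hence the discrepancy, at the orbifold points from the noncollapsing bound of Lemma \ref{L61}, through the volume-density comparison for tangent cones $\mathbb{C}^2/\Gamma$; this bounds $|\Gamma|$ and hence the log discrepancies. If that works, boundedness and the lct bound follow from \cite{bir}.
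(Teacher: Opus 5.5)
Your proposal has a genuine gap in the ``Uniform estimates'' step: nothing in it makes the constants independent of $i$.

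\textbf{The estimates are not uniform.} Lemma \ref{L61} and Theorem \ref{T51} concern a single Sasaki--Ricci flow, and their constants depend on the initial metric. Applied to the self-similar flow of $g_i$, they therefore depend on $i$. The three bulleted bounds are not what \cite{gpss} prove, and none is available a priori:
\begin{enumerate}
\item a lower bound on $\alpha$ controls $\int e^{-\alpha(\varphi-\sup\varphi)}$, not the oscillation of a soliton potential;
\item Zhu-type bounds on $\theta_i$ depend on the soliton field, which changes with $i$;
\item a $C^0$ bound on $\varphi_i$ relative to a Bergman embedding is the partial $C^0$-estimate itself, which is an output of the noncollapsed theory, not an input.
\end{enumerate}

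\textbf{The missing idea.} The heart of the paper's argument, as in \cite{gpss}, is a uniform lower bound on Perelman's entropy. You use the $\alpha$-bound only to get boundedness of the family; the paper also uses it analytically.
\begin{enumerate}
\item Through the continuity path of Theorem \ref{T2025}, it produces $\widehat{\omega}_i\in c_1^B(M_i)$ with $Ric^T(\widehat{\omega}_i)\geq\alpha_1\widehat{\omega}_i$.
\item Myers, the volume bounds and Croke then give a uniform Sobolev constant, hence $\mu(\widehat{g}_i^T)\geq -A$.
\item By Proposition \ref{P77}, the flow starting at $\widehat{g}_i^T$ converges to the soliton $g_i^T$. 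Since $\mu$ is nondecreasing along the flow, $\mu(g_i^T)\geq -A$.
\end{enumerate}
This entropy bound is exactly what the degeneration theory of \cite{tz2}, \cite{gpss} needs to give uniform noncollapsing, diameter and soliton-potential bounds. Only after that do Theorems \ref{T61}--\ref{T65} apply with uniform constants.

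\textbf{The main obstacle.} The obstacle you flag is real, but your proposed fix is circular: the noncollapsing constant of Lemma \ref{L61} is not uniform in $i$ either. In fact, no argument from the stated hypotheses can give a uniform $\epsilon$-lc bound. Consider the weighted Sasakian structures on $\mathbb{S}^5$ with weights $(1,1,r)$.
\begin{enumerate}
\item The leaf space is $\mathbb{P}(1,1,r)$, which is well-formed; its only singular point is of type $\frac{1}{r}(1,1)$, with log discrepancy $2/r$.
\item Being toric, these structures carry Sasaki--Ricci solitons \cite{fow}, \cite{sz}.
\item Yet $\alpha(\mathbb{P}(1,1,r))\leq 1/(r+2)$ (take $(r+2)\{x_0=0\}\in|-K|$), and $c_1^2=(r+2)^2/r\to\infty$.
\end{enumerate}
So an extra uniform hypothesis must be added, for example a bound on the orders of the local uniformizing groups. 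The paper's Corollary \ref{C2025-2} silently needs the same thing, since its remark only provides an $\epsilon$ depending on each $Z$. Under such a hypothesis, your leaf-space reduction combined with the entropy argument above does go through.
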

The key ingredient is to apply the algebraic and analytic Invariants: $\alpha
$-invariant due to Tian and log canonical threshold due to Cheltsov-Shramov
and Demailly, and a uniform positive lower bound of the log canonical
threshold due to Birkar.

\begin{definition}
Let $M$ be an $(2n+1)$-dimensional compact transverse Fano Sasakian manifold
with an ample basic transverse holomorphic line bundle $((K_{M}^{T})^{-1},h)$
with the curvature $\omega$ which is a transverse K\"{a}hler metric in
$c_{1}^{B}(M).$

\begin{enumerate}
\item We define transverse Tian's $\alpha$-invariant (\cite{zh1})
\[%
\begin{array}
[c]{ccl}%
\alpha(M,\mathcal{F}_{\xi}) & = & \sup \{ \alpha>0|\text{ }\exists \text{
}C(\alpha)>0,\text{ such that }\int_{M}e^{-\alpha(\varphi-\sup \varphi)}%
\omega^{n}\wedge \eta \leq C,\\
&  & \text{for all }\varphi \in C_{B}^{\infty}(M,\mathbb{R})\text{ with }%
\omega+\sqrt{-1}\partial_{B}\overline{\partial}_{B}\varphi>0\}.
\end{array}
\]

\item The transverse Demailly-Koll\'{a}r complex singularity exponent for the
quasi-regular Sasakian manifold $M$ (\cite{dk})
\[%
\begin{array}
[c]{ccl}%
c(\varphi) & = & \sup \{ \alpha>0|\text{ }\int_{M}e^{-\alpha \varphi}\omega
^{n}\wedge \eta<\infty,\text{ for the upper }\\
&  & \text{semicontinuous basic }L^{1}\text{-function }\varphi \text{ with
}\omega \text{-PSH}\\
&  & \text{in the sense of distribution with }\sup_{M}\varphi=0\}.
\end{array}
\]
\end{enumerate}
\end{definition}
\begin{theorem}
\label{T2025} Let $(M,\eta,\xi,\Phi,g,\omega)$ be an $(2n+1)$-dimensional
compact Sasakian manifold with $2\pi c_{1}^{B}(M,\mathcal{F}_{\xi})=[\omega]$.
If
\[%
\begin{array}
[c]{c}%
\alpha(M,\mathcal{F}_{\xi})>\frac{n}{n+1},
\end{array}
\]
then there exists a Sasakian-Einstein metric on $M$.\ That is, $(\ast)_{t}$
can be solved for all $t\in \lbrack0,\frac{n+1}{n}\alpha(M)).$
\end{theorem}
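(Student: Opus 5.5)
We follow Tian's classical argument for the $\alpha$-invariant criterion, carried out in the basic (transverse) setting via the continuity method. Consider the family of transverse complex Monge--Amp\`{e}re equations
\[
(\ast)_{t}\qquad (\omega+\sqrt{-1}\partial_{B}\overline{\partial}_{B}\varphi)^{n}\wedge\eta=e^{u(0)-t\varphi}\omega^{n}\wedge\eta,\qquad t\in[0,1],
\]
for basic functions $\varphi$. Here $u(0)$ is the transverse Ricci potential, normalized as in (\ref{2022-1}). A solution at $t=1$ gives $Ric^{T}(\omega_{\varphi})=\omega_{\varphi}$. After a $D$-homothetic deformation this yields a Sasaki--Einstein metric. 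Let $S=\{t\in[0,1]:(\ast)_{t}\text{ has a smooth basic solution}\}$.

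\emph{Step 1: $0\in S$ and $S$ is open.} At $t=0$ we solve the transverse Calabi--Yau equation by El Kacimi-Alaoui's basic Yau theorem. For openness at $t>0$ we use $Ric^{T}(\omega_{\varphi_t})=t\omega_{\varphi_t}+(1-t)\omega>t\omega_{\varphi_t}$. By the basic Bochner formula the first nonzero eigenvalue of $-\Delta_{B}$ on basic functions is $>t$. Hence the linearization $\Delta_{B,\varphi}+t$ is invertible on basic H\"older spaces, and the implicit function theorem (basic version) gives openness.

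\emph{Step 2: a priori $C^{0}$ bound, which is the main step.} Fix $t_{0}<\min\{1,\frac{n+1}{n}\alpha\}$; by hypothesis $\frac{n+1}{n}\alpha>1$, so every $t\in[0,1]$ can be covered. We need $\|\varphi_{t}\|_{C^0}$ bounded for $t\le t_0$. Two inputs from the Green function are used:
\begin{itemize}
\item[(a)] $\Delta_{B}\varphi>-n$ gives $\sup\varphi\le \frac1V\int\varphi\,\omega^n\wedge\eta+C$.
\item[(b)] Since $Ric^T(\omega_{\varphi_t})\ge t\omega_{\varphi_t}$, the Bishop/Myers and Sobolev bounds for the transverse metric give $-\inf\varphi\le -\frac1V\int\varphi\,\omega_{\varphi}^n\wedge\eta+C$.
\end{itemize}
These reduce the problem to bounding $\mathrm{osc}\,\varphi$ via the functionals $I,J$ (basic Aubin functionals, defined with the measure $\omega^n\wedge\eta$). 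The standard Tian inequality is obtained by differentiating $(\ast)_t$ in $t$ and using monotonicity of $I-J$. It gives
\[
\frac{n}{n+1}\cdot\frac1V\int\varphi\,\omega_\varphi^n\wedge\eta\;\le\;(I-J)\text{-type terms}\;\le\;\frac{1}{t}\Big(\sup\varphi\Big)+\dots
\]
We combine this with the $\alpha$-invariant bound $\int_M e^{-\alpha'(\varphi-\sup\varphi)}\omega^n\wedge\eta\le C$ for $\alpha'<\alpha$, applied via Jensen to
\[
\int e^{-t\varphi+u(0)}\omega^n\wedge\eta = V .
\]
This yields
\[
\Big(t-\tfrac{n}{n+1}\alpha'\,\Big)\dots\le C,
\]
that is, the estimate $\sup\varphi\le C$ and then $\mathrm{osc}\,\varphi\le C$ exactly when $t<\frac{n+1}{n}\alpha'$. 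The main obstacle is to check that every integration by parts and every Jensen step only involves basic quantities. Our claim is that $\omega^n\wedge\eta$ integrates basic $2n$-forms like a volume form on a (local) leaf space, so the K\"ahler computations carry over verbatim. This is the same mechanism invoked in the proof of Theorem \ref{T65} via (\ref{2022-a}).

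\emph{Step 3: higher estimates and closedness.} Given the $C^0$ bound, the transverse Yau $C^2$ estimate and the Aubin--Yau Laplacian estimate hold for basic functions with the transverse curvature of $g^{T}$. The Evans--Krylov/Calabi $C^{2,\alpha}$ estimate then holds in foliated charts, since basic functions are functions of the transverse variables. Bootstrapping gives uniform $C^{k}$ bounds, so $S$ is closed. Hence $S\supset[0,\min\{1,\frac{n+1}{n}\alpha\})$, and since $\alpha>\frac{n}{n+1}$ we get $1\in S$, which produces the Sasaki--Einstein metric.
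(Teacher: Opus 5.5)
Your route is the paper's: the continuity path $(\ast)_t$, openness from $\lambda_1>t$, a $C^0$ bound from Green-function estimates combined with the $\alpha$-invariant, then transverse Yau/Evans--Krylov. The paper itself only quotes the Siu--Tian ``Harnack estimate'' in the form used by Zhang and Demailly--Koll\'ar.

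\textbf{The gap is in Step 2.} This is the only place where $\alpha>\frac{n}{n+1}$ enters, and as written it does not work. Your first display is not the right inequality: the quantity to control is $\frac1V\int_M(-\varphi)\,\omega_\varphi^n\wedge\eta$, and no factor $\frac1t$ appears. Your final factor $(t-\frac{n}{n+1}\alpha')$ would bound $\sup\varphi$ for $t>\frac{n}{n+1}\alpha'$, not for $t<\frac{n+1}{n}\alpha'$ as you conclude. The inequality you need is
\[
\frac1V\int_M(-\varphi_t)\,\omega_t^n\wedge\eta\le n\sup_M\varphi_t ,\qquad \omega_t=\omega+\sqrt{-1}\partial_{B}\overline{\partial}_{B}\varphi_t .
\]
To get it, differentiate $\int_M e^{u(0)-t\varphi_t}\omega^n\wedge\eta=V$ to obtain $\int_M(\varphi_t+t\dot\varphi_t)\,\omega_t^n\wedge\eta=0$. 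This gives
\[
\frac{d}{dt}\Big[t\Big(J(\varphi_t)-\frac1V\int_M\varphi_t\,\omega^n\wedge\eta\Big)\Big]=-(I-J)(\varphi_t)\le 0 ,
\]
so $J(\varphi_t)\le\frac1V\int_M\varphi_t\,\omega^n\wedge\eta$. Then $J\ge\frac{1}{n+1}I$ turns this into the display; only $I-J\ge0$ is needed, not monotonicity.

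Next, apply Jensen with respect to $\omega_t^n\wedge\eta/V$ to $\int_M e^{-\alpha'(\varphi_t-\sup\varphi_t)}e^{t\varphi_t-u(0)}\,\omega_t^n\wedge\eta\le C$. This gives $\alpha'\sup\varphi_t+(t-\alpha')\frac1V\int_M\varphi_t\,\omega_t^n\wedge\eta\le C$. For $t\ge\alpha'$, combining with the display yields
\[
\big((n+1)\alpha'-nt\big)\sup_M\varphi_t\le C .
\]
For $t\le\alpha'$, use $\frac1V\int_M\varphi_t\,\omega_t^n\wedge\eta\le\sup_M\varphi_t$ (from $I\ge0$) to get $t\sup_M\varphi_t\le C$. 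Your (b) then gives $-\inf\varphi_t\le n\sup\varphi_t+C$, which is exactly the Harnack estimate the paper invokes.

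\textbf{Smaller points.}
\begin{enumerate}
\item Taking $\alpha'\in(\frac{n}{n+1},\alpha)$ makes $(n+1)\alpha'-nt$ uniformly positive on $[\delta,1]$, and this is what gives closedness at $t=1$. Your choice $t_0<\min\{1,\frac{n+1}{n}\alpha\}$ only yields $S\supset[0,1)$.
\item Solvability of $(\ast)_0$ requires $\int_M e^{u(0)}\omega^n\wedge\eta=V$, not the $e^{-u(0)}$ normalization of (\ref{2022-1}).
\item In (b) there are two obstacles. In the irregular case no manifold carries $g^T_{\varphi_t}$, and $g_{\varphi_t}$ itself has $Ric=Ric^T-2g^T$ on $\ker\eta_{\varphi_t}$. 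So apply Myers and the Green-function lower bound to the $D$-homothetic metric $a\,g^T_{\varphi_t}+a^2\eta_{\varphi_t}\otimes\eta_{\varphi_t}$ with $a<t/2$. This metric has positive Ricci curvature, and its Laplacian on basic functions is $a^{-1}\Delta_{B,\varphi_t}$. This requires $t\ge\delta>0$; small $t$ is covered by openness at $t=0$.
\end{enumerate}
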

\begin{proof}
For a basic function $F$ with%
\[
\rho^{T}=\omega+\sqrt{-1}\partial_{B}\overline{\partial}_{B}F,
\]
the existence of Sasaki-Einstein metric with $c_{1}^{B}>0$ can be reduced to
find a new Sasaki-metric $\widetilde{\omega}=\omega+\sqrt{-1}\partial
_{B}\overline{\partial}_{B}\varphi$ such that
\[%
\begin{array}
[c]{c}%
\log \frac{\widetilde{\omega}^{n}\wedge \eta}{\omega^{n}\wedge \eta}%
=-\varphi+F\text{, }\int_{M}\varphi \omega^{n}\wedge \eta=0.
\end{array}
\]
By the continuity method, we consider the following a family of equations%
\begin{equation}%
\begin{array}
[c]{c}%
\log \frac{\widetilde{\omega}^{n}\wedge \eta}{\omega^{n}\wedge \eta}=-t\varphi+F
\end{array}
\tag{$(\ast)_t$}%
\end{equation}
for $t\in \lbrack0,1].$ By the Harnack estimate as in the paper of Zhang
\cite{zh1} and Demailly-Koll\'{a}r \cite{dk} which is due to Siu \cite{siu}
and Tian \cite{t3}, $(\ast)_{t}$ can be solved up to $t$ if
\[%
\begin{array}
[c]{c}%
\int_{M}e^{-\gamma t\varphi}\omega^{n}\wedge \eta<\infty
\end{array}
\]
for any choice of $\gamma \in(\frac{n}{n+1},1).$ Then our Theorem \ref{T2025}
follows easily.
\end{proof}
It follows easily from Demailly-Koll\'{a}r \cite{dk} that

\begin{corollary}
Let $(M,\eta,\xi,\Phi,g,\omega)$ be an $(2n+1)$-dimensional compact transverse
Fano quasi-regular Sasakian manifold with $2\pi c_{1}^{B}(M,\mathcal{F}_{\xi
})=[\omega]$. Then
\begin{equation}
\alpha(M,\mathcal{F}_{\xi}):=\inf \{c(\varphi)|\varphi \text{ is }%
\omega \text{-PSH}\} \text{.} \label{1}%
\end{equation}
\end{corollary}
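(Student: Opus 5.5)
The plan is to reduce everything to the leaf space $Z$ via the first structure theorem, apply the Demailly--Koll\'{a}r identification of the $\alpha$-invariant with the log canonical threshold of the orbifold $Z$, and then pull the result back to basic functions on $M$. Since $M$ is quasi-regular, $\pi:M\rightarrow Z$ is an $\mathbb{S}^{1}$-orbibundle over a normal projective orbifold $Z$, and $\omega=\pi^{\ast}\omega_{h}$ with $[\omega_{h}]=2\pi c_{1}^{orb}(Z)$. Basic functions on $M$ are exactly pullbacks of functions on $Z$. Basic $\omega$-PSH functions (upper semicontinuous, basic, $L^{1}$) correspond bijectively to $\omega_{h}$-PSH functions on $Z$ in the orbifold sense.

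The key step is the fibre integration identity. For any basic function $\psi=\pi^{\ast}\psi_{Z}$ we have
\[
\int_{M}e^{-\psi}\omega^{n}\wedge\eta=\ell\int_{Z}e^{-\psi_{Z}}\omega_{h}^{n},
\]
where $\ell$ is the length of a generic orbit. The generic orbit has constant length, and the exceptional orbits lie over a measure-zero set, so the constant $\ell$ does not affect finiteness or uniform bounds. By this identity, both $\alpha(M,\mathcal{F}_{\xi})$ and $c(\varphi)$ coincide with the corresponding quantities for $(Z,\omega_{h})$: Tian's $\alpha$-invariant $\alpha(Z)$ and the complex singularity exponent $c(\varphi_{Z})$. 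This identification is done locally in orbifold charts $\widetilde{U}/G$, where the integrals differ only by the factor $|G|$.

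It therefore suffices to prove $\alpha(Z)=\inf\{c(\varphi_{Z})\}$ on the orbifold $Z$. This is Demailly's theorem in \cite{dk}, which is proved via the semicontinuity theorem of complex singularity exponents. One inequality is immediate from the definitions: if $\alpha<\alpha(Z)$, then $\int e^{-\alpha(\varphi-\sup\varphi)}$ is uniformly bounded over smooth potentials. Each $\omega$-PSH $\varphi$ is a decreasing limit of smooth $\omega$-PSH functions; on a projective orbifold this follows from Demailly regularization in local uniformizing charts, or from Błocki--Kołodziej. Fatou's lemma then gives $c(\varphi)\geq\alpha$. Since $\alpha<\alpha(Z)$ was arbitrary, $\inf c\geq\alpha(Z)$.

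The main obstacle is the reverse inequality. Let $c_{0}=\inf c(\varphi)$ and suppose $\alpha<c_{0}$ but no uniform bound holds. Then there are normalized $\varphi_{j}$ with $\int e^{-\alpha\varphi_{j}}\rightarrow\infty$. By compactness of normalized $\omega$-PSH functions in $L^{1}$, a subsequence converges to some $\varphi$ with $c(\varphi)\geq c_{0}>\alpha$. The Demailly--Koll\'{a}r effective semicontinuity theorem then yields $e^{-\alpha\varphi_{j}}\rightarrow e^{-\alpha\varphi}$ in $L^{1}$, which is a contradiction. The difficulty is justifying semicontinuity at orbifold points. I would handle this by working in finite uniformizing charts $\widetilde{U}\rightarrow\widetilde{U}/G$. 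There $\omega$-PSH functions lift to $G$-invariant quasi-PSH functions on smooth balls, the integrability of $e^{-\alpha\varphi}$ is preserved up to the factor $|G|$, and the smooth local theorem of \cite{dk} applies verbatim. A finite cover of $Z$ by such charts then gives the global statement, and transporting it back through the fibre integration identity proves (\ref{1}).
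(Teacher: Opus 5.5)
Your proposal is correct and follows the same route as the paper. The paper simply says the identity follows from Demailly--Koll\'{a}r \cite{dk}, after implicitly passing to the orbifold leaf space $Z$ as it does explicitly in the proof of the subsequent $\alpha$/lct theorem; your fibre-integration reduction and your sketch of both inequalities (regularization with Fatou, and effective semicontinuity) spell out what the paper leaves to the reference. Note also that \cite{dk} is itself formulated for compact K\"{a}hler orbifolds (it treats Fano orbifolds), so the uniformizing-chart argument you identify as the main obstacle is already covered there.
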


\begin{definition}
(\cite{cs}, \cite{d1}) A log pair $(Z,\Delta)$ is consisting of a connected
compact normal projective variety $Z$ and an effective $\mathbb{Q}$-divisor
$\Delta$ such that $(K_{Z}+\Delta)$ is $\mathbb{Q}$-Cartier. Let $D$ be a
effective $\mathbb{Q}$-Cartier divisor on $Z.$ The critical value of $t$ such
that $(Z,\Delta+tD)$ is log canonical for a small $t$ but not klt for a larger
$t$ is called the log canonical threshold with respective to $D.$ For the
simplicity, let $(Z,\emptyset)$ be a normal projective variety with klt
singularities and $D$ be a effective $\mathbb{Q}$-Cartier divisor. The log
canonical threshold of $D$ is defined by%
\[
lct(Z,D):=\sup \{t\in \mathbb{Q}\mathbf{:}\text{ }(Z,tD)\text{ \textrm{is log
canonical}}\} \text{.}%
\]
\end{definition}
Now we consider a compact transverse Fano quasi-regular Sasakian manifold and
its leave space which is a normal projective orbifold $Z$ as above. Then the
submersion $\pi:(M,D^{T})\rightarrow(Z,D)$ is the corresponding $\mathbb{S}%
^{1}$-orbibundle over the normal projective orbifold $Z$ and $D^{T}$ is the
corresponding transverse effective $\mathbb{Q}$-divisor such that $\pi_{\ast
}D^{T}=D.$

\begin{definition}
(\ref{d63}) Let $M$ be a compact transverse Fano quasi-regular Sasakian
manifold and its leave space which is a normal projective orbifold $Z,$ the
corresponding log canonical threshold of $D^{T}$ is then defined by%
\[
lct(M,D^{T}):=\sup \{t\in \mathbb{Q}\mathbf{:}\text{ }(M,tD^{T})\text{
\textrm{is log canonical}}\} \text{.}%
\]
\end{definition}

Let $M$ be a compact transverse Fano quasi-regular Sasakian manifold. To
relate $\alpha(M)$ to algebraic geometry, let $D^{T}$ be any nonzero basic
divisor in the linear series $|m(K_{M}^{T})^{-1}|$. Therefore there is a
global nonzero basic transverse holomorphic section $S^{T}$ of $H^{0}%
(M,(K_{M}^{T})^{-m})$ with zero basic divisor equal to $D^{T}$. Since $S^{T}$
is unique up to scaling, we will rescale it with respect to $((K_{M}^{T}%
)^{-m},h^{m},m\omega)$ such that $\int_{M}||S^{T}||_{h^{m}}^{2}\omega
^{n}\wedge \eta=1.$

Note that the function $\frac{1}{m}\log||S^{T}||_{h^{m}}^{2}$ is $\omega$-PSH
due to the transverse Poincar\'{e}-Lelong formula%
\[%
\begin{array}
[c]{c}%
\omega+\frac{1}{m}\sqrt{-1}\partial_{B}\overline{\partial}_{B}\log
||S^{T}||_{h^{m}}^{2}=\frac{1}{m}[D^{T}],
\end{array}
\]
where $[D^{T}]$ is the current of integration along $D^{T}$.

\begin{theorem}
(\cite{cs}, \cite{d1}) Let $M$ be a compact transverse Fano quasi-regular
Sasakian manifold of foliation klt singularities. The $\alpha$-invariant and
log canonical threshold of $D^{T}$ is related by
\[%
\begin{array}
[c]{ccl}%
\alpha(M) & = & \inf_{m\geq1}\inf_{D^{T}\in|m(K_{M}^{T})^{-1}|}lct(M,\frac
{1}{m}D^{T})\\
& = & \sup \{t\in \mathbb{Q}\text{ }\mathbf{|}\text{ }(M,tD^{T})\text{
\textrm{is log canonical and }}D^{T}\sim-K_{M}^{T}\}.
\end{array}
\]
\end{theorem}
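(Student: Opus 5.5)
The plan is to reduce the statement to the known Kähler orbifold result on the leaf space and then pull everything back along the orbibundle projection. Since $M$ is quasi-regular, the first structure theorem gives an orbifold Riemannian submersion $\pi:(M,\xi,\eta,\omega)\to(Z,h,\omega_h)$ with $\omega=\pi^{\ast}\omega_h$. Here $Z$ is a well-formed klt del Pezzo orbifold, so $K_M^T=\pi^{\ast}K_Z$ (Type I, no branch divisor). Pullback by $\pi$ identifies several objects on $M$ with their counterparts on $Z$: basic functions on $M$ with functions on $Z$, basic $\omega$-PSH functions with $\omega_h$-PSH functions, basic sections of $(K_M^T)^{-m}$ with sections of $-mK_Z$, and basic divisors $D^T\in|m(K_M^T)^{-1}|$ with divisors $D=\pi_{\ast}D^T\in|-mK_Z|$.

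By Definition (\ref{d63}), $lct(M,\frac1mD^T)=lct(Z,\frac1mD)$. Moreover, since the $\mathbb{S}^1$-fibres have fixed length up to the finite isotropy orders, Fubini along the fibres together with (\ref{2022-a}) gives
\[
\int_M e^{-\alpha\varphi}\,\omega^n\wedge\eta \;=\;2\pi\int_Z e^{-\alpha\varphi}\,\omega_h^n
\]
for basic $\varphi$, up to a bounded multiplicative constant on the orbifold locus. Hence $\alpha(M,\mathcal{F}_\xi)=\alpha(Z)$, and likewise the complex singularity exponent satisfies $c(\varphi)$ on $M$ equals $c(\varphi)$ on $Z$.

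With this dictionary the argument follows Demailly's proof on klt Fano orbifolds. For the inequality $\le$, take $\varphi=\frac1m\log\|S^T\|^2_{h^m}$, which is $\omega$-PSH by the transverse Poincaré--Lelong formula. Locally $e^{-t\varphi}=|s|^{-2t/m}$ times a smooth positive factor. By the standard analytic characterization of klt pairs (integrability of $|s|^{-2c}$ against the orbifold volume form, where klt singularities of $Z$ ensure the volume form itself is locally integrable after passing to local uniformizing charts), $c(\varphi)=lct(Z,\frac1mD)$. Then the Corollary (\ref{1}) gives $\alpha(M)\le\inf_m\inf_{D^T}lct(M,\frac1mD^T)$.

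For $\ge$, the main obstacle is approximating an arbitrary $\omega$-PSH basic $\varphi$ by the algebraic ones. I would use Demailly's Bergman kernel approximation on $Z$:
\[
\varphi_m=\frac1{m}\log\sum_j\|\sigma_j\|^2_{h^m}
\]
with $\{\sigma_j\}$ an orthonormal basis of $H^0(Z,-mK_Z)$ with respect to the weight $e^{-m\varphi}$. These sections are basic sections on $M$ via $\pi^\ast$, and the $L^2$ estimate (\ref{d2}) supplies the Ohsawa--Takegoshi type extension needed on the quasi-regular quotient. This yields $\varphi\le\varphi_m+C/m$ and $c(\varphi_m)\to c(\varphi)$, by semicontinuity of complex singularity exponents (Demailly--Kollár). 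Finally, $c(\varphi_m)$ is bounded below by the lct of a general member $\frac1mD$ of the linear system spanned by the $\sigma_j$, again via the analytic characterization of lct. Taking infima gives $\alpha(M)\ge\inf_m\inf_D lct$.

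The second expression, as a supremum of $t$ with $(M,tD^T)$ log canonical for all $\mathbb{Q}$-divisors $D^T\sim_{\mathbb{Q}}-K_M^T$, is a reformulation of the infimum. Indeed, every effective $\mathbb{Q}$-divisor $\mathbb{Q}$-linearly equivalent to $-K_M^T$ is of the form $\frac1mD^T$ with $D^T\in|m(K_M^T)^{-1}|$, and log canonicity is a closed condition in $t$.
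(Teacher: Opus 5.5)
Your route is the paper's: pass to the leaf space $Z$ and run Demailly's argument on the klt orbifold. The paper's proof is only a pointer to exactly this. Your $\le$ direction and the final reformulation are fine.

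The gap is in the $\ge$ direction, at the step ``$\varphi\le\varphi_m+C/m$ and $c(\varphi_m)\to c(\varphi)$ by semicontinuity''. Both facts you cite bound $c(\varphi)$ from \emph{above}. The inequality $\varphi\le\varphi_m+C/m$ gives $c(\varphi_m)\ge c(\varphi)$. Demailly--Koll\'ar semicontinuity is \emph{lower} semicontinuity, $c(\varphi)\le\liminf_m c(\varphi_m)$. No lower bound on $c(\varphi)$ can come from $L^1$-convergence plus semicontinuity: the bounded truncations $\max(\varphi,-j)$ converge to $\varphi$ and have $c=\infty$.

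The missing ingredient is the reverse inequality specific to Bergman approximants, $1/c(\varphi)\le 1/c(\varphi_m)+1/m$. Let $\mathcal{H}_m$ be the space of sections of $-mK_Z$ with $\int_Z\|\sigma\|^2_{h^m}e^{-m\varphi}dV<\infty$, and let $\{\sigma_j\}$ be an orthonormal basis. For $0<\gamma<m$, H\"older with exponents $m/\gamma$ and $m/(m-\gamma)$ gives
\[
\int_Z e^{-\gamma\varphi}\,dV=\int_Z\Big(\sum_j\|\sigma_j\|^2_{h^m}e^{-m\varphi}\Big)^{\gamma/m}e^{-\gamma\varphi_m}\,dV\le(\dim\mathcal{H}_m)^{\gamma/m}\Big(\int_Z e^{-\frac{m\gamma}{m-\gamma}\varphi_m}\,dV\Big)^{\frac{m-\gamma}{m}}.
\]
Hence $c(\varphi)\ge\gamma$ whenever $m\gamma/(m-\gamma)<c(\varphi_m)$. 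Combine this with $c(\varphi_m)\ge lct(Z,\frac1m D_\sigma)$ for any $0\ne\sigma\in\mathcal{H}_m$. That bound needs no genericity of $D$: Cauchy--Schwarz gives $\varphi_m\ge\frac1m\log\|\sigma\|^2_{h^m}-C$. You then get $1/c(\varphi)\le 1/\ell+1/m$ with $\ell=\inf_m\inf_D lct(Z,\frac1m D)$, and hence $c(\varphi)\ge\ell$. Semicontinuity does belong in the proof, but in Corollary (\ref{1}), where it turns $\inf_\varphi c(\varphi)$ into the uniform constant in the definition of $\alpha$.

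With this in place, Ohsawa--Takegoshi/H\"ormander is only needed to know that $\mathcal{H}_m\ne 0$ for large $m$; otherwise $\varphi_m\equiv-\infty$ and the argument is empty. For that purpose (\ref{d2}) is not the right tool. It is an estimate for the smooth metrics $h^m(t)$ along the flow, not for a singular weight $e^{-m\varphi}$ with $\varphi$ merely $\omega$-psh. What you need is H\"ormander/OT for $-mK_Z=K_Z+(m+1)(-K_Z)$ with weight $h^{m+1}e^{-m\varphi}$, whose curvature is $\ge\omega$. This should be carried out on $Z_{\mathrm{reg}}$ (or in uniformizing charts) and the sections extended across the codimension $\ge 2$ orbifold locus. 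That is what the paper's remark about using the orbifold metric on the regular points refers to.

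Finally, you silently assume Type I, i.e.\ $K_M^T=\pi^{\ast}K_Z$. The statement covers foliation klt singularities in general, where $K_M^T=\pi^{\ast}(K_Z+\Delta)$ and all thresholds must be taken for the pair $(Z,\Delta+tD)$.
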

\begin{proof}
For a quasi-regular Sasakian manifold $M,$ its leave space is a normal
projective orbifold $Z$ with klt singularities. One can work on an orbifold
$Z$ instead of a non-singular variety. Then we can follow the same method due
to Demailly's method (\cite{cs}, \cite{d1}) which works as well on a normal
projective variety. More precisely, the method of $L^{2}$-estimates can be
extended to the case of orbifold singularities by using the orbifold metric on
the open set of regular points of $Z$ instead. Then we are done.
\end{proof}
On the other hand, Birkar obtains a uniform positive lower bound of the log
canonical threshold as following.

\begin{proposition}
(\cite[Corollary 1.4]{bir}) Let $\varepsilon$ be the positive real number.
Then normal projective varieties $Z$ of dimension $n$ such that the klt pair
$(Z,\Delta)$ is $\varepsilon$-lc for some boundary $B$ and $-(K_{Z}+\Delta)$
is nef and big, form a bounded of family.
\end{proposition}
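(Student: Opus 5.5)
This statement is quoted from \cite[Corollary 1.4]{bir}, and in the paper we would simply cite it. If one wanted to recover it from Birkar's main theorem (the BAB conjecture, \cite[Theorem 1.1]{bir}), the plan is a reduction to the case where $Z$ itself is an $\varepsilon$-lc Fano variety. Throughout we identify the boundary $B$ in the statement with $\Delta$. The key steps, in order, are as follows.

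\textbf{Step 1: make $Z$ of Fano type.} Since $(Z,\Delta)$ is klt and $-(K_{Z}+\Delta)$ is nef and big, $Z$ is of Fano type. Concretely, write $-(K_{Z}+\Delta)\sim_{\mathbb{Q}}A+E$ with $A$ ample and $E\geq0$. For small $\delta>0$, the pair $(Z,\Delta+\delta E)$ is still klt and
\[
-(K_{Z}+\Delta+\delta E)\sim_{\mathbb{Q}}(1-\delta)\bigl(-(K_{Z}+\Delta)\bigr)+\delta A
\]
is ample. Choosing $\delta$ small keeps the pair $\varepsilon/2$-lc, so we may replace $\varepsilon$ by $\varepsilon/2$ and assume $-(K_{Z}+\Delta)$ is ample. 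Note that the discrepancies of $Z$ itself are at least those of $(Z,\Delta)$, since $\Delta\geq0$.

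\textbf{Step 2: pass to a Fano model of $Z$.} Run a $(-K_{Z})$-MMP, which terminates because $Z$ is of Fano type, to reach a model $Z'$ where $-K_{Z'}$ is semiample. Take the anticanonical model $Z''$, which is an $\varepsilon$-lc Fano variety. BAB applied to $Z''$ shows it lies in a bounded family.

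\textbf{Step 3: go back from $Z''$ to $Z$.} The MMP steps and the contraction only extract or contract divisors of log discrepancy at most $1$, so $Z$ is obtained from a bounded family by a birational map. Boundedness of $Z$ then follows from a uniform bound on the anti-canonical volume, $\mathrm{vol}(-(K_{Z}+\Delta))\leq\mathrm{vol}(-K_{Z})\leq C(n,\varepsilon)$. This volume bound is itself part of Birkar's theorem, combined with effective birationality of $|-mK_{Z}|$ with $m=m(n,\varepsilon)$. Together these give an embedding of $Z$ of bounded degree via a bounded multiple of an ample divisor.

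\textbf{Main obstacle.} The genuinely hard input, which cannot be reproduced here, is BAB itself. Birkar proves it by establishing boundedness of $n$-complements and effective birationality, and then using them to obtain a uniform lower bound for lct's of bounded anticanonical divisors, by induction on dimension. In the present paper this is used only to conclude that the family of leaf spaces $Z_{i}$ in Theorem \ref{T55} is bounded. That boundedness in turn gives a uniform positive lower bound $\alpha(M_{i})\geq\alpha_{0}>0$ via the lct characterization of the $\alpha$-invariant.
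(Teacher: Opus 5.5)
Citing Birkar is exactly what the paper does; it gives no argument of its own, and the citation is all that is needed. Birkar's BAB theorem \cite{bir} is already formulated for pairs: it covers projective $X$ such that $(X,B)$ is $\varepsilon$-lc for some boundary $B$ with $-(K_X+B)$ nef and big. The statement here is a transcription of it, which is why the stray letter $B$ sits next to $\Delta$. So there is no separate Fano case to reduce to. Your optional reconstruction, however, does not work as written.

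\textbf{Step 3 is the decisive gap.} A bound on $\mathrm{vol}(-K_Z)$ together with birationality of $|-mK_Z|$ for $m=m(n,\varepsilon)$ gives only \emph{birational} boundedness, for the following reasons.
\begin{enumerate}
\item $-K_Z$ is big but in general not nef on $Z$. The map defined by $|-mK_Z|$ contracts exactly the divisors that are exceptional over $Z''$, so its image is $Z''$, not $Z$.
\item A concrete instance: let $Z$ be $\mathbb{P}^2$ blown up at $k\geq 4$ collinear points, $\tilde{L}$ the strict transform of the line, and $\Delta=(1-\frac{2}{k-1})\tilde{L}$. Then $(Z,\Delta)$ is $\frac{2}{k-1}$-lc and $-(K_Z+\Delta)$ is nef and big. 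But $-K_Z\cdot\tilde{L}=3-k<0$, and $|-mK_Z|$ contracts $\tilde{L}$.
\item Step 1 does not rescue this. It makes $-(K_Z+\Delta)$ ample, but $\Delta$ has arbitrary real coefficients, so this produces no very ample divisor of bounded degree on $Z$.
\end{enumerate}
Passing from birational boundedness to boundedness is precisely the hard part of BAB, and Birkar handles it through his lower bound on log canonical thresholds of linear systems. To undo $Z\dashrightarrow Z''$ directly, you would need two things your sketch does not address. First, extract, uniformly over the bounded family of the $Z''$, the finitely many divisors with $a(E,Z'',0)\leq 1$. Second, bound all small modifications and contractions of the resulting Fano type models.

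\textbf{Step 2 also contains an unjustified claim.} The $(-K_Z)$-MMP consists of $K_Z$-positive steps, and log discrepancies decrease along such steps. So $Z''$ being $\varepsilon$-lc does not follow from $Z$ being $\varepsilon$-lc. This one is repairable. After Step 1, pick a general $0\leq A\sim_{\mathbb{R}}-(K_Z+\Delta)$ with $(Z,\Delta+A)$ still $\frac{\varepsilon}{2}$-lc. Every step is then $(K_Z+\Delta+A)$-trivial, hence crepant for this pair, so $Z'$ and $Z''$ remain $\frac{\varepsilon}{2}$-lc. One further point: only $K_Z+\Delta$ is assumed $\mathbb{R}$-Cartier, so you would first need a small $\mathbb{Q}$-factorialization. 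That is one more modification to undo at the end.
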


In particular $Z$, as the leave space of a quasi-regular Sasakian manifold, is
an $n$-dimensional normal Fano projective orbifold with $\varepsilon$-lc
singularities. Hence

\begin{corollary}
\label{C2025-2}There exists a positive constant $\alpha_{0}(n)$ such that for
any quasi-regular transverse Fano Sasakian manifold $M$
\[
\alpha(M)\geq \alpha_{0}(n).
\]

\end{corollary}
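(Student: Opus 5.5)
The plan is to reduce the statement to the leaf space and then combine the algebraic description of $\alpha(M)$ with Birkar's boundedness. Let $M$ be quasi-regular and transverse Fano. By the first structure theorem, $\pi:M\rightarrow Z$ is an $\mathbb{S}^{1}$-orbibundle over a normal projective Fano orbifold $Z$ of dimension $n$, with $-K_{M}^{T}=\pi^{\ast}(-K_{Z}^{orb})$. Basic divisors $D^{T}\in|m(K_{M}^{T})^{-1}|$ correspond bijectively to divisors $D=\pi_{\ast}D^{T}\in|-mK_{Z}^{orb}|$. Log canonicity is local in the transverse direction and the fibres are circles, so $(M,tD^{T})$ is log canonical exactly when $(Z,tD)$ is. The theorem of Cheltsov--Shramov and Demailly stated above then gives
\[
\alpha(M)=\inf_{m\geq1}\inf_{D\in|-mK_{Z}|}lct\Big(Z,\tfrac{1}{m}D\Big)=:\alpha(Z),
\]
so everything reduces to a uniform lower bound for $\alpha(Z)$ over the leaf spaces $Z$ that occur.

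Next I would invoke Birkar's result \cite[Corollary 1.4]{bir} in the form quoted just before the statement. As the paper indicates, each such $Z$ is an $n$-dimensional Fano orbifold with $\varepsilon$-lc singularities, and $-K_{Z}$ is ample, hence nef and big. I treat $\varepsilon$ as the fixed constant attached to this class in the paper's setting, so it is absorbed into the dimensional constant. Applying Birkar with $\Delta=0$ (or a small boundary if needed), all such $Z$ form a bounded family. Concretely, there are a finitely-typed base $S$ and a flat projective morphism $\mathcal{Z}\rightarrow S$ such that every $Z$ occurs as a fibre $\mathcal{Z}_{s}$. Moreover, there is a fixed $r=r(n)$ such that $-rK_{\mathcal{Z}/S}$ is Cartier and relatively very ample.

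The main step, and the main obstacle, is to pass from boundedness to a uniform positive lower bound on the log canonical threshold of anticanonical $\mathbb{Q}$-divisors. The first tool I would try is Birkar's companion statement \cite{bir} that, for $\varepsilon$-lc Fano varieties of fixed dimension, $lct(Z,|-K_{Z}|_{\mathbb{R}})\geq t(n,\varepsilon)>0$. Combined with the displayed identity, this gives the result immediately with $\alpha_{0}(n)=t$.

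If only boundedness is available, I would argue as follows.
\begin{enumerate}
\item By Demailly's comparison, $\alpha(Z)$ is computed up to a fixed factor by $lct(Z,\frac{1}{r}|-rK_{Z}|)$ for the single level $r$. Equivalently, use Tian's characterization via $\omega$-PSH functions $\frac{1}{m}\log\|S\|^{2}$ together with the finite generation coming from boundedness.
\item On the relative linear system $\mathbb{P}(\pi_{\ast}\mathcal{O}(-rK_{\mathcal{Z}/S}))\rightarrow S$, the function $(s,D)\mapsto lct(\mathcal{Z}_{s},\frac{1}{r}D)$ is constructible and lower semicontinuous.
\item Each value is positive, because every $\mathcal{Z}_{s}$ is klt. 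By Noetherian induction on $S$, the function attains a positive minimum $\alpha_{0}(n)$.
\end{enumerate}

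Finally I would transport this back to $M$ through the identity $\alpha(M)=\alpha(Z)$ above. The orbifold $L^{2}$-estimates on the regular part of $Z$, as in the proof of the Cheltsov--Shramov and Demailly theorem, ensure that the Sasakian and orbifold $\alpha$-invariants agree.
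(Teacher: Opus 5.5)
You follow the paper's route: reduce to the leaf space, use the Cheltsov--Shramov/Demailly identity $\alpha(M)=\alpha(Z)$, then apply Birkar. The step you flag yourself is where it fails, because there is no $\varepsilon$ ``attached to this class''. A klt leaf space is $\varepsilon$-lc only for some $\varepsilon=\varepsilon(Z)>0$, which is all the paper's remark asserts. The minimal log discrepancies of the cyclic quotient points that occur are not bounded below: a $\frac{1}{k}(1,1)$ point has minimal log discrepancy $2/k$. Birkar's boundedness theorem and his lower bound $\mathrm{lct}(Z,|-K_Z|_{\mathbb{R}})\geq t(n,\varepsilon)$ both require $\varepsilon$ to be fixed in advance. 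Without a uniform $\varepsilon$ the leaf spaces do not form a bounded family. So neither your main argument nor the Noetherian-induction fallback gets started; the fallback presupposes a single bounded family, and its step (1) only controls divisors in one fixed system $|-rK_Z|$, not the infimum over all $m$. The paper's own proof is silent at exactly the same point.

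The gap cannot be filled, because with $\alpha_0$ depending only on $n$ the statement is false. Take $S^5$ with the weighted Sasakian structure of weights $(1,1,k)$. It is quasi-regular and transverse Fano. Its leaf space is the well-formed orbifold $Z=\mathbb{P}(1,1,k)$ with a single $\frac{1}{k}(1,1)$ point, so it is even of Type I.
\begin{itemize}
\item Let $L=\{x_0=0\}$. Then $(k+2)L\sim -K_Z$.
\item The orbifold chart $\mathbb{C}^2\to\mathbb{C}^2/\mu_k$ is \'etale in codimension one, so $\mathrm{lct}(Z,L)=1$, hence $\mathrm{lct}(Z,(k+2)L)=\frac{1}{k+2}$.
\item Analytically, let $h$ be an orbifold hermitian metric on $\mathcal{O}(1)$ with curvature $\omega/(k+2)$. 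Then $\varphi=(k+2)\log\|x_0\|_h^2$ is a basic $\omega$-PSH function, and $e^{-\alpha\varphi}$ is non-integrable for $\alpha\geq\frac{1}{k+2}$.
\end{itemize}
So $\alpha(M)\leq\frac{1}{k+2}\to 0$, and $\mathbb{P}(1,\dots,1,k)$ gives the same in every dimension.

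What your first tool does prove is the corrected statement: there is $t(n,\varepsilon)>0$ with $\alpha(M)\geq t(n,\varepsilon)$ for every quasi-regular transverse Fano $M$ whose leaf space is $\varepsilon$-lc. Any application, for instance to quasi-regular approximations $\xi_i\to\xi$, then needs a separate argument that the leaf spaces stay uniformly $\varepsilon$-lc.
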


\begin{remark}
Note that any Kawamata log terminal (klt) singularity is $\varepsilon$-lc for
some positive real number $\varepsilon$. Since the well-formed leave space
$(Z,\emptyset)$ only admits orbifold singularities, $Z$ is automatically klt.
For the leave space which is not well-formed, we assume that $(Z,\Delta)$ is a
klt log pair.
\end{remark}
As a consequence of Theorem \ref{T2025} and Corollary \ref{C2025-2} that

\begin{theorem}
There exists a positive constant $\alpha_{1}(n)$ such that for any
quasi-regular transverse Fano Sasakian manifold $M$, there exists a transverse
K\"{a}hler metric $\widehat{\omega}\in c_{1}^{B}(M)$ satisfying%
\begin{equation}
Ric^{T}(\widehat{\omega})\geq \alpha_{1}(n)\widehat{\omega}. \label{2}%
\end{equation}
\end{theorem}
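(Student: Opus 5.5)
The plan is to use the continuity method of Theorem \ref{T2025}, combined with the uniform lower bound $\alpha(M)\geq\alpha_{0}(n)$ from Corollary \ref{C2025-2}. Fix $\omega\in c_{1}^{B}(M)$ with basic Ricci potential $F$, so that $Ric^{T}(\omega)=\omega+\sqrt{-1}\partial_{B}\overline{\partial}_{B}F$. Along the family $(\ast)_{t}$, set $\omega_{t}=\omega+\sqrt{-1}\partial_{B}\overline{\partial}_{B}\varphi_{t}$. Differentiating $(\ast)_{t}$ with $\sqrt{-1}\partial_{B}\overline{\partial}_{B}$ gives
\[
Ric^{T}(\omega_{t})=t\omega_{t}+(1-t)\omega\geq t\,\omega_{t}.
\]
Hence, if $(\ast)_{t}$ is solvable for all $t\in[0,t_{0}]$, the metric $\widehat{\omega}=\omega_{t_{0}}$ lies in $c_{1}^{B}(M)$ and satisfies (\ref{2}) with $\alpha_{1}=t_{0}$. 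Everything therefore reduces to finding a $t_{0}$ that depends only on $n$.

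For that, I would invoke the conclusion of Theorem \ref{T2025} in its quantitative form: $(\ast)_{t}$ can be solved for every $t\in[0,\frac{n+1}{n}\alpha(M))$. Openness follows because the linearized operator $\Delta_{B}+t$ is invertible on basic functions whenever $Ric^{T}(\omega_{t})\geq t\omega_{t}$ with $t<1$. This uses the transverse Lichnerowicz bound, which says the first nonzero basic eigenvalue is at least $t$. Closedness follows from the $C^{0}$ bound obtained from the $\alpha$-invariant integrability $\int_{M}e^{-\gamma t\varphi}\omega^{n}\wedge\eta<\infty$, together with the standard transverse Yau $C^{2}$ and $C^{2,\alpha}$ estimates. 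Choosing $\alpha_{1}(n)=\min\{\frac{1}{2},\frac{n+1}{2n}\alpha_{0}(n)\}$, Corollary \ref{C2025-2} gives $\alpha_{1}(n)<\frac{n+1}{n}\alpha(M)$ for every quasi-regular transverse Fano $M$. The family is thus solvable up to $t_{0}=\alpha_{1}(n)$, and the metric $\widehat{\omega}=\omega_{\alpha_{1}(n)}$ has the required property.

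The main obstacle is the $C^{0}$-estimate at a parameter $t<\frac{n+1}{n}\alpha(M)$, and in particular making every constant uniform. The estimate must rely only on $\alpha(M)$ and on structural constants, not on the particular $M$ beyond these. My approach here follows Tian's argument. First, the Harnack-type inequality $-\inf\varphi_{t}\leq n\sup\varphi_{t}+C$ is proved through Green's function for $\omega_{t}$. This step uses the lower bound $Ric^{T}(\omega_{t})\geq t\omega_{t}$ and the bounded diameter from the transverse Myers theorem. Second, this inequality is combined with the integrability of $e^{-\gamma t(\varphi_{t}-\sup\varphi_{t})}$ for $\gamma<\alpha(M)$, which bounds $\sup\varphi_{t}$. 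Since only existence of $\widehat{\omega}$ is asserted, and not uniform bounds on it, the dependence of $C$ on the particular $M$ is harmless. Only the threshold $t_{0}$ must be uniform, and it is, because of Corollary \ref{C2025-2}.

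The remaining point is that all integrals and Laplacians involved are basic. They can therefore be computed on the leaf space orbifold $Z$ via the submersion $\pi$, where the orbifold Kähler versions of these estimates apply verbatim.
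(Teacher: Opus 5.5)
Your argument is the paper's own: solve $(\ast)_t$ using Theorem \ref{T2025}, use $Ric^{T}(\omega_t)=t\omega_t+(1-t)\omega\geq t\omega_t$, and obtain a threshold depending only on $n$ from Corollary \ref{C2025-2}. For a single fixed $M$ the continuity-method part is fine. (For openness, note that it is the $(1-t)\omega$ term that gives the strict bound $\lambda_1>t$; $\lambda_1\geq t$ alone would not make the linearized operator invertible.)

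The genuine gap is the uniformity step, which you take from Corollary \ref{C2025-2}. That corollary does not follow from Birkar. His boundedness theorem and his lower bound for $\mathrm{lct}(X,|-K_X|_{\mathbb{R}})$ are for $\varepsilon$-lc Fano varieties with $\varepsilon>0$ \emph{fixed}. A klt orbifold leaf space is only $\varepsilon$-lc for some $\varepsilon$ that depends on $M$, as the remark after the corollary itself concedes. What the argument actually gives is $\alpha_0=\alpha_0(n,\varepsilon)$, and hence $\alpha_1=\alpha_1(n,\varepsilon)$.

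This cannot be repaired, because the uniform statement is already false for $n=2$ with Type I singularities.

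\emph{The example.} Give $S^5$ the weighted quasi-regular Sasakian structure with weights $(1,1,k)$. Its leaf space $Z_k=\mathbb{P}(1,1,k)$ is well-formed and Fano, with a single $\frac{1}{k}(1,1)$ point. It is $\varepsilon$-lc only for $\varepsilon\leq 2/k$.

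\emph{The $\alpha$-invariant degenerates.} Let $L=\{x_0=0\}$. Then $-K_{Z_k}\sim(k+2)L$, so
\[
\alpha(M)\leq\mathrm{lct}(Z_k,(k+2)L)=\frac{1}{k+2}\to0,
\]
which contradicts Corollary \ref{C2025-2}.

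\emph{The conclusion itself fails.} Suppose $\widehat{\omega}\in c_1^B(M)$, normalized as in the paper, satisfied $Ric^{T}(\widehat{\omega})\geq\alpha_1\widehat{\omega}$. It would descend to an orbifold K\"ahler metric on $Z_k$ with $Ric\geq\alpha_1$. Bishop--Gromov on this compact real $4$-dimensional orbifold bounds its volume by $C/\alpha_1^{2}$, with $C$ universal. On the other hand, that volume is a fixed multiple of $(-K_{Z_k})^{2}=(k+2)^{2}/k$. Hence $\alpha_1^{2}\leq C'k/(k+2)^{2}\to0$.

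So no threshold depending only on $n$ exists. Your proof, and the statement, become correct only after restricting to leaf spaces that are $\varepsilon$-lc for a fixed $\varepsilon$ (for instance, bounded orbifold orders). That restriction is the analogue of the smoothness ($\varepsilon=1$) that drives the Guo--Phong--Song--Sturm argument.
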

We can assume that $\widehat{\omega}$ is invariant under the maximal group $G$
action of $\mathrm{Aut}(M)$ so that $\omega$ is a $G$-invariant transverse
K\"{a}hler metric in the equation $(\ast)_{t}$.

\begin{theorem}
There exists $A(n)>0$ such that
\begin{equation}
\mu(\widehat{g}^{T})\geq-A, \label{3}%
\end{equation}
for the transverse K\"{a}hler metric $\widehat{g}^{T}$ associated to the
$\widehat{\omega}$ as in (\ref{2}).
\end{theorem}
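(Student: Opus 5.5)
The plan is to adapt the Guo--Phong--Song--Sturm argument: turn the Ricci lower bound (\ref{2}) into a lower bound for Perelman's $\mu$-functional on the transverse geometry. Here $\mu(\widehat g^{T})$ means the transverse $\mu$-entropy at scale $\tau=\frac12$,
\[
\mu(\widehat g^{T})=\inf\Big\{\frac{1}{V}\int_M \big(R^T+|\nabla^T f|^2+f-2n\big)e^{-f}\,\widehat\omega^n\wedge\eta \;:\; \frac{1}{V}\int_M e^{-f}\widehat\omega^n\wedge\eta=1\Big\},
\]
with $f$ basic, normalized as in Lemma \ref{L61}. Since all quantities are basic, the integral reduces through $\pi$ to the orbifold leaf space $(Z,\widehat h)$ with $\widehat\omega=\pi^{\ast}\omega_{\widehat h}$. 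So it suffices to bound from below the usual $\mu$-functional of a K\"ahler orbifold metric with $Ric\geq\alpha_1(n)\,\omega_{\widehat h}$ in $2\pi c_1(Z)$.

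\textbf{Step 1: geometric bounds.} From (\ref{2}) I get a scalar lower bound $R^T\geq n\alpha_1$. Myers' theorem, which holds on the orbifold $Z$ because geodesics avoid the codimension-two singular set or can be handled by the orbifold cover, gives $\mathrm{diam}(Z,\widehat h)\leq \pi\sqrt{(2n-1)/\alpha_1}$. The volume is fixed: $V=\int_M\widehat\omega^n\wedge\eta$ is cohomological, determined by $c_1^B(M)^n$ together with the fibre length. Here the fibre length is bounded because $\eta$ is fixed in $D$-homothety normalization.

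\textbf{Step 2: Sobolev and log-Sobolev.} The Ricci lower bound, the diameter upper bound and the fixed volume give, via Bishop--Gromov and the Croke/Li--Yau argument on the regular part, a uniform basic Sobolev constant $C_S(n)$ as in (\ref{b4}). This part must be done carefully on the orbifold. I expect it to be the main obstacle, since the Sobolev inequality usually needs a non-collapsing bound that depends on $V$. To handle this, I would either use that $c_1(Z)^n$ is bounded above by Birkar boundedness, which is already invoked in Corollary \ref{C2025-2}, or avoid it by bounding $V$ from above directly through Bishop--Gromov applied to the diameter bound. The Bishop--Gromov upper bound $V\leq C(n,\alpha_1)$ suffices for the upper side. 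For the lower side, I would use the log-Sobolev form, which only needs $V\,\mathrm{diam}^{-2n}$ bounded below. Fano degree bounded below for klt surfaces, or the Sasakian volume of $\eta$ given the fixed class, supplies this.

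\textbf{Step 3: conclude.} Put $w=e^{-f/2}$. Jensen's inequality and the Sobolev inequality give the log-Sobolev inequality
\[
\frac1V\int w^2\log w^2\leq C\,\frac1V\int\big(|\nabla^T w|^2+w^2\big)+C,
\]
and hence
\[
\frac1V\int\big(4|\nabla^T w|^2-w^2\log w^2\big)\geq -C.
\]
Adding $R^T\geq n\alpha_1>0$ and the constant $-2n$ then yields $\mu(\widehat g^{T})\geq -A(n)$, which is (\ref{3}).
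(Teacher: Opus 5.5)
Your outline is the paper's proof: Myers and Bishop--Gromov from (\ref{2}), then a lower volume bound, then Croke's theorem for a uniform Sobolev constant, and finally Sobolev $\Rightarrow$ log-Sobolev $\Rightarrow$ a lower bound on $\mu$. The weak point is the step you flag yourself, and your justification for it is false. The anticanonical degree of klt del Pezzo surfaces is \emph{not} bounded below. The well-formed plane $\mathbb{P}(p,p+1,p+2)$ with $p$ odd is the leaf space of a quasi-regular transverse Fano Type I structure on $\mathbb{S}^5$, and it has $(-K)^2=9(p+1)/(p(p+2))\to 0$. A positive lower bound holds only in a bounded family, for example $\varepsilon$-lc surfaces with $\varepsilon$ fixed, where the degree takes finitely many values. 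Your alternative, ``the Sasakian volume given the fixed class'', is constant for one $M$ but gives no bound depending on $n$ alone. The paper's own assertion $\mathrm{Vol}(M,\widehat g^{T})\geq \overline C(n)$ ``due to $\widehat\omega\in c_1^B(M)$'' has the same weakness.

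With the normalization you actually wrote, you can drop the volume bound entirely. You divide by the true total volume $V=\int_M\widehat\omega^n\wedge\eta$, so the functional descends exactly to $Z$; the fibre length cancels, and your remark about it is not needed. On the orbifold $Z$, of real dimension $N=2n$ with $Ric\geq\alpha_1>0$, the normalized Sobolev inequality
\[
\Big(\frac1V\int |w|^{\frac{2N}{N-2}}\Big)^{\frac{N-2}{N}}\leq \frac1V\int w^2+\frac{C(N)}{\alpha_1}\,\frac1V\int|\nabla^T w|^2
\]
holds with no volume hypothesis (Ilias; Bakry--Ledoux). The unnormalized Sobolev constant degenerates like $V^{-2/N}$, which is exactly the factor the $\frac1V$ normalization removes. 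Jensen then gives, for $\frac1V\int w^2=1$,
\[
\frac1V\int w^2\log w^2\leq \frac N2\log\Big(1+\frac{C(N)}{\alpha_1}\,\frac1V\int|\nabla^T w|^2\Big)\leq 4\,\frac1V\int|\nabla^T w|^2+C(n,\alpha_1).
\]
This is the form Step 3 needs. Your ``hence'' requires the coefficient of the gradient term to be at most $4$, and a linear log-Sobolev inequality with an unspecified constant $C$ does not guarantee that. Together with $R^T\geq n\alpha_1$, this yields $\mu(\widehat g^{T})\geq -A(n)$. If you used Perelman's $(4\pi\tau)^{-n}$ normalization instead, the volume would enter through $\log V$, and a uniform bound would genuinely require the $\varepsilon$-lc boundedness.
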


\begin{proof}
We recall the transverse $\mathcal{W}$-functional as in the paper of
Chang-Han-Lin-Wu (\cite{chlw1})
\[%
\begin{array}
[c]{c}%
\mathcal{W}^{T}(g^{T},f)=\frac{1}{V}\int_{M}(R^{T}+|\nabla^{T}f|^{2}%
+f-2n)e^{-f}d\mu,
\end{array}
\]
where $d\mu=\omega^{n}\wedge \eta$ and $V=(c_{1}^{B}(M))^{n}$, and the
transverse $\mu$-functional
\[%
\begin{array}
[c]{c}%
\mu(g^{T})=\inf_{f}\{ \mathcal{W}^{T}(g^{T},f):\frac{1}{V}\int_{M}e^{-f}%
d\mu=1\}.
\end{array}
\]

It follows from (\ref{2}) that we have Myers' and volume comparison as
\[
\mathrm{Vol}(M,\widehat{g}^{T})\leq C(n)
\]
and
\[
\mathrm{Diam}(M,\widehat{g}^{T})\leq C(n).
\]
Moreover, $\mathrm{Vol}(M,\widehat{g}^{T})\geq \overline{C}(n)$ due to
$\widehat{\omega}\in c_{1}^{B}(M).$ Then it follows from the previous
Corollary and Croke's theorem \cite{cr} that we have the uniformly bound of
the Sobolev constant $C_{S}$ of $(M,\widehat{g}^{T}).$ It is well-known that a
Sobolev inequality implies the lower bound of $\mu$-functional. Hence the
lower bound of the transverse $\mu$-functional as well.
\end{proof}
On the other hand, based on the paper of Chang-Han-Lin-Zhou (\cite{chlj}), we have

\begin{proposition}
\label{P77} Let $(M,\eta,\xi,\Phi,\omega^{T})$ be a compact Fano Sasakian
manifold which admits a Sasaki-Ricci soliton $(g_{SS}^{T},V)$. Then any
solution $g^{T}(x,t)$ of the normalized Sasaki-Ricci flow (\ref{2025}) will
converge to $g_{SS}^{T}(x)$ in the sense of Cheeger-Gromov $C^{\infty}%
$-topology with $\mathcal{L}_{\operatorname{Im}(V)}\omega^{T}=0$ for the
initial K\"{a}hler $\omega^{T}$.
\end{proposition}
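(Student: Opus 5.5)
The plan is to transplant the Kähler argument of Tian–Zhu and Tian–Zhang–Zhang–Zhu, as adapted by Chang–Han–Lin–Zhou, to the transverse Kähler structure. All quantities involved (metrics, potentials, Ricci potentials, the soliton field $V$) are basic. The argument therefore runs on the transverse structure, and the Reeb direction only enters through the volume form $\omega^{n}\wedge\eta$.

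\emph{Step 1 (reduction to a potential flow).} Write $\omega_{SS}=\omega^{T}+\sqrt{-1}\partial_{B}\overline{\partial}_{B}\psi$ for the soliton. Since $\mathcal{L}_{\operatorname{Im}(V)}\omega^{T}=0$, we may average over the closure $K$ of the flow of $\operatorname{Im}(V)$ inside the transverse automorphism group, and assume everything is $K$-invariant. By uniqueness of the flow, this invariance is preserved along (\ref{2025}). We then express the flow as the parabolic Monge--Ampère equation (\ref{2022-1}) for a basic potential $\varphi(t)$.

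\emph{Step 2 (modified flow).} Pull back by the one-parameter family of transverse biholomorphisms $\sigma_{t}$ generated by $\operatorname{Re}(V)$. This turns the soliton into a stationary point of the modified flow
\[
\frac{\partial}{\partial t}\widetilde{\omega}=-Ric^{T}(\widetilde{\omega})+\widetilde{\omega}+\mathcal{L}_{\operatorname{Re}(V)}\widetilde{\omega}.
\]
We then use the modified transverse Mabuchi (or Ding) functional restricted to $K$-invariant potentials. The existence of $\omega_{SS}$ gives properness of this functional modulo the reductive centralizer of $V$; this is the Tian–Zhu argument, carried out on basic functions using the basic $L^{2}$-estimate (\ref{d2}) and the transverse Moser–Trudinger inequality. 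Perelman's estimates (Lemma \ref{L61}, Lemma \ref{L31}) bound $u(t)$, the diameter and $R^{T}$. Together with monotonicity of the functional along the flow and properness, they yield a uniform $C^{0}$ bound on the normalized $\varphi(t)$, possibly after composing with elements of the centralizer.

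\emph{Step 3 (higher estimates and convergence).} Transverse Yau and Calabi estimates upgrade the $C^{0}$ bound to uniform $C^{k}$ bounds on $\omega(t)$ modulo automorphisms. By Lemma \ref{L41}, every sequential limit satisfies the soliton equation (\ref{c1}) with a holomorphic gradient field. Uniqueness of Sasaki–Ricci solitons up to transverse automorphisms (Tao–Zhu, Futaki–Ono–Wang) identifies this limit with $g_{SS}^{T}$. Cheeger–Gromov $C^{\infty}$ convergence of the whole flow then follows from a Łojasiewicz argument or Tian–Zhu's stability argument.

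\emph{Main obstacle.} The hardest step is Step 2, namely getting uniform $C^{0}$ control. The group modding is delicate, and we must make sure the $K$-invariance hypothesis really forces the flow to avoid the noncompact directions of the automorphism group.
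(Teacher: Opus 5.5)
Your outline reruns the Tian--Zhu convergence argument intrinsically on basic functions. The paper reruns nothing. It attributes the statement to Chang--Han--Lin--Zhou \cite{chlj}. In the remark that follows, it notes that only the quasi-regular case is used: the corollary $\mu(g^{T})\geq -A$ on $\mathcal{SR}(n)$ and Theorem \ref{T55} concern quasi-regular structures. In that case the result ``follows easily'' from \cite{ctz} and \cite{tzhu1} by descent to the leaf space:
\begin{itemize}
\item by Proposition \ref{P21}, basic potentials are orbifold potentials on $Z$;
\item the flow (\ref{2025}) is the orbifold K\"{a}hler--Ricci flow, and $(g_{SS}^{T},V)$ is an orbifold K\"{a}hler--Ricci soliton;
\item $\mathcal{L}_{\operatorname{Im}(V)}\omega^{T}=0$ is exactly Tian--Zhu's $K_{X}$-invariance hypothesis;
\item Tian--Zhu, in orbifold form, gives convergence on $Z$, which lifts to $M$ since $g^{T}=\pi^{*}h$.
\end{itemize}
Your route buys scope. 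Unlike the descent, it can in principle reach an irregular $\xi$, where no reduction is available: $\xi$ is frozen along the flow, and a soliton for $\xi$ does not produce solitons for the quasi-regular approximants $\xi_{i}$ of Proposition \ref{P22}. The price is that every Tian--Zhu ingredient must be reproved transversally. For the case the paper actually needs, your Steps 1--3 collapse into the single descent observation above.

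The price shows in Step 2. You rightly flag it as the crux, but as written it is asserted rather than argued.
\begin{itemize}
\item Properness of the modified energy modulo the centralizer \emph{is} the (modified) Moser--Trudinger inequality. Listing a transverse Moser--Trudinger inequality as an input is therefore circular.
\item (\ref{d2}) is an $L^{2}$-estimate for $\overline{\partial}_{B}$ on $(K_{M}^{T})^{-m}$-valued forms. The paper uses it only for the partial $C^{0}$-estimate and justifies it only for quasi-regular structures via the orbifold quotient. It contributes nothing to properness, and it is unavailable exactly in the irregular case your approach is meant to cover.
\end{itemize}
To make the irregular case honest you need two things. First, an independent transverse proof that the modified energy is bounded below, and proper modulo the centralizer, on $K$-invariant basic potentials (e.g.\ via convexity along transverse weak geodesics). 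Second, a transverse uniqueness theorem for Sasaki--Ricci solitons; the K\"{a}hler result is Tian--Zhu's, not ``Tao--Zhu''.

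Two smaller points.
\begin{itemize}
\item No averaging is needed in Step 1, and it would be illegitimate if it changed the initial data. The hypothesis already gives invariance under the closure $K$, and $g_{SS}^{T}$ is $K$-invariant because $\operatorname{Im}(V)$ is Killing for a gradient soliton.
\item Cheeger--Gromov convergence of the whole flow already follows from the uniform bounds modulo automorphisms plus uniqueness of subsequential limits. A \L{}ojasiewicz argument is only needed for convergence without reparametrization.
\end{itemize}
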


\begin{remark}
In our application, we only need a compact Fano Sasakian manifold $(M,\eta
,\xi,\Phi,\omega^{T})$ to be quasi-regular. Then Proposition \ref{P77} follows
easily from Cao-Tian-Zhu \cite{ctz} and Tian-Zhu \cite{tzhu1} for the
K\"{a}hler case.
\end{remark}

\begin{corollary}
Let $\mathcal{SR}(n)$ be the space of Sasaki-Ricci solitons on a compact
transverse Fano quasi-regular Sasakian manifold $(M^{2n+1},g^{T},V)$. For any
$(M,g^{T})\in \mathcal{SR}(n),$
\begin{equation}
\mu(g^{T})\geq-A. \label{4}%
\end{equation}

\end{corollary}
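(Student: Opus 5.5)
The plan is to combine the uniform lower bound (\ref{3}) for the model metric $\widehat{g}^{T}$ with Perelman's monotonicity of the transverse $\mu$-functional along the normalized Sasaki-Ricci flow (\ref{2025}). Then we use Proposition \ref{P77} to identify the limit of the flow started at $\widehat{\omega}$ with the given soliton.

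\textbf{Step 1.} Fix $(M,g^{T},V)\in\mathcal{SR}(n)$ with K\"{a}hler form $\omega_{SS}\in c_{1}^{B}(M)$. Take the transverse K\"{a}hler metric $\widehat{\omega}\in c_{1}^{B}(M)$ constructed from (\ref{2}), which satisfies $Ric^{T}(\widehat{\omega})\geq\alpha_{1}(n)\widehat{\omega}$. Using the $G$-invariance noted after (\ref{2}), we may average over the compact torus generated by $\operatorname{Im}(V)$, so that $\mathcal{L}_{\operatorname{Im}(V)}\widehat{\omega}=0$. Averaging keeps the class, and the averaged metric still satisfies a Ricci lower bound of the same type: either the construction of $\widehat{\omega}$ can be done equivariantly from the start, or we only use the volume, diameter and Sobolev bounds, which survive averaging. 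This is the point to check most carefully. By (\ref{3}) we get $\mu(\widehat{g}^{T})\geq-A(n)$.

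\textbf{Step 2.} Run the normalized Sasaki-Ricci flow (\ref{2025}) from $\widehat{\omega}$. As recalled in Theorem \ref{T51} and Lemma \ref{L61} via \cite{chlw1}, the transverse $\mathcal{W}^{T}$-functional satisfies Perelman's monotonicity along this flow, since the computation is purely transverse and basic. Hence $t\mapsto\mu(g^{T}(t))$ is nondecreasing, and $\mu(g^{T}(t))\geq\mu(\widehat{g}^{T})\geq-A$ for all $t\geq0$.

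\textbf{Step 3.} Since $M$ admits the soliton $(g_{SS}^{T},V)$ and $\mathcal{L}_{\operatorname{Im}(V)}\widehat{\omega}=0$, Proposition \ref{P77} gives $g^{T}(t)\to g_{SS}^{T}$ in the Cheeger-Gromov $C^{\infty}$ topology, possibly after a transverse automorphism. The functional $\mu$ is invariant under diffeomorphisms preserving the foliation. It is also continuous under smooth convergence on a compact manifold: the minimizer exists by Rothaus-type arguments with uniform Sobolev constant, so the infimum depends continuously on the metric. Therefore $\mu(g_{SS}^{T})=\lim_{t\to\infty}\mu(g^{T}(t))\geq-A$, which is (\ref{4}).

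The main obstacle is Step 1: ensuring that the flow starting from $\widehat{\omega}$ satisfies the hypothesis $\mathcal{L}_{\operatorname{Im}(V)}\omega=0$ of Proposition \ref{P77} without losing the bound (\ref{3}). I would first try to build $\widehat{\omega}$ via the continuity path $(\ast)_{t}$ with $\omega$ chosen invariant under a maximal compact subgroup containing the flow of $\operatorname{Im}(V)$. Uniqueness of solutions then makes $\widehat{\omega}$ invariant, so (\ref{2}) and hence (\ref{3}) hold directly for the invariant metric.
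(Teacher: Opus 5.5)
Your proposal is correct and is essentially the paper's proof: run the normalized flow from $\widehat{g}^{T}$, use the monotonicity of $\mu$, identify the limit with the given soliton via Proposition \ref{P77}, and pass to the limit to get the bound from (\ref{3}). Drop the averaging alternative in Step 1, since averaging does not preserve a transverse Ricci lower bound (nor obviously the diameter bound); keep the equivariant continuity-method construction from your last paragraph, which is exactly what the paper invokes when it takes $\widehat{\omega}$ invariant under a maximal compact group $G$.
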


\begin{proof}
Let $g^{T}(t)$ be the solution of the normalized Sasaki-Ricci flow with the
initial metric $\widehat{g}^{T}$. Since $g^{T}(t)\in \mathcal{SR}(n),$ it
follows from Proposition \ref{P77} that the Sasaki-Ricci flow with any initial
Sasaki metric in $\mathcal{SR}(n)$ will converge to a Sasaki-Ricci soliton in
the $C^{\infty}$-sense of Sasaki potentials. Moreover
\[%
\begin{array}
[c]{c}%
\lim_{t\rightarrow \infty}\mu(g^{T}(t))=\mu(g^{T}).
\end{array}
\]

Note that $\mu(g^{T}(t))$ is monotonically non-decreasing along the
Sasaki-Ricci flow due to Perelman \cite{p1} and He \cite{he}. Then the
Corollary follows easily from (\ref{3}).
\end{proof}
The Proof of Theorem \ref{T55}:

\begin{proof}
The $C^{\infty}$-convergence on the regular part of $M_{\infty}$ is achieved
by making use of a variant of Perelman's pseudolocality theorem since the
soliton metric is a solution of the Sasaki-Ricci flow. The goal of the rest of
the paper is to show that $M_{\infty}$ is a transverse $\mathbb{Q}$-Fano
Sasakian manifold with foliation Kawamata log terminal singularities. The
Sasaki-Ricci soliton metric $g_{\infty}^{T}$ extends to a transverse
K\"{a}hler current on $M_{\infty}$ with bounded local potential
and\ $V_{\infty}$ extends to a global Hamiltonian holomorphic vector field on
$M_{\infty}$.

Now we can apply the same argument as in Guo-Phong-Song-Sturm \cite{gpss} and
Tian-Zhang \cite{tz2} because the assumption of the uniform bound for the
Futaki invariant is to obtain a uniform lower bound for the $\mu$ functional.
\end{proof}

As a consequence of Proposition \ref{P2026}, it follows from Proposition
\ref{T66} and Compactness Theorem \ref{T55} that the Hamilton-Tian conjecture
holds for the Sasaki-Ricci flow as in Theorem \ref{T99}. That is, under the
Sasaki-Ricci flow (\ref{2025}), we have%
\[%
\begin{array}
[c]{lcl}%
(M,\xi^{i},g^{i}) & \overset{i\rightarrow \infty}{\longrightarrow} &
(M,\xi,g_{0})\\
\downarrow t\rightarrow \infty &  & \downarrow t\rightarrow \infty \\
(M_{\infty},\xi^{i},g^{i}{}_{\infty}) & \overset{i\rightarrow \infty
}{\longrightarrow} & (M_{\infty},\xi,g_{\infty}).
\end{array}
\]
\section{Applications}

As the final consequence, we only deal when the pair is $(M,\emptyset)$. More
precisely, we will show that the gradient Sasaki-Ricci soliton orbifold metric
is a Sasaki-Einstein metric if $M$ is transverse $K$-stable. This is an odd
dimensional counterpart of Yau-Tian-Donaldson conjecture on a compact
$K$-stable K\"{a}hler manifold (\cite{cds1}, \cite{cds2}, \cite{cds3},
\cite{t5}). It can be viewed as a Sasaki analogue of Tian-Zhang's \cite{tz1}
and Chen-Sun-Wang's result \cite{csw} for the K\"{a}hler-Ricci flow.

All transverse quantities on Sasakian manifolds such as transverse Mabuchi
$K$-energy, Sasaki-Futaki invariant, transverse Perelman's $\mathcal{W}%
$-functional can be viewed as their K\"{a}hler counterparts restricted on
basic functions and transverse K\"{a}hler structure. Then all the integrands
are only involved with the transverse K\"{a}hler structure. Hence, under the
Sasaki-Ricci flow, the Reeb vector field and the transverse holomorphic
structure are both invariant, and the metrics are bundle-like. Furthermore,
when one applies integration by parts, the expressions involved behave
essentially the same as in the K\"{a}hler case.

\subsection{Transverse Mabuchi $K$-Energy}

We recall the special case of Log Sasaki-Mabuchi $K$-energy as the transverse
Mabuchi $K$-energy (\cite{cj}) on a compact transverse Fano Sasakian
$(2n+1)$-manifold along any basic transverse K\"{a}hler potential $\phi_{s}$
with $\phi_{0}=\varphi_{1}$ and $\phi_{1}=\varphi_{2}$:%
\begin{equation}%
\begin{array}
[c]{c}%
K_{\eta_{0}}(\varphi_{1},\varphi_{2}):=-\frac{1}{V}\int_{0}^{1}\int
_{M}\overset{\cdot}{\phi}_{s}(R_{\phi_{s}}^{T}-n)\omega_{\phi_{s}}^{n}%
\wedge \eta_{\phi_{s}}\wedge ds
\end{array}
\label{57b}%
\end{equation}
and then also
\[%
\begin{array}
[c]{c}%
K_{\eta_{0}}(\varphi_{1},\varphi_{2}):=-\frac{1}{V}\int_{0}^{1}\int
_{M}\overset{\cdot}{\phi}_{s}(R_{\phi_{s}}^{T}-n)\omega_{\phi_{s}}^{n}%
\wedge \eta_{0}\wedge ds.
\end{array}
\]
It follows easily from the definition that

\begin{lemma}
\label{L50} (\cite{cj}, \cite{fow})

\begin{enumerate}
\item $K_{\eta_{0}}$ is independent of the path $\phi_{t}$, where
$\overset{\cdot}{\phi_{s}}=\frac{d}{dt}\phi.$ Furthermore it satisfies the
$1$-cocycle condition%
\[
K_{\eta_{0}}(\varphi_{1},\varphi_{2})+K_{\eta_{0}}(\varphi_{2},\varphi
_{3})=K_{\eta_{0}}(\varphi_{1},\varphi_{3})
\]
and
\[
K_{\eta_{0}}(\varphi_{1}+C_{1},\varphi_{2}+C_{2})=K_{\eta_{0}}(\varphi
_{1},\varphi_{2}).
\]

\item For a family of transverse K\"{a}hler potentials $\varphi=\varphi_{t}$
in (\ref{2022-1}), we have%
\begin{equation}%
\begin{array}
[c]{c}%
\frac{d}{dt}K_{\eta_{0}}(\varphi)=-\int_{M}||\nabla^{T}u(t)||^{2}\omega
(t)^{n}\wedge \eta_{0}.
\end{array}
\label{31}%
\end{equation}

\end{enumerate}
\end{lemma}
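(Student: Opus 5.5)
The plan is to prove the two parts of Lemma \ref{L50} separately. Part (1) follows from showing that the integrand of (\ref{57b}) is the integral of a closed $1$-form on the space of basic transverse K\"{a}hler potentials. Part (2) is a direct computation along the flow (\ref{2022-1}).

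\textbf{Path independence.} Let $\mathcal{H}_B$ be the space of basic potentials $\phi$ with $\omega_\phi=\omega_0+\sqrt{-1}\partial_B\overline{\partial}_B\phi>0$. It is an open convex subset of $C_B^\infty(M)$, hence contractible. On $\mathcal{H}_B$ define the $1$-form
\[
\sigma_\phi(\psi)=-\frac{1}{V}\int_M\psi\,(R^T_\phi-n)\,\omega_\phi^n\wedge\eta_0 .
\]
I will check that $d\sigma=0$. Take a two-parameter family $\phi(s,r)$. Use the basic variation formulas
\[
\partial_r(\omega_\phi^n\wedge\eta_0)=\Delta_\phi(\partial_r\phi)\,\omega_\phi^n\wedge\eta_0,\qquad \partial_r R^T_\phi=-\Delta_\phi^2(\partial_r\phi)-\langle Ric^T_\phi,\sqrt{-1}\partial_B\overline{\partial}_B\partial_r\phi\rangle .
\]
These are the K\"{a}hler formulas, valid here because everything is basic and $\eta_0$ is fixed. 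Then compute $\partial_r\sigma(\partial_s\phi)-\partial_s\sigma(\partial_r\phi)$. After integrating by parts on $M$ (no boundary terms, since $d\eta_0$ is horizontal and the integrands are basic), the terms $\partial_r\partial_s\phi$ cancel. The fourth-order terms are symmetric in $(r,s)$. The Ricci terms combine into $\int\langle Ric^T_\phi,\ \partial\partial_s\phi\wedge\bar\partial\partial_r\phi-\partial\partial_r\phi\wedge\bar\partial\partial_s\phi\rangle$, which is real and symmetric, hence cancels. So $\sigma$ is closed. On a contractible domain, a closed form has path-independent line integrals, and this gives path independence.

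The $1$-cocycle property is then immediate: concatenate a path from $\varphi_1$ to $\varphi_2$ with one from $\varphi_2$ to $\varphi_3$. For invariance under constants, take the path $\phi_s+sC$ (or note $\omega_{\phi+C}=\omega_\phi$). This gives an added term $-\frac{C}{V}\int_M(R^T_\phi-n)\omega_\phi^n\wedge\eta_0$, which vanishes because $[\omega_\phi]_B=2\pi c_1^B$ implies $\int R^T\omega^n\wedge\eta_0=n\int\omega^n\wedge\eta_0$. The equality of the two expressions in (\ref{57b}), one with $\eta_{\phi_s}$ and one with $\eta_0$, follows because the integrands are basic. Since $\eta_\phi\wedge\omega_\phi^n$ and $\eta_0\wedge\omega_\phi^n$ differ by $d^c_B\phi\wedge\omega_\phi^n$, which is horizontal of degree $2n+1$ and hence zero, the two integrals agree.

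\textbf{Derivative along the flow.} By path independence and the cocycle property, $\frac{d}{dt}K_{\eta_0}(\varphi_t)=\sigma_{\varphi_t}(\dot\varphi_t)$. Differentiating (\ref{2022-1}) and comparing with the definition of $u$ shows $\dot\varphi=-u+c(t)$, where $u$ is the transverse Ricci potential satisfying $R^T-n=-\Delta_B u$ (equivalently $Ric^T-\omega=\sqrt{-1}\partial_B\overline\partial_B u$ up to the sign convention). The constant $c(t)$ drops out by the constant-invariance shown above. Therefore
\[
\frac{d}{dt}K_{\eta_0}=-\frac1V\int_M(-u)(-\Delta_Bu)\,\omega^n\wedge\eta_0=-\frac1V\int_M|\nabla^Tu|^2\omega^n\wedge\eta_0,
\]
after one integration by parts. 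This is (\ref{31}), up to the normalization $V$.

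\textbf{Main obstacle.} The step most likely to cause trouble is bookkeeping of signs and normalizations: the sign convention for $u$ in (\ref{2022-1}), the factor $\frac1V$, and justifying integration by parts with $\eta_0$ fixed rather than $\eta_\phi$. I would settle all three by reducing every integral to $\int_M(\text{basic }2n\text{-form})\wedge\eta_0$ and using $d(\beta\wedge\eta_0)=d\beta\wedge\eta_0$ for basic $\beta$ of degree $2n-1$. That identity holds because $\beta\wedge d\eta_0$ is a basic $(2n+1)$-form, which vanishes.
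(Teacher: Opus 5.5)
Your proposal is correct and is the standard argument the paper relies on when it says the lemma ``follows easily from the definition'' and cites \cite{cj}, \cite{fow}: you show the Mabuchi $1$-form is closed on the convex space of basic potentials (using $\omega_\phi^n\wedge\eta_\phi=\omega_\phi^n\wedge\eta_0$ and transverse Stokes), and then substitute $\dot\varphi=-u+c(t)$ along (\ref{2022-1}). One step needs tidying in the closedness computation. The antisymmetrized Ricci expression is purely imaginary, not ``real and symmetric.'' It drops out for a different reason: after the contracted Bianchi identity makes the $\partial_s\phi\,\langle\nabla^T R^T,\nabla^T\partial_r\phi\rangle$ terms cancel against those coming from the volume-form variation, only the real part of $\int_M\bigl(R^{i\bar j}_\phi-(R^T_\phi-n)g_\phi^{i\bar j}\bigr)(\partial_s\phi)_i(\partial_r\phi)_{\bar j}\,\omega_\phi^n\wedge\eta_0$ survives, and that real part is symmetric in $r,s$.
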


\subsection{The Sasaki-Futaki Invariant}

For the Hamiltonian holomorphic vector field $V,$ $d\pi_{\alpha}(V)$ is a
holomorphic vector field on $V_{\alpha}$ and the complex valued Hamiltonian
function $u_{V}:=\sqrt{-1}\eta(V)$ satisfies
\begin{equation}%
\begin{array}
[c]{c}%
\overline{\partial}_{B}u_{V}=-\frac{\sqrt{-1}}{2}i_{V}d\eta.
\end{array}
\label{59-2}%
\end{equation}
Assume we normalize that $c_{1}^{B}(M)=\frac{1}{2}[d\eta]_{B},$ there exists a
basic function $h_{\omega}$ such that
\[%
\begin{array}
[c]{c}%
Ric^{T}(x,t)-\omega(x,t)=\sqrt{-1}\partial_{B}\overline{\partial}_{B}%
h_{\omega}.
\end{array}
\]

\begin{lemma}
\label{2022-A} (\cite{bgs}, \cite{fow}) The Sasaki-Futaki invariant
\begin{equation}%
\begin{array}
[c]{c}%
f_{M}(V)=\int_{M}V(h_{\omega})\omega^{n}\wedge \eta
\end{array}
\label{59}%
\end{equation}
is only depends on the basic cohomology represented by $d\eta$, and not on the
particular transverse K\"{a}hler metric. It is clear that $f_{M}$ vanishes if
$M$ has a Sasaki-Einstein metric in its basic cohomology class. One also have
the following reformulation :
\begin{equation}%
\begin{array}
[c]{c}%
f_{M}(V)=-n\int_{M}u_{V}(Ric_{\omega}^{T}-\omega)\omega^{n-1}\wedge \eta
=-\int_{M}u_{V}(R_{\omega}^{T}-n)\omega^{n}\wedge \eta.
\end{array}
\label{59-1}%
\end{equation}
\end{lemma}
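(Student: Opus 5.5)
To prove Lemma \ref{2022-A}, I would work throughout with basic forms and the transverse K\"{a}hler structure. The plan is to mimic the classical Futaki/Calabi computation: fix $\omega$ and $\eta$, and consider a path of transverse K\"{a}hler forms $\omega_s=\omega+\sqrt{-1}\partial_B\overline{\partial}_B\varphi_s$ in the same basic class, with $\varphi_s$ basic. The first observation is that $\omega_s^n\wedge\eta$ and $\omega_s^n\wedge\eta_{\varphi_s}$ have the same integral against basic functions. The reason is that $\eta_{\varphi_s}-\eta=d^C_B\varphi_s$ is basic, and a basic $(2n+1)$-form vanishes. So, exactly as remarked for the Mabuchi energy, one may integrate against $\eta$ fixed, and every integrand is a transverse K\"{a}hler quantity.

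\textbf{Step 1 (equivalence of the two formulas).} Take the trace of $Ric^T-\omega=\sqrt{-1}\partial_B\overline{\partial}_B h_\omega$ to get $R^T_\omega-n=\Delta_B h_\omega$. Then
\[
-\int_M u_V(R^T_\omega-n)\omega^n\wedge\eta=-\int_M u_V\Delta_B h_\omega\,\omega^n\wedge\eta .
\]
Integrate by parts transversally; this is legitimate because $d\eta$ restricted to basic forms behaves like the K\"{a}hler form and $d(\alpha\wedge\eta)=d\alpha\wedge\eta\pm\alpha\wedge d\eta$, where the last term vanishes for top-degree basic $\alpha$. Using (\ref{59-2}), i.e.\ $\overline{\partial}_Bu_V=-\frac{\sqrt{-1}}{2}i_Vd\eta$, this turns into $\int_M V(h_\omega)\omega^n\wedge\eta$ up to the normalization constant. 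The middle expression in (\ref{59-1}) is the same identity written with $n\,(Ric^T-\omega)\wedge\omega^{n-1}=(R^T-n)\omega^n$.

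\textbf{Step 2 (independence of the metric).} Differentiate $F(s)=-\int_M u_{V,s}(R^T_s-n)\omega_s^n\wedge\eta$ in $s$. The standard variations are $\dot u_{V,s}=V(\dot\varphi)$ (from (\ref{59-2}) applied to $\omega_s$), $\dot R^T=-\Delta_B^2\dot\varphi-\langle Ric^T,\sqrt{-1}\partial_B\overline{\partial}_B\dot\varphi\rangle$, and $\frac{d}{ds}\omega_s^n=\Delta_B\dot\varphi\,\omega_s^n$. Substituting these and integrating by parts, one obtains $\dot F=\int_M \dot\varphi\,\mathcal{L}(\ldots)$. I expect this to be the main obstacle, since the terms must cancel as in Futaki's proof. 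The cleanest route is to use the first form instead: $F=\int_M V(h)\omega^n\wedge\eta$, with $\dot h=-\Delta_B\dot\varphi-\dot\varphi+c$. Then
\[
\dot F=\int_M\big(V(\dot h)+V(h)\Delta_B\dot\varphi\big)\omega^n\wedge\eta
=-\int_M\operatorname{div}_h(V)\big(\Delta_B\dot\varphi+\dot\varphi+\ldots\big),
\]
where $\operatorname{div}_h(V)=\operatorname{div}V+V(h)$ is the weighted divergence. Holomorphy of $V$ gives $\overline{\partial}_B\operatorname{div}_h V=0$ up to the Fano identity, so the result reduces to $\int_M V(\Delta_B\dot\varphi+\dot\varphi)e^{h}\ldots$. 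This vanishes after one more integration by parts using $\overline{\partial}_B$-closedness.

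\textbf{Step 3 (vanishing).} If the class contains a Sasaki--Einstein metric, compute with that metric, for which $h_\omega$ is constant. Then $V(h_\omega)=0$, so $f_M\equiv0$. The whole argument is the K\"{a}hler one of \cite{fow} transported to basic forms, and the only Sasakian input is the $\eta$-independence in the first observation.
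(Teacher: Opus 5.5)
Your Steps 1 and 3 are fine. Transporting Futaki's K\"ahler computation to basic forms is also the right route; the paper gives no argument of its own beyond citing \cite{bgs}, \cite{fow}. With $\omega=\sqrt{-1}g^T_{i\bar j}dz^i\wedge d\bar z^j$, (\ref{59-2}) reads exactly $\partial_{\bar j}u_V=g^T_{i\bar j}V^i$, so Step 1 holds with no stray constant. However, Step 2 is the real content of the lemma, Step 3 depends on it, and as written Step 2 does not work.

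\textbf{The identity you invoke is false.} You use $\overline{\partial}_B\operatorname{div}_hV=0$. Since $V$ is transversely holomorphic,
\[
\partial_{\bar j}\operatorname{div}V=V^i\partial_i\partial_{\bar j}\log\det g^T=-V^iR^T_{i\bar j},\qquad \partial_{\bar j}V(h_\omega)=V^i\partial_i\partial_{\bar j}h_\omega .
\]
Hence
\[
\overline{\partial}_B\big(\operatorname{div}V+V(h_\omega)\big)=-g^T_{i\bar j}V^i\,d\bar z^j=-\overline{\partial}_Bu_V ,
\]
which is nonzero unless $V=0$. If $\operatorname{div}_hV$ were a constant $c$, your formula would give $\dot F=c\int_M\dot\varphi\,\omega^n\wedge\eta$, which is not $0$ in general.

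\textbf{The pairing is also off.} No weight $e^{h}$ ever enters. Start from $\dot F=\int_M\big(V(\dot h)+V(h_\omega)\Delta_B\dot\varphi\big)\omega^n\wedge\eta$ with $\dot h=-\Delta_B\dot\varphi-\dot\varphi+\dot c$, and integrate $V$ by parts. This gives
\[
\dot F=\int_M\Delta_B\dot\varphi\,\big(\operatorname{div}V+V(h_\omega)\big)\,\omega^n\wedge\eta+\int_M\dot\varphi\,\operatorname{div}V\,\omega^n\wedge\eta .
\]
So $\dot\varphi$ is paired with $\operatorname{div}V$, not with $\operatorname{div}_hV$.

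\textbf{The missing idea is the holomorphy-potential identity.} First, $\operatorname{div}V=\nabla_k\big((g^T)^{k\bar j}\partial_{\bar j}u_V\big)=\Delta_Bu_V$. Second, by the computation above, $\operatorname{div}V+V(h_\omega)+u_V$ is a $\overline{\partial}_B$-closed basic function on a compact manifold, hence a constant $c$. Equivalently, $\Delta_Bu_V+u_V+V(h_\omega)=c$. Substituting both into $\dot F$ gives
\[
\dot F=\int_M\big(\dot\varphi\,\Delta_Bu_V-u_V\,\Delta_B\dot\varphi\big)\,\omega^n\wedge\eta=0 .
\]
This vanishes by self-adjointness of $\Delta_B$ on basic functions; the constant drops out since $\int_M\Delta_B\dot\varphi\,\omega^n\wedge\eta=0$. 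Here $u_V$ and $\Delta_B$ are taken with respect to $\omega_s$ at each $s$. This is exactly where (\ref{59-2}) and $Ric^T-\omega=\sqrt{-1}\partial_B\overline{\partial}_Bh_\omega$ enter, and your sketch never uses them in this way.

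Without this identity, Step 2 does not establish that $f_M$ is independent of the transverse K\"ahler metric. The vanishing statement in Step 3 is then unproved as well.
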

\subsection{Transverse Perelman's $\mathcal{W}$-Functional}

We recall the transverse Perelman's $\mathcal{W}$-functional on a compact
Sasakian $(2n+1)$-manifold:
\begin{equation}%
\begin{array}
[c]{c}%
\mathcal{W}^{T}(g^{T},f,\tau)=(4\pi \tau)^{-n}\int_{M}[\tau(R^{T}+|\nabla
^{T}f|^{2})+f-2n]e^{-f}\omega^{n}\wedge \eta_{0},
\end{array}
\label{2022-2}%
\end{equation}
for $f\in C_{B}^{\infty}(M;R)$ and $\tau>0$ and define
\[%
\begin{array}
[c]{c}%
\lambda^{T}(g^{T},\tau)=\inf \{ \mathcal{W}^{T}(g^{T},f,\tau):f\in
C_{B}^{\infty}(M,\mathbb{R});\text{ }\int_{M}(4\pi \tau)^{-n}e^{-f}\omega
^{n}\wedge \eta_{0}=1\}.
\end{array}
\]

\begin{lemma}
\label{L51}(\cite{co1})

\begin{enumerate}
\item
\[
-\infty<\lambda^{T}(g^{T},\tau)\leq C.
\]

\item There exists $f_{\tau}\in C_{B}^{\infty}(M,\mathbb{R})$ so that
$\mathcal{W}^{T}(g^{T},f_{\tau},\tau)=\lambda^{T}(g^{T},\tau)$. That is,
\[%
\begin{array}
[c]{c}%
\lambda^{T}(g^{T},\tau)=\inf \{ \mathcal{W}^{T}(g^{T},f,\tau):f\in W_{B}%
^{1,2};\text{ }\int_{M}(4\pi \tau)^{-n}e^{-f}\omega^{n}\wedge \eta_{0}=1\}.
\end{array}
\]

\end{enumerate}
\end{lemma}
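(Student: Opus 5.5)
Both parts of Lemma \ref{L51} reduce to the Kähler/Riemannian argument of Perelman (via Rothaus) applied to basic functions. The tools are the uniform Sobolev inequality (\ref{b4}) for basic functions and the observation that, for basic $f$, every integrand in (\ref{2022-2}) involves only the transverse Kähler structure.

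\textbf{Substitution.} With the constraint $\int_M(4\pi\tau)^{-n}e^{-f}\omega^n\wedge\eta_0=1$, set $w=(4\pi\tau)^{-n/2}e^{-f/2}$. Then $w\in W_B^{1,2}$ is basic and $\int_M w^2\omega^n\wedge\eta_0=1$. The functional becomes
\[
\mathcal{W}^{T}(g^{T},f,\tau)=\int_M\big[\tau(4|\nabla^T w|^2+R^Tw^2)-w^2\log w^2\big]\omega^n\wedge\eta_0-n\log(4\pi\tau)-2n .
\]

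\textbf{Part (1), upper bound.} Take $w$ constant, namely $w^2=1/\mathrm{Vol}$. This gives $\lambda^T\le \tau\,\overline{R^T}+\log\mathrm{Vol}-n\log(4\pi\tau)-2n=:C$.

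\textbf{Part (1), lower bound.} I would apply Jensen's inequality to the probability measure $w^2\omega^n\wedge\eta_0$, together with (\ref{b4}) for basic functions, where $q=\frac{2(2n+1)}{2n-1}>2$. This gives
\[
\int w^2\log w^2\le \tfrac{q}{q-2}\log\|w\|_{L^q}^2\le \tfrac{q}{q-2}\log\big(C_S(\|\nabla^T w\|_2^2+1)\big).
\]
Since $\log(1+x)\le \epsilon x+C_\epsilon$, this is bounded by $4\tau\|\nabla^T w\|_2^2/2+C(\tau,C_S)$. Combined with $R^T\ge\min R^T$, the functional is bounded below. Finiteness follows.

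\textbf{Part (2), existence of a minimizer.} Take a minimizing sequence $w_k\ge 0$ (replacing $w_k$ by $|w_k|$ does not change the value). By the coercive estimate above, $\|w_k\|_{W^{1,2}_B}$ is bounded. The subspace of basic functions is closed in $W^{1,2}$, since it is the kernel of $\xi$ in the distributional sense. The embedding $W^{1,2}_B\hookrightarrow L^2$ is compact (Rellich on compact $M$), and the embedding into $L^q$ is bounded. Hence a subsequence converges weakly in $W^{1,2}_B$ and strongly in $L^p$ for every $p<q$. The entropy term $\int w^2\log w^2$ is continuous under such convergence by uniform integrability from the $L^q$ bound. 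The gradient term is weakly lower semicontinuous. So the limit $w$ is a minimizer with $\|w\|_2=1$, and this also proves the $W_B^{1,2}$ characterization of $\lambda^T$.

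\textbf{Main obstacle: regularity and positivity.} The minimizer satisfies the basic Euler--Lagrange equation
\[
-4\tau\Delta_B w+\tau R^T w-2w\log w=(\lambda^T+n\log(4\pi\tau)+2n)\,w .
\]
Because $w$ is basic, $\Delta_B w=\Delta w$, so this is an elliptic equation on $M$. Bootstrapping, using $w\log w\in L^p$ for every $p$, gives $w\in C^{1,\alpha}$ and then $w\in C^\infty$ away from $\{w=0\}$. Positivity of $w$ comes from the strong maximum principle / Harnack inequality for the equation written as $-4\tau\Delta w+c(x)w=0$ with $c=\tau R^T-2\log w-\mathrm{const}$. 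Here $c$ is bounded above, and $w\log w\to0$ as $w\to0$, which handles the logarithmic singularity (this is Rothaus' argument).

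Finally, set $f_\tau=-2\log w-n\log(4\pi\tau)$. It is smooth and basic, and it realizes $\lambda^T$. The quasi-regular orbifold structure is not needed, since everything is done on $M$ with basic functions.
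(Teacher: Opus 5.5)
Your strategy is the standard Rothaus argument restricted to basic functions, which is also the argument in the cited source \cite{co1}; the paper gives no proof of its own and only cites it. The substitution $w=(4\pi\tau)^{-n/2}e^{-f/2}$, Jensen plus the Sobolev inequality for coercivity, and the minimizing sequence in the closed subspace $W^{1,2}_B$ are all sound. Part (1) and the existence of a nonnegative $W^{1,2}_B$ minimizer are therefore fine.

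\textbf{The gap is in the positivity step.} From $-4\tau\Delta w+\tau R^T w-2w\log w=Kw$ you get $c=\tau R^T-2\log w-K$. Since $w$ is bounded, this $c$ is bounded \emph{below}. Near the zero set of $w$, however, $-2\log w\to+\infty$, so $c$ is \emph{not} bounded above.

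The linear strong maximum principle for a nonnegative solution of $-\Delta w+cw=0$ needs precisely an upper bound on $c$: one uses $-\Delta w+(\sup c^{+})w\geq 0$. So it does not apply here. An unbounded-above $c$ genuinely allows interior zeros: $w=|x|^{2}$ solves $-\Delta w+cw=0$ in $\mathbb{R}^{m}$ with $c=2m/|x|^{2}$.

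Your remark that $w\log w\to 0$ only makes the nonlinearity continuous, which is what yields $C^{1,\alpha}$. It says nothing about $\{w=0\}$. In fact the term $-2w\log w\geq 0$ near a zero lies on the side that makes $w$ a \emph{sub}solution, so no linear argument of this kind can work.

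\textbf{How to repair it.} You need an argument that uses the exact logarithmic rate. Choose $C\geq\sup(\tau R^T-K)$. Then $-4\tau\Delta w+\beta(w)\geq 0$ with $\beta(s)=Cs-2s\log s$. This $\beta$ is nondecreasing for small $s$ and satisfies $\beta(0)=0$. Moreover $\int_0^s\beta\sim s^{2}|\log s|$, so $\int_0 \frac{ds}{s\sqrt{|\log s|}}=\infty$. A Vázquez-type strong maximum principle then gives $w>0$ unless $w\equiv 0$.

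This condition is borderline, which is why a specific argument is required. For $\beta(s)=s^{p}$ with $0<p<1$ the corresponding integral converges, and nonnegative solutions with zero sets exist; in one dimension, a constant multiple of $(x^{+})^{2/(1-p)}$ is one. Alternatively, you can quote Rothaus' positivity result directly.

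\textbf{A smaller omission.} Minimizing over basic functions only gives the Euler--Lagrange equation tested against basic functions. The identity $\Delta_B w=\Delta w$ does not address this. To treat it as a weak elliptic equation on $M$, note that every term is invariant under the isometric Reeb flow. Averaging an arbitrary test function over the closure of that flow, a torus of isometries, then reduces to the basic case.

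With positivity in hand, your bootstrap to smoothness and your definition of $f_\tau$ go through, including the $W^{1,2}_B$ characterization of $\lambda^T$.
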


Note that $\mathcal{W}$-functional can be expressed as
\[%
\begin{array}
[c]{ccl}%
\mathcal{W}^{T}(g^{T},f) & = & \mathcal{W}^{T}(g^{T},f,\frac{1}{2}%
)+(2\pi)^{-n}(2n)V\\
& = & (2\pi)^{-n}\int_{M}[\frac{1}{2}(R^{T}+|\nabla^{T}f|^{2})+f]e^{-f}%
\omega^{n}\wedge \eta_{0},
\end{array}
\]
where $(g^{T},f)$ satisfies $\int_{M}e^{-f}\omega^{n}\wedge \eta_{0}=V$ and
$\tau=\frac{1}{2}.$ Again
\[%
\begin{array}
[c]{c}%
\mu^{T}(g^{T})=\inf \{ \mathcal{W}^{T}(g^{T},f):f\in C_{B}^{\infty
}(M;\mathbb{R})\text{ \textrm{with} }\int_{M}e^{-f}\omega^{n}\wedge \eta
_{0}=V\}.
\end{array}
\]
It follows from Lemma \ref{L51} (also \cite{he}) that

\begin{corollary}
\label{C51}

\begin{enumerate}
\item $\lambda^{T}(g^{T})$ can be attained by some $f$ which satisfies the
Euler-Lagrange equation :
\begin{equation}%
\begin{array}
[c]{c}%
\Delta_{B}f+f+\frac{1}{2}(R^{T}-|\nabla^{T}f|^{2})=\mu^{T}(g^{T})
\end{array}
\label{51}%
\end{equation}
and
\begin{equation}%
\begin{array}
[c]{c}%
\delta \mu^{T}(g^{T})=-\int_{M}\langle \delta g^{T},Ric^{T}-g^{T}+\nabla
^{T}\overline{\nabla}^{T}f\rangle e^{-f}\omega^{n}\wedge \eta_{0}.
\end{array}
\label{52}%
\end{equation}

\item $g^{T}$ is a critical point of $\mu^{T}(g^{T})$ if and only if $g^{T}$
is a gradient shrinking Sasaki-Ricci soliton%
\begin{equation}
Ric^{T}+\nabla^{T}\overline{\nabla}^{T}f=g^{T} \label{53}%
\end{equation}
where $f$ is a minimizer of $\mathcal{W}^{T}(g^{T},\cdot).$
\end{enumerate}
\end{corollary}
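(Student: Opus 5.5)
The statement is Corollary \ref{C51}, and I would prove it by importing Perelman's argument for the $\mu$-functional. All integrands involve only basic functions and the transverse K\"{a}hler structure, and integration by parts against $\omega^{n}\wedge\eta_{0}$ behaves as in the K\"{a}hler case. For part (1), Lemma \ref{L51}(2) already gives a minimizer of $\mathcal{W}^{T}(g^{T},\cdot,\tfrac12)$ under the constraint $\int_{M}e^{-f}\omega^{n}\wedge\eta_{0}=V$. To get its regularity and equation, I would first substitute $v=e^{-f/2}$. This turns the functional into the quadratic-plus-entropy form
\[
(2\pi)^{-n}\int_M\big(2|\nabla^T v|^2+\tfrac12R^Tv^2-v^2\log v^2\big)\,\omega^n\wedge\eta_0 ,
\]
constrained by $\int v^{2}=V$. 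Next I would vary $v$ among basic functions and use a Lagrange multiplier. The resulting equation is
\[
-4\Delta_{B}v+R^{T}v-2v\log v^{2}-\mu' v=0 .
\]
Elliptic regularity for the basic Laplacian (a transversally elliptic operator, as in \cite{co1}) then gives $v$ smooth and positive. Rewriting in terms of $f$ and identifying the multiplier with $\mu^{T}(g^{T})$ yields (\ref{51}). To identify the multiplier, I would multiply the equation by $e^{-f}$ and integrate. The coefficient normalizations must be tracked against the chosen $\tau=\frac12$ and the factor $(2\pi)^{-n}$.

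For the first variation (\ref{52}), I would vary the transverse metric $g^{T}$ through basic symmetric tensors $\delta g^{T}=h$. I would simultaneously vary $f$ so that the measure $e^{-f}\omega^{n}\wedge\eta_{0}$ is preserved; this requires $\delta f=\frac12\mathrm{tr}\,h$. Since $f$ is a minimizer, its own variation contributes nothing. So it suffices to compute $\delta\mathcal{W}^{T}$ at fixed weighted measure. This is Perelman's computation, using $\delta R^{T}=-\Delta_{B}\mathrm{tr}h+\mathrm{div}\,\mathrm{div}\,h-\langle h,Ric^{T}\rangle$ for the transverse Levi-Civita connection. The divergence terms are integrated by parts against $e^{-f}$, and the $\eta_{0}$ factor causes no boundary contributions because everything is basic. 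This produces $-\int\langle h,Ric^{T}+\nabla^{T}\overline{\nabla}^{T}f-g^{T}\rangle e^{-f}\omega^{n}\wedge\eta_{0}$. The $-g^{T}$ term comes from the $f$ term in the integrand at $\tau=\frac12$. For transverse K\"{a}hler variations the Hessian is $\nabla^{T}\overline{\nabla}^{T}f$.

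For part (2), if $g^{T}$ is a soliton (\ref{53}), then (\ref{52}) vanishes for every $h$, so $g^{T}$ is critical. Conversely, I would take $h=Ric^{T}-g^{T}+\nabla^{T}\overline{\nabla}^{T}f$ itself, which is basic. Then $0=\delta\mu^{T}=-\int|h|^{2}e^{-f}$, which forces (\ref{53}) with $f$ the minimizer.

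The main obstacle is part (1): existence and smoothness of the minimizer within basic functions, and the precise constants. The key points are the uniform log-Sobolev/Sobolev inequality for basic functions (\ref{b4}) and the fact that the basic Laplacian has the Schauder theory needed for bootstrapping. I would take these from Lemma \ref{L51} and \cite{co1}, \cite{he}.
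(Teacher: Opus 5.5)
Your proposal is correct and follows essentially the same route as the paper. The paper gives no independent argument: it deduces the corollary from Lemma \ref{L51} (a smooth basic minimizer, from \cite{co1}) together with \cite{he}, i.e.\ Perelman's Euler--Lagrange and first-variation computation for basic functions against $\omega^{n}\wedge\eta_{0}$, which is exactly what you reconstruct, and as you anticipate, the constants $(2\pi)^{-n}$, $V$ and the factor $\tfrac12$ from $\tau=\tfrac12$ only rescale the constants in (\ref{51}) and (\ref{52}) without affecting the critical-point characterization in part (2).
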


It follows from (\ref{52}) that

\begin{corollary}
Let $(M,\xi,\eta_{0},g_{0})$ be a compact transverse Fano Sasakian
$(2n+1)$-manifold. Then, under the Sasaki-Ricci flow,
\begin{equation}%
\begin{array}
[c]{c}%
\frac{d}{dt}\mu^{T}(g_{t}^{T})=\int_{M}||Ric_{g_{t}^{T}}^{T}-g_{t}^{T}%
+\nabla^{T}\overline{\nabla}^{T}f_{t}||_{g_{t}^{T}}^{2}e^{-f_{t}}\omega
^{n}(g_{t}^{T})\wedge \eta_{0}.
\end{array}
\label{54}%
\end{equation}
Here $f_{t}$ are minimizing solutions of (\ref{51}) associated to metrics
$g_{t}^{T}$ and $\sigma_{t}$ is the family of transverse diffeomorphisms of
$M$ generated by the time-dependent vector field $\frac{1}{2}\nabla_{g_{t}%
^{T}}^{T}f_{t}.$
\end{corollary}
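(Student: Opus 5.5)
The formula (\ref{54}) follows from the first variation formula (\ref{52}) once we use the diffeomorphism invariance of $\mu^{T}$ along the foliation. Along (\ref{2025}) the transverse metric moves by $\partial_t g_t^{T}=-(Ric^{T}_{g_t^T}-g_t^{T})$. We compare it with the modified flow
\[
\tilde g^{T}_t=\sigma_t^{*}g^{T}_t ,
\]
where $\sigma_t$ is generated by the basic vector field $\frac{1}{2}\nabla^{T}_{g_t^T}f_t$. Here $f_t$ is a minimizer of (\ref{51}). By Lemma \ref{L51} and Corollary \ref{C51} it exists and is basic and smooth.

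\textbf{Step 1: diffeomorphism invariance.} Since $\nabla^T f_t$ is basic, $\sigma_t$ consists of foliation-preserving (transverse) diffeomorphisms that commute with the flow of $\xi$. Such maps preserve $\xi$, the transverse holomorphic structure up to pullback, and the measure $\omega^n\wedge\eta_0$ up to the matching pullback of $e^{-f}$. Consequently $\mathcal{W}^{T}(\sigma^*g^T,\sigma^*f)=\mathcal{W}^{T}(g^T,f)$, and hence $\mu^{T}(\sigma_t^*g_t^T)=\mu^{T}(g_t^T)$. It therefore suffices to differentiate $\mu^{T}$ along $\tilde g_t^T$.

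\textbf{Step 2: apply the first variation.} The variation of the modified flow is
\[
\delta g^T=-(Ric^{T}-g^{T}+\nabla^{T}\overline{\nabla}^{T}f_t),
\]
pulled back by $\sigma_t$; the Lie derivative term $\mathcal{L}_{\frac12\nabla^T f}g^T$ produces the Hessian term. Inserting this into (\ref{52}) at the minimizer $f_t$ gives the square of the soliton tensor, which is (\ref{54}) after pulling back.

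\textbf{Main obstacle.} The delicate point is justifying that $t\mapsto\mu^{T}(g_t^T)$ is differentiable, or at least that (\ref{54}) holds in the sense of one-sided derivatives, because the minimizer $f_t$ may fail to be unique or to depend smoothly on $t$. I would handle this with Perelman's trick. Fix $t_0$ and solve the backward conjugate heat equation
\[
\partial_t e^{-f}=-\Delta_B e^{-f}+(R^{T}-n)e^{-f}
\]
on the basic functions with $f(t_0)=f_{t_0}$. This is a basic parabolic equation, so it is solvable on the Sasakian manifold and preserves basicness and the normalization. The standard computation then shows that $\mathcal{W}^{T}(g_t^T,f(t))$ is monotone with derivative equal to the right side of (\ref{54}) at $t_0$. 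Comparing with the infimum gives
\[
\mu^{T}(g^T_{t})\le\mathcal{W}^{T}(g^T_t,f(t)) \quad\text{for } t<t_0,
\]
which yields the lower Dini derivative. The transverse integrations by parts are identical to the K\"ahler case because every integrand is basic and $d(\omega^{n}\wedge\eta_0)$-type boundary terms vanish.
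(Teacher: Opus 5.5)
Your route (the first variation (\ref{52}) plus invariance of $\mu^{T}$ under the foliation-preserving maps $\sigma_t$) is what the paper means by its one-line ``it follows from (\ref{52})''. Step 1 is fine: $\sigma_t$ commutes with the flow of $\xi$, so $\sigma_t^{*}\eta_0-\eta_0$ is basic and $\sigma_t^{*}(\omega^{n}\wedge\eta_0)=(\sigma_t^{*}\omega)^{n}\wedge\eta_0$.

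Step 2 is not right as written. $\mathcal{L}_{\frac{1}{2}\nabla^{T}f}g^{T}$ is the full real transverse Hessian $\mathrm{Hess}^{T}f$. Its $(2,0)+(0,2)$ part $\nabla^{T}\nabla^{T}f$ vanishes only when the $(1,0)$-gradient of $f$ is transversely holomorphic. So the modified velocity is not of type $(1,1)$, and you cannot insert it into (\ref{52}) written with the complex Hessian $\nabla^{T}\overline{\nabla}^{T}f$; that form is valid only for transverse K\"{a}hler (type $(1,1)$) variations. The paper itself distinguishes $\nabla^{T}\nabla^{T}$ from $\nabla^{T}\overline{\nabla}^{T}$, cf.\ Lemma \ref{L41} and Theorem \ref{T51}. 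With the Riemannian form $\delta\mu^{T}=-\int_{M}\langle\delta g^{T},Ric^{T}-g^{T}+\mathrm{Hess}^{T}f\rangle e^{-f}\omega^{n}\wedge\eta_{0}$, your argument gives $\int_{M}|Ric^{T}-g^{T}+\mathrm{Hess}^{T}f_{t}|^{2}e^{-f_{t}}\omega^{n}\wedge\eta_{0}$. This exceeds the complex-Hessian reading of (\ref{54}) by (a multiple of) $\int_{M}|\nabla^{T}\nabla^{T}f_{t}|^{2}e^{-f_{t}}\omega^{n}\wedge\eta_{0}$. So (\ref{54}) is correct only when read with the full Hessian, a point the paper's derivation also glosses over. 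Check the sign as well: $\partial_t(\sigma_t^{*}g_t^{T})=\sigma_t^{*}(\partial_t g_t^{T}+\mathcal{L}_{X_t}g_t^{T})$, so the velocity $-(Ric^{T}-g^{T}+\mathrm{Hess}^{T}f_t)$ requires $X_t=-\frac{1}{2}\nabla^{T}f_t$ (or the inverse maps).

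Second, your treatment of the obstacle you rightly single out proves only half of the identity. From $\mu^{T}(g_t^{T})\le\mathcal{W}^{T}(g_t^{T},f(t))$ for $t<t_0$, with equality at $t_0$, you obtain $\liminf_{t\to t_0^{-}}\frac{\mu^{T}(g_{t_0}^{T})-\mu^{T}(g_t^{T})}{t_0-t}\ge$ the right side of (\ref{54}) at $f_{t_0}$. That inequality gives monotonicity, not (\ref{54}); with non-unique minimizers the left derivative could be strictly larger.

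The missing half comes from $t>t_0$. Test $\mu^{T}(g_t^{T})\le\mathcal{W}^{T}(g_t^{T},\hat f(t))$ with any smooth normalized basic family satisfying $\hat f(t_0)=f_{t_0}$, e.g.\ $e^{-\hat f(t)}\omega_t^{n}\wedge\eta_0=e^{-f_{t_0}}\omega_{t_0}^{n}\wedge\eta_0$. Its $t$-derivative at $t_0$ equals the right side of (\ref{54}) by the first variation and the Euler--Lagrange equation (\ref{51}). Hence $\limsup_{t\to t_0^{+}}\frac{\mu^{T}(g_t^{T})-\mu^{T}(g_{t_0}^{T})}{t-t_0}$ is bounded above by the same quantity. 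The two bounds give (\ref{54}) at every $t_0$ where $t\mapsto\mu^{T}(g_t^{T})$ is differentiable. Since this function is monotone (which your conjugate-heat argument on $[t_1,t_2]$ does prove), that covers almost every $t$. The paper's one-line proof tacitly assumes this differentiability.
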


Next, we state some a priori estimates for the minimizing solution $f_{t}$ of
(\ref{51}) under the Sasaki-Ricci \ flow. It can be viewed as their K\"{a}hler
counterparts restricted on basic functions and transverse K\"{a}hler
structure, etc. We refer to the details estimates as in \cite[Proposition
4.2]{tzhu2} and \cite{psswe}.

At first, we can improve the estimates as in Lemma \ref{L31} if the transverse
Mabuchi $K$-energy is bounded from below.

\begin{lemma}
If in addition the transverse Mabuchi $K$-energy is bounded from below on the
space of transverse K\"{a}hler potentials as in Lemma \ref{L31}, then we have

\begin{enumerate}
\item
\[%
\begin{array}
[c]{c}%
\lim_{t\rightarrow \infty}||u(t)||_{C^{0}(M)}=0,
\end{array}
\]

\item
\[%
\begin{array}
[c]{c}%
\lim_{t\rightarrow \infty}||\nabla^{T}u(t)||_{C_{B}^{0}(M,g_{t}^{T})}=0,
\end{array}
\]

\item
\[%
\begin{array}
[c]{c}%
\lim_{t\rightarrow \infty}||\Delta_{B}u(t)||_{C^{0}(M)}=0.
\end{array}
\]
\end{enumerate}
\end{lemma}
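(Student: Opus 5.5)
The plan follows the Kähler argument of Tian--Zhu and Phong--Song--Sturm--Weinkove, applied to basic functions and the transverse Kähler structure. All integrations by parts are taken against $\omega(t)^{n}\wedge\eta_{0}$; since $\xi$ and the transverse holomorphic structure are fixed along (\ref{2025}), only basic quantities enter.

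\textbf{Step 1: an integrated decay statement.} Integrate (\ref{31}) in time. Because the transverse Mabuchi $K$-energy is bounded below, this gives
\[
\int_{0}^{\infty}\int_{M}\|\nabla^{T}u(t)\|^{2}\,\omega(t)^{n}\wedge\eta_{0}\,dt<\infty .
\]

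\textbf{Step 2: upgrade to $L^{2}$-decay at every time.} Differentiate
\[
Y(t)=\int_{M}\|\nabla^{T}u\|^{2}\,\omega(t)^{n}\wedge\eta_{0}
\]
along the flow, using the evolution equation $\partial_{t}u=\Delta_{B}u+u-a(t)$. This should give a differential inequality of the form $Y'\le CY$, with $C$ controlled by the uniform bounds of Lemma \ref{L31}. A Gronwall-type argument combined with the integrability from Step 1 then yields $Y(t)\to0$. In the same way one gets $\int_{M}(\Delta_{B}u)^{2}\to0$, using (\ref{a2}) and (\ref{b2}) of Lemma \ref{L41}.

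\textbf{Step 3: pass from $L^{2}$ to $C^{0}$.} I would use the uniform Sobolev inequality (\ref{b4}), Perelman's bounds in Lemma \ref{L61}, and the non-collapsing of transverse balls. A Moser iteration applied to the parabolic inequalities satisfied by $|\nabla^{T}u|^{2}$ and $\Delta_{B}u$ should then convert the $L^{2}$ decay into $\|\nabla^{T}u\|_{C^{0}}\to0$ and $\|\Delta_{B}u\|_{C^{0}}\to0$. The relevant inequalities are
\[
(\partial_{t}-\Delta_{B})|\nabla^{T}u|^{2}\le |\nabla^{T}u|^{2}-|\nabla\nabla u|^{2}-|\nabla\overline{\nabla}u|^{2},
\qquad
(\partial_{t}-\Delta_{B})\Delta_{B}u=\Delta_{B}u-|\nabla\overline{\nabla}u|^{2}.
\]
As an alternative, one can interpolate using the bounded higher derivatives from Lemma \ref{L31}: a function with bounded gradient whose $L^{2}$ norm tends to zero on a non-collapsed space of bounded diameter tends to zero in $C^{0}$.

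\textbf{Step 4: the normalization and $u$ itself.} From (\ref{b2}) and the vanishing just obtained, $u-a(t)\to0$ in $L^{2}$, and hence in $C^{0}$ by the gradient bound. The normalization $\frac1V\int e^{-u}=1$ then forces $a(t)\to0$, and so $\|u\|_{C^{0}}\to0$.

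\textbf{Main obstacle.} The difficult point is Step 2: obtaining the differential inequality $Y'\le CY$ without assuming a bound on the transverse Ricci curvature. I would handle the term $\int Ric^{T}(\nabla u,\nabla u)$ by using $Ric^{T}=g^{T}+\nabla\overline{\nabla}u$, integrating by parts, and absorbing the result with $|\nabla^{T}u|\le C$ and $\|\Delta_{B}u\|_{C^{0}}\le C$ from Lemma \ref{L31}. The Hessian term that remains has the favorable sign.
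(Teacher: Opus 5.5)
Your Steps 1, 2 and 4 are sound and reproduce the K\"ahler argument (Tian--Zhu, Phong--Song--Sturm--Weinkove) that the paper only cites. Step 2 is easier than you expect. Integrating your own identity for $(\partial_t-\Delta_B)|\nabla^T u|^2$ against $\omega(t)^n\wedge\eta_0$, whose time derivative is $\Delta_B u\,\omega(t)^n\wedge\eta_0$, gives $Y'\le(1+\|\Delta_B u\|_{C^0})Y$ with no Ricci term at all. Expanding (\ref{b2}) as $\int(\Delta_B u+u-a)^2=\int(\Delta_B u)^2-2Y+\int(u-a)^2$, after discarding $\int|\nabla^T u|^4\le CY$, then gives both $L^2$ decays at once.

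The genuine gap is the lower bound in item (3). Your identity $(\partial_t-\Delta_B)\Delta_B u=\Delta_B u-|\nabla^T\overline{\nabla}^T u|^2$ makes $\Delta_B u$ a subsolution, so Moser iteration controls only $\sup_M(\Delta_B u)_+$. To bound $(\Delta_B u)_-$ you would have to iterate $-\Delta_B u$. Its source term $+|\nabla^T\overline{\nabla}^T u|^2$ is known only in $L^2$ (Theorem \ref{T51}), which falls short of the $L^q$, $q>n+1$, that parabolic Moser iteration with a source requires. This is exactly the difficulty behind the upper bound for $R^T$ in Perelman's estimates. Your fallback does not close the gap either. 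Lemma \ref{L31} bounds $u$, $\nabla^T u$ and $\Delta_B u$, but neither $\nabla^T\Delta_B u$ nor the Hessian of $u$. So ``Lipschitz and $L^2$-small implies $C^0$-small'' applies to $u$ alone, not to $|\nabla^T u|^2$ or $\Delta_B u$.

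The missing idea is Perelman's combination. Adding your two identities, the mixed Hessian cancels, and $w=|\nabla^T u|^2-\Delta_B u$ satisfies
\[
(\partial_t-\Delta_B)w=w-|\nabla^T\nabla^T u|^2\le w .
\]
We have $\int_M w^2\le 2\int_M|\nabla^T u|^4+2\int_M(\Delta_B u)^2\to0$. Moser iteration for the subsolution $w_+$, using (\ref{b4}) and $|\Delta_B u|\le C$ to control the evolving volume form, then gives $\sup_M w_+\to0$. Hence $\Delta_B u\ge|\nabla^T u|^2-w_+\ge-\sup_M w_+$, and the right-hand side tends to $0$. Combined with your bound on $(\Delta_B u)_+$, this proves (3). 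With the other sign convention $R^T_{k\bar l}-g^T_{k\bar l}=\partial_k\overline{\partial}_l u$, used in parts of the paper, the two sides swap and the good quantity is $|\nabla^T u|^2+\Delta_B u$ instead.
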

As a consequence, we have

\begin{corollary}
\label{C52} If the transverse Mabuchi $K$-energy is bounded from below on the
space of transverse K\"{a}hler potentials as in Lemma \ref{L31}, then there
exists a sequence of $t_{i}\in \lbrack i,i+1]$ such that

\begin{enumerate}
\item
\[%
\begin{array}
[c]{c}%
\lim_{t_{i}\rightarrow \infty}||\Delta_{B}f_{t_{i}}||_{L_{B}^{2}(M,g_{t_{i}%
}^{T})}=0,
\end{array}
\]

\item
\[%
\begin{array}
[c]{c}%
\lim_{t_{i}\rightarrow \infty}||\nabla^{T}f_{t_{i}}||_{L_{B}^{2}(M,g_{t_{i}%
}^{T})}=0,
\end{array}
\]

\item
\[%
\begin{array}
[c]{c}%
\lim_{t_{i}\rightarrow \infty}\int_{M}f_{t_{i}}e^{-f_{t_{i}}}\omega_{g_{t_{i}%
}^{T}}^{n}\wedge \eta_{0}=0.
\end{array}
\]
Moreover, we have
\begin{equation}%
\begin{array}
[c]{c}%
\lim_{t\rightarrow \infty}\mu^{T}(g_{t}^{T})=(2\pi)^{-n}(2n)V=\sup \{ \mu
^{T}(\overline{g}^{T}):\overline{g}^{T}\in c_{1}^{B}(M)\}.
\end{array}
\label{55}%
\end{equation}

\end{enumerate}
\end{corollary}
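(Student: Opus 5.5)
The plan is to follow the Kähler argument of Tian--Zhu and Phong--Song--Sturm--Weinkove, carried out on basic functions. Lemma \ref{L41} shows that the quantity $\int_M(\Delta_B u-|\nabla^T u|^2+u-a)^2\omega(t)^n\wedge\eta_0$ tends to $0$. The item that does the real work is the identity (\ref{55}): $\mu^T(g^T_t)\to(2\pi)^{-n}(2n)V$. Items (1)--(3) will follow from it together with the Euler--Lagrange equation (\ref{51}) and uniform estimates.

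\textbf{First, the upper bound in (\ref{55}).} For any $\overline{g}^T$ with $\omega_{\overline g}\in c_1^B(M)$, test $\mathcal{W}^T$ with $f=h_{\overline\omega}+c$, where $c$ is chosen so that $\int e^{-f}\omega^n\wedge\eta_0=V$. Jensen's inequality then gives $\mu^T(\overline{g}^T)\le(2\pi)^{-n}(2n)V$. This is the transverse version of Perelman's bound, since $R^T=n-\Delta_B h$ and integration by parts works for basic functions.

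\textbf{Second, the lower bound.} Insert the Ricci potential $u(t)$ as a test function in $\mathcal{W}^T(g^T_t,\cdot)$. By the preceding Lemma, $\|u(t)\|_{C^0}$, $\|\nabla^T u(t)\|_{C^0}$ and $\|\Delta_B u(t)\|_{C^0}$ all tend to $0$ once the transverse Mabuchi $K$-energy is bounded below. Hence the normalizing constant tends to $0$, and $\mathcal{W}^T(g^T_t,u(t))\to(2\pi)^{-n}(2n)V$. By itself this only bounds $\mu^T$ from above, so for the lower bound I would use monotonicity (\ref{54}) and argue by contradiction. Suppose $\lim\mu^T(g^T_t)=\mu_\infty<(2\pi)^{-n}(2n)V$. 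Then along a sequence the minimizers $f_t$ stay a definite distance from the constant. The uniform Sobolev constant (\ref{b4}) and the $C^0$ bounds on $u$ give uniform $C^0$ and $W^{1,2}$ bounds on $f_t$, as in \cite[Proposition 4.2]{tzhu2}. Because $\mathrm{Ric}^T-g^T=\partial_B\overline\partial_B u$, the integral $\int_t^{t+1}\int|{\rm Ric}^T-g^T+\nabla\overline\nabla f|^2e^{-f}$ tends to $0$ by (\ref{54}). This forces $\nabla\overline\nabla(u+f)\to0$ in $L^2$, so $f_t$ is close to $-u(t)+$const, which is close to a constant, and that contradicts the assumption. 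This step is the main obstacle: the a priori $C^0$ estimate for $f_t$ must hold uniformly in $t$ on the transverse structure. I would first try the Tian--Zhu Moser iteration applied to (\ref{51}), using the bound $|R^T|\le C$ from Lemma \ref{L61}.

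\textbf{Third, the sequence and items (1)--(3).} Having (\ref{55}), (\ref{54}) gives $\int_i^{i+1}\frac{d}{dt}\mu^T\,dt\to0$, so we can pick $t_i\in[i,i+1]$ at which the integrand of (\ref{54}) tends to $0$. At these times $\|\nabla\overline\nabla(f+u)\|_{L^2}\to0$. Since $u(t_i)\to0$ in $C^1$ and $C^0$-Laplacian, $\|\Delta_B f_{t_i}\|_{L^2}\to0$, which is (1). Integrating by parts, $\int|\nabla^T f|^2e^{-f}=\int\Delta_B f\,e^{-f}$, and together with the $C^0$ bound on $f$ this gives (2). For (3), plug into $\mathcal{W}^T$. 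The $R^T$ and gradient terms tend to their Einstein values, so $\mu^T(g^T_{t_i})-(2\pi)^{-n}(2n)V$ differs from $(2\pi)^{-n}\int fe^{-f}$ by $o(1)$, and (\ref{55}) yields (3).
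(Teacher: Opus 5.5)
This is essentially the argument the paper relies on. The paper gives no separate proof and instead defers to the transverse version of \cite[Proposition 4.2]{tzhu2}, which uses exactly your ingredients: the decay of $u(t)$ from the preceding lemma, good times $t_i\in[i,i+1]$ selected via the monotonicity (\ref{54}) and the Jensen upper bound on $\mu^T$, uniform bounds on the minimizers $f_t$ together with a uniform Poincar\'e inequality, and $\partial_B\overline{\partial}_B(u+f_{t_i})\to 0$ in $L^2$.

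The only difference is the ordering, and your contradiction in Step 2 is unnecessary. At those good times you already get (1)--(3) directly; (3) then comes from $f_{t_i}$ tending to a constant that the normalization $\int_M e^{-f}\omega^n\wedge\eta_0=V$ forces to be $0$, rather than from (\ref{55}). Finally, (\ref{55}) follows by evaluating $\mathcal{W}^T(g^T_{t_i},f_{t_i})$ and using monotonicity.
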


\subsection{Transverse $K$-Stability}

We first observe that there is a Sasakian analogue of Kodaira embedding
theorem on a compact Sasakian $(2n+1)$-manifold.

\begin{proposition}
\label{PCR}(\cite{rt}, \cite{hlm}, \cite{hhl}) Let $(M,\eta,\xi,\Phi,g)$ be a
compact Sasakian $(2n+1)$-manifold and $(L^{T},h^{T})$ be a basic transverse
holomorphic line bundle over $M$ with the basic Hermitian metric $h^{T}.$ Then
$L^{T}$ is ample if and only if $L^{T}$ is positive.
\end{proposition}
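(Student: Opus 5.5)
The plan is to reduce the statement to the transverse geometry of the characteristic foliation and then, in the quasi-regular case, to orbifold Kodaira embedding on the leaf space. The two directions are handled differently.

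\emph{Ample implies positive.} If $(L^{T})^{m}$ is very ample, its basic transverse holomorphic sections $s_{0},\dots,s_{N}$ define an $\mathbb{S}^{1}$-equivariant map into $\mathbb{CP}^{N}$ that is a transverse immersion. Pull back the Fubini--Study metric through the induced basic Hermitian metric $h^{T}=(\sum_{j}|s_{j}|^{2})^{-1/m}$. Its curvature $\frac{1}{m}\Psi^{\ast}\omega_{FS}$ is then a basic, closed, real $(1,1)$-form, positive on $\ker\eta$, so $L^{T}$ is positive. This direction only uses that the sections are basic.

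\emph{Positive implies ample, quasi-regular case.} Here I would use the first structure theorem. The leaf space $Z=M/\mathcal{F}_{\xi}$ is a compact Kähler orbifold, and $\pi:M\rightarrow Z$ is a Riemannian orbifold submersion. A basic line bundle $L^{T}$ with basic metric $h^{T}$ is then $\pi^{\ast}$ of an orbifold line bundle $(L,h)$ on $Z$, with $\Theta_{h^{T}}=\pi^{\ast}\Theta_{h}$, so $L$ is positive on $Z$. Baily's orbifold Kodaira embedding theorem then gives that $L^{m}$ is very ample for $m$ divisible by the orders of the local uniformizing groups. Basic sections of $(L^{T})^{m}$ correspond exactly to orbifold sections of $L^{m}$. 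Composing the resulting embedding of $Z$ with $\pi$ gives the $\mathbb{S}^{1}$-equivariant basic map into a weighted projective space, equivalently into $\mathbb{CP}^{N}$ with the weighted $\mathbb{C}^{\ast}$-action. This is exactly ampleness in the sense used in the paper.

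\emph{Positive implies ample, irregular case.} I would argue directly with the CR analysis instead of a quotient.

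1. Apply the CR Bochner--Kodaira--Nakano identity (Lemma \ref{L2026}) for $\overline{\partial}_{B}$ on basic $(0,q)$-forms with values in $(L^{T})^{m}$. Positivity of $\Theta_{h^{T}}$ gives $\Delta_{\overline{\partial}_{B}}\geq cm$ for $q\geq1$.

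2. This yields a Hörmander $L^{2}$-estimate for basic sections, the analogue of (\ref{d2}), and hence vanishing of $H^{q}_{B}$ for $m$ large.

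3. Run the usual peak-section argument in transverse foliation charts: construct basic sections separating leaves and transverse tangent directions.

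4. The main obstacle is the final step. The Reeb closure of a leaf is a torus $T^{k}$, so separation can only be achieved for leaf closures. One must use that basic sections are $T^{k}$-invariant and obtain an embedding into $\mathbb{C}^{N+1}$ that is equivariant for a torus action, following \cite{rt}, \cite{hlm}, \cite{hhl}.

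As a fallback for this step, I would perturb $\xi$ to quasi-regular $\xi_{i}$ via Proposition \ref{P22}. Positivity is an open condition, so it is preserved under the perturbation. One then has to show that the $L^{2}$-estimates, and therefore the embeddings from the quasi-regular case, are uniform in $i$ for $m$ fixed large, and pass to the limit. Controlling that uniformity is where I expect the main difficulty.
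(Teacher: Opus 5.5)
Your ``ample $\Rightarrow$ positive'' argument and the quasi-regular case of the converse are essentially what the cited works give (\cite{rt} together with Baily's orbifold embedding theorem); the paper itself offers no argument beyond the citations. Note that ``embedding'' must be read as an embedding of the leaf space $Z$, since a map built from basic sections is constant along Reeb orbits and never embeds $M$.

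The genuine gap is the irregular case. It cannot be closed, because with ampleness defined through basic sections the statement is false there. You observed that basic sections only separate leaf closures; this is fatal, not a technicality. Every ratio $s_i/s_j$ is a basic function, hence constant on leaf closures, i.e.\ on the orbits of the torus $T^k=\overline{\{\exp(t\xi)\}}$. In a foliation chart, the $T^k$-orbit of a generic point projects to a $(k-1)$-dimensional submanifold of the local leaf space. So for $k\geq 2$ the map $[s_0:\cdots:s_N]$ is never a transverse immersion, for any $m$. Your step 3 is therefore impossible, and step 4 is inconsistent, since $T^k$-invariant sections cannot produce a $T^k$-equivariant embedding.

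Concretely, let $M=S^5\subset\mathbb{C}^3$ with Reeb field generating $z_j\mapsto e^{ia_jt}z_j$, where $a_1,a_2,a_3>0$ are linearly independent over $\mathbb{Q}$.
\begin{itemize}
\item The form $\Omega=dz_1\wedge dz_2\wedge dz_3$ has $\xi$-weight $\lambda=a_1+a_2+a_3>0$. Hence $c_1^B(M)$ is a positive multiple of $[d\eta]_B$, and $(K_M^T)^{-1}$ is positive.
\item However, $\iota_{r\partial_r}\Omega|_M$ is a nowhere vanishing CR section of $K_M^T$ of weight $\lambda$. So basic sections of $(K_M^T)^{-m}$ correspond to CR functions of $\xi$-weight $m\lambda$, i.e.\ to monomials $z^\alpha$ with $\langle\alpha,a\rangle=m\lambda$.
\item Linear independence over $\mathbb{Q}$ forces $\alpha=(m,m,m)$. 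Thus $\dim H_B^0(M,(K_M^T)^{-m})=1$ for every $m$.
\end{itemize}

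The perturbation fallback fails for the same reason. Uniform estimates would only produce limits of $\xi_i$-basic sections, and these are $\xi$-basic. In the example, for each fixed $m$ the space of $\xi_i$-basic sections of $(K^T_{\xi_i})^{-m}$ is already one-dimensional for all large $i$, so the $m$ needed for very ampleness must tend to infinity.

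What \cite{hlm} and \cite{hhl} construct is a CR embedding of $M$ itself, using CR sections or functions in \emph{nonzero} Fourier components of the $\mathbb{S}^1$- or $T^k$-action, via Szeg\H{o} kernel asymptotics. Your estimate $\Delta_{\overline{\partial}_B}\geq cm$ on basic forms only sees the zero Fourier mode and cannot yield this. So you must either restrict the equivalence to quasi-regular $\xi$, or change both the definition of ampleness and the method.

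Even in the quasi-regular case, your argument hides two hypotheses.
\begin{itemize}
\item $L^T$ is $\pi^\ast$ of an orbibundle on $Z$ only if its leafwise flat connection has trivial holonomy around closed leaves. This holds for $K_M^T$, but in general basic sections of $(L^T)^m$ vanish unless that holonomy is an $m$-th root of unity.
\item Baily's theorem gives very ampleness only for $m$ divisible by the orbifold orders, not for all $m\geq m_0$ as the paper's definition demands. For example, every section of an odd power of $\mathcal{O}(1)$ on $\mathbb{P}(1,1,2)$ vanishes at the singular point.
\end{itemize}
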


Following the notions as in \cite{dt}, \cite{chlw1}, \cite{t3} and \cite{t5},
we define the Sasaki analogue of a $K$-stable Fano K\"{a}hler manifold on a
compact transverse Fano Sasakian $5$-manifold $(M,\xi,\eta_{0},g_{0})$.

Then by the CR Kodaira embedding theorem (\cite{hhl}, \cite{chlw1}), there
exists an embedding
\[
\Psi:M\rightarrow \Psi(M)\subset(\mathbb{CP}^{N},\omega_{FS})
\]
defined by the basic transverse holomorphic section $\{s_{0},s_{1}%
,\cdots,s_{N}\}$ of $H_{B}^{0}(M,(K_{M}^{T})^{-m})$ which is $\mathbb{R}%
$-equivariant with respect to a weighted diagonal $\mathbb{R}$-actions in
$\mathbb{C}^{N+1}$ with $N=\dim H_{B}^{0}(M,(K_{M}^{T})^{-m})-1$ for a large
positive integer $m.$We define
\[
\mathrm{Diff}^{T}(M)=\{ \sigma \in \mathrm{Diff}(M)\text{\ }|\text{ }%
\sigma_{\ast}\xi=\xi \text{ \textrm{and} }\sigma^{\ast}g^{T}=(\sigma^{\ast
}g)^{T}\}
\]
and
\[
SL^{T}(N+1,\mathbb{C})=SL(N+1,\mathbb{C})\cap \mathrm{Diff}^{T}(M).
\]
Any other basis of $H_{B}^{0}(M,(K_{M}^{T})^{-m})$ gives an embedding of the
form $\sigma^{T}\circ \Psi$ with $\sigma^{T}\in SL^{T}(N+1,\mathbb{C}).$ Now
for any subgroup of the weighted $\mathbb{R}$-action $G_{0}=\{ \sigma
^{T}(t)\}_{t\in \mathbb{C}^{\ast}}$ of $SL^{T}(N+1,\mathbb{C}),$ there is a
unique limiting%
\[%
\begin{array}
[c]{c}%
M_{\infty}=\lim_{t\rightarrow0}\sigma^{T}(t)(M)\subset \mathbb{CP}^{N}.
\end{array}
\]

Now the Sasaki-Futaki invariant can be extended to transverse Fano Sasakian
manifold of foliation log terminal singularities. Let $V$ be a the Hamiltonian
holomorphic vector field whose real part generates the action by $\sigma
^{T}(e^{-s})$. As in our previous results as in \cite{chlw1}, there is a
generalized Sasaki-Futaki invariant defined by $f_{M_{\infty}}(V)$ as in
(\ref{59}) and (\ref{59-1}). In fact, one can introduce the Sasaki analogue of
the $K$-stable on K\"{a}hler manifolds (\cite{t3}, \cite{t5}, \cite{d},
\cite{li}, \cite{tw1}, \cite{dt}) :

\begin{definition}
\label{d61} Let $(M_{\infty},\xi,\eta,g,\omega)$ be a compact transverse Fano
Sasakian manifold at worst foliation log terminal at the singular set of
finite $\mathbb{S}^{1}$-fibres as in Theorem \ref{T99}. We say that a compact
transverse Fano Sasakian $5$-manifold $(M,\xi,\eta_{0},g_{0})$ is transverse
$K$-stable with respect to $(K_{M}^{T})^{-m}$ if the generalized Sasaki-Futaki
invariant
\[
\operatorname{Re}f_{M_{\infty}}(V)\geq0\text{ \  \ }%
\]
for any weighted $\mathbb{C}^{\ast}$-action $G_{0}=\{ \sigma^{T}%
(t)\}_{t\in \mathbb{C}^{\ast}}$ of $SL^{T}(N+1,\mathbb{C})$ and the equality
holds if and only if $M_{\infty}$ is transverse biholomorphic to $M.$ We say
that $M$ is transverse $K$-stable if it is transverse $K$-stable for all large
positive integer $m$.
\end{definition}

\begin{remark}
\label{2025-3}

\begin{enumerate}
\item There are other formulations of the $K$-stability by Donaldson as in
\cite{d} and Paul as in \cite{paul1}. Donaldson's formulation of the
$K$-stability does not require that $M_{\infty}$ be transverse normal.
However, by \cite{lx}, Donaldson's formulation is equivalent to Definition
\ref{d61} if $M_{\infty}$ is a transverse Fano klt normal variety.

\item Collins and Sz\'{e}kelyhidi (\cite{cz2}) called the $K$-stability of
Sasakian manifolds in the sense of its Kaehler cone is $K$-stable by following
the definition of Donaldson.

\item Similar as in \cite{ccllw} for the case of log pair , we say that $M$ is
transverse log $K$-polystable if $M_{\infty}$ is a transverse Fano klt normal variety.
\end{enumerate}
\end{remark}

With Definition \ref{d61} in mind, we are ready to show that the transverse
Mabuchi $K$-energy is bounded from below under the Sasaki-Ricci flow. This is
served as the Sasaki analogue of the K\"{a}hler-Ricci flow due to Tian-Zhang
\cite{tz1}.

\begin{theorem}
Let $(M,\xi,\eta_{0},g_{0})$ be a compact transverse Fano Sasakian manifold of
dimension five. If $M$ is transverse $K$-stable, then the transverse Mabuchi
$K$-energy is bounded from below under the Sasaki-Ricci flow (\ref{2025})%
\begin{equation}
K_{\eta_{0}}(\omega_{0},\omega_{t})\geq-C(g_{0}). \label{54-b}%
\end{equation}

\end{theorem}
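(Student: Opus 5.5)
We argue by contradiction, in the manner of Tian--Zhang \cite{tz1} for the K\"{a}hler--Ricci flow. Suppose the transverse Mabuchi $K$-energy is not bounded below along the flow. By (\ref{31}), $t\mapsto K_{\eta_{0}}(\omega_{0},\omega_{t})$ is non-increasing, so it must tend to $-\infty$.

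First we obtain a limit of the flow. By Theorem \ref{T99} (parts (1)--(3)), resting on Proposition \ref{T66}, the compactness Theorem \ref{T55} and Proposition \ref{P2026}, we choose $t_{i}\rightarrow\infty$ such that $(M,g^{T}(t_{i}))$ converges to a gradient Sasaki--Ricci orbifold soliton $(M_{\infty},\xi,\eta_{\infty},g_{\infty}^{T},V_{\infty})$. This limit is transverse $\mathbb{Q}$-Fano, with at worst foliation klt singularities along finitely many $\mathbb{S}^{1}$-fibres, and the convergence is smooth on the regular part. The partial $C^{0}$-estimate, Theorem \ref{T65}, lets us embed $M$ at time $t_{i}$ and $M_{\infty}$ into a fixed $\mathbb{CP}^{N}$ by $L^{2}$-orthonormal bases of $H_{B}^{0}(M,(K_{M}^{T})^{-m})$. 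These embeddings are $\mathbb{S}^{1}$-equivariant for the weighted $\mathbb{C}^{\ast}$-action, and the images of $M$ at time $t_{i}$ are $\sigma_{i}^{T}\circ\Psi(M)$ with $\sigma_{i}^{T}\in SL^{T}(N+1,\mathbb{C})$. Since these images converge to $\Psi_{\infty}(M_{\infty})$ in the Chow/Hilbert sense, $M_{\infty}$ lies in the closure of the $SL^{T}(N+1,\mathbb{C})$-orbit of $\Psi(M)$.

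Next we split into two cases.

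\textbf{Case 1: $M_{\infty}$ is transverse biholomorphic to $M$.} Then $M$ carries a smooth Sasaki--Ricci soliton in $c_{1}^{B}(M)$. Because $M=M_{\infty}$, transverse $K$-stability forces the Sasaki--Futaki invariant $f_{M}$ to vanish. The soliton vector field $V_{\infty}$ then satisfies $f_{M}(V_{\infty})=\int_{M}|\nabla^{T}u_{\infty}|^{2}e^{u_{\infty}}\omega_{\infty}^{n}\wedge\eta=0$, so the soliton is Sasaki--Einstein. A transverse Bando--Mabuchi/Tian argument (transverse K\"{a}hler quantities behave exactly as in the K\"{a}hler case), or equivalently Proposition \ref{P77}, then gives that $K_{\eta_{0}}$ is bounded below on $c_{1}^{B}(M)$. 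This contradicts $K_{\eta_{0}}\rightarrow-\infty$.

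\textbf{Case 2: $M_{\infty}\neq M$.} The limit lies in the boundary of the orbit. The key step, following \cite{tz1} and Tian \cite{t3}, is to find a one-parameter weighted $\mathbb{C}^{\ast}$-subgroup $\sigma^{T}(t)\subset SL^{T}(N+1,\mathbb{C})$ with $\lim_{t\rightarrow0}\sigma^{T}(t)(M)=M_{\infty}$. The central fibre is then a transverse Fano klt degeneration, so Definition \ref{d61} applies and gives $\operatorname{Re}f_{M_{\infty}}(V)>0$, where $V$ generates $\sigma^{T}(e^{-s})$. On the other hand, $M_{\infty}$ carries the soliton, and the derivative of the $K$-energy along $\sigma^{T}(e^{-s})$ tends asymptotically to $\operatorname{Re}f_{M_{\infty}}(V)$. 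This is computed via (\ref{57b}) and (\ref{59-1}), using that $g_{\infty}^{T}$ has bounded local potentials by Theorem \ref{T55}(3). Comparing with the decreasing behaviour of the $K$-energy along the flow (Lemma \ref{L50}, (\ref{31})) shows that $K_{\eta_{0}}\rightarrow-\infty$ forces $\operatorname{Re}f_{M_{\infty}}(V)\leq0$, which is a contradiction.

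The main obstacle is this one-parameter subgroup step: the fact that the flow sequence approaches a boundary point of the orbit must be converted into an honest weighted $\mathbb{C}^{\ast}$-degeneration compatible with $\xi$. To do it, we first try the Luna slice / Donaldson--Sun argument on the reductive stabilizer of $\Psi_{\infty}(M_{\infty})$. That stabilizer contains the torus generated by $\xi$ and $\operatorname{Im}V_{\infty}$, which is reductive by a Matsushima-type result for solitons. A second delicate point is justifying the asymptotic slope formula for the $K$-energy in the presence of foliation klt singularities. For this we would pass to the leaf space orbifold $Z_{\infty}$, compute there as in the K\"{a}hler case, and lift back via (\ref{2022-a}). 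In dimension five, a smooth-limit version of Case 2 would also suffice, because the singular set is only finitely many fibres.
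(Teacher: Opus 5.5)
\textbf{There is a genuine gap in your Case 2, at the step that is supposed to produce the contradiction.}

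Your hypothesis $K_{\eta_0}(\omega_0,\omega_t)\to-\infty$ is a statement about the flow path. The slope $\operatorname{Re} f_{M_\infty}(V)$ only governs $K_{\eta_0}$ along the single ray $\sigma^T(e^{-s})$.

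\begin{itemize}
\item The Bergman approximants of $\omega_{t_i}$ are $\overline\omega_i=\frac1m\Psi_i^*\omega_{FS}$ with $\Psi_i=G_i\circ\Psi_1$. Nothing forces the $G_i$ to lie on, or stay a bounded distance from, one one-parameter subgroup. So a positive slope along $\sigma^T$ says nothing about $K_{\eta_0}$ at $\overline\omega_i$ unless you control how the $G_i$ leave the orbit. That control is exactly what Paul's theorem, or a genuine Luna-slice argument, supplies.
\item Your sentence ``comparing with the decreasing behaviour of the $K$-energy \dots\ forces $\operatorname{Re} f_{M_\infty}(V)\le 0$'' therefore has no content as written.
\item You also never compare $\omega_{t_i}$ with $\overline\omega_i$ at the level of the $K$-energy; you use the partial $C^0$-estimate only to embed. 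Without that comparison, no information about Bergman metrics transfers back to the flow.
\end{itemize}

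The one-parameter-subgroup step itself is also not available as you describe it. A Luna-slice argument needs the stabilizer of $\Psi_\infty(M_\infty)$ to be reductive. For a soliton limit with $V_\infty\neq0$, in general only the centralizer of the soliton field is reductive (Tian--Zhu); the full automorphism group can fail to be. That the torus generated by $\xi$ and $\operatorname{Im}V_\infty$ is reductive is irrelevant here. Handling a non-Einstein soliton limit needs a two-step degeneration compatible with $V_\infty$, as in Chen--Sun--Wang, and that is essentially the whole difficulty.

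The paper avoids all of this with a direct argument. By (\ref{31}) the $K$-energy is non-increasing, so it suffices to bound it along $t_i$. The cocycle identity gives
\[
K_{\eta_0}(\omega_0,\omega_i)+K_{\eta_0}(\omega_i,\overline\omega_i)=K_{\eta_0}(\omega_0,\overline\omega_1)+K_{\eta_0}(\overline\omega_1,\overline\omega_i).
\]
\begin{itemize}
\item $K$-stability enters only through a lower bound $K_{\eta_0}(\overline\omega_1,\overline\omega_i)\ge -C$ on the whole $SL^T$-orbit of Bergman metrics, via Paul as in Tian--Zhang, not along a single ray.
\item The remaining term $K_{\eta_0}(\omega_i,\overline\omega_i)$ is bounded above using the explicit formula (\ref{57a}). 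Its gradient terms have a sign, $|u|\le C$ by Perelman, and $\overline\omega_i\le C\omega_i$ by (\ref{d1}) and (\ref{d11}).
\end{itemize}
The one-parameter-subgroup and reductivity argument is deferred to the following corollary. There the limit is already known to be Sasaki--Einstein, so its Futaki invariant vanishes and reductivity is available. Your Case 1 is essentially fine, but it is unnecessary in this scheme.
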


\begin{proof}
First, it follows from (\ref{31}) that $K_{\eta_{0}}(\omega_{0},\omega_{t})$
is non-increasing in $t$. So it suffices to show a uniform lower bound of
\begin{equation}
K_{\eta_{0}}(\omega_{0},\omega_{t_{i}})\geq-C. \label{56}%
\end{equation}
Now if $M$ is transverse $K$-stable, we fix an integer $m>0$ sufficiently
large such that $(L^{T})^{-m}$ is very-ample. Then for any orthonormal basic
basis $\{ \sigma_{t_{i},m,k}\}_{k=0}^{N_{m}}$ in $H_{B}^{0}(M,(K_{M}^{T}%
)^{-m})$ with $N_{m}=\dim H_{B}^{0}(M,(K_{M}^{T})^{-m})-1$ at $t_{i}$ , we can
define the $\mathbb{S}^{1}$-equivariant embedding with respective the weighted
$\mathbb{C}^{\ast}$-action in $\mathbb{C}^{N_{m}+1}$
\[
\Psi_{i}:M\rightarrow(\mathbb{CP}^{N_{m}},\omega_{FS})
\]
with the Bergman metric $\overline{\omega}_{t_{i}}:=\overline{\omega}%
_{i}=\frac{1}{m}\Psi_{i}^{\ast}(\omega_{FS})$ so that for any $i\geq1,$, there
exists a $G_{i}\in SL^{T}(N_{m}+1,\mathbb{C})$ such that
\[
\Psi_{i}=G_{i}\circ \Psi_{1}.
\]
At first as in the K\"{a}hler case (\cite{paul1}, \cite{tz1}), the Sasaki
analogue of Mabuchi $K$-energy will have a lower bound on $\overline{\omega
}_{i}$ with \ a fixed $\overline{\omega}_{1}$
\[
K_{\eta_{0}}(\overline{\omega}_{1},\overline{\omega}_{i})\geq-C.
\]
By the $1$-cocycle condition of the transverse Mabuchi $K$-energy,%
\[
K_{\eta_{0}}(\omega_{0},\omega_{i})+K_{\eta_{0}}(\omega_{i},\overline{\omega
}_{i})=K_{\eta_{0}}(\omega_{0},\overline{\omega}_{i})=K_{\eta_{0}}(\omega
_{0},\overline{\omega}_{1})+K_{\eta_{0}}(\overline{\omega}_{1},\overline
{\omega}_{i})
\]
and then
\[
K_{\eta_{0}}(\omega_{0},\omega_{i})+K_{\eta_{0}}(\omega_{i},\overline{\omega
}_{i})\geq-C.
\]
Hence, to show (\ref{56}), we only need to get an upper bound for
\begin{equation}
K_{\eta_{0}}(\omega_{i},\overline{\omega}_{i})\leq C. \label{57}%
\end{equation}

For a fixed $m,$ we define
\[%
\begin{array}
[c]{c}%
\overline{\mathcal{\rho}}_{i}(x):=\frac{1}{m}\mathcal{\rho}_{t_{i}%
,m}(x):=\frac{1}{m}\mathcal{F}_{m}(x,t_{i}).
\end{array}
\]
Here $\mathcal{F}_{m}(x,t_{i})$ as in (\ref{58}). Then
\[%
\begin{array}
[c]{c}%
\omega_{i}=\overline{\omega}_{i}+\sqrt{-1}\partial_{B}\overline{\partial}%
_{B}\overline{\mathcal{\rho}}_{i}.
\end{array}
\]

It follows from \cite{t4} that the transverse Mabuchi $K$-energy has the
following explicit expression%
\begin{equation}%
\begin{array}
[c]{ccl}%
K_{\eta_{0}}(\omega_{i},\overline{\omega}_{i}) & = & \int_{M}\log
\frac{\overline{\omega}_{i}^{n}}{\omega_{i}^{n}}\overline{\omega}_{i}%
^{n}\wedge \eta_{0}+\int_{M}u(\overline{\omega}_{i}^{n}-\omega_{i}^{n}%
)\wedge \eta_{0}\\
&  & -\sqrt{-1}\sum_{k=0}^{n-1}\frac{n-k}{n+1}\int_{M}(\partial_{B}%
\overline{\mathcal{\rho}}_{i}\wedge \overline{\partial}_{B}\overline
{\mathcal{\rho}}_{i}\wedge \omega_{i}^{k}\wedge \overline{\omega}_{i}%
^{n-k-1})\wedge \eta_{0}\\
& \leq & \int_{M}\log \frac{\overline{\omega}_{i}^{n}}{\omega_{i}^{n}}%
\overline{\omega}_{i}^{n}\wedge \eta_{0}+\int_{M}u(\overline{\omega}_{i}%
^{n}-\omega_{i}^{n})\wedge \eta_{0}\\
& \leq & \int_{M}\log \frac{\overline{\omega}_{i}^{n}}{\omega_{i}^{n}}%
\overline{\omega}_{i}^{n}\wedge \eta_{0}+C.
\end{array}
\label{57a}%
\end{equation}
Here $u$ is the transverse Ricci potential under the Sasaki-Ricci flow and we
have used the Perelman estimate that%
\[
|u(t_{i})|\leq C.
\]
On the other hand, it follows from (\ref{d1}) and (\ref{d11}) that%
\[
\overline{\omega}_{i}\leq C\omega_{i}.
\]
Therefore (\ref{57a}) implies (\ref{57}) and then we are done.
\end{proof}

\begin{corollary}
Let $(M,\xi,\eta_{0},g_{0})$ be a compact transverse Fano Sasakian manifold of
dimension five. If $M$ is transverse $K$-stable, then under the Sasaki-Ricci
flow, $M(t)$ converges to a compact transverse Fano Sasakian manifold
$M_{\infty}$ which is isomorphic to $M$ endowed with a smooth Sasaki-Einstein metric.
\end{corollary}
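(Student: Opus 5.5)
The plan is to combine the lower bound of the transverse Mabuchi $K$-energy from the preceding Theorem with the a priori estimates collected above. These together upgrade the subsequential soliton limit of Theorem \ref{T99} to a Sasaki-Einstein metric on $M$ itself.

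\textbf{Step 1: the soliton potential vanishes in the limit.} Since $M$ is transverse $K$-stable, (\ref{54-b}) holds. Integrating (\ref{31}) in $t$ then gives
\[
\int_{0}^{\infty}\int_{M}\|\nabla^{T}u(t)\|^{2}\omega(t)^{n}\wedge\eta_{0}\,dt\leq C .
\]
By the Lemma following Corollary \ref{C51}, this improves Lemma \ref{L31} to $\|u(t)\|_{C^{0}}$, $\|\nabla^{T}u(t)\|_{C^{0}}$ and $\|\Delta_{B}u(t)\|_{C^{0}}$ all tending to $0$. Corollary \ref{C52} then says that $\mu^{T}(g^{T}_{t})$ tends to its maximal value $(2\pi)^{-n}(2n)V$, and that the minimizers $f_{t_{i}}$ go to zero in the relevant norms.

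\textbf{Step 2: the limit is transverse K\"ahler-Einstein.} Take the subsequential limit $(M_{\infty},\xi,\eta_{\infty},g^{T}_{\infty})$ from Proposition \ref{T66} and Theorem \ref{T99}. In the quasi-regular case, the smooth convergence on $(M_{\infty})_{reg}$ comes from Theorem \ref{T64}. In the irregular case it comes from the approximation Proposition \ref{P2026} together with the Compactness Theorem \ref{T55}. On $(M_{\infty})_{reg}$ we have $u(t_{i})\to u_{\infty}$ smoothly, and Step 1 forces $u_{\infty}\equiv0$. So (\ref{c1}) becomes $Ric^{T}(g^{T}_{\infty})=g^{T}_{\infty}$ on the regular part, and the soliton vector field $V_{\infty}=\nabla^{T}u_{\infty}$ vanishes.

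\textbf{Step 3: the limit is isomorphic to $M$.} Fix a large $m$. By the partial $C^{0}$-estimate Theorem \ref{T65}, the Kodaira-type embeddings $\Psi_{i}$ built from orthonormal bases of $H^{0}_{B}(M,(K^{T}_{M})^{-m})$ at $t_{i}$ converge, after composing with $G_{i}\in SL^{T}(N_{m}+1,\mathbb{C})$, to an embedding of $M_{\infty}$. Hence $M_{\infty}$ lies in the closure of the $SL^{T}(N_{m}+1,\mathbb{C})$-orbit of $\Psi_{1}(M)$.

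There are two cases.
\begin{enumerate}
\item If the $G_{i}$ stay bounded, then $M_{\infty}$ is transversally biholomorphic to $M$.
\item If the $G_{i}$ diverge, a standard Cartan-decomposition argument produces a weighted $\mathbb{C}^{\ast}$-action $\sigma^{T}(t)$ that degenerates $M$ to a limit in the orbit closure. This is where the main obstacle lies. Since the limit carries a Sasaki-Einstein metric with at worst foliation klt singularities, its generalized Sasaki-Futaki invariant (\ref{59-1}) vanishes. Definition \ref{d61} then forces $M_{\infty}\cong M$. To make the degeneration argument rigorous, I would mimic Tian-Zhang \cite{tz1}: use the properness of the $K$-energy restricted to Bergman metrics, which follows from (\ref{54-b}) via the cocycle decomposition used in the preceding proof, to exclude divergence.
\end{enumerate}

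\textbf{Step 4: regularity.} With $M_{\infty}\cong M$ smooth, the singular set $\Sigma^{T}_{\infty}$ is empty. The Sasaki-Einstein metric then extends smoothly across it, for instance by Theorem \ref{T63} applied on all of $M$, or by uniqueness and regularity of transverse K\"ahler-Einstein metrics with bounded potential. The flow therefore converges to a smooth Sasaki-Einstein metric on $M$. Uniqueness of the limit along the whole flow follows from Proposition \ref{P77} applied with $V=0$.
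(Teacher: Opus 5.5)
Your overall architecture matches the paper's. The lower bound (\ref{54-b}) forces the soliton limit to be Sasaki--Einstein, and transverse $K$-stability is then played against the vanishing Futaki invariant of that limit.

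The gap is in Step 3, case (2). A Cartan decomposition $G_i=k_i\exp(\lambda_i)k_i'$ only produces a weighted one-parameter subgroup $\sigma^T(t)$ whose limit $\lim_{t\to0}\sigma^T(t)\Psi_1(M)$ is \emph{some} point of the orbit closure. In general this point is not $\Psi_\infty(M_\infty)$. So the claim ``the limit carries a Sasaki--Einstein metric'' is unjustified for the degeneration you actually produced, and so is the vanishing of its Futaki invariant (\ref{59-1}).

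The ingredient you are missing is the one the paper invokes at exactly this point. Because $M_\infty$ is Sasaki--Einstein, its Lie algebra of Hamiltonian holomorphic vector fields is reductive (\cite{dt}, \cite{ber}, \cite{fow}). Reductivity of the stabilizer of $\Psi_\infty(M_\infty)$ is what allows, by a Luna-slice/Hilbert--Mumford type argument as in the K\"ahler case, a weighted $\mathbb{C}^*$-action $G(s)\subset SL^T(N_m+1,\mathbb{C})$ with $G(s)\circ\Psi_1(M)$ converging to the embedding of $M_\infty$ itself. Only then is the Futaki invariant entering Definition \ref{d61} that of a Sasaki--Einstein space, hence zero, and the strict inequality yields $M_\infty\cong M$. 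Relatedly, the right dichotomy is whether $\Psi_\infty(M_\infty)$ lies in the orbit of $\Psi_1(M)$, not whether the $G_i$ stay bounded. The $G_i$ can diverge inside a noncompact automorphism group of $M$ while the limit stays in the orbit.

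Your proposed fallback does not close the gap. Estimate (\ref{54-b}) is only a lower bound, and in the preceding proof it was itself derived from the lower bound $K_{\eta_0}(\overline{\omega}_1,\overline{\omega}_i)\geq -C$ on Bergman metrics. Running the cocycle identity together with (\ref{57}) backwards gives at most two-sided bounds on $K_{\eta_0}(\overline{\omega}_1,\overline{\omega}_i)$ along this particular sequence. It does not give properness of the $K$-energy on Bergman metrics.

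A lower bound alone cannot exclude divergence of the $G_i$ or force $M_\infty\cong M$. In the K\"ahler setting, and hence for regular Sasakian manifolds over it, consider small deformations of the Mukai--Umemura threefold that degenerate to it under a $\mathbb{C}^*$-action. They have Mabuchi energy bounded below, yet admit no K\"ahler--Einstein metric, and their flow limit is the Mukai--Umemura threefold itself. Properness is genuinely stronger than a lower bound and cannot be extracted from (\ref{54-b}). The equality clause of Definition \ref{d61} therefore has to be used through a $\mathbb{C}^*$-degeneration landing on $M_\infty$, which is precisely what reductivity provides.
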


\begin{proof}
Firstly, it follows from Corollary \ref{C52}, (\ref{54-b}) and (\ref{54}) that
$M_{\infty}$ must be Sasaki-Einstein. Moreover, the Lie algebra of all
Hamiltonian holomorphic vector fields is reductive (\cite{dt}, \cite{ber},
\cite{fow}). If $M_{\infty}$ is not equal to $M$, there is a family of the
weighted $\mathbb{C}^{\ast}$-action $\{G(s)\}_{s\in C^{\ast}}\subset
SL^{T}(N_{m}+1,\mathbb{C})$ such that
\[
\Psi_{s}(M)=G(s)\circ \Psi_{1}(M)
\]
converges to the $\mathbb{S}^{1}$-equivariant embedding of $M_{\infty}$ with
respect to the weighted $\mathbb{C}^{\ast}$-action in $(\mathbb{CP}^{N_{m}%
},\omega_{FS}).$ Then the generalized Sasaki-Futaki invariant $f_{M_{\infty}%
}(V)$ of $M_{\infty}$ vanishes.

On the other hand, by the assumption that $M$ is transverse $K$-stable, if
$M_{\infty}$ is not equal to $M$, then
\[
\operatorname{Re}f_{M_{\infty}}(V)>0.
\]
This is a contradiction. Hence $M_{\infty}=M$. Therefore there is a
Sasaki-Einstein metric on $M.$
\end{proof}
\begin{remark}
\label{2025-4}

\begin{enumerate}
\item Note that by continuity method, Collins and Sz\'{e}kelyhidi (\cite{cz2})
showed that a polarized affine variety admits a Ricci-flat K\"{a}hler cone
metric if and only if it is $K$-stable. In particular, the Sasakian manifold
admits a Sasaki-Einstein metric if and only if its K\"{a}hler cone is $K$-stable.

\item On the other hand, instead of $K$-stability on its K\"{a}hler cone, one
can have the so-called transverse $K$-stability on a compact transverse Fano
Sasakian manifold at worst foliation log terminal at the singular set of
$\mathbb{S}^{1}$-fibres of $\{ \xi_{p_{1}},\cdots,\xi_{p_{N}}\}$ as in Remark
(\ref{2025-3}).
\end{enumerate}
\end{remark}

\appendix

\section{ \ }

We first refer to the preliminary notions for a Sasakian manifold in the our
previous papers (\cite{chlw1}, \cite{chlw2}) which included the Sasakian
structure, the leave space and its foliation singularities, basic holomorphic
line bundles and basic divisors over Sasakian manifolds. We also refer to
\cite{bg}, \cite{fow}, and references therein for some details.

\subsection{Sasakian Geometry and Sasakian Structures}

Let $(M,g,\nabla)$ be a Riemannian $(2n+1)$-manifold. $(M,g)$ is called Sasaki
if \ the cone
\[
(C(M),J,\overline{\omega},\overline{g}):=(\mathbf{%
\mathbb{R}
}^{+}\times M\mathbf{,\ }dr^{2}+r^{2}g)
\]
is K\"{a}hler with $\overline{\omega}=\frac{1}{2}i\partial \overline{\partial
}r^{2}$ and
\[
\overline{\eta}=\frac{1}{2}\overline{g}(\xi,\cdot)\text{ \  \  \textrm{and}
\  \ }\overline{\xi}=J(r\frac{\partial}{\partial r}).
\]
The function $\frac{1}{2}r^{2}$ is hence a global K\"{a}hler potential for the
cone metric. As $\left[  r=1\right]  =\{1\} \times M\subset C(M)$, we may
define the Reeb vector field $\xi$ on $M$ by
\[
\xi=J(\frac{\partial}{\partial r}).
\]
and the contact $1$-form $\eta$ on $TM$
\[
\eta=g(\xi,\cdot)
\]
Then $\xi$ is the killing vector field with unit length such that $\eta
(\xi)=1\ $and$\  \ d\eta(\xi,X)=0.$ The tensor field of $type(1,1)$, defined
by
\[
\Phi(Y)=\nabla_{Y}\xi
\]
satisfies the condition%
\[
(\nabla_{X}\Phi)(Y)=g(\xi,Y)X-g(X,Y)\xi
\]
for any pair of vector fields $X$ and $Y$ on $M$. Then such a triple
$(\eta,\xi,\Phi)$ is called a Sasakian structure on a Sasakian manifold
$(M,g).$ Note that the Riemannian curvature satisfying the following
\[
R(X,\xi)Y=g(\xi,Y)X-g(X,Y)\xi
\]
for any pair of vector fields $X$ and $Y$ on $M$. In particular, the sectional
curvature of every section containing $\xi$ equals one.

The first structure theorem on Sasakian manifolds states that

\begin{proposition}
\label{P21}(\cite{ru}, \cite{bg}) Let $(M,\eta,\xi,\Phi,g)$ be a compact
quasi-regular Sasakian manifold of dimension $2n+1$ and $Z$ denote the space
of leaves of the characteristic foliation $\mathcal{F}_{\xi}$ (just as
topological space). Then

\begin{enumerate}
\item $Z$ carries the structure of a Hodge orbifold $\mathcal{Z=}(Z,\Delta)$
with an orbifold K\"{a}hler metric $h$ and K\"{a}hler form $\omega$ which
defines an integral class in $H_{orb}^{2}(Z,\mathbf{Z)}$ in such a way that
$\pi:$ $(M,g,\omega)\rightarrow(Z,h,\omega_{h})$ is an orbifold Riemannian
submersion, and a principal $S^{1}$-orbibundle ($V$-bundle) over $Z.$
Furthermore,it satisfies $\frac{1}{2}d\eta=\pi^{\ast}(\omega_{h}).$The fibers
of $\pi$ are geodesics.

\item $Z$ is also a $Q$-factorial, polarized, normal projective algebraic variety.

\item The orbifold $Z$ is Fano if and only if $Ric_{g}>-2$: In this case $Z$
as a topological space is simply connected; and as an algebraic variety is
uniruled with Kodaira dimension $-\infty$.

\item $(M,\xi,g)$ is Sasaki-Einstein if and only if $(Z,h)$ is
K\"{a}hler-Einstein with scalar curvature $4n(n+1).$

\item If $(M,\eta,\xi,\Phi,g)$ is regular then the orbifold structure is
trivial and $\pi$ is a principal circle bundle over a smooth projective
algebraic variety.

\item As real cohomology classes, there is a relation between the first basic
Chern class and the first orbifold Chern class
\[
c_{1}^{B}(M):=c_{1}(\emph{F}_{\xi})=\pi^{\ast}c_{1}^{orb}(\mathbf{Z}).
\]

\end{enumerate}

Conversely, let $\pi$: $M\rightarrow Z$ be a $\mathbf{S}^{1}$-orbibundle over
a compact Hodge orbifold $(Z,h)$ whose first Chern class is an integral class
defined by $[\omega_{Z}]$, and $\eta$ be a $1$-form with $\frac{1}{2}d\eta
=\pi^{\ast}\omega_{Z}$. Then $(M,\pi^{\ast}h+\eta \otimes \eta)$ is a Sasakian
manifold if all the local uniformizing groups inject into the structure group
$U(1).$
\end{proposition}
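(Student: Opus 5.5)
The plan is to build the orbifold structure on $Z$ from the Reeb flow. I would first establish the geometry of the fibration (1), (5), (6). Next come the algebraic statements (2), (3) on $Z$. Then the curvature identities for (3), (4). The converse is treated last.

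\textbf{Step 1: orbifold structure and submersion.} Since $M$ is compact and quasi-regular, every orbit of the Killing field $\xi$ is closed. By compactness (Wadsley's theorem for geodesible flows with closed orbits) the periods are bounded, so after normalizing $\eta$ the flow integrates to a locally free isometric $U(1)$-action. At each $x\in M$ the isotropy $\Gamma_x\subset U(1)$ is finite cyclic. The slice theorem gives a $\Gamma_x$-invariant transverse slice $S_x$ tangent to $D=\ker\eta$, and charts $S_x/\Gamma_x$ make $Z=M/U(1)$ an orbifold. The transverse data $g^T=\frac12 d\eta(\cdot,\Phi\cdot)$ and $\Phi|_D$ are basic, since $\mathcal{L}_\xi\eta=0$, $i_\xi d\eta=0$ and $\mathcal{L}_\xi\Phi=0$. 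Hence they descend to an orbifold Kähler metric $h$ and complex structure $J$ with $\frac12 d\eta=\pi^*\omega_h$ and $\Phi=\pi^*J$. By construction $\pi$ is an orbifold Riemannian submersion, and its fibers are geodesics because $\nabla_\xi\xi=\Phi\xi=0$. The form $\eta$ is a connection form on the principal $S^1$-orbibundle, so its curvature $d\eta$ represents (up to $2\pi$) the orbifold Chern class. This gives integrality in $H^2_{orb}(Z,\mathbb{Z})$ and shows $Z$ is Hodge. Basic forms on $M$ are exactly pullbacks of orbifold forms on $Z$, which yields (6): $c_1^B(M)=\pi^*c_1^{orb}(Z)$. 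If the action is free, every $\Gamma_x$ is trivial, giving (5).

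\textbf{Step 2: algebraic structure.} For (2) I would invoke Baily's orbifold Kodaira embedding theorem, applied to a high power of the positive orbi-line bundle given by $[\omega_h]$. This shows $Z$ is a polarized normal projective variety. Normality and $\mathbb{Q}$-factoriality then follow because $Z$ has only finite quotient singularities.

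\textbf{Step 3: curvature identities.} For (3) and (4) I would use O'Neill's formulas for the submersion with totally geodesic fibers and integrable-horizontal defect $\frac12 d\eta$. These give
\[
Ric_g(X,Y)=Ric_h(\pi_*X,\pi_*Y)-2g(X,Y),\qquad Ric_g(X,\xi)=0,\qquad Ric_g(\xi,\xi)=2n,
\]
for $X,Y\in D$. Thus $Ric_g>-2$ iff $Ric_h>0$. A positive orbifold Ricci form represents $c_1^{orb}$, which gives Fano. For the converse, Fano gives a representative of $c_1^{orb}$ via the orbifold Calabi--Yau theorem. The topological and birational consequences (simple connectivity, uniruledness, $\kappa=-\infty$) I would quote from Kollár and Miyaoka--Mori applied to log terminal Fano varieties. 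The same identities show $Ric_g=2n\,g$ iff $Ric_h=(2n+2)h$, whose scalar curvature is $2n(2n+2)=4n(n+1)$; this is (4).

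\textbf{Step 4: the converse.} Given an $S^1$-orbibundle over a Hodge orbifold, I would take a connection $\eta$ with $\frac12 d\eta=\pi^*\omega_Z$ and set $g=\pi^*h+\eta\otimes\eta$. The injectivity of the local uniformizing groups into $U(1)$ makes the total space a smooth manifold, since the action on the total space is then free on local uniformizers. I would then verify that the cone $dr^2+r^2g$ is Kähler by checking $(\nabla_X\Phi)Y=g(\xi,Y)X-g(X,Y)\xi$ from the Kähler condition on $h$.

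The main obstacle I expect is keeping the normalizations consistent: the period of $\xi$ versus integrality of $[\omega_h]$, and the factor $\frac12$ in $d\eta$. The deep input for (3), namely simple connectivity and uniruledness, must be imported rather than proved here.
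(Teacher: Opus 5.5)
Your proposal is correct and is essentially the standard Boyer--Galicki argument: locally free Reeb circle action, orbifold quotient carrying the descended transverse K\"ahler structure, Baily's embedding theorem, O'Neill's formulas, imported results for simple connectivity and uniruledness, and the orbifold Boothby--Wang construction for the converse. The paper does not reprove this; it simply cites \cite{ru} and \cite{bg}, and your caveats are right. The converse direction of (3) only gives $Ric_{g'}>-2$ for a deformed compatible Sasakian metric $g'$ with the same Reeb field and basic class, obtained via your orbifold Calabi--Yau step, not for the given $g$; likewise the integrality in (1) holds only after normalizing the period of $\xi$.
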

\subsection{The Foliation Singularities}

We recall the definition of foliation singularities in a compact quasi-regular
Sasakian $5$-manifold (\cite{chlw2}). Let $(M,\xi,\alpha_{\xi})$ be a compact
quasi-regular Sasakian $5$-manifold and the leave space $Z:=M/\mathbb{S}^{1}$
be an orbifold K\"{a}hler surface. Denote by the quotient map
\[
\pi:M\rightarrow Z
\]
as before, and call such a $5$-manifold $(M,\xi)$ an $\mathbb{S}^{1}%
$-orbibundle. More precisely, $M$ admits a locally free, effective
$\mathbb{S}^{1}$-action
\[%
\begin{array}
[c]{c}%
\alpha_{\xi}:\mathbb{S}^{1}\times M\rightarrow M
\end{array}
\]
such that $\alpha_{\xi}(t)$ is orientation-preserving, for every
$t\in \mathbb{S}^{1}$. Since $M$ is compact, $\alpha_{\xi}$ is proper and the
isotropy group $\Gamma_{p}$ of every point $p\in M$ is finite.

\begin{definition}
The principal orbit type $M_{\operatorname{reg}}$ corresponds to points in
$(M,\xi,\alpha_{\xi})$ with the trivial isotropy group and
$M_{\operatorname{reg}}\rightarrow M_{\operatorname{reg}}/\mathbb{S}^{1}$ is a
principle $\mathbb{S}^{1}$-bundle. Furthermore, the orbit $\mathbb{S}_{p}^{1}$
of a point $p\in M$ is called a regular fiber if $p\in M_{\operatorname{reg}}%
$, and a singular fiber otherwise. In this case, $M_{\mathrm{sing}}%
/\mathbb{S}^{1}\simeq \Sigma^{orb}(Z).$
\end{definition}

In general for a compact quasi-regular Sasakian $5$-manifold, the space of
leaves has at least the codimension two fixed point set of every non-trivial
isotropy subgroup or the codimension one fixed point set of some non-trivial
isotropy subgroup.

\begin{definition}
\label{D21} (\cite{chlw2})

\begin{enumerate}
\item Foliation singularities of type I : Let $(M,\eta,\xi,\Phi,g)$ be a
compact quasi-regular Sasakian $5$-manifold and its leave space $(Z,\emptyset
)$ of the characteristic foliation be well-formed which means its orbifold
singular locus and algebro-geometric singular locus coincide, equivalently $Z$
has no branch divisors.. Then $Z$ is a $\mathbb{Q}$-factorial normal
projective algebraic orbifold surface with isolated singularities of a finite
cyclic quotient of $\mathbb{C}^{2}$. Accordingly, $p\in Z$ is analytically
isomorphic to $p\in Z\simeq(0\in \mathbb{C}^{2})/\mu_{Z_{r}},$where $Z_{r}$ is
a cyclic group of order $r$ and its action on such open affine neighborhood is
defined by
\[
\mu_{Z_{r}}:(z_{1},z_{2})\rightarrow(\zeta^{a}z_{1},\zeta^{b}z_{2}),
\]
where $\zeta$ is a primitive $r$-th root of unity. We denote the cyclic
quotient singularity by $\frac{1}{r}(a,b)$ with $(a,r)=1=(b,r)$. In
particular, the action can be rescaled so that every cyclic quotient
singularity corresponds to a $\frac{1}{r}(1,a)$-point with $(r,a)=1$. It is
klt (Kawamata log terminal) singularities.

\item Foliation singularities of type II : Let $(M,\eta,\xi,\Phi,g)$ be a
compact quasi-regular Sasakian $5$-manifold and its leave space $(Z,\Delta)$
has the codimension one fixed point set of some non-trivial isotropy subgroup.
In this case, the action
\[%
\begin{array}
[c]{c}%
\mu_{Z_{r}}:(z_{1},z_{2})\rightarrow(e^{\frac{2\pi a_{1}i}{r_{1}}}%
z_{1},e^{\frac{2\pi a_{2}i}{r_{2}}}z_{2}),
\end{array}
\]
for some positive integers $r_{1,}$ $r_{2}$ whose least common multiple is
$r$, and $a_{i},$ $i=1,$ $2$ are integers coprime to $r_{i},$ $i=1,$ $2$. Then
the foliation singular set contains some $3$-dimensional Sasakian submanifolds
of $M.$ More precisely, the corresponding singularities in $(M,\eta,\xi
,\Phi,g)$\textbf{\ }is called\textbf{\ }the Hopf\textbf{\ }$\mathbb{S}^{1}%
$\textbf{-}orbibundle over a Riemann surface\textbf{\ }$\Sigma_{h}.$

\item By the second structure theorem (\cite{ru}, \cite{bg}), any Sasakian
structure $(\xi,\eta,\Phi)$ on $(M,g)$ is either quasi-regular or there is a
sequence of quasi-regular Sasakian structures $(M,\xi_{i},\eta_{i},\Phi
_{i},g_{i})$ converging in the compact-open $C^{\infty}$-topology to
$(\xi,\eta,\Phi,g).$ Then all of $(M,\xi_{i},\eta_{i},\Phi_{i},g_{i})$ are the
same type of foliation singularities. Hence we called an irregular Sasakian
manifold $(M,\xi,\eta,\Phi,g)$ of foliation singularities of type I or type II
if $(M,\xi_{i},\eta_{i},\Phi_{i},g_{i})$ is the foliation singularities of
type I or type II, respectively.
\end{enumerate}
\end{definition}
\subsection{The Foliated Normal Coordinate}

Let $(M,\eta,\xi,\Phi,g)$ be a compact Sasakian $(2n+1)$-manifold with
$g(\xi,\xi)=1$ and the integral curves of $\xi$ are geodesics. For any point
$p\in M$, we can construct local coordinates in a neighborhood of $p$ which
are simultaneously foliated and Riemann normal coordinates (\cite{gkn}). That
is, we can find Riemann normal coordinates $\{x,z^{1},z^{2},\cdot \cdot
\cdot,z^{n}\}$ on a neighborhood $U$ of $p$, such that $\frac{\partial
}{\partial x}=\xi$ on $U$. Let $\{U_{\alpha}\}_{\alpha \in A}$ be an open
covering of the Sasakian manifold and $\pi_{\alpha}:U_{\alpha}\rightarrow
V_{\alpha}\subset%
\mathbb{C}
^{n\text{ }}$ be submersions such that
\[
\pi_{\alpha}\circ \pi_{\beta}^{-1}:\pi_{\beta}(U_{\alpha}\cap U_{\beta
})\rightarrow \pi_{\alpha}(U_{\alpha}\cap U_{\beta})
\]
is biholomorphic. On each $V_{\alpha},$ there is a canonical isomorphism
\[
d\pi_{\alpha}:D_{p}\rightarrow T_{\pi_{\alpha}(p)}V_{\alpha}%
\]
for any $p\in U_{\alpha},$ where $D=\ker \xi \subset TM.$ Since $\xi$ generates
isometries, the restriction of the Sasakian metric $g$ to $D$ gives a
well-defined Hermitian metric $g_{\alpha}^{T}$ on $V_{\alpha}.$ This Hermitian
metric in fact is K\"{a}hler. More precisely, let $z^{1},z^{2},\cdot \cdot
\cdot,z^{n}$ be the local holomorphic coordinates on $V_{\alpha}$. We pull
back these to $U_{\alpha}$ and still write the same. Let $x$ be the coordinate
along the leaves with $\xi=\frac{\partial}{\partial x}.$ Then we have the
foliation local coordinate $\{x,z^{1},z^{2},\cdot \cdot \cdot,z^{n}\}$ on
$U_{\alpha}\ $and $(D\otimes%
\mathbb{C}
)$ is spanned by the fields $Z_{j}=\left(  \frac{\partial}{\partial z^{j}%
}+ih_{j}\frac{\partial}{\partial x}\right)  ,\  \  \ j\in \left \{
1,2,...,n\right \}  $ with
\[
\eta=dx-ih_{j}dz^{j}+ih_{\overline{j}}d\overline{z}^{j}%
\]
and its dual frame
\[
\{ \eta,dz^{j},\ j=1,2,\cdot \cdot \cdot,n\}.
\]
Here $h$ is a basic function such that $\frac{\partial h}{\partial x}=0$ and
$h_{j}=\frac{\partial h}{\partial z^{j}},h_{j\overline{l}}=\frac{\partial
^{2}h}{\partial z^{j}\partial \overline{z}^{l}}$ with the foliation normal
coordinate%
\begin{equation}
h_{j}(p)=0,h_{j\overline{l}}(p)=\delta_{j}^{l},dh_{j\overline{l}}(p)=0.
\label{AAA3}%
\end{equation}
Moreover, we have
\[
d\eta(Z_{\alpha},\overline{Z_{\beta}})=d\eta(\frac{\partial}{\partial
z^{\alpha}},\frac{\partial}{\overline{\partial}z^{\beta}}).
\]
Then the K\"{a}hler $2$-form $\omega_{\alpha}^{T}$ of the Hermitian metric
$g_{\alpha}^{T}$ on $V_{\alpha},$ which is the same as the restriction of the
Levi form $d\eta$ to $\widetilde{D_{\alpha}^{n}}$, the slice $\{x=$
\textrm{constant}$\}$ in $U_{\alpha},$ is closed. The collection of K\"{a}hler
metrics $\{g_{\alpha}^{T}\}$ on $\{V_{\alpha}\}$ is so-called a transverse
K\"{a}hler metric. We often refer to $d\eta$ as the K\"{a}hler form of the
transverse K\"{a}hler metric $g^{T}$ in the leaf space $\widetilde{D^{n}}.$

The K\"{a}hler form $d\eta$ on $D$ and the K\"{a}hler metric $g^{T}$ is define
such that $g=g^{T}+\eta \otimes \eta.$ Now in terms of the normal coordinate, we
have%
\[
g^{T}=g_{i\overline{j}}^{T}dz^{i}d\overline{z}^{j}.
\]
Here $g_{i\overline{j}}^{T}=g^{T}(\frac{\partial}{\partial z^{i}}%
,\frac{\partial}{\partial \overline{z}^{j}}).$ The transverse Ricci curvature
$Ric^{T}$ of the Levi-Civita connection $\nabla^{T}$ associated to $g^{T}$ is
defined by $Ric^{T}=Ric+2g^{T}$ and then $R^{T}=R+2n.$ The transverse Ricci
form is defined to be
\[
\rho^{T}=Ric^{T}(\Phi \cdot,\cdot)=-iR_{i\overline{j}}^{T}dz^{i}\wedge
d\overline{z}^{j}%
\]
with
\[
R_{i\overline{j}}^{T}=-\frac{\partial^{2}}{\partial z^{i}\partial \overline
{z}^{j}}\log \det(g_{\alpha \overline{\beta}}^{T})
\]
and it is a closed basic $(1,1)$-form $\rho^{T}=\rho+2d\eta.$

\subsection{The Sasaki-Ricci Flow}

We recall that a $p$-form $\gamma$ on a Sasakian $(2n+1)$-manifold is called
basic if
\[
i(\xi)\gamma=0\text{ \  \  \textrm{and} \  \ }\mathcal{L}_{\xi}\gamma=0.
\]

Let $\Lambda_{B}^{p}$ be the sheaf of germs of basic $p$-forms and
\ $\Omega_{B}^{p}$ be the set of all global sections of $\Lambda_{B}^{p}$. It
is easy to check that $d\gamma$ is basic if $\gamma$ is basic. Set
$d_{B}=d|_{\Omega_{B}^{p}}.$ Then%
\[
d_{B}:=\partial_{B}+\overline{\partial}_{B}:\Omega_{B}^{p}\rightarrow
\Omega_{B}^{p+1}.
\]
with\ $\partial_{B}:\Lambda_{B}^{p,q}\rightarrow \Lambda_{B}^{p+1,q}$ and
$\overline{\partial}_{B}:\Lambda_{B}^{p,q}\rightarrow \Lambda_{B}^{p,q+1}.$
Moreover%
\[
d_{B}d_{B}^{c}=i\partial_{B}\overline{\partial}_{B}\text{ \  \  \textrm{and}
\  \ }d_{B}^{2}=(d_{B}^{c})^{2}=0
\]
for $d_{B}^{c}:=\frac{i}{2}(\overline{\partial}_{B}-\partial_{B}).$ The basic
Laplacian is defined by
\[
\Delta_{B}:=d_{B}d_{B}^{\ast}+d_{B}^{\ast}d_{B}.
\]
Then we have the basic de Rham complex $(\Omega_{B}^{\ast},d_{B})$ and the
basic Dolbeault complex $(\Omega_{B}^{p,\ast},\overline{\partial}_{B})$ and
its cohomology ring $H_{B}^{\ast}(\mathcal{F}_{\xi})\triangleq H_{B}^{\ast
}(M,\mathbf{R})$ of the foliation $\mathcal{F}_{\xi}$ (\cite{eka}). Then we
can define the orbifold cohomology of the leaf space $Z=M/U(1)$ to be this
basic cohomology ring
\[
H_{orb}^{\ast}(Z,\mathbf{R})\triangleq H_{B}^{\ast}(F_{\xi})
\]
and the basic first Chern class $c_{1}^{B}(M)$ by $c_{1}^{B}=[\frac{\rho^{T}%
}{2\pi}]_{B}$. And a transverse K\"{a}hler-Einstein metric(or a Sasaki $\eta
$-Einstein metric) means that it satisfies $[\rho^{T}]_{B}=\varkappa \lbrack
d\eta]_{B}$ for $\varkappa=-1,0,1$, up to a $D$-homothetic deformation.

\begin{example}
Let $(M,\eta,\xi,\Phi,g)$ be a compact Sasakian $(2n+1)$-manifold. If $g^{T}$
is a transverse K\"{a}hler metric on $M,$ then $h_{\alpha}=\det \left(
((g_{i\overline{j}}^{\alpha})^{T})^{-1}\right)  $ on $U_{\alpha}$ defines a
basic Hermitian metric on the transverse canonical bundle $K_{M}^{T}$. The
inverse $(K_{M}^{T})^{-1}$ of $K_{M}^{T}$ is sometimes called the transverse
anti-canonical bundle. Its basic first Chern class $c_{1}^{B}((K_{M}^{T}%
)^{-1})$ is called the basic first Chern class of $M$ and often denoted by
$c_{1}^{B}(M).$Then it follows from the previous result that $c_{1}^{B}(M)$
that $c_{1}^{B}(M)=[\frac{\rho_{\omega}^{T}}{2\pi}]_{B}$ for any transverse
K\"{a}hler metric $\omega$ on a Sasakian manifold $M$.
\end{example}

\begin{definition}
Let $(L,h)$ be a basic transverse holomorphic line bundle over a Sasakian
manifold $(M,\eta,\xi,\Phi,g)$ with the basic Hermitian metric $h$. We say
that $L$ is\textbf{\ }very ample if for any ordered basis $\underline
{s}=(s_{0},...,s_{N})$ of $H_{B}^{0}(M,L)$, the map $i_{\underline{s}%
}:M\rightarrow \mathbf{CP}^{N}$ given by%
\[
i_{\underline{s}}(x)=[s_{0}(x),...,s_{N}(x)]
\]
is well-defined and an embedding which is $\mathbb{R}$-equivariant with
respect to the weighted $\mathbf{C}^{\ast}$action in $\mathbf{C}^{N+1}$ as
long as not all the $s_{i}(x)$ vanish. We say that $L$ is ample if there
exists a positive integer $m_{0}$ such that $L^{m}$ is very ample for all
$m\geq m_{0}.$
\end{definition}

Now we consider the {Type II deformations of Sasakian structures }$(M,\eta
,\xi,\Phi,g)$ as followings :

By fixing the $\xi$ and varying $\eta$, define
\[
\widetilde{\eta}=\eta+d_{B}^{c}\varphi,
\]
for $\varphi \in \Omega_{B}^{0}$. Then
\[
d\widetilde{\eta}=d\eta+i\partial_{B}\overline{\partial}_{B}\varphi \text{
\  \ and \  \ }\widetilde{\omega}=\omega+i\partial_{B}\overline{\partial}%
_{B}\varphi.
\]
Hence we have the same transversal holomorphic foliation but with the new
K\"{a}hler structure on the K\"{a}hler cone $C(M)$ and new contact bundle
$\widetilde{D}$ with%
\[
\widetilde{\omega}=\frac{1}{2}dd^{c}\widetilde{r}^{2},\widetilde
{r}=re^{\varphi}.
\]
Since $r\frac{\partial}{\partial r}=\widetilde{r}\frac{\partial}%
{\partial \widetilde{r}}$ and $\xi+ir\frac{\partial}{\partial r}=\xi-iJ(\xi)$
is a holomorphic vector field on $C(M),$ so we have the same holomorphic
structure. Finally, by the $\partial_{B}\overline{\partial}_{B}$-Lemma in the
basic Hodge decomposition, there is a basic function $F:M\rightarrow
\mathbb{R}
$ such that
\[
\rho^{T}(x,t)-\varkappa d\eta(x,t)=d_{B}d_{B}^{c}F=i\partial_{B}%
\overline{\partial}_{B}F.
\]
Now we focus on finding a new $\eta$-Einstein Sasakian structure
$(M,\xi,\widetilde{\eta},\widetilde{\Phi},\widetilde{g})$ with $\widetilde
{g}^{T}=(g_{i\overline{j}}^{T}+\varphi_{i\overline{j}})dz^{i}d\overline{z}%
^{j}$ such that
\[
\widetilde{\rho}^{T}=\varkappa d\widetilde{\eta}.
\]
Hence $\widetilde{\rho}^{T}-\rho^{T}=\kappa d_{B}d_{B}^{c}\varphi-d_{B}%
d_{B}^{c}F$. It follows that there is a Sasakian analogue of the
Monge-Amp\`{e}re equation for the orbifold version of Calabi-Yau Theorem
\begin{equation}
\frac{\det(g_{\alpha \overline{\beta}}^{T}+\varphi_{\alpha \overline{\beta}}%
)}{\det(g_{\alpha \overline{\beta}}^{T})}=e^{-\kappa \varphi+F}. \label{B}%
\end{equation}

Now we consider the Sasaki-Ricci flow on $M\times \lbrack0,T)$%
\[
\frac{d}{dt}g^{T}(x,t)=-(Ric^{T}(x,t)-\varkappa g^{T}(x,t))
\]
which is equivalent to%
\begin{equation}
\frac{d}{dt}\varphi=\log \det(g_{\alpha \overline{\beta}}^{T}+\varphi
_{\alpha \overline{\beta}})-\log \det(g_{\alpha \overline{\beta}}^{T}%
)+\kappa \varphi-F. \label{C}%
\end{equation}
\subsection{CR Bochner-Kodaira-Nakano Identity}

We consider a CR-holomorphic vector bundle $(E,\overline{\partial}_{b})$ over
a strictly pseudoconvex CR $(2n+1)$-manifold $(M,T^{1,0}(M),\eta)$ and a
unique Chern-Tanaka connection $D$ on $E$ due to N. Tanaka (\cite{ta}). Define
the global $L^{2}$-norm for any $L^{2}$ section $s$ of $E$
\[
\int_{M}||s(x)||^{2}d\mu
\]
where $||s(x)||^{2}=\left \langle s(x),s(x)\right \rangle _{L_{\eta}}$ is the
pointwise Hermitian norm and $d\mu=\eta \wedge(d\eta)^{n}$. The sub-Laplace
Beltrami operator associated to this connection is defined by
\[
\Delta=D^{\ast}D+DD^{\ast}%
\]
where $D^{\ast}$ is the adjoint of $D$ with respect to the $L^{2}$-norm as
above. The Tanaka connection
\[
D=D^{1,0}+D^{0,1}%
\]
have decomposition with
\[
D^{0,1}=\overline{\partial}_{b}.
\]
We define the complex sub-Laplace operators
\[
\Delta^{\prime}=(D^{\prime})^{\ast}D^{\prime}+D^{\prime}(D^{\prime})^{\ast}%
\]
with $D^{\prime}=D^{1,0}$ and
\[
\Delta_{\overline{\partial}_{b}}=\overline{\partial}_{b}^{\ast}\overline
{\partial}_{b}+\overline{\partial}_{b}\overline{\partial}_{b}^{\ast}.
\]
Now we will work on a compact Sasakian manifold $M$. In fact, we consider all
operators on the sheaf of germs of basic $p$-forms $\Lambda_{B}^{p}$ and \ the
set of all global sections $\Omega_{B}^{p}$ of $\Lambda_{B}^{p}$ with the
complex basic Laplacian
\[
\Delta_{\overline{\partial}_{B}}=\overline{\partial}_{B}^{\ast}\overline
{\partial}_{B}+\overline{\partial}_{B}\overline{\partial}_{B}^{\ast}.
\]
Then we have the following CR Bochner-Kodaira-Nakano identity.

\begin{lemma}
\label{L2026} (\cite{chll}) Let $(M,T^{1,0}(M),\eta)$ be a strictly
pseudoconvex Sasakian $(2n+1)$-manifold and $D$ be the Tanaka connection in a
CR-holomorphic vector bundle $E$ over $M$. Then the complex sub-Laplace
operators $\Delta^{\prime}$ and $\Delta_{\overline{\partial}_{B}}$ acting on
$E$-valued basic forms satisfy the identity%
\begin{equation}
\Delta_{\overline{\partial}_{B}}=\Delta^{\prime}+[i\Theta(E),\Lambda],
\label{2020AA}%
\end{equation}
where $\Delta_{\overline{\partial}_{B}}=\overline{\partial}_{B}^{\ast
}\overline{\partial}_{B}+\overline{\partial}_{B}\overline{\partial}_{B}^{\ast
}.$ Here $\Theta(E):=D^{2}s$ is the curvature operator.
\end{lemma}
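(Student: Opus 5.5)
The statement is the CR Bochner--Kodaira--Nakano identity, Lemma \ref{L2026}: on $E$-valued basic forms over a Sasakian manifold, $\Delta_{\overline{\partial}_{B}}=\Delta^{\prime}+[i\Theta(E),\Lambda]$. The plan is to reduce it, through foliated normal coordinates, to the classical K\"{a}hler Bochner--Kodaira--Nakano identity on the local leaf spaces $V_{\alpha}$. The point is that every operator involved preserves basic forms and, in the foliation chart $\{x,z^{1},\dots,z^{n}\}$, depends only on the transverse K\"{a}hler data.

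\textbf{Step 1: restriction of the Tanaka connection.} On a basic $E$-valued form $s$ we have $i(\xi)s=0$ and $\mathcal{L}_{\xi}s=0$. Since $\xi$ is Killing and $\eta$ is a Sasakian contact form, the Tanaka connection on basic sections satisfies $D_{\xi}s=0$. In the frame $Z_{j}=\partial_{z^{j}}+ih_{j}\partial_{x}$, the components $D_{Z_{j}}$ and $D_{\overline{Z}_{j}}$ therefore act as $\partial_{z^{j}}$ and $\partial_{\overline{z}^{j}}$ followed by the Chern connection of the induced Hermitian bundle $\pi_{\alpha\ast}E$ on $V_{\alpha}$. Hence $D^{1,0}=D^{\prime}$ and $D^{0,1}=\overline{\partial}_{b}=\overline{\partial}_{B}$ coincide with the pullbacks of the Chern connection components on $V_{\alpha}$. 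The curvature $\Theta(E)=D^{2}$ is then a basic $(1,1)$-form equal to the pullback of the Chern curvature. The torsion of the Tanaka connection has no effect here: for Sasakian structures the pseudo-Hermitian torsion $A$ vanishes, and the term $d\eta\otimes\xi$ is killed by $i(\xi)s=0$.

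\textbf{Step 2: adjoints and Lefschetz operators.} Take the formal adjoints $(D^{\prime})^{\ast}$ and $\overline{\partial}_{B}^{\ast}$ with respect to $d\mu=\eta\wedge(d\eta)^{n}$, restricted to basic forms. On basic integrands, Fubini along the $\mathbb{S}^{1}$-orbits (or along the local flow boxes in the irregular case) shows that these adjoints agree locally with the K\"{a}hler adjoints on $V_{\alpha}$ for the metric $g_{\alpha}^{T}$. This uses $\mathcal{L}_{\xi}d\mu=0$ and $d\eta=2\pi^{\ast}\omega_{\alpha}$, so the fibre factor integrates out as a constant. Similarly, $L=d\eta\wedge\cdot$ and its adjoint $\Lambda$ are the transverse Lefschetz operators.

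\textbf{Step 3: local computation.} The identity is now pointwise and local. Work at $p$ in foliated normal coordinates with $h_{j}(p)=0$, $h_{j\overline{l}}(p)=\delta_{jl}$ and $dh_{j\overline{l}}(p)=0$, as in (\ref{AAA3}). First establish the basic K\"{a}hler identities
\[
[\Lambda,\overline{\partial}_{B}]=-i(D^{\prime})^{\ast},\qquad [\Lambda,D^{\prime}]=i\overline{\partial}_{B}^{\ast}.
\]
They hold to first order at $p$ because the transverse metric osculates the flat metric there, exactly as in the K\"{a}hler proof. Then expand
\[
\Delta_{\overline{\partial}_{B}}=-i\big(\overline{\partial}_{B}[\Lambda,D^{\prime}]+[\Lambda,D^{\prime}]\overline{\partial}_{B}\big)
\]
and use the Jacobi identity together with $[D^{\prime},\overline{\partial}_{B}]=\Theta(E)$ on basic forms to obtain $\Delta_{\overline{\partial}_{B}}-\Delta^{\prime}=[i\Theta(E),\Lambda]$.

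\textbf{Main obstacle.} The delicate step is Step 1/2: verifying that, on basic forms, the commutator $[D^{\prime},\overline{\partial}_{B}]$ produces no extra term of the form $d\eta\cdot D_{\xi}$. On general CR manifolds such a term arises from $[Z_{j},\overline{Z}_{k}]=-2ih_{j\overline{k}}\partial_{x}$. I would handle it by noting that this bracket term acts through $D_{\xi}$, which vanishes on basic sections by $\mathcal{L}_{\xi}s=0$ and $\xi$-invariance of the connection. This is where the Sasakian and basic hypotheses are essential.
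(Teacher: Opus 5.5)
Your proposal is correct. The paper gives no argument of its own for this lemma; it only cites \cite{chll}. Your sketch is the standard proof: transverse K\"{a}hler identities in foliated normal coordinates, then the graded Jacobi identity with $[D^{\prime},\overline{\partial}_B]=\Theta(E)$. It also correctly isolates the one CR-specific point, namely that the $D_\xi$ term coming from $[Z_j,\overline{Z}_k]=-2ih_{j\overline{k}}\partial_x$ vanishes on basic sections.

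Two things are worth stating explicitly. First, $E$ and its Hermitian metric must be basic, so that $\mathcal{L}_\xi$ makes sense on $E$ and the Tanaka connection satisfies $D_\xi s=0$ on basic sections. This is exactly the setting in which the paper uses the lemma, for $(K_M^T)^{-m}$. Second, in the irregular case it is cleaner to identify $\overline{\partial}_B^{\ast}$ by a different route than a Fubini argument along leaves. Note that the $L^2(M)$-adjoint of $\overline{\partial}_b$ on all horizontal forms preserves basic forms, and that on them it equals the transverse adjoint, because the Reeb orbits are geodesics ($\nabla_\xi\xi=0$).
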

\subsection{Foliation KLT Singularities}

We recall that a log pair $(Z,\Delta)$ is consisting of a connected compact
projective normal variety $Z$ and an effective $\mathbb{Q}$-divisor $\Delta$
such that $(K_{Z}+\Delta)$ is $\mathbb{Q}$-Cartier. Given a log minimal
resolution $\phi:(Y,D)\rightarrow(Z,\Delta)$ with a nonsingular projective
variety $Y$. Let $(M,\omega_{Z}^{T},\Delta^{T})$ be a compact quasi-regular
Sasakian manifold with type II foliation singulaties. By the first Sasakian
structure theorem, there exists a submersion $\widetilde{\pi}:(\widetilde
{M},D^{T})\rightarrow(Y,D)$ with the regular Sasakian manifold $(\widetilde
{M},D^{T})$ such that the transverse log resolution is defined by%
\begin{equation}
\widetilde{\phi}:(\widetilde{M},D^{T})\rightarrow(M,\Delta^{T}) \label{2026-2}%
\end{equation}
so that it is basic and the following diagram
\[%
\begin{array}
[c]{lcl}%
(\widetilde{M},\widetilde{\omega},D^{T}) & \overset{\widetilde{\phi}%
}{\longrightarrow} & (M,\omega_{Z}^{T},\Delta^{T})\\
\downarrow \widetilde{\pi} & \circlearrowright & \downarrow \pi \\
(Y,\omega,D) & \overset{\phi}{\longrightarrow} & (Z,\omega_{Z},\Delta)
\end{array}
\]
is commutative $\pi \circ \widetilde{\phi}=\phi \circ \widetilde{\pi}$.

In general, by the second Sasakian structure proposition \ref{P22}, the
definition works as well for the irregular case. We refer to \cite{chlw2} for
some details regarding basic holomorphic line bundles $L^{T}$ and basic
divisors $\Delta^{T}$ over Sasakian manifolds, etc.

Then, we define the log pair $(M,\Delta^{T})$ with the foliation klt
(\textrm{log canonical) }singularities as follows :

\begin{definition}
\label{d63} Let $(M,\Delta^{T})$ be a compact Sasakian manifold. We define

\begin{enumerate}
\item We call $(M,\Delta^{T})$ is a log pair with foliation klt singularities
if there exists a unique transverse $\mathbb{Q}$-divisor $D_{\widetilde{M}%
}^{T}$ \ such that
\[
(K_{\widetilde{M}}^{T})^{-1}=\widetilde{\phi}^{\ast}((K_{M}^{T})^{-1}%
-\Delta^{T})+\sum_{0\leq a_{j}<1}a_{j}D_{j}^{T}.
\]
Moreover, $(M,\Delta^{T})$ is a log pair with foliation \textrm{log canonical}
singularities if $0\leq a_{j}\leq1.$

\item $M$ \ admits foliation log terminal singularities when the pair
$(M,\emptyset)$ is foliation klt singularities.

\item $(M,\Delta^{T})$ is called a log Fano Sasakian manifold if $L^{T}%
=(K_{M}^{T})^{-1}-\Delta^{T}$ is a basic ample $\mathbb{Q}$-line bundle. That
is
\[
c_{1}^{B}(M,\Delta^{T})=c_{1}^{B}(M)-c_{1}^{B}(\Delta^{T})>0.
\]

\item A conic K\"{a}hler metric $\omega$ on a compact K\"{a}hler $n$-manifold
$Y$ along the irreducible divisor $D$ with cone angle $2\pi \beta,$ $0<\beta<1$
is asymptotically equivalent to the model metric on $\mathbb{C}^{n}$
\[%
\begin{array}
[c]{c}%
\omega_{\beta}=\sqrt{-1}\frac{dz_{1}\wedge d\overline{z}_{1}}{|z_{1}%
|^{2(1-\beta)}}+\sum_{j=2}^{n}dz_{j}\wedge d\overline{z}_{j}.
\end{array}
\]
It is called conic K\"{a}hler-Einstein
\[
Ric(\omega):=\beta \omega(t)+(1-\beta)[D]
\]
in the sense of currents with $D\thicksim-|K_{Y}|$. That is, locally near a
point $p\in Y,$ there is a holomorphic coordinate system $(V,z_{1}%
,\cdots,z_{n})$ such that $D\cap V=\{z\in V;$ $z_{1}=0\}.$

\item Let $(\widetilde{M},D^{T})$ be a log Fano regular Sasakian manifold. A
conic Sasakian metric $\widetilde{\omega}$ is called a conic Sasaki-Einstein
on $(\widetilde{M},D^{T})$ with the transverse cone angle $2\pi(1-\beta)$
along $D^{T}$ with $\widetilde{\pi}_{\ast}^{-1}(D_{Y})=D^{T}$ if
\[%
\begin{array}
[c]{c}%
\widetilde{Ric}^{T}(\widetilde{\omega})=\widetilde{\omega}+(1-\beta)[D^{T}]
\end{array}
\]
such that $2\pi \beta$ is the cone angle along $D_{Y}.$
\end{enumerate}
\end{definition}


\begin{thebibliography}{99999}
\bibitem[B]{b}D. Barden, \textit{Simply connected five-manifolds}, Ann. of
Math. (2) 82 (1965), 365-385.

\bibitem[Bam]{bam}R. Bamler, Convergence of Ricci flows with bounded scalar
curvature, Ann. of Math. (2) 188 (2018), no. 3, 753--83.

\bibitem[BBEGZ]{bbegz}R. Berman, S. Boucksom, P. Eyssidieux, V. Guedj and A.
Zeriahi, \textit{K\"{a}hler-Einstein metrics and the K\"{a}hler-Ricci flow on
log Fano varieties}, J. Reine Angew. Math. 751 (2019), 27-89.

\bibitem[Bel]{bel}F. A. Belgun, \textit{Normal CR structures on compact }%
$3$\textit{-manifolds}, Math. Z. 238 (2001), no. 3, 441--460.

\bibitem[Ber]{ber}B. Berndtsson, \textit{A Brunn-Minkowski type inequality for
Fano manifolds and some uniqueness theorems in K\"{a}hler geometry}, Invent.
math. (2015) 200:149--200.

\bibitem[BG]{bg}C. P. Boyer and K. Galicki, \textit{Sasaki Geometry}, Oxford
Mathematical Monographs. Oxford University Press, Oxford (2008).

\bibitem[BGS]{bgs}C. P. Boyer , K. Galicki and S. Simanca, \textit{Canonical
Sasakian metrics,} Comm. Math. Phys. 279 (2008), no. 3, 705--733.

\bibitem[BM]{bm}S. Bando and T. Mabuchi,\textit{\ Uniqueness of
Einstein-K\"{a}hler metrics modulo connected group actions}, in: Algebraic
geometry (Sendai 1985), Adv. Stud. Pure Math. 10, North-Holland, Amsterdam
(1987), 11--40.

\bibitem[Bir]{bir}C. Birkar, \textit{Singularities of linear systems and
boundedness of Fano varieties}, Annals of Mathematics 193 (2021), 347-405.

\bibitem[Cr]{cr}C.B. Croke, \textit{Some isoperimetric inequalities and
eigenvalue estimates}, Ann. Sci. Ec. Norm. Super. 13 (1980), 419 435.

\bibitem[CS]{cs}I. A. Cheltsov and K. A. Shramov, \textit{Log-canonical
thresholds for smooth Fano threefolds, with an appendix by J.-P. Demailly},
Uspekhi Mat. Nauk 63 (2008), no. 5 (383), 73--180.

\bibitem[Cao]{cao}H. Cao, \textit{Deformation of K\"{a}hler metrics to
K\"{a}hler-Einstein metrics on compact K\"{a}hler manifolds}, Invent. Math.
81(1985), 359--372.

\bibitem[CC1]{cc1}J. Cheeger and T. H. Colding, \textit{Lower bounds on the
Ricci curvature and the almost rigidity of warped products}, Ann. Math., 144
(1996), 189-237.

\bibitem[CC2]{cc2}J. Cheeger and T. H. Colding, \textit{On the structure of
spaces with Ricci curvature bounded below I}, J. Diff. Geom., 46 (1997), 406-480.

\bibitem[CC3]{cc3}J. Cheeger and T. H. Colding, \textit{On the structure of
spaces with Ricci curvature bounded below II}, J. Diff. Geom., 54 (2000), 13-35.

\bibitem[CCLLW]{ccllw}D.-C. Chang, S.-C. Chang, F. Li, C. Lin and C.-T. Wu,
Yau-Tian-Donaldson Conjecture in a Log Fano Quasi-Regular Sasakian
$5$-Manifold, to appear in Annals of Mathematical Sciences and Applications. arXiv:2406.16430.

\bibitem[CCLW]{cclw}D.-C. Chang, S.-C. Chang, C. Lin and C.-T. Wu,
\textit{Foliation divisorial contraction by the Sasaki-Ricci flow on Sasakian
}$5$\textit{-manifolds}, arXiv: 2203.01736.

\bibitem[CCT]{cct}J. Cheeger, T. H. Colding and G. Tian,\textit{\ On the
singularities of spaces with bounded Ricci curvature}, Geom. Funct. Anal., 12
(2002), 873-914.

\bibitem[CDS1]{cds1}X. Chen, Simon Donaldson, and Song Sun,
\textit{K\"{a}hler-Einstein metrics on Fano manifolds. I,} J. Amer. Math. Soc.
28 (2015), no. 1, 183--197.

\bibitem[CDS2]{cds2}X. Chen, Simon Donaldson, and Song Sun,
\textit{K\"{a}hler-Einstein metrics on Fano manifolds II,} J. Amer. Math. Soc.
28 (2015), no. 1, 199--234.

\bibitem[CDS3]{cds3}X. Chen, Simon Donaldson, and Song Sun,
\textit{K\"{a}hler-Einstein metrics on Fano manifolds III}, J. Amer. Math.
Soc. 28 (2015), no. 1, 235--278.

\bibitem[CHLL]{chll}S.-C. Chang, Y. Han, N. Li and C. Lin, Existence of
nonconstant CR-holomorphic functions of polynomial growth in Sasakian
Manifolds, J. Reine Angew. Math. (Crelle Journal), vol. 2023, no. 802 (2023),
DOI 10.1515/ crelle-2023-0046, pp. 223-254.

\bibitem[CHLW1]{chlw1}S.-C. Chang, Y. Han, C. Lin and C.-T. Wu,\textit{\ }%
$L^{4}$\textit{-bound of the transverse Ricci curvature under the Sasaki-Ricci
flow}, Journal of Mathematical Study, 58(2025), No. 1, 36-59.

\bibitem[CHLW2]{chlw2}S.-C. Chang, Y. Han, C. Lin and C.-T. Wu,
\textit{Convergence of the Sasaki-Ricci flow on Sasakian }$5$%
\textit{-manifolds of general type}, International Journal of Mathematics, 
37(2026), No. 3, 2650020 (47 pages) DOI: 10.1142/S0129167X26500205.

\bibitem[CHLZ]{chlj}S.-C. Chang, Y. Han, C. Lin and J. Zhou, \textit{The
Sasaki-Ricci Flow on Sasaki-Ricci solitons}, preprint.

\bibitem[CHW]{chw}S.-C. Chang, Y. Han and C.-T. Wu, Geometry and Topology of
Gradient Shrinking Sasaki-Ricci Solitons, arXiv: 2508.13495.

\bibitem[CLLQ]{cllq}S.-C. Chang, F. Li, C. Lin and H. Qiu, \textit{Geometry of
shrinking Sasaki-Ricci solitons I: fundamental equations and characterization
of rigidity}, arXiv:2502.16148 ; arXiv:2509.01100.

\bibitem[CJ]{cj}T. Collins and A. Jacob, \textit{On the convergence of the
Sasaki-Ricci flow}, Analysis, complex geometry, and mathematical physics: in
honor of Duong H. Phong, 11--21, Contemp. Math., 644, Amer. Math. Soc.,
Providence, RI, 2015.

\bibitem[CN]{cn}T. H. Colding and A. Naber, \textit{Sharp H\"{o}lder
continuity of tangent cones for spaces with a lower Ricci curvature bound and
applications}, Ann. of Math., 176 (2012), 1173-1229.

\bibitem[Co1]{co1}T. Collins, \textit{The transverse entropy functional and
the Sasaki-Ricci flow}, Trans. AMS., Volume 365, Number 3, March 2013, Pages 1277-1303.

\bibitem[Co2]{co2}T. Collins, \textit{Uniform Sobolev inequality along the
Sasaki-Ricci flow}, J. Geom. Anal. 24 (2014), 1323--1336.

\bibitem[Co3]{co3}T. Collins, \textit{Stability and convergence of the
Sasaki-Ricci flow}, J. reine angew. Math. 716 (2016), 1--27.

\bibitem[CSW]{csw}X, Chen, S. Sun and B. Wang, \textit{K\"{a}hler-Ricci flow,
K\"{a}hler-Einstein metric, and K-stability}, Topol. 22 (2018) 3145-3173.

\bibitem[CTZ]{ctz}H. D. Cao, G. Tian and X. Zhu, \textit{K\"{a}hler-Ricci
solitons on compact complex manifolds with }$C_{1}(M)>0$, GAFA, Geom. funct.
anal. Vol. 15 (2005) 697 -- 719.

\bibitem[CW]{cw}X. X. Chen and B. Wang, Space of Ricci flows (II)---part B:
weak compactness of the flows, J. Differential Geom, 116 (2020), no. 1, 1--123.

\bibitem[CZ1]{cz1}T. Collins and G. Szekelyhidi, \textit{K-semistability for
irregular Sasakian manifolds}, J. Differential Geometry 109 (2018) 81-109.

\bibitem[CZ2]{cz2}T. Collins and G. Szekelyhidi, \textit{Sasaki-Einstein
metrics and K-stability,} Geom. Topol. 23 (2019), no. 3, 1339--1413.

\bibitem[D]{d}S. K. Donaldson, \textit{Scalar curvature and stability of toric
varieties}, J. Differential Geom. 62 (2002), 289-349.

\bibitem[D1]{d1}J.-P. Demailly, \textit{On Tian's invariant and log canonical
thresholds}, appendix to I. Cheltsov and C. Shramov's article : Log-canonical
thresholds of smooth Fano threefolds.

\bibitem[D2]{d2}J.-P. Demailly, $L^{2}$\textit{-vanishing theorems for
positive line bundles and adjunction theory}, Lecture Notes in Math., vol.
1646, Springer, Berlin, 1996, pp. 1-97.

\bibitem[DK]{dk}J.-P. Demailly and J. Koll\'{a}r, \textit{Semi-continuity of
complex singularity exponents and K\"{a}hler-Einstein metrics on Fano
manifolds}, Ann. Ec. Norm. Sup 34 (2001), 525-556.

\bibitem[DP]{dp}J.-P. Demailly and M. Paun, \textit{Numerical characterization
of the K\"{a}hler cone of a compact K\"{a}hler manifold}, Ann. of Math., 159
(2004), no. 3, 1247--1274.

\bibitem[DS]{ds}S. Donaldson and S. Sun, \textit{Gromov-Hausdorff limits of
K\"{a}hler manifolds and algebraic geometry}, Acta Math. 213(1) (2014) 63--106.

\bibitem[DT]{dt}W. Ding and G. Tian, \textit{K\"{a}hler-Einstein metrics and
the generalized Futaki invariants}, Invent. Math., 110 (1992), 315-335.

\bibitem[EKA]{eka}A. El Kacimi-Alaoui, \textit{Operateurs transversalement
elliptiques sur un feuilletage riemannien et applications}, Compos. Math. 79
(1990) 57--106.

\bibitem[F]{f}A. Futaki, \textit{An obstruction to the existence of Einstein
K\"{a}hler metrics,} Invent. Math. 73 (1983), 437-443.

\bibitem[FOW]{fow}A. Futaki, H. Ono and G.Wang, \textit{Transverse K\"{a}hler
geometry of Sasaki manifolds and toric Sasaki-Einstein manifolds}, J.
Differential Geom. 83 (2009) 585--635.

\bibitem[G]{gei}H. Geiges, \textit{Normal contact structures on 3-manifolds},
Tohoku Math. J. 49 (1997), 415-422.

\bibitem[GMSW]{gmsw}J. P. Gauntlett, D. Martelli, J. Sparks and D. Waldram,
\textit{Sasaki-Einstein Metrics on }$\mathbb{S}^{2}\times \mathbb{S}^{3}$, Adv.
Theor. Math. Phys. 8 (2004), 711--734.

\bibitem[GKN]{gkn}M. Godlinski, W. Kopczynski and P. Nurowski, \textit{Locally
Sasakian manifolds}, Classical Quantum Gravity 17 (2000) L105--L115.

\bibitem[GPSS]{gpss}Bin Guo , Duong H. Phong , Jian Song\ and Jacob Sturm,
\textit{Compactness of K\"{a}hler-Ricci solitons on Fano manifolds}, Pure
Appl. Math. Q. 18 (2022), no. 1, 305-316.

\bibitem[H]{h}L. Hormander, \textit{An introduction to complex analysis in
several variables}, Van Nostrand, Princeton, 1973.

\bibitem[H1]{h1}R. S. Hamilton, \textit{The Ricci flow on surfaces}, Math, and
General Relativity, Contemporary Math. 71 (1988), 237-262.

\bibitem[H2]{h2}R. S. Hamilton, T\textit{hree-manifolds with positive Ricci
curvature}, J. Differential Geom. 17 (1982), no. 2, 255--306.

\bibitem[H3]{h3}R.S. Hamilton, \textit{The formation of singularities in the
Ricci flow}, in Surveys in differential geometry, Vol. II (Cambridge, MA,
1993), 7-136, Int. Press, Cambridge, MA, 1995.

\bibitem[He]{he}W. He, \textit{The Sasaki-Ricci flow and compact Sasaki
manifolds of positive transverse holomorphic bisectional curvature}, J. Geom.
Anal. 23 (2013), 1876-931.

\bibitem[HHL]{hhl}H. Herrmann, C.-Y. Hsiao and X. Li, \textit{Szeg\"{o}
kernels and equivariant embedding theorems for CR manifolds}, Math. Res. Lett.
29 (2022), no. 1, 193-246.

\bibitem[HLM]{hlm}C.-Y. Hsiao, X. Li and G. Marinescu, \textit{Equivariant
Kodaira embedding for CR manifolds with circle action}, Michigan Math. J. 70
(2021), no. 1, 55--113.

\bibitem[HS]{hs}W. He and S. Sun, \textit{Frankel conjecture and Sasaki
geometry}, Advances in Mathematics, 291 (2016), 912?60.

\bibitem[JST]{jst}W. Jian, J. Song and G. Tian, Finite time singularities of
the Kaehler-Ricci flow, arXiv:2310.07945v2.

\bibitem[K1]{k1}J. Koll\'{a}r, \textit{Singularities of pairs}, Algebraic
geometry, Santa Cruz 1995, Proc. Sympos. Pure Math., vol. 62, Amer. Math.
Soc., Providence, RI (1997) 221-287.

\bibitem[K2]{k2}J. Koll\'{a}r,\textit{\ Einstein metrics on connected sums of
}$\mathbb{S}^{2}\times \mathbb{S}^{3}$, J. Differential Geom. 75 (2007), no. 2, 259--272.

\bibitem[K3]{k3}J. Koll\'{a}r, \textit{Einstein metrics on five-dimensional
Seifert bundles}, J. Geom. Anal. 15 (2005), no. 3, 445--476.

\bibitem[Li]{li}C. Li, \textit{Remarks on logarithmic }$K$\textit{-stability},
Communications in Contemporary Mathematics, 17 (2014), no. 2, 1450020, 1-17.

\bibitem[LX]{lx}C. Li and C. Xu, \textit{Special test configuration and
K-stability of Fano varieties}, Ann. of Math. (2) 180 (2014), no. 1, 197--232.

\bibitem[L]{l}J. Liu, \textit{The generalized K\"{a}hler Ricci flow}, J. Math.
Anal. Appl. 408 (2013) 751--761.

\bibitem[LZ]{lz}J. Liu and X. Zhang, \textit{The conical K\"{a}hler-Ricci flow
on Fano manifolds}, Advances in Mathematics, 307 (2017), 1324--1371.

\bibitem[M]{m}T. Mabuchi, \textit{K-energy maps integrating Futaki
invariants}, Tohoku Math. J., 38, 245-257 (1986).

\bibitem[MSY]{msy}Dario Martelli, James Sparks and Shing-Tung Yau,
\textit{Sasaki--Einstein manifolds and volume minimisation}, Communications in
Mathematical Physics, 280 (2008), 611-673.

\bibitem[Mo]{mo}N. Mok, \textit{The uniformization theorem for compact
K\"{a}hler manifolds of nonnegative holomorphic bisectional curvature}, J.
Differ. Geom. 27(2) (1988), 179-214.

\bibitem[Na]{na}A. M. Nadel, \textit{Multiplier ideal sheaves and
K\"{a}hler-Einstein metrics of positive scalar curvature}, Ann. of Math. (2)
132 (1990), no. 3, 549-596.

\bibitem[NT]{nt}S. Nishikawa and P. Tondeur, \textit{Transversal infinitesimal
automorphisms for harmonic K\"{a}hler foliation}, Tohoku Math. J., 40(1988), 599-611.

\bibitem[P1]{p1}G. Perelman, \textit{The entropy formula for the Ricci flow
and its geometric applications}, preprint, arXiv: math.DG/0211159.

\bibitem[P2]{p2}G. Perelman, \textit{Ricci flow with surgery on
three-manifolds}, preprint, arXiv: math.DG/0303109.

\bibitem[P3]{p3}G. Perelman, \textit{Finite extinction time for the solutions
to the Ricci flow on certain three-manifolds}, preprint, arXiv: math.DG/0307245.

\bibitem[Paul1]{paul1}S. T. Paul, \textit{Hyperdiscriminant polytopes, Chow
polytopes, and Mabuchi energy asymptotics}, Ann. Math., 175 (2012), 255-296.

\bibitem[Paul2]{paul2}S. T. Paul, \textit{A numerical criterion for }%
$K$\textit{-energy maps of algebraic manifolds}, arXiv:1210.0924v1.

\bibitem[Pe]{pe}P. Petersen, \textit{Convergence theorems in Riemannian
geometry}, in \textquotedblright Comparison Geometry\textquotedblright \ edited
by K. Grove and P. Petersen, MSRI Publications, vol 30 (1997), Cambridge Univ.
Press, 167-202.

\bibitem[PSS]{pss}D. H. Phong, J. Song and J. Sturm, \textit{Degeneration of
K\"{a}hler-Ricci solitons on Fano manifolds}, Acta Math. No. 52 (2015), 29--43.

\bibitem[PSSWe]{psswe}D. H. Phong, J. Song, J. Sturm and B. Weinkove,
\textit{The K\"{a}hler-Ricci flow and }$\overline{\partial}$\textit{-operator
on vector fields}, J. Differ. Geom., 81 (2009), 631-647.

\bibitem[PW1]{pw1}P. Petersen and G.F. Wei,\textit{\ Relative volume
comparison with integral curvature bounds}, Geom. Funct. Anal., 7 (1997), 1031-1045.

\bibitem[PW2]{pw2}P. Petersen and G.F. Wei, \textit{Analysis and geometry on
manifolds with integral Ricci curvature bounds,\ II}, Trans. AMS., 353 (2001), 457-478.

\bibitem[RT]{rt}J. Ross and R.P. Thomas, \textit{Weighted projective
embeddings, stability of orbifolds and constant scalar curvature K\"{a}hler
metrics}, JDG 88 (2011), no. 1, 109-160.

\bibitem[Ru]{ru}P. Rukimbira, \textit{Chern-Hamilton's conjecture and
K-contactness}, Houston J. Math. 21 (1995), no. 4, 709-718.

\bibitem[S]{s}S. Smale, \textit{On the structure of 5-manifolds,} Ann. of
Math. (2) 75 (1962), 38-46.

\bibitem[Shi]{shi}W.-X. Shi, \textit{Ricci deformation of the metric on
complete noncompact Riemannian manifolds}, J. Diff. Geom., 30 (1989), 303-394.

\bibitem[Siu]{siu}Y.-T. Siu, \textit{The existence of K\"{a}hler-Einstein
metrics on manifolds with positive anticanonical line bundle and a suitable
finite symmetry group}, Ann. of Math. 127 (1988), 585--627.

\bibitem[Sp]{sp}James Sparks, \textit{Sasaki-Einstein Manifolds}, Surveys in
Differential Geometry 16 (2011), 265-324.

\bibitem[ST]{st}J. Song and G. Tian, \textit{The K\"{a}hler-Ricci flow through
singularities}, Invent. Math. 207 (2017), no. 2, 519--595.

\bibitem[SW]{sw}J. Song and B. Weinkove, \textit{Contracting exceptional
divisors by the K\"{a}hler-Ricci flow}, Duke Math. J. 162 (2013), no. 2, 367--415.

\bibitem[SWZ]{swz}K. Smoczyk, G. Wang and Y. Zhang, \textit{The Sasaki-Ricci
flow,} Internat. J. Math. 21 (2010), no. 7, 951--969.

\bibitem[SZ]{sz}Y. Shi and X. Zhu, \textit{K\"{a}hler-Ricci solitons on toric
Fano orbifolds}, Math. Z. (2012) 271:1241--1251.

\bibitem[Ta]{ta}N. Tanaka, \textit{A Differential Geometric Study on Strongly
Pseudo-Convex Manifold}, Kinokuniya, Tokyo, 1975.

\bibitem[T1]{t1}G. Tian, \textit{On Calabi's conjecture for complex surfaces
with positive first Chern class}, Invent. Math., 101, (1990), 101-172.

\bibitem[T2]{t2}G. Tian, \textit{Partial }$C^{0}$\textit{-estimates for
K\"{a}hler-Einstein metrics}, Commun. Math. Stat., 1 (2013), 105-113.

\bibitem[T3]{t3}G. Tian,\textit{\ K\"{a}hler-Einstein metrics with positive
scalar curvature}, Invent. Math. 137 (1997), no. 1, 1-37.

\bibitem[T4]{t4}G. Tian, \textit{Canonical Metrics in K\"{a}hler Geometry},
Lectures in Mathematics ETH Z\={u}rich, Birkh\u{u}user Verlag, Basel, 2000.

\bibitem[T5]{t5}G. Tian, \textit{K-stability and K\"{a}hler-Einstein metrics},
Comm. Pure Appl. Math. 68 (2015), no. 7, 1085--1156. Corrigendum: Comm. Pure
Appl. Math. 68 (2015), no. 11, 2082--2083.

\bibitem[TZ1]{tz1}G. Tian and Z. Zhang, \textit{Regularity of K\"{a}ler-Ricci
flow on Fano manifolds}, Acta Math. 216 (2016) No. 1, 127-176.

\bibitem[TZ2]{tz2}G. Tian and Z. Zhang, \textit{Degeneration of
K\"{a}hler-Ricci solitons}, Int. Math. Res. Not. IMRN 2012, no. 5, 957--985.

\bibitem[TZhu1]{tzhu1}G. Tian and X.-H. Zhu, \textit{Convergence of
K\"{a}hler-Ricci flow}, J. Amer. Math. Soc. 20 (2007), 675-699.

\bibitem[TZhu2]{tzhu2}G. Tian and X.-H. Zhu, \textit{Convergence of
K\"{a}hler-Ricci flow on Fano manifolds II}, J. Reine Angew. Math.. 678
(2013), 223-245.

\bibitem[TW1]{tw1}G. Tian and F. Wang, \textit{On the existence of conic
K\"{a}hler-Einstein metrics}, Adv. Math. 375 (2020), 107413, 42 pp.

\bibitem[TW2]{tw2}G. Tian and F. Wang, \textit{Cheeger-Colding-Tian theory for
conic K\"{a}hler-Einstein metrics}, J. Geom. Anal. 31 (2021), no. 2, 1471--1509.

\bibitem[TZZZZ]{tzzzz}G. Tian, Q. Zhang, Z.-L. Zhang, M. Zhu and X.-H. Zhu,
Laplacian comparison on Kaehler-Ricci flow and convergence, arXiv:2509.14820v2.

\bibitem[WZ]{wz}X. Wang and X. Zhu, \textit{K\"{a}hler-Ricci solitons on toric
Fano manifolds with positive first Chern class}. Adv. Math. 188 (2004), 87--103.

\bibitem[Y1]{y1}S.-T. Yau, \textit{On the Ricci curvature of a compact
K\"{a}hler manifold and the complex Monge-Amp\`{e}re equation},\textit{\ I},
Comm. Pure. Appl. Math. 31 (1978), 339-411.

\bibitem[Y2]{y2}S.-T. Yau, \textit{Open problems in geometry}, Proc. Symp.
Pure Math. 54 (1993), 1-18.

\bibitem[Z]{z}Z. Zhang, \textit{Degeneration of shrinking Ricci solitons},
IMRN, no. 21 (2010), 4137--4158.

\bibitem[Zh1]{zh1}X. Zhang, \textit{Some invariants in Sasakian geometry},
International Mathematics Research Notices, Volume 2011, Issue 15 (2011), 3335--3367.

\bibitem[Zh2]{zh2}X. Zhang, \textit{Energy properness and Sasakian-Einstein
metrics}, Commun. Math. Phys. 306 (2011), 229--260.
\end{thebibliography}
\end{document}